\documentclass[12pt, a4paper,leqno]{amsart}
\usepackage{amsmath,amsthm,amscd,amssymb,amsfonts, amsbsy}
\usepackage{latexsym}
\usepackage{exscale}
\usepackage{mathrsfs}
\usepackage[utf8]{inputenc}

\year=2021 \month=06 \day=28

\usepackage[colorlinks,citecolor=red,pagebackref,hypertexnames=false, breaklinks]{hyperref}

\parskip=4pt
\textwidth=1.2\textwidth
\textheight=1.08\textheight

\marginparwidth=65pt
\parskip=3pt

\calclayout
\allowdisplaybreaks




\renewcommand\labelenumi{(\roman{enumi})}
\renewcommand\theenumi\labelenumi

\newcommand{\abs}[1]{{\left\lvert{#1}\right\rvert}}
\newcommand{\norm}[1]{{\left\lVert{#1}\right\rVert}}
\newcommand{\br}[1]{{\left({#1}\right)}}
\newcommand{\bbr}[1]{{\left\{{#1}\right\}}}

\newcommand{\fiint}{\fint\!\!\!\fint}

\def \Int{\,\Int\,}
\def \iint{\int\!\!\!\int}

\def\Xint#1{\mathchoice
    {\XXint\displaystyle\textstyle{#1}}%
    {\XXint\textstyle\scriptstyle{#1}}%
    {\XXint\scriptstyle\scriptscriptstyle{#1}}%
    {\XXint\scriptscriptstyle\scriptscriptstyle{#1}}%
    \!\int}
    \def\XXint#1#2#3{{\setbox0=\hbox{$#1{#2#3}{\int}$}
    \vcenter{\hbox{$#2#3$}}\kern-.5\wd0}}
    \def\fint{\Xint-}

\def\Xint#1{\mathchoice
    {\XXint\displaystyle\textstyle{#1}}%
    {\XXint\textstyle\scriptstyle{#1}}%
    {\XXint\scriptstyle\scriptscriptstyle{#1}}%
    {\XXint\scriptscriptstyle\scriptscriptstyle{#1}}%
    \!\int}
    \def\XXint#1#2#3{{\setbox0=\hbox{$#1{#2#3}{\int}$}
    \vcenter{\hbox{$#2#3$}}\kern-.5\wd0}}
    \def\dashint{\Xint-}
  \renewcommand{\emptyset}{\mbox{\textup{\O}}}

\def\div{\mathop{\rm div}}
\def \A{ \mathbb{A} }

\def \L{ \mathbb{L} }
\def \N{ \mathbb{N} }
\def \R{ \mathbb{R} }
\def \Z{ \mathbb{Z} }

\def \Ncal { \mathcal{N} }
\def \Mcal { \mathcal{M} }
\def \Ocal{ \mathcal{O} }
\def \Scal{ \mathcal{S} }

\def \hh{ \mathrm{H} }
\def \pp{ \mathrm{P} }
\def \Grm{ \mathrm{G} }

\newcommand{\wt}{\widetilde}

\newcommand{\loc}{\text{{\rm loc}}}

\newcommand{\divv}{{\text{{\rm div}}}}
\renewcommand{\Re}{{\rm Re}\,}

\def \re{ \mathbb{R} }

\def\esssup{\mathop\mathrm{\,ess\,sup\,}}
\renewcommand{\Int}{{\rm Int}\,}

\DeclareMathOperator{\supp}{supp}

\renewcommand{\chi}{{\bf 1}}



\theoremstyle{plain}
\newtheorem{theorem}[equation]{Theorem}
\newtheorem{lemma}[equation]{Lemma}
\newtheorem{corollary}[equation]{Corollary}
\newtheorem{proposition}[equation]{Proposition}

\theoremstyle{definition}
\newtheorem{definition}[equation]{Definition}

\theoremstyle{remark}
\newtheorem{remark}[equation]{Remark}

\numberwithin{equation}{section}

\begin{document}
\allowdisplaybreaks

\author{Pascal Auscher}

\address{Laboratoire de Math\'{e}matiques d'Orsay, Université Paris-Saclay, CNRS, Universit\'{e} Paris-Saclay, 91405 Orsay, France; and Laboratoire Ami\'enois de Math\'ematique  Fondamentale et Appliqu\'ee, 
CNRS-UMR 7352, Universit\'e de Picardie-Jules Verne, 80039 Amiens France}
\email{pascal.auscher@universite-paris-saclay.fr}

\author{Li Chen}
\address{Li Chen
	\\
	Department of Mathematics
	\\
	Louisiana State University
	\\
	Baton Rouge, LA 70803-4918, USA}
\email{lichen@lsu.edu}

\author{Jos\'e Mar{\'\i}a Martell}
\address{Jos\'e Mar{\'\i}a Martell
\\
Instituto de Ciencias Matem\'aticas CSIC-UAM-UC3M-UCM
\\
Consejo Superior de Investigaciones Cient{\'\i}ficas
\\
C/ Nicol\'as Cabrera, 13-15
\\
E-28049 Madrid, Spain} \email{chema.martell@icmat.es}

\author{Cruz Prisuelos-Arribas}
\address{Cruz Prisuelos-Arribas
	\\
	Departamento de Física y Matemáticas	
	\\
	Universidad de Alcalá de Henares
	\\
	Plaza de San Diego, s/n
	\\
	E-28801 Alcalá de Henares, Madrid, Spain} 
\email{cruz.prisuelos@uah.es}

\title[The regularity problem for degenerate elliptic operators]{The regularity problem for degenerate elliptic operators in weighted spaces}

\thanks{The second, third, and last authors  acknowledge that the research leading to these results has received funding from the European Research 
	Council under the European Union's Seventh Framework Programme (FP7/2007-2013)/ ERC	agreement no. 615112 HAPDEGMT. 	The third author acknowledges financial support from the Spanish Ministry of Science and Innovation, through the ``Severo Ochoa Programme for Centres of Excellence in R\&D'' (CEX2019-000904-S). The third and last authors were partially supported by the Spanish Ministry of Science and Innovation,  MTM PID2019-107914GB-I00.}

\date{\today}
\subjclass[2010]{35J25, 35B65, 35B45, 42B37, 42B25, 47A60, 47D06,  35J15}

\keywords{Regularity problem, degenerate elliptic operators in divergence form, Muckenhoupt weights,  singular non-integral operators, square functions, heat and Poisson semigroups, a priori estimates, off-diagonal estimates, square roots of elliptic operators, Kato's conjecture}

\begin{abstract}
We study the solvability of the regularity problem for degenerate elliptic operators in the block case for data in weighted spaces.
More precisely, let $L_w$ be a degenerate elliptic operator with degeneracy given by a fixed weight $w\in A_2(dx)$ in $\R^n$, and consider the associated block second order degenerate elliptic problem in the upper-half space $\R_+^{n+1}$. We obtain non-tangential bounds for the full gradient of the solution of the block case operator given by the Poisson semigroup in terms of the gradient of the boundary data. All this is done in the spaces $L^p(vdw)$ where $v$ is a Muckenhoupt weight with respect to the underlying natural weighted space $(\R^n, wdx)$. We recover earlier results in the non-degenerate case (when $w\equiv 1$, and with or without weight $v$). Our strategy is also different and more direct thanks in particular  to recent observations on change of angles in weighted square function estimates and  non-tangential maximal functions. 
Our method gives as a consequence the (unweighted) $L^2(dx)$-solvability of the regularity problem for the block operator 
\[
\L_\alpha u(x,t)
=
-|x|^{\alpha} \divv_x \big(|x|^{-\alpha }\,A(x) \nabla_x u(x,t)\big)-\partial_{t}^2 u(x,t)
\]
for any complex-valued uniformly elliptic matrix $A$ and for all $-\epsilon<\alpha<\frac{2\,n}{n+2}$,
where $\epsilon$ depends just on the dimension and the ellipticity constants of $A$.
\end{abstract}

\maketitle

\tableofcontents

\bigskip
\section{Introduction}\label{section:intro}

The study of divergence form  degenerate elliptic equations was pioneered in the series of papers \cite{FJK82,FKJ83,FKS82}, where
real symmetric elliptic matrices with some degeneracy expressed in terms of $A_2(dx)$-weights were considered (here and elsewhere $ A_{2}(dx)\equiv A_{2}(\R^n,dx)$). The goal of this paper is to obtain the solvability of the regularity problem  for second order divergence form degenerate elliptic operators with complex coefficients and with boundary data in weighted Lebesgue spaces. To set the stage, let us introduce the class of operators that we consider here. Let  $A$ be an $n\times n$ matrix of complex $L^\infty$-valued coefficients defined on $\R^n$, $n\geq 2$. We assume that
this matrix satisfies the following uniform ellipticity (or \lq\lq
accretivity\rq\rq) condition: there exist $0<\lambda\le\Lambda<\infty$ such that
\begin{equation}
\lambda\,|\xi|^2
\le
\Re A(x)\,\xi\cdot\bar{\xi}
\quad\qquad\mbox{and}\qquad\quad
|A(x)\,\xi\cdot \bar{\zeta}|
\le
\Lambda\,|\xi|\,|\zeta|,
\label{eq:elliptic-intro}
\end{equation}
for all $\xi,\zeta\in\mathbb{C}^n$ and almost every $x\in \R^n$. We have used the notation
$\xi\cdot\zeta=\xi_1\,\zeta_1+\cdots+\xi_n\,\zeta_n$ and therefore
$\xi\cdot\bar{\zeta}$ is the usual inner product in $\mathbb{C}^n$.  
Associated with this matrix and a given weight $w\in A_{2}(dx)$ (which is fixed from now on, unless stated otherwise) we define the second order divergence
form degenerate elliptic operator
\begin{align}\label{degenerateL}
L_w u
=
-w^{-1}\div(w\,A\,\nabla u),
\end{align}
which is understood as a maximal-accretive operator on $L^2(\R^n,w dx)\equiv L^2(w)$ with domain $\mathcal{D}(L_w)$ by means of a sesquilinear form. Note that writing $A_w=w\,A$, one has that $A_w$ is a degenerate elliptic matrix in the sense that 
\begin{equation}
\lambda\,|\xi|^2\,w(x)
\le
\Re A_w(x)\,\xi\cdot\bar{\xi}
\quad\qquad\mbox{and}\qquad\quad
|A_w(x)\,\xi\cdot \bar{\zeta}|
\le
\Lambda\,|\xi|\,|\zeta|\,w(x),
\label{eq:elliptic-intro:w}
\end{equation}
for all $\xi,\zeta\in\mathbb{C}^n$ and almost every $x\in \R^n$. Conversely, if $A_w$ is degenerate elliptic matrix satisfying the previous conditions one can trivially see that $A:=w^{-1}\,A_w$ is uniformly elliptic.

The prominent case $w\equiv 1$ gives the class of uniformly elliptic operators. The celebrated resolution of the Kato problem in \cite{AHLMT02} established that if $L$ is a uniformly divergence form elliptic operator (that is, $L=L_w$ with $w\equiv 1$) then $\sqrt{L}f$ is comparable to $\nabla f$ in $L^2(\R^n,dx) \equiv L^2(dx)$. This led to a new Calderón-Zygmund theory developed by the first named author in \cite{Au07} to establish the boundedness in Lebesgue spaces of the associated functional calculus, vertical square function, Riesz transforms, reverse inequalities, etc. A key ingredient in that theory is the use of the so-called off-diagonal or Gaffney estimates satisfied by the associated heat semigroup and its  gradient. This was later extended in \cite{AMIII06,AMI07,AMII07} where the same operators were shown to satisfy weighted norm inequalities with Muckenhoupt weights. Conical square functions have been also considered in \cite{AHM, MaPAI17}. Some of the previous results in conjunction with the theory of Hardy spaces for uniformly elliptic operators from \cite{HMay09, HMayMc11} led to \cite{May10} where the solvability of the regularity problem in the block case  for data in Lebesgue spaces was obtained. This amounted to control non-tangentially the full gradient of the solution given by the Poisson semigroup in terms of the gradient of the boundary datum. In turn, using the weighted Hardy space theory developed in \cite{MaPAI17, MaPAII17, PA17}, the solvability of the regularity problem in the block case  for data in Lebesgue spaces with Muckenhoupt weights has been recently studied in \cite{ChMP-A18}.

Concerning the Kato problem in the general case, where $L_w$ is a degenerate elliptic operator as above with a generic $w\in A_2(dx)$, \cite{CUR15} (see also \cite{CUR08,CUR12})  showed that $\sqrt{L_w} f$ is comparable to $\nabla f$  in $L^2( w)$. The boundedness of the associated operators (functional calculus, Riesz transform, reverse inequalities, vertical square functions, etc.), both in the natural Lebesgue spaces $L^p(w)$ and also in weighted spaces $L^p(v\,dw)$ with $v\in A_\infty(w)$ was considered in \cite{CMR15}. A particular case of interest is that on which under further assumptions in $w$ one can show equivalence of $\sqrt{L_w} f$ and $\nabla f$ in $L^2(dx)$ by simply taking $v=w^{-1}$, that is, the $L^2(dx)$-problem Kato problem was solved for a class of degenerate elliptic operators that goes beyond that of uniformly elliptic ---e.g., one can take $L_\gamma=-|\cdot|^\gamma\,\div(|\cdot|^{-\gamma} A(\cdot)\nabla)$ where $A$ is a uniformly elliptic matrix, $\gamma\in (-\varepsilon, 2n/(n+2))$, and $\varepsilon$ depends on the dimension and the ellipticity constants of $A$.  Some work has been also done  concerning  conical square functions with respect to the heat or Poisson semigroup (or theirs gradients) generated by $L_w$. For example, in \cite{ChMPA16} the last three authors of the present paper established the boundedness and the comparability of some conical square functions extending to the degenerate case the results from \cite{MaPAI17}. Moreover, in \cite{PAII18}, the last named author has made a deeper study of the vertical and conical square functions and some non-tangential maximal functions  arising from  degenerate elliptic  operators. On another direction, in \cite{ARR15}  the authors considered $L^2$-boundary value problems for degenerate elliptic equations and systems.  In particular, they initiated the study of Dirichlet and Neumann problems in the degenerate setting using the so-called first order method.

Our goal in this paper to contribute to this theory studying the solvability of the regularity problem for degenerate elliptic operators and also propose other methods. More precisely, consider the degenerate elliptic operator $L_w =-w^{-1}\div(w\,A\,\nabla )$ where $w\in A_2(dx)$ and $A$ is an  $n\times n$ matrix of complex $L^\infty$-valued coefficients defined on $\R^n$, $n\geq 2$, which is uniformly elliptic matrix (see \eqref{eq:elliptic-intro}). Introduce the  $(n+1)\times (n+1)$ block matrix
\[
\A = \left(\begin{array}{cc}A & 0 \\0 & 1\end{array}\right),
\]
which is $(n+1)\times (n+1)$ uniformly elliptic. This gives rise to a degenerate elliptic operator in $\R^{n+1}$, 
\begin{equation}\label{def:L-block}
\L_w u
=-w^{-1} \divv_{x,t} (w\, \A \nabla_{x,t}u)
=
-w^{-1} \divv_x (w\,A \nabla_x u)-\partial_{t}^2u
=(L_w)_x u-\partial_{t}^2u.
\end{equation}
Here and elsewhere, $\nabla_{x,t}$  denotes the full gradient, while the symbol $\nabla$ refers just to the spatial derivatives. Note that in the previous equality we have used that $w$ does not depend on the $t$ variable, hence it is trivial to see that with a slight abuse of notation if we write $w(x,t):=w(x)$ for every $(x,t)\in\R^{n+1}$ then $w\in A_2(\R^{n+1}, dx)$ since $w\in A_2(dx)$. 

The operator $-L_w$ generates a $C^0$-semigroup $\{e^{-t L_w}\}_{t>0}$ of contractions on $L^2(w)$ which is called the heat semigroup. This and the subordination formula (see \eqref{subordinationformula} below) yield that $\{e^{-t \sqrt{L_w}}\}_{t>0}$ is a $C^0$-semigroup of contractions on $L^2(w)$. Hence, whenever $f\in C_c^\infty(\R^n)$, one has that $u(x,t):=e^{-t\sqrt L_w}f(x)$, with  $(x,t)\in\R_+^{n+1}$, is a strong solution  of $\L_w u=0$ in $\R_{+}^{n+1}$. It is also a weak solution: by this we mean that $u\in W_{{\rm loc}}^{1,2}(\R_+^{n+1}, dw\,dt)$ satisfies 
\begin{equation}\label{weka-sol}
\iint_{\R_+^{n+1}} \A(x)\nabla_{x,t} u(x,t) \cdot \nabla_{x,t} \psi(x,t)\, dw(x)\,dt =0,\qquad \forall\,\psi \in C_{0}^{\infty}(\R_+^{n+1}).
\end{equation}
Also, $u(\cdot,t)\to f$ in $L^2(w)$ as $t\to 0^+$ by the semigroup representation. As usual, $dw(x)\equiv w(x)dx$.

Consider the $L^2$-non-tangential maximal function $\Ncal_w $ defined in
\cite{ARR15}:
\begin{equation}\label{eq:NTdeg}
\Ncal_w h(x):= \sup_{t>0} \left(\,\fiint_{W(x,t)} |h(y,s)|^2 dw(y)\, ds\right)^{\frac12},\quad  h\in L_{\loc}^2(\R_+^{n+1}, dw\,dt).
\end{equation}
where $W(x,t):=(c_0^{-1}t,c_0t)\times B(x,c_1t)$ is a Whitney region and $c_0>1,\,c_1>0$ are fixed parameters throughout the paper.

Given $1<p<\infty$ and $v\in A_\infty(w)$ (note that our assumption $w\in A_2(dx)$ implies that $w$ is a doubling measure and hence $(\R^n,w,|\cdot|)$ is a space of homogeneous type), we say that the weighted regularity problem $(R^{\L_w})_{L^p(vdw)}$ is solvable if for every 
$f\in C_c^\infty(\R^n)$ the weak solution of $\L_w u=0$ in $\R_{+}^{n+1}$ given by $u(x,t):=e^{-t\sqrt{L_w}}f(x)$, $(x,t)\in \R^{n+1}_+$, 
satisfies the following weighted non-tangential maximal function estimate:
\begin{equation}\label{ntangestimate}
\norm{\Ncal_w(\nabla_{x,t}u)}_{L^p(vdw)} \leq C\norm{\nabla f}_{L^p(vdw)}.
\end{equation}
Once this estimate is under control, one can extend the semigroup to general data. However, the status of convergence to the boundary of the solution needs a specific treatment that is not addressed here. 

As in \cite{Au07, AMII07, CMR15}, we denote by $(p_-(L_w),p_+(L_w))$ and by $(q_-(L_w),q_+(L_w))$ the maximal open intervals on which the heat semigroup $\{e^{-tL_w}\}_{t>0}$ and the gradient of the heat semigroup $\{\sqrt{t}\nabla e^{-tL_w}\}_{t>0}$ are respectively uniformly bounded on $L^p(w)$. That is,
\begin{align*}
p_-(L_w) &:= \inf\left\{p\in(1,\infty): \sup_{t>0} \|e^{-tL_w}\|_{L^p(w)\rightarrow L^p(w)}< \infty\right\},
\\[4pt]
p_+(L_w)& := \sup\left\{p\in(1,\infty) : \sup_{t>0} \|e^{-tL_w}\|_{L^p(w)\rightarrow L^p(w)}< \infty\right\},
\end{align*}
\begin{align*}
q_-(L_w) &:= \inf\left\{p\in(1,\infty): \sup_{t>0} \|\sqrt{t}\nabla e^{-t^2L_w}\|_{L^p(w)\rightarrow L^p(w)}< \infty\right\},
\\[4pt]
q_+(L_w)& := \sup\left\{p\in(1,\infty) : \sup_{t>0} \|\sqrt{t}\nabla e^{-t^2L_w}\|_{L^p(w)\rightarrow L^p(w)}< \infty\right\}.
\end{align*}

We need to introduce some extra notation (see Section \ref{sec:preliminaries}). Set $r_w:=\inf\big\{p:\ w\in A_p(dx)\big\}$, and note that $1\le r_w<2$ since $w\in A_2(dx)$. Given $0\le p_0<q_0\le\infty$ and $v\in A_{\infty}(w)=A_{\infty}(\R^n,w\,dx)$ define
\[
\mathcal{W}_v^w(p_0,q_0):=\left\{p\in(p_0,q_0):\ v\in A_{\frac{p}{p_0}}(w)\cap RH_{\left(\frac{q_0}{p}\right)'}(w)\right\}.
\]
We are now ready to state our main result:
\begin{theorem}\label{thm:maindegenerate}
	Let $w\in A_2(dx)$ and let $\L_w$ be a block  degenerate elliptic operator in $\R_+^{n+1}$ as above. Let $v\in A_{\infty}(w)$ be such that 
\begin{equation}\label{main:compa}
	\mathcal{W}_v^w\big(\max\{r_w,q_-(L_w)\},q_+(L_w)\big)\neq \emptyset.
\end{equation}
	Then, for every $p\in \mathcal{W}_v^w\left(\max\left\{r_w,\frac{nr_w q_-(L_w)}{nr_w+q_-(L_w)}\right\},q_+(L_w)\right)$ and every $f\in C_c^\infty(\R^n)$, 
	if one sets $u(x,t)=e^{-t\sqrt L_w}f(x)$, $(x,t)\in\R_+^{n+1}$, then
	\begin{equation}\label{main:main-est}
	\norm{\Ncal_w (\nabla_{x,t} u)}_{L^p(vdw)} \leq C\norm{\nabla f}_{L^p(vdw)}
	\end{equation}
and  $(R^{\L_w})_{L^p(vdw)}$ is solvable.
\end{theorem}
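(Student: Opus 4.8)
The plan is to factor the solution through the square root $\sqrt{L_w}$, to reduce the two a priori different pieces $\partial_t u$ and $\nabla_x u$ to objects that are already controlled —the Poisson semigroup of $L_w$, its gradient, and the associated weighted conical square functions and non-tangential maximal functions— and then to invoke the off-diagonal theory and the weighted estimates of \cite{CUR15,CMR15,ChMPA16,PAII18}. Fix $f\in C_c^\infty(\R^n)$ and set $g:=\sqrt{L_w}\,f$, so that $g\in L^2(w)$ and $f=L_w^{-1/2}g$. Since $u(x,t)=e^{-t\sqrt{L_w}}f(x)$ and $\sqrt{L_w}$, $e^{-t\sqrt{L_w}}$, $L_w^{-1/2}$ commute through the functional calculus,
\begin{equation*}
\partial_t u(\cdot,t)=-\,e^{-t\sqrt{L_w}}g,\qquad \nabla_x u(\cdot,t)=\nabla_x L_w^{-1/2}\,e^{-t\sqrt{L_w}}g,
\end{equation*}
whence $\Ncal_w(\nabla_{x,t}u)\le \Ncal_w\big(e^{-t\sqrt{L_w}}g\big)+\Ncal_w\big(\nabla_x L_w^{-1/2}e^{-t\sqrt{L_w}}g\big)$. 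The solution of the Kato problem for degenerate elliptic operators and the weighted square-root comparison of \cite{CUR15,CMR15} give $\|g\|_{L^p(vdw)}=\|\sqrt{L_w}f\|_{L^p(vdw)}\lesssim\|\nabla f\|_{L^p(vdw)}$; the hypothesis $\mathcal W_v^w(\max\{r_w,q_-(L_w)\},q_+(L_w))\neq\emptyset$ together with the $A_\infty(w)$ self-improvement of the classes $\mathcal W_v^w$ is exactly what makes this valid on the whole (Sobolev-improved) interval of the statement. So it suffices to bound each of the two terms above in $L^p(vdw)$ by $C\|g\|_{L^p(vdw)}$.

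For the first term I would use that, via the subordination formula, the $L^2$ Gaffney (off-diagonal) estimates of $\{e^{-tL_w}\}_{t>0}$ transfer into $L^2$ off-diagonal estimates at scale $t$ for the Poisson semigroup $\{e^{-t\sqrt{L_w}}\}_{t>0}$, which combined with its $L^{p_0}(w)$-boundedness for $p_0>p_-(L_w)$ yields, as in \cite{ChMPA16,PAII18}, the pointwise domination
\begin{equation*}
\Ncal_w\big(e^{-t\sqrt{L_w}}g\big)(x)\ \lesssim\ \big(\Mcal_w(|g|^{s})(x)\big)^{1/s},\qquad p_-(L_w)<s<\min\{2,p\},
\end{equation*}
where $\Mcal_w$ is the Hardy–Littlewood maximal operator on the space of homogeneous type $(\R^n,w,|\cdot|)$. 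Since any $p\in\mathcal W_v^w\big(\max\{r_w,\tfrac{n r_w q_-(L_w)}{n r_w+q_-(L_w)}\},q_+(L_w)\big)$ admits such an $s$ with $v\in A_{p/s}(w)$, the operator $g\mapsto(\Mcal_w(|g|^s))^{1/s}$ is bounded on $L^p(vdw)$, and the first term is $\le C\|g\|_{L^p(vdw)}$.

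The second term, $\Ncal_w(\nabla_x L_w^{-1/2}e^{-t\sqrt{L_w}}g)=\Ncal_w(\nabla_x u)$, is the crux: the Riesz transform $\nabla_x L_w^{-1/2}$ is non-local, so it cannot be pulled outside the Whitney averages defining $\Ncal_w$, and it lowers the smoothing order by one, forcing a Sobolev loss. The direct route, in the spirit of the change-of-aperture philosophy mentioned in the abstract, is as follows. Since $u$ solves $\L_w u=0$ and $w\,\A$ is degenerate elliptic, the interior Caccioppoli inequality bounds the Whitney $L^2$-average of $\nabla_{x,t}u$ on $W(x,t)$ by $t^{-1}$ times the $L^2$-average of $u-c$ on a dilate of $W(x,t)$, for any constant $c$; choosing $c$ scale by scale and telescoping, $\Ncal_w(\nabla_{x,t}u)(x)$ is dominated by a non-tangential maximal function of $t^{-1}$ times the oscillation of $u$. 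Splitting that oscillation into its vertical part —controlled through $\partial_t u=-e^{-t\sqrt{L_w}}g$ and the previous step by $(\Mcal_w(|g|^s))^{1/s}$— and its horizontal part, and writing $e^{-s\sqrt{L_w}}f=f-\int_0^s e^{-r\sqrt{L_w}}g\,dr$, one is left with a Poisson-semigroup contribution (again $(\Mcal_w(|g|^s))^{1/s}$) and with the oscillation of $f$ on balls, which a weighted Poincaré inequality dominates by $(\Mcal_w(|\nabla f|^{r_w}))^{1/r_w}$. Equivalently, one may reduce $\Ncal_w(\nabla_x u)$, using the weighted change-of-angle estimates for non-tangential maximal functions and conical square functions of \cite{ChMPA16,PAII18}, to the square function bounds
\begin{equation*}
\Big\|\Big(\iint_{\Gamma(\cdot)}\big|t\nabla_x e^{-t\sqrt{L_w}}g\big|^2\tfrac{dw\,dt}{t\,w(B(\cdot,t))}\Big)^{\!1/2}\Big\|_{L^p(vdw)}
+\Big\|\Big(\iint_{\Gamma(\cdot)}\big|t\sqrt{L_w}e^{-t\sqrt{L_w}}g\big|^2\tfrac{dw\,dt}{t\,w(B(\cdot,t))}\Big)^{\!1/2}\Big\|_{L^p(vdw)}\lesssim \|g\|_{L^p(vdw)},
\end{equation*}
plus the boundary term $(\Mcal_w(|\nabla f|^{r_w}))^{1/r_w}$. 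In either realization, the main obstacle is precisely this second piece: one must carry the Sobolev loss produced by $\nabla_x L_w^{-1/2}$ —which lowers the admissible left endpoint from $\max\{r_w,q_-(L_w)\}$ to $\max\{r_w,\tfrac{n r_w q_-(L_w)}{n r_w+q_-(L_w)}\}$, the lower Sobolev conjugate of $q_-(L_w)$ relative to the homogeneous dimension $n r_w$ of $(\R^n,w\,dx)$— while keeping $p$ inside the open interval on which the semigroup, its gradient, the Riesz transform and $\Mcal_w$ are simultaneously bounded on $L^p(vdw)$; the nonemptiness hypothesis and the $A_\infty(w)$-extrapolation machinery are there to secure this. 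Combining the bounds for both terms with $\|g\|_{L^p(vdw)}\lesssim\|\nabla f\|_{L^p(vdw)}$ yields \eqref{main:main-est} for every $f\in C_c^\infty(\R^n)$, which is exactly the solvability of $(R^{\L_w})_{L^p(vdw)}$.
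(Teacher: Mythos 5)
There is a genuine gap: your factorization through $g=\sqrt{L_w}f$ cannot produce the Sobolev-improved left endpoint $\max\{r_w,(q_-(L_w))_{w,*}\}$ that the statement claims, and none of the reductions you sketch works below $\mathfrak{r}_v(w)\,q_-(L_w)$. Recall that the target range is $p\in\mathcal{W}_v^w\bigl(\max\{r_w,\tfrac{n r_w q_-(L_w)}{n r_w+q_-(L_w)}\},q_+(L_w)\bigr)$, which goes strictly below $\mathfrak{r}_v(w)\,q_-(L_w)=\mathfrak{r}_v(w)\,p_-(L_w)$. Both legs of your argument need $p$ above that threshold. For (1), the weighted reverse Riesz inequality $\|\sqrt{L_w}f\|_{L^p(vdw)}\lesssim\|\nabla f\|_{L^p(vdw)}$ is only available (Proposition~\ref{prop:wRR}) for $p\in\mathcal{W}_v^w(\max\{r_w,p_-(L_w)\},p_+(L_w))$; the ``$A_\infty(w)$ self-improvement of $\mathcal{W}_v^w$'' does not lower this left endpoint, since those classes are already open intervals by \eqref{intervalrsw} and the Sobolev shift requires a new endpoint argument, not a self-improvement. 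For (2), the pointwise domination $\Ncal_w(e^{-t\sqrt{L_w}}g)\lesssim(\mathcal{M}^w(|g|^{s}))^{1/s}$ requires $s>p_-(L_w)$ together with $v\in A_{p/s}(w)$, forcing $p>\mathfrak{r}_v(w)\,p_-(L_w)$; your assertion that every $p$ in the target range admits such an $s$ is false precisely on the lower subinterval where the Sobolev improvement lives. The homogeneous conical square functions of $e^{-t\sqrt{L_w}}g$ you introduce at the end suffer the same restriction.

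You do name the obstacle --- the Sobolev loss from $\nabla L_w^{-1/2}$ --- but your Caccioppoli/telescoping/Poincar\'e alternative is never carried out, and it is exactly here that the substance of the proof lies. What the paper does, and what is absent from your proposal, is a weak-type Calder\'on--Zygmund-on-Sobolev-spaces argument (Lemma~\ref{lem:CZweighted} applied to $f$ at height $\alpha$ for the product weight $vw$, with the operators $A_{r_{B_i}}=I-(I-e^{-r_{B_i}^2 L_w})^M$) for the \emph{inhomogeneous} square functions $\widetilde\Grm_{\hh}^{w}$ and $\widetilde\Scal_{\hh}^{w}$, Theorems~\ref{thm:wtG} and~\ref{thm:boundednesswidetildeS}. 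These, not a Riesz-transform factorization, are fed into Propositions~\ref{prop:spaceD} and~\ref{prop:timeD}: the subordination formula is applied directly to $\nabla e^{-t\sqrt{L_w}}f$ and the resulting error term is controlled by $\widetilde\Grm_{\hh}^{w}f$, while for $\partial_t u$ the paper dominates $\Ncal_w(\partial_t u)\lesssim\Ncal_{\pp}^{\alpha,w}(\sqrt{L_w}f)$ and then runs a separate CZ decomposition of $f$ (not of $g$), with $\widetilde\Scal_{\hh}^{w}$ absorbing the bad part. Your outline stops short of any such endpoint estimate and therefore only recovers the range $\mathcal{W}_v^w(\max\{r_w,q_-(L_w)\},q_+(L_w))$, not the range in the statement.
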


Let us compare this result with some previous work. When $w\equiv 1$ (that is, we are working with the class of uniformly ---or non-degenerate--- elliptic operators) and $v\equiv 1$ then, clearly,  $r_w=1$,  $\mathcal{W}_v^w\big(\max\{r_w,q_-(L_w)\},q_+(L_w)\big)=(q_-(L_w),q_+(L_w))\neq\emptyset$, and our result gives \eqref{main:main-est} in the range
$\left(\max\left\{1,\frac{nq_-(L_w)}{n+q_-(L_w)}\right\},q_+(L_w)\right)$, hence we fully  recover \cite[Theorem 4.1]{May10}. If we still assume that  $w\equiv 1$ and we let $v\in A_\infty(w)=A_\infty(dx)$, then our assumption \eqref{main:compa} agrees with that in \cite[Theorem 1.10]{ChMP-A18} and the range of $p$'s here is slightly worse than the one in that result (the lower end-point in \cite[Theorem 1.10]{ChMP-A18} has been pushed down using an extra technical argument that we have chosen not to follow here). 

Our methods to prove Theorem \ref{thm:maindegenerate}, in particular the estimate involving  $\partial_{t} u$, are  also novel.  The above works used  advanced technology of Hardy spaces adapted to operators: developing them in our context is probably a new challenge in  itself.   Instead, we rely on recent change of angle formulas for weighted conical square function estimates (see Section \ref{section:change-of-angles}) and also the ones we prove for non-tangential weighted maximal functions (see Lemma \ref{lema:changeofangelN})   which allow us to implement more directly standard tools in the field.

An important consequence of our method is that we can obtain the solvability of the regularity problem corresponding to data in unweighted Lebesgue spaces. The main idea consists in taking $v=w^{-1}$ in Theorem \ref{thm:maindegenerate}. The following result focuses in the case of the $L^2$-solvability (more general results are presented in Section \ref{section:unweighted}, see Corollaries \ref{corol:weights} and \ref{corol:weights:power}):

\begin{corollary}\label{corol:weighted:intro}
Let $w\in A_2(dx)$ and let $\L_w$ be a block  degenerate elliptic operator in $\R_+^{n+1}$ as above. Given $\Theta\ge 1$ there exists $\epsilon_0=\epsilon_0(\Theta,n,\Lambda/\lambda)\in (0,\frac1{2n}]$, such that for every $w\in A_{1+\epsilon}(dx)\cap  RH_{\max\{\frac2{1-\epsilon}, 1+(1+\epsilon)\,\frac{n}2\}}(dx)$ with  $0\le\epsilon<\epsilon_0$ and $[w]_{A_2(dx)}\le \Theta$, then
\begin{equation}\label{intro-L2}
\norm{\Ncal_w (\nabla_{x,t} u)}_{L^2(dx)} \leq C\norm{\nabla f}_{L^2(dx)}.
\end{equation}
for every $f\in C_c^\infty(\R^n)$ and where $u(x,t)=e^{-t\sqrt L_w}f(x)$, $(x,t)\in\R_+^{n+1}$. Hence $(R^{\L_w})_{L^2(dx)}$ is solvable.

Furthermore, if we set 
\[
\L_\alpha u(x,t)
=
-|x|^{\alpha} \divv_x \big(|x|^{-\alpha}\,A(x) \nabla_x u(x,t)\big)-\partial_{t}^2 u(x,t)
\]
where $A$ is an $n\times n$ matrix of complex $L^\infty$-valued coefficients defined on $\R^n$, $n\geq 2$, satisfying the uniform ellipticity condition 
\eqref{eq:elliptic-intro}, then there exists $0<\epsilon<\frac12$ small enough (depending only on the dimension and the ratio $\Lambda/\lambda$) such that 
if $-\epsilon<\alpha<\frac{2\,n}{n+2}$ then \eqref{intro-L2} holds in this scenario and   $(R^{\L_\alpha})_{L^2(dx)}$ is solvable.
\end{corollary}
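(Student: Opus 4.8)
The plan is to deduce Corollary \ref{corol:weighted:intro} directly from Theorem \ref{thm:maindegenerate} by making the choice $v=w^{-1}$, so that $L^p(v\,dw)=L^p(w^{-1}\cdot w\,dx)=L^p(dx)$, and then checking that with $p=2$ the hypotheses of the theorem are met under the stated quantitative assumptions on $w$. First I would record the elementary identity $v=w^{-1}\in A_{s}(w)$ iff $w\in A_{s'}(dx)$ (up to the usual normalization) together with the companion fact $w^{-1}\in RH_{\sigma}(w)$ iff $w\in A_{1+\sigma(p-1)/p}$-type condition; more precisely I would use the standard dictionary relating $A_p(w\,dx)$ and $RH_q(w\,dx)$ classes for $v=w^{-1}$ back to Muckenhoupt/reverse-Hölder classes of $w$ with respect to Lebesgue measure. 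The point is that belonging of $2$ to $\mathcal W_{w^{-1}}^w(\max\{r_w,q_-(L_w)\},q_+(L_w))$, and to the slightly smaller Sobolev-type interval $\mathcal W_{w^{-1}}^w(\max\{r_w,\tfrac{nr_w q_-(L_w)}{nr_w+q_-(L_w)}\},q_+(L_w))$, translates into (a) $q_-(L_w)<2<q_+(L_w)$ with the appropriate Sobolev lower endpoint also below $2$, and (b) two explicit membership conditions on $w$ of the form $w\in A_{1+\epsilon}(dx)\cap RH_{\tau}(dx)$.

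The key quantitative input is that the interval $(q_-(L_w),q_+(L_w))$ is stable under small perturbations of $w$ away from $w\equiv 1$: when $w\equiv 1$ one has the classical bounds $q_-(L)<2<q_+(L)$ (indeed $q_-(L)<\frac{2n}{n+2}$ and $q_+(L)>2$ for any uniformly elliptic $L$), and the endpoints $q_\pm(L_w)$ depend on $w$ only through $[w]_{A_2(dx)}$ (and the ellipticity ratio), in a way that is upper/lower semicontinuous as $[w]_{A_2}\to 1$. So I would invoke the results of \cite{CMR15} (quoted in the introduction) giving the openness of these intervals and the dependence of their endpoints solely on $n$, $\Lambda/\lambda$ and $[w]_{A_2(dx)}$; fixing the bound $[w]_{A_2(dx)}\le\Theta$ then produces a uniform $\delta=\delta(\Theta,n,\Lambda/\lambda)>0$ with $q_-(L_w)<2-\delta$ and $q_+(L_w)>2+\delta$, and similarly controls $r_w$ (which is close to $1$ when $w\in A_{1+\epsilon}$). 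Then one chooses $\epsilon_0$ small enough, depending on $\delta$ hence on $\Theta,n,\Lambda/\lambda$, so that the hypotheses $w\in A_{1+\epsilon}(dx)\cap RH_{\max\{2/(1-\epsilon),\,1+(1+\epsilon)n/2\}}(dx)$ with $0\le\epsilon<\epsilon_0$ force $w^{-1}$ into the required $A_{2/\max\{r_w,q_-(L_w)\}}(w)\cap RH_{(q_+(L_w)/2)'}(w)$ classes, and also verify the Sobolev-shifted lower endpoint. The specific reverse-Hölder exponent $\max\{2/(1-\epsilon),1+(1+\epsilon)n/2\}$ is exactly what is produced by translating the $RH_{(q_+(L_w)/2)'}(w)$ condition (the $2/(1-\epsilon)$ part, since $q_+(L_w)$ is only slightly above $2$) and the Sobolev-endpoint $RH$-condition coming from $\tfrac{nr_w q_-(L_w)}{nr_w+q_-(L_w)}$ (the $1+(1+\epsilon)n/2$ part) into statements about $w$ against Lebesgue measure; I would do this bookkeeping carefully using the $A_p$–$RH_q$ duality and the factorization lemmas from Section \ref{sec:preliminaries}. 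Once \eqref{main:main-est} holds with $p=2$ and $v=w^{-1}$, the left side is $\|\Ncal_w(\nabla_{x,t}u)\|_{L^2(dx)}$ and the right side is $\|\nabla f\|_{L^2(dx)}$, which is \eqref{intro-L2}, and solvability of $(R^{\L_w})_{L^2(dx)}$ follows from the definition.

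For the final assertion about $\L_\alpha$, the point is simply that $\L_\alpha=\L_{w}$ with $w(x)=|x|^{-\alpha}$ and $A$ the given uniformly elliptic matrix, and the power weight $|x|^{-\alpha}$ lies in $A_2(dx)$ precisely for $-n<\alpha<n$; moreover $|x|^{-\alpha}\in A_{1+\epsilon}(dx)\cap RH_{\tau}(dx)$ with the exponents demanded above provided $\alpha$ stays in a one-sided neighborhood of $0$ — here one must be slightly careful because the upper threshold $\tfrac{2n}{n+2}$ is genuinely bigger than any small number, so this is \emph{not} merely a perturbation of $w\equiv 1$: one uses instead that for $0\le\alpha<\tfrac{2n}{n+2}$ the weight $|x|^{-\alpha}$ already satisfies the needed $A_1$-type/$RH$ conditions with room to spare (a direct computation with power weights: $|x|^{-\alpha}\in RH_q(dx)$ iff $\alpha q<n$, and $\in A_p(dx)$ iff $-n(p-1)<\alpha<n$), so the constraint $\alpha<\tfrac{2n}{n+2}$ is exactly what makes $|x|^{-\alpha}\in RH_{1+n/2}(dx)$ and keeps $2$ inside the Sobolev-shifted interval, while the small negative threshold $-\epsilon<\alpha$ absorbs the $A_{1+\epsilon}$-requirement; then apply the first part of the corollary (with $\Theta$ taken to be any fixed bound on $[|x|^{-\alpha}]_{A_2(dx)}$ valid on the compact $\alpha$-range $[-\epsilon_0/2,\tfrac{2n}{n+2}-\eta]$, noting one can cover the range by finitely many such pieces or argue directly). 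I expect the main obstacle to be the careful translation of the abstract membership conditions $v=w^{-1}\in A_{p/p_0}(w)\cap RH_{(q_0/p)'}(w)$ into the precise Lebesgue-measure conditions on $w$ displayed in the statement, matching the exact exponents $2/(1-\epsilon)$ and $1+(1+\epsilon)n/2$, together with tracking that the lower Sobolev endpoint $\tfrac{nr_w q_-(L_w)}{nr_w+q_-(L_w)}$ indeed stays below $2$ after the perturbation — this requires quantitative control of how $q_-(L_w),q_+(L_w),r_w$ move with $[w]_{A_2(dx)}$, which I would extract from \cite{CMR15} rather than reprove.
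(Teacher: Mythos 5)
Your proposal is correct and takes essentially the same route as the paper: specialize Theorem \ref{thm:maindegenerate} to $v=w^{-1}$, translate the $A_p(w)$ and $RH_q(w)$ conditions on $w^{-1}$ into $RH$ and $A_p$ conditions on $w$ via the duality $\mathfrak{r}_{w^{-1}}(w)=s_w$, $\mathfrak{s}_{w^{-1}}(w)=r_w$, check that $p=2$ lies in the resulting interval using the quantitative bound $2r_w<q_+(L_w)$ from \cite{CMR15}, and handle $\L_\alpha$ by setting $w=|x|^{-\alpha}$ and computing the power-weight $A_p$/$RH_q$ memberships explicitly. The paper simply packages this bookkeeping into the intermediate Corollaries \ref{corol:weights} (item (d)) and \ref{corol:weights:power}, which it then cites, whereas you carry it out directly; the content is the same.
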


The plan of the paper is as follows.  In Section \ref{sec:preliminaries} we introduce notations and definitions, and we recall some known results. We also obtain  estimates for some inhomogeneous vertical and conical square functions which are interesting on its own right (see Propositions \ref{thm:wtG} and \ref{thm:boundednesswidetildeS}). To prove our main result, Theorem \ref{thm:maindegenerate}, we split the main estimate into two independent pieces, one regarding $\Ncal_w (\nabla_x u)$ and the other one related to $\Ncal_w (\partial_{t} u)$, see respectively Propositions \ref{prop:spaceD} and \ref{prop:timeD} in Section \ref{sec:proofmain}. 
In Section \ref{section:unweighted} we study the solvability of the regularity problem in unweighted Lebesgue spaces and, in particular, we prove Corollary \ref{corol:weighted:intro}.

\section{Preliminaries}\label{sec:preliminaries}
We shall use the following notation: $dx$ denotes the usual Lebesgue measure in $\R^n$, $dw$ denotes the measure in $\R^n$ given by the weight $w$, and $vdw$ or $d(vw)$ denotes the one given by the product weight $vw$. Besides, throughout the paper
$n$ will denote the dimension of the underlying space
$\R^n$ and we shall always assume
$n\geq 2$.

Given a ball $B$, let $r_B$ denote the radius of $B$.  We write $\lambda B$ for the concentric ball with radius $\lambda\, r_B$, $\lambda>0$. Moreover, we set $C_1(B)=4B$ and, for $j\geq 2$, $C_j(B)=2^{j+1}B \backslash 2^j B$.

\subsection{Weights} We need to introduce some classes of Muckenhoupt weights. Namely,  $A_\infty(dx)$, on which the underlying measure space is $(\R^n,dx)$, and then fix $w\in A_\infty(dx)$ and consider the class $A_\infty(w)$ where the ``weighted'' underlying space is $(\R^n,dw)$.

\subsubsection{$A_\infty(dx)$ weights} By a weight $w$ we mean a non-negative, locally integrable function.
For brevity, we will often write $dw$ for $w\,dx$. In particular, we write $w(E)=\int_E\,dw$ and $L^p(w)=L^{p}(\R^n,dw)$.  We will use the following notation for averages:   given a set $E$ such that
$0<w(E)<\infty$,
\[ \dashint_E f\,dw = \frac{1}{w(E)}\int_E f\,dw, \]
or, if $0<|E|<\infty$,
\[ \dashint_E f\,dx = \frac{1}{|E|}\int_E f\,dx. \]
Abusing slightly the notation, for  $j\geq 1$, we set
$$
\dashint_{C_j(B)} f\, dw= \frac{1}{w(2^{j+1}B)} \int_{C_j(B)} f\,dw.
$$

We state some definitions and basic properties of Muckenhoupt
weights.  For further details,
see~\cite{Du01, GCRF85, Gra14}. Consider the Hardy-Littlewood maximal function
$$
\mathcal{M} f(x):=\sup_{B\ni x} \dashint_B|f(y)|\,dy.
$$
It is well-known that given a weight $w$, $\mathcal{M}$ is bounded on $L^p(w)$, if and only if,  $w\in A_p(dx)$, $1<p<\infty$ where we say that $w\in A_p(dx)$, $1<p<\infty$, if
\[ [w]_{A_p(dx)} := \sup_B \left(\dashint_B w(x)\,dx\right) \left(\dashint_B
  w(x)^{1-p'}\,dx\right)^{p-1} < \infty. \]
Here and below the sups run over the collection of balls $B\subset\re^n$.
When $p=1$, $\mathcal{M}$ is bounded from $L^1(w)$ to $L^{1,\infty}(w)$ if and only if $w\in A_1(dx)$, that is, if
\[ [w]_{A_1(dx)} := \sup_B \left(\dashint_B w(x)\,dx\right)  \left(\esssup_{x\in B} w(x)^{-
1}\right)<
\infty.  \]

We also introduce the reverse H\"older classes. We say that $w\in RH_s(dx)$, $1<s<\infty$ if
\[ [w]_{RH_s(dx)} := \sup_B \left(\dashint_B w(x)\,dx\right )^{-1}
\left(\dashint_B w(x)^s\,dx\right )^{\frac{1}{s}} < \infty, \]
and
\[ [w]_{RH_\infty(dx)} := \sup_B\left(\dashint_B w(x)\,dx\right)^{-1} \left(\esssup_{x\in B} w(x)\right)  <
\infty.  \]
It is also well-known that
\[ A_\infty(dx) := \bigcup_{1\leq p <\infty} A_p (dx) = \bigcup_{1<s\le \infty}
RH_s(dx).  \]
%

Throughout the paper we shall use in several places the following  properties. Namely, if $w\in RH_s(dx)$, $1<s\le\infty$, 
\begin{align}\label{pesosineqw:RHq}
\frac{w(E)}{w(B)}\leq  [w]_{RH_{s}(dx)}\left(\frac{|E|}{|B|}\right)^{\frac{1}{s'}}, \quad \forall\,E\subset B,
\end{align}
where $B$ is any ball in $\re^n$. 
Analogously, if
$w\in A_p(dx)$, $1\leq p<\infty$, then
\begin{align}\label{pesosineqw:Ap}
 \left(\frac{|E|}{|B|}\right)^{p}\le [w]_{A_{p}(dx)}\frac{w(E)}{w(B)}, \quad \forall\,E\subset B.
\end{align}
This implies in particular that $w$ is a doubling measure, that is, 
\begin{align}\label{doublingcondition}
w(\lambda B)
\le
[w]_{A_p(dx)}\,\lambda^{n\,p}w(B),
\qquad \forall\,B,\ \forall\,\lambda>1.
\end{align}

 We continue by introducing  some important notation.
 Weights in the $A_p(dx)$ and $RH_s(dx)$ classes have a self-improving
 property: if $w\in A_p(dx)$, there exists $\epsilon>0$ such that $w\in
 A_{p-\epsilon}(dx)$, and similarly if $w\in RH_s(dx)$, then $w\in
 RH_{s+\delta}(dx)$ for some $\delta>0$.  Hereafter, given $w\in A_p(dx)$, let
 \begin{equation}
 r_w=\inf\big\{p:\ w\in A_p(dx)\big\}, \qquad s_w=\inf\big\{q:\ w\in RH_{q'}(dx)\big\}.
 \label{eq:defi:rw}
 \end{equation}
 Note that according to our definition $s_w$ is the conjugated exponent of the one defined in \cite[Lemma 4.1]{AMI07}.
 Given $0\le p_0<q_0\le \infty$ and $w\in A_{\infty}(dx)$,  \cite[Lemma 4.1]{AMI07} implies that
 \begin{align}\label{intervalrs}
 \mathcal{W}_w(p_0,q_0):=\left\{p\in (p_0, q_0): \ w\in A_{\frac{p}{p_0}}(dx)\cap RH_{\left(\frac{q_0}{p}\right)'}(dx)\right\}
 =
 \left(p_0r_w,\frac{q_0}{s_w}\right).
 \end{align}
 If $p_0=0$ and $q_0<\infty$ it is understood that the only condition that stays is $w\in RH_{\left(\frac{q_0}{p}\right)'}(dx)$. Analogously, 
 if $0<p_0$ and $q_0=\infty$ the only assumption is $w\in A_{\frac{p}{p_0}}(dx)$. Finally $\mathcal{W}_w(0,\infty)=(0,\infty)$. 

Furthermore, given $p\in (0,\infty)$ and a weight $w\in A_{\infty}(dx)$, we define the following Sobolev exponents with respect to $w$
\begin{align}\label{p_{w,*}}
(p)_{w,*}:=\frac{nr_wp}{nr_w+p},
\end{align}
and, for $k\in \N$,
\begin{align}\label{p_w^*}
p_{w}^{k,*}:=\begin{cases}
\frac{nr_wp}{nr_w-kp}& \textrm{ if }nr_w>kp,
\\
\infty & \textrm{ otherwise.}
\end{cases}
\end{align}
We write $p_w^*:=p_w^{1,*}$.

\subsubsection{$A_\infty(w)$ weights}
Fix now $w\in A_{\infty}(dx)$. As mentioned above, \eqref{doublingcondition} says that $w$ is a doubling measure, hence 
$(\re,dw,|\cdot|)$ is a space of homogeneous type (here and elsewhere $|\cdot|$ stands for the ordinary Euclidean distance).
One can then introduce the weighted maximal operator
	\begin{align}\label{weightedHLM}
	\mathcal{M}^wf(x):=\sup_{B\ni x}\dashint_B |f(y)|\,dw(y).
	\end{align}
Much as before,  $\mathcal{M}^w$ is bounded on $L^p(v dw)$, $1<p<\infty$, if and only if, $v\in A_p(w)$, which  means that 
\begin{align}\label{apclassv}
[v]_{A_p(w)} = \sup_B \left(\dashint_B v(x)\,dw\right) \left(\dashint_B
v(x)^{1-p'}\,dw\right)^{p-1} < \infty.
\end{align}
Analogously, we can define the classes $RH_s(w)$ by replacing the Lebesgue measure in the definitions above with
$dw$: $v\in RH_s(w)$, $1<s<\infty$ if
\begin{align}\label{rhclassv}
[v]_{RH_s(w)} = \sup_B \left(\dashint_B v(x)\,dw\right)^{-1} \left(\dashint_B
  v(x)^{s}\,dw\right)^{\frac{1}{s}} < \infty.
\end{align}
From these definitions, it follows at once  that there is a
``duality'' relationship between the weighted and unweighted
$A_p(dx)$ and $RH_s(dx)$ conditions:  $w^{-1} \in A_p(w)$ if and only if $w \in
RH_{p'}(dx)$ and $w^{-1}\in RH_s(w)$ if and only if $w\in A_{s'}(dx)$.

For every measurable set $E\in \R^n$, we write $vw(E)=\int_Ed(vw)=\int_E v dw=(v dw)(E)$ and $L^p(v dw)=L^p(\re^n, v(x)\,w(x)\,dx)$. In this direction, 
for every $w\in A_p(dx)$, $v\in A_q(w)$, $1\le p,q<\infty$, it follows that
\begin{align}\label{pesosineq:Ap}
\left(\frac{|E|}{|B|}\right)^{p\,q}
\le
[w]_{A_{p}(dx)}^q\left(\frac{w(E)}{w(B)}\right)^{q}
\le
[w]_{A_{p}(dx)}^q[v]_{A_{q}(w)}
\frac{vw(E)}{vw(B)},\quad \forall\,E\subset B.
\end{align}
Analogously, if $w\in RH_{p}(dx)$ and $v\in RH_{q}(w)$ $1< p,q\le\infty$, one has 
\begin{align}\label{pesosineq:RHq}
\frac{vw(E)}{vw(B)}
\leq
[v]_{RH_{q}(w)}\left(\frac{w(E)}{w(B)}\right)^{\frac{1}{q'}}\leq [v]_{RH_{q}(w)}[w]_{RH_{p}(dx)}^{\frac1{q'}}\left(\frac{|E|}{|B|}\right)^{\frac{1}{p'\,q'}},\quad \forall\,E\subset B.
\end{align}

As before, for a weight $v\in A_{\infty}(w)$ (recall that $w\in A_{\infty}(dx)$ is fixed) we set
\begin{align}\label{eq:defi:rvw}
\mathfrak{r}_v(w):=\inf\big\{r:\ v\in A_{r}(w)\big\}\quad \textrm{and}\quad
\mathfrak{s}_v(w):=\inf\big\{s:\ v\in RH_{s'}(w)\big\}.
\end{align}
For $0\le p_0<q_0\le \infty$ and $v\in A_{\infty}(w)$, by a similar argument to that of \cite[Lemma 4.1]{AMI07}, we have
\begin{align}\label{intervalrsw}
\mathcal{W}_v^w(p_0,q_0):=\left\{p\in(p_0,q_0):\ v\in A_{\frac{p}{p_0}}(w)\cap RH_{\left(\frac{q_0}{p}\right)'}(w)\right\}
=
\left(p_0\mathfrak{r}_v(w),\frac{q_0}{\mathfrak{s}_v(w)}\right).
\end{align}
If $p_0=0$ and $q_0<\infty$, as before, it is understood that the only condition that stays is $v\in RH_{\left(\frac{q_0}{p}\right)'}(w)$. Analogously, if $0<p_0$ and $q_0=\infty$ the only assumption is $v\in A_{\frac{p}{p_0}}(w)$. Finally $\mathcal{W}_v^w(0,\infty)=(0,\infty)$.

\begin{remark}\label{remark:product-weight}
The proof of our main result will use the Calder\'on-Zygmund decomposition from Lemma \ref{lem:CZweighted} with respect to the underlying measure $v(x)\,dw(x)= v(x)\,w(x)\,dx$ where $w\in A_{\infty}(dx)$ and $v\in A_\infty(w)$.  In that scenario it was shown in  \cite[Remark 2.15]{PAII18} that $wv\in A_{\infty}(dx)$ and moreover $r_{vw}\leq r_w\mathfrak{r}_v(w)$. The converse inequality is false in general: let $w(x):=|x|^{n}$ and $v:=w^{-1}$, then one can easily see that $r_w\mathfrak{r}_v(w)=r_w\,s_w=2$ and $r_{vw}=1$. 
\end{remark}

We state some lemma which will be useful in the sequel.
\begin{lemma}\label{ARHsinpesoconpeso}
Given $0<p\leq q<\infty$, let $B\subset\R^n$ be a ball and let $j\geq 1$, the following holds:
 \begin{list}{$(\theenumi)$}{\usecounter{enumi}\leftmargin=1cm \labelwidth=1cm\itemsep=0.2cm\topsep=.0 cm \renewcommand{\theenumi}{\alph{enumi}}}
\item If $v\in A_{\frac{q}{p}}(w)$ then
\[
\left(\dashint_{C_j(B)}|f(x)|^{p}dw(x)\right)^{\frac{1}{p}}
\lesssim
\left(\dashint_{C_j(B)}|f(x)|^{q}d(vw)(x)\right)^{\frac{1}{q}}.
\]

 \item If $v\in RH_{\left(\frac{q}{p}\right)'}(w)$ then
\[
\left(\dashint_{C_j(B)}|f(x)|^{p}d(vw)(x)\right)^{\frac{1}{p}}
\lesssim
\left(\dashint_{C_j(B)}|f(x)|^{q}dw(x)\right)^{\frac{1}{q}}.
\]
\end{list}
\end{lemma}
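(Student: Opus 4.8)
The plan is to derive both inequalities from a single use of H\"older's inequality followed by the defining inequality of the relevant weight class, applied on the enlarged ball $2^{j+1}B$. Throughout write $t:=q/p\ge 1$, so that the hypothesis in (a) reads $v\in A_t(w)$ and the one in (b) reads $v\in RH_{t'}(w)$; also recall that, by our conventions, $\dashint_{C_j(B)}g\,d\mu=\mu(2^{j+1}B)^{-1}\int_{C_j(B)}g\,d\mu$ for $\mu\in\{w,vw\}$, so the whole point of the bookkeeping below is that the normalizing factors $w(2^{j+1}B)$ and $vw(2^{j+1}B)$ cancel out.

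\emph{Proof of (a).} Write $|f|^{p}=\big(|f|^{p}v^{1/t}\big)\,v^{-1/t}$ and apply H\"older on $C_j(B)$ with exponents $t,t'$. Using $pt=q$ and the identity $-t'/t=1-t'$ this gives
\[
\int_{C_j(B)}|f|^{p}\,dw\le\Big(\int_{C_j(B)}|f|^{q}\,d(vw)\Big)^{1/t}\Big(\int_{C_j(B)}v^{1-t'}\,dw\Big)^{1/t'}.
\]
Enlarge the last integral from $C_j(B)$ to $2^{j+1}B$ (legitimate since $v^{1-t'}\ge 0$) and apply the $A_t(w)$ condition \eqref{apclassv} on the ball $2^{j+1}B$ in the form $\dashint_{2^{j+1}B}v^{1-t'}\,dw\le[v]_{A_t(w)}^{1/(t-1)}\big(\dashint_{2^{j+1}B}v\,dw\big)^{-1/(t-1)}$; keeping track of the exponents via $1/t'=(t-1)/t$ one obtains $\big(\int_{C_j(B)}v^{1-t'}\,dw\big)^{1/t'}\le[v]_{A_t(w)}^{1/t}\,w(2^{j+1}B)^{1/t'}\big(w(2^{j+1}B)/vw(2^{j+1}B)\big)^{1/t}$. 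Inserting this bound, expanding $\int_{C_j(B)}|f|^q\,d(vw)=vw(2^{j+1}B)\dashint_{C_j(B)}|f|^q\,d(vw)$, the powers of $w(2^{j+1}B)$ and $vw(2^{j+1}B)$ telescope; dividing by $w(2^{j+1}B)$ and raising to the power $1/p$ yields (a) with implicit constant $[v]_{A_{q/p}(w)}^{1/q}$.

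\emph{Proof of (b).} Apply H\"older on $C_j(B)$ with exponents $t,t'$ directly to $\int_{C_j(B)}|f|^{p}v\,dw$, again using $pt=q$, to get
\[
\int_{C_j(B)}|f|^{p}\,d(vw)\le\Big(\int_{C_j(B)}|f|^{q}\,dw\Big)^{1/t}\Big(\int_{C_j(B)}v^{t'}\,dw\Big)^{1/t'}.
\]
Enlarge the last integral to $2^{j+1}B$ and invoke $v\in RH_{t'}(w)$ there, namely $\big(\dashint_{2^{j+1}B}v^{t'}\,dw\big)^{1/t'}\le[v]_{RH_{t'}(w)}\dashint_{2^{j+1}B}v\,dw=[v]_{RH_{t'}(w)}\,vw(2^{j+1}B)/w(2^{j+1}B)$. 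As before the powers of $w(2^{j+1}B)$ cancel (here using $1/t+1/t'=1$); dividing by $vw(2^{j+1}B)$ and raising to the power $1/p$ gives (b) with implicit constant $[v]_{RH_{(q/p)'}(w)}^{1/p}$.

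The endpoint case $p=q$ (so $t=1$, $t'=\infty$) is handled separately since then the H\"older step is vacuous: in (a) one uses $\esssup_{2^{j+1}B}v^{-1}\le[v]_{A_1(w)}\big(\dashint_{2^{j+1}B}v\,dw\big)^{-1}$ and in (b) one uses $\esssup_{2^{j+1}B}v\le[v]_{RH_\infty(w)}\dashint_{2^{j+1}B}v\,dw$, with the same cancellation of the normalizing factors. No genuine difficulty arises in this lemma; the only point deserving attention is the exponent bookkeeping together with the harmless enlargement from $C_j(B)$ to $2^{j+1}B$, which is exactly what makes the averaging constants in the definition of $\dashint_{C_j(B)}$ disappear.
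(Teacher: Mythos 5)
Your proof is correct and takes essentially the same approach as the paper's: apply H\"older's inequality on $C_j(B)$ with exponents $q/p$ and $(q/p)'$, then invoke the defining $A_{q/p}(w)$ (resp.\ $RH_{(q/p)'}(w)$) inequality on the full ball $2^{j+1}B$. The only superficial differences are that you track the normalizing factors $w(2^{j+1}B)$, $vw(2^{j+1}B)$ explicitly rather than working with averaged integrals throughout, and that you spell out the $p=q$ endpoint (which the paper leaves to the reader); both are fine.
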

\begin{proof}
We prove the case $p<q$ (when $p=q$ the proof follows similarly and is left to the interested reader).
Assume first $v\in A_{\frac{q}{p}}(w)$. 
We obtain $(a)$ by applying H\"older's inequality and \eqref{apclassv}
\begin{align*}
\left(\dashint_{C_j(B)}|f|^{p}dw\right)^{\frac{1}{p}}
&=
\left(\dashint_{C_j(B)}|f|^{p}v^{\frac{p}{q}}v^{-\frac{p}{q}}dw\right)^{\frac{1}{p}}
\\
&\lesssim
\left(\dashint_{C_j(B)}|f|^{q}vdw\right)^{\frac{1}{q}}
\left(\dashint_{2^{j+1}B}v^{1-\left(\frac{q}{p}\right)'}dw\right)^{\frac{1}{q}\left(\frac{q}{p}-1\right)}
\\
&\lesssim
\left(\dashint_{C_j(B)}|f|^{q}vdw\right)^{\frac{1}{q}}
\left(\dashint_{2^{j+1}B}vdw\right)^{-\frac{1}{q}}
=
\left(\dashint_{C_j(B)}|f|^{q}d(vw)\right)^{\frac{1}{q}}.
\end{align*}
Next assume $v\in RH_{\left(\frac{q}{p}\right)'}(w)$, we obtain $(b)$ by applying H\"older's inequality and \eqref{rhclassv}
\begin{align*}
\left(\dashint_{C_j(B)}|f|^{p}d(vw)\right)^{\frac{1}{p}}
&=
\left(\frac{w(2^{j+1}B)}{vw(2^{j+1}B)}\right)^{\frac{1}{p}}\left(\dashint_{C_j(B)}|f|^{p}vdw\right)^{\frac{1}{p}}
\\
&\lesssim\left(\dashint_{2^{j+1}B}vdw\right)^{-\frac{1}{p}}\left(\dashint_{C_j(B)}|f|^{q}dw\right)^{\frac{1}{q}}
\left(\dashint_{2^{j+1}B}v^{\left(\frac{q}{p}\right)'}dw\right)^{\frac{1}{p}\frac{1}{\left(\frac{q}{p}\right)'}}
\\
&\lesssim
\left(\dashint_{C_j(B)}|f|^{q}dw\right)^{\frac{1}{q}}.
\end{align*}

\end{proof}

\medskip


\subsection{Square functions and non-tangential maximal functions}
\


In this section, we introduce several auxiliary operators (vertical and conical square functions, non-tangential maximal functions) which will be needed at various points along the proofs. 

Consider, for  $\kappa\geq 1$, the non-tangential maximal function $\mathcal{N}^{\kappa,w}$ defined as
\begin{align}\label{eq:NTkappa}
\mathcal{N}^{\kappa,w}F(x):=\sup_{t>0}\left(\int_{B(x,\kappa t)}|F(y,t)|^2\frac{dw(y)}{w(B(x,t))}\right)^{\frac{1}{2}}.
\end{align}
We write $\Ncal^w$ when $\kappa=1$.  We are particularly interested in the non-tangential maximal functions associated with the heat or Poisson semigroup.
For $f\in L^2(w)$, define
\begin{align}\label{nontangheat}
\Ncal_{\hh}^{\kappa,w}f(x):=\sup_{t>0}\left(\int_{B(x,\kappa t)}\left|e^{-t^2{L_w}}f(y)\right|^2\frac{dw(y)}{w(B(x,t))}\right)^{\frac{1}{2}};
\end{align}
\begin{align}\label{nontangpoisson}
\Ncal_{\pp}^{\kappa,w}f(x):=\sup_{t>0}\left(\int_{B(x,\kappa t)}\left|e^{-t\sqrt{L_w}}f(y)\right|^2\frac{dw(y)}{w(B(x,t))}\right)^{\frac{1}{2}}.
\end{align}
Again, when $\kappa=1$ we write  $\Ncal_{\pp}^{w}$ and $\Ncal_{\hh}^{w}$. We shall obtain weighted boundedness of these operators in Section 4.2. 

We also consider several invariants of the vertical square functions associated with the heat semigroup which were studied in \cite[Sections 5 and 10]{CMR15}:
\begin{align}\label{verticalheat-1}
\mathrm{g}_{\hh}^wf(x):=\left(\int_{0}^{\infty}\left|t^2 L_we^{-t^2{L_w}}f(x)\right|^2\frac{dt}{t}\right)^{\frac{1}{2}};
\end{align}
\begin{align}\label{verticalheat-2}
\mathrm{G}_{1/2,\hh}^wf(x):=\left(\int_{0}^{\infty} \abs{t \nabla(t^2L_w)^{\frac{1}{2}} e^{-t^2 L_w}f(x)}^2 \frac{dt}{t}\right)^{\frac{1}{2}};
\end{align}
\begin{align}\label{verticalheat}
\mathrm{G}_{\hh}^w f(x):=\left(\int_{0}^{\infty} \abs{t \nabla t^2L_w e^{-t^2 L_w}f(x)}^2 \frac{dt}{t}\right)^{\frac{1}{2}}.
\end{align}

Proceeding as in \cite[Propositions 5.1 and 10.1]{CMR15}, by a standard argument, we have the following lemma.
\begin{lemma}\label{lem:wVSF}
Let $L_w$ be a degenerate elliptic operator with $w\in A_2(dx)$ and let $v\in A_{\infty}(w)$.  Then
 \begin{list}{$(\theenumi)$}{\usecounter{enumi}\leftmargin=1cm \labelwidth=1cm\itemsep=0.2cm\topsep=.0cm \renewcommand{\theenumi}{\alph{enumi}}}
 
\item $\mathrm{g}_{\hh}^w$ is bounded on $L^p(vdw)$ for all $p\in \mathcal{W}_v^w\left(p_-(L_w),p_+(L_w)\right)$;

\item $\mathrm{G}_{1/2,\hh}^w$ and $\mathrm{G}_{\hh}^w$ are bounded on $L^p(vdw)$ for all $p\in \mathcal{W}_v^w\left(q_-(L_w),q_+(L_w)\right)$.
\end{list}
\end{lemma}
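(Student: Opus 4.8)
The plan is to derive both statements by combining two standard ingredients: the boundedness on $L^2(w)$ of the three square functions, and a weighted extrapolation principle for square functions built out of analytic families of operators associated with $L_w$ that enjoy $L^p(w)$--$L^q(w)$ off-diagonal decay, exactly in the spirit of \cite{Au07}, \cite{AMI07}, \cite{AMII07} and, in the present degenerate/$A_\infty(w)$ setting, of the arguments behind \cite[Propositions 5.1 and 10.1]{CMR15}.

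First I would settle the base case $p=2$, $v\equiv 1$. Since $L_w$ is associated with a bounded coercive sesquilinear form on $L^2(w)$, it admits a bounded holomorphic functional calculus there; in particular the quadratic estimate $\int_0^\infty\|t^2L_we^{-t^2L_w}f\|_{L^2(w)}^2\,\frac{dt}{t}\lesssim\|f\|_{L^2(w)}^2$ holds, which is precisely the $L^2(w)$-boundedness of $\mathrm{g}_{\hh}^w$. For $\mathrm{G}_{1/2,\hh}^w$ and $\mathrm{G}_{\hh}^w$ I would factor $t\nabla(t^2L_w)^{1/2}e^{-t^2L_w}=(\nabla L_w^{-1/2})\,\varphi(t^2L_w)$ and $t\nabla(t^2L_w)e^{-t^2L_w}=(\nabla L_w^{-1/2})\,\widetilde\varphi(t^2L_w)$, with $\varphi(z)=ze^{-z}$ and $\widetilde\varphi(z)=z^{3/2}e^{-z}$ admissible functions; combining the $L^2(w)$-boundedness of $\nabla L_w^{-1/2}$, i.e.\ the degenerate Kato estimate of \cite{CUR15}, with the quadratic estimates for $\varphi$ and $\widetilde\varphi$ gives the $L^2(w)$-boundedness of both operators. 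Note also that $2\in(p_-(L_w),p_+(L_w))$ and $2\in(q_-(L_w),q_+(L_w))$, so $2$ lies in the relevant interval in each case.

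Next I would assemble the off-diagonal input: by \cite{CMR15}, $\{e^{-tL_w}\}_{t>0}$ satisfies full $L^p(w)$--$L^q(w)$ off-diagonal estimates for $p_-(L_w)<p\le q<p_+(L_w)$, and $\{\sqrt t\,\nabla e^{-tL_w}\}_{t>0}$ for $q_-(L_w)<p\le q<q_+(L_w)$. By Cauchy's integral formula on small circles inside the sector of holomorphy, the enhanced families $\{t^2L_we^{-t^2L_w}\}$, $\{t\nabla(t^2L_w)^{1/2}e^{-t^2L_w}\}$ and $\{t\nabla(t^2L_w)e^{-t^2L_w}\}$ inherit, in the very same ranges, $L^p(w)$--$L^q(w)$ off-diagonal estimates with an extra polynomial decay of arbitrarily high order. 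With this I would invoke the weighted square-function principle of \cite{CMR15}: if a family $\{\Theta_t\}_{t>0}$ satisfies $L^{p_0}(w)$--$L^{q_0}(w)$ off-diagonal estimates and $f\mapsto(\int_0^\infty|\Theta_tf|^2\,\frac{dt}{t})^{1/2}$ is bounded on $L^r(w)$ for some $r\in(p_0,q_0)$, then it is bounded on $L^p(v\,dw)$ for every $p\in\mathcal{W}_v^w(p_0,q_0)$. Part $(a)$ then follows by taking $\Theta_t=t^2L_we^{-t^2L_w}$, $(p_0,q_0)=(p_-(L_w),p_+(L_w))$ and $r=2$; part $(b)$ follows by taking $\Theta_t=t\nabla(t^2L_w)^{1/2}e^{-t^2L_w}$, resp.\ $\Theta_t=t\nabla(t^2L_w)e^{-t^2L_w}$, $(p_0,q_0)=(q_-(L_w),q_+(L_w))$ and $r=2$.

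The one point that requires care — the main obstacle — is checking that the abstract extrapolation scheme transfers verbatim to the space of homogeneous type $(\R^n,dw,|\cdot|)$ with the additional weight $v\in A_\infty(w)$: one needs the Calderón--Zygmund decomposition and the good-$\lambda$/change-of-angle arguments relative to $dw$ (available since $w$ is doubling), and, crucially, that the off-diagonal decay of the enhanced families can be taken faster than any prescribed power of the homogeneous dimension, which is guaranteed here because $w\in A_2(dx)$ forces $w(B(x,\lambda r))\lesssim\lambda^{2n}w(B(x,r))$. All these ingredients were already established in \cite{CMR15}, so the proof reduces to transcribing those arguments with the present normalizations $t^2L_we^{-t^2L_w}$, $t\nabla(t^2L_w)^{1/2}e^{-t^2L_w}$ and $t\nabla(t^2L_w)e^{-t^2L_w}$ in place of the ones used there — the ``standard argument'' referred to in the statement.
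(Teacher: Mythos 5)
Your overall scheme is the standard one the paper alludes to (quadratic estimate on $L^2(w)$ + off-diagonal estimates on balls + the weighted extrapolation machinery of \cite{CMR15}), and the factorizations $t\nabla(t^2L_w)^{1/2}e^{-t^2L_w}=\nabla L_w^{-1/2}\,\varphi(t^2L_w)$, $t\nabla(t^2L_w)e^{-t^2L_w}=\nabla L_w^{-1/2}\,\widetilde\varphi(t^2L_w)$ that you use to dispatch the $L^2(w)$ case are correct. The ranges you obtain also match the statement.

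There is one step that, as written, does not go through: the claim that Cauchy's integral formula on small circles gives off-diagonal estimates for the family $\{t\nabla(t^2L_w)^{1/2}e^{-t^2L_w}\}=\{t^2\nabla L_w^{1/2}e^{-t^2L_w}\}$. The Cauchy/analyticity trick produces off-diagonal decay for $(zL_w)^m e^{-zL_w}$, and (by the same differentiation applied to $\sqrt z\,\nabla e^{-zL_w}$, or by composition with $(zL_w)^me^{-zL_w}$, cf.\ Lemma~\ref{lem:ODweighted} and Remark~\ref{rem:stable}) also for $\sqrt z\,\nabla (zL_w)^m e^{-zL_w}$ with \emph{integer} $m$. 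The operator $L_w^{1/2}$, however, is not a $z$-derivative of the heat semigroup, so it is not reached this way; one instead has to appeal to the integral representation \eqref{representationsquarerootofL} (write $tL_w^{1/2}e^{-t^2L_w}=\frac{2}{\sqrt\pi}\,t\int_0^\infty L_we^{-(s^2+t^2)L_w}\,ds$, split at $s=t$, and transfer the off-diagonal estimates of $uL_we^{-uL_w}$), which is a different argument from the one you invoke. Alternatively, and more economically, you can sidestep off-diagonal estimates for this family altogether: the factorization $\mathrm{G}_{1/2,\hh}^w f=\big(\int_0^\infty|\nabla L_w^{-1/2}\,(t^2L_we^{-t^2L_w}f)|^2\,\tfrac{dt}{t}\big)^{1/2}$ together with the $\ell^2$-valued extension of the boundedness of $\nabla L_w^{-1/2}$ on $L^p(v\,dw)$ for $p\in\mathcal{W}_v^w(q_-(L_w),q_+(L_w))$ (from \cite{CMR15}) reduces $\mathrm{G}_{1/2,\hh}^w$ to $\mathrm{g}_{\hh}^w$ directly in the weighted setting, and then part~(a) finishes it. Either repair closes the gap; as stated, though, "by Cauchy's integral formula" is not a valid justification for the half-power family.
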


Now we recall the following conical square functions studied by the authors in \cite{ChMPA16}.
\begin{align}\label{eq:conicalheat}
\Scal_{\hh}^{\alpha,w}f(x):=\left(\iint_{\Gamma^{\alpha}(x)}\left|t^2 L_we^{-t^2{L_w}}f(y)\right|^2\frac{dw(y)\,dt}{tw(B(y,t))}\right)^{\frac{1}{2}},
\end{align}
where $\Gamma^{\alpha}(x):=\{(y,t)\in \R^{n+1}_+:|x-y|<\alpha t\}$ is the cone with vertex at $x$ and aperture $\alpha>0$. When $\alpha=1$ we write $\Gamma(x)$ and $\Scal_{\hh}^{w}$. According to \cite[Proposition 3.1]{ChMPA16}, we have that $\Scal_{\hh}^{w}$ is bounded on $L^p(vdw)$ for all $p\in \mathcal{W}_v^w\left(p_-(L_w),\infty\right)$.

Finally, we introduce  the following ``inhomogeneous'' vertical and conical square functions:
\begin{align}\label{verticalnabaheat}
\widetilde\Grm_{\hh}^wf(x):=\left(\int_{0}^{\infty}\left|\nabla t^2L_we^{-t^2{L_w}}f(x)\right|^2\frac{dt}{t}\right)^{\frac{1}{2}};
\end{align}
\begin{align}\label{conicalheat}
\widetilde\Scal_{\hh}^{w}f(x):=\left(\iint_{\Gamma(x)}\abs{t^{-1}(t^2L_w)e^{-t^2{L_w}}f(y)}^2\frac{dw(y)\,dt}{tw(B(y,t))}\right)^{\frac{1}{2}}.
\end{align}
By non-homogeneity, we mean that the power of $t$ inside the square functions is not in accordance with the order of the operator $L_w$, we are modifying respectively $\Grm_{\hh}^w$ and $\Scal_{\hh}^w$ by removing one power of $t$. The analogues of the above two square functions in other settings turn to be very useful in the study of Riesz transform and Hardy space theory, see for instance  \cite{CD03,HMay09}. 
Sections \ref{section:nono-homo-vertical} and \ref{section:nono-homo-conical} below study the boundedness of $\widetilde\Grm_{\hh}^w$ and $\widetilde\Scal_{\hh}^{w}$ on weighted Sobolev spaces, which plays an essential role in the proof of our main results. 

We finish this subsection by recalling the results about the  reverse inequality of the Riesz transform associated with the operator $L_w$ proved in \cite{CMR15}. The Riesz transform $\nabla L_w^{-\frac{1}{2}}$ associated with the operator $L_w$ can be written as
\begin{align*}
\nabla L_w^{-\frac{1}{2}}=\frac{2}{\sqrt{\pi}}\int_0^{\infty}t\nabla e^{-t^2L_w}\frac{dt}{t},
\end{align*}
 consider also the following square root representation (see for instance \cite{ARR15,CUR15}):
\begin{align}\label{representationsquarerootofL}
\sqrt{L_w}=\frac{2}{\sqrt{\pi}}\int_0^{\infty}tL_w e^{-t^2L_w}\frac{dt}{t}.
\end{align}

\begin{proposition}[{\cite[Proposition 6.1]{CMR15}}]\label{prop:wRR}
Let  $\max\{r_w,(p_-(L_w))_{w,*}\}<p<p_+(L_w)$. Then for all $f\in \Scal$,
\begin{equation*}
\norm{\sqrt{L_w}f}_{L^p(w)} \lesssim \norm{\nabla f}_{L^p(w)}.
\end{equation*}
Furthermore, if $p\in \mathcal{W}_v^w\left(\max\{r_w,p_-(L_w)\},p_+(L_w)\right)$. Then for all $f\in \Scal$,
\begin{equation*}
\norm{\sqrt{L_w}f}_{L^p(vdw)} \lesssim \norm{\nabla f}_{L^p(vdw)}.
\end{equation*}
\end{proposition}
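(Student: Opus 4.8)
The plan is to deduce this reverse Riesz transform inequality from the resolution of the Kato problem for $L_w$ in $L^2(w)$ (due to \cite{CUR15}) together with the off-diagonal machinery and the extrapolation/interpolation theory for singular non-integral operators, exactly as in \cite{Au07, AMII07}. First I would recall that the Kato estimate $\|\sqrt{L_w}f\|_{L^2(w)}\simeq\|\nabla f\|_{L^2(w)}$ holds; the nontrivial direction we want is $\|\sqrt{L_w}f\|_{L^p(w)}\lesssim\|\nabla f\|_{L^p(w)}$ for $p$ away from $2$. Writing
\[
\sqrt{L_w}\,f=\frac{2}{\sqrt\pi}\int_0^\infty t L_w e^{-t^2 L_w}f\,\frac{dt}{t},
\]
and recalling that $L_w f=-w^{-1}\divv(wA\nabla f)$, one factors $t L_w e^{-t^2L_w}=(t\,w^{-1}\divv(wA\,\cdot\,))\circ(\nabla e^{-t^2L_w})$, i.e. $\sqrt{L_w}=\mathcal{R}\circ\nabla$ where $\mathcal{R}$ acts on vector fields via $\int_0^\infty t\,w^{-1}\divv(wA\,e^{-t^2L_w}\vec g)\,\frac{dt}{t}$ in a suitable sense. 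One then checks that $\mathcal{R}$ is an $L^2(w)$-bounded singular non-integral operator whose kernel enjoys the $L^p(w)-L^2(w)$ (or weighted $L^2$) off-diagonal estimates inherited from the heat semigroup $\{e^{-t^2L_w}\}$ and its gradient on the interval $(p_-(L_w),p_+(L_w))$ — these off-diagonal bounds are precisely the ones recorded in \cite{CMR15}. Applying the unweighted version of the extrapolation theorem for such operators (the analogue in the $(\R^n,w\,dx)$ space of homogeneous type of \cite[Theorem 1.1 or the good-$\lambda$/Calderón–Zygmund decomposition argument]{Au07}) downward from $p=2$ yields boundedness on $L^p(w)$ for $p$ in the stated range $\max\{r_w,(p_-(L_w))_{w,*}\}<p<p_+(L_w)$; the appearance of the Sobolev-type lower endpoint $(p_-(L_w))_{w,*}=\frac{nr_wp_-(L_w)}{nr_w+p_-(L_w)}$ comes from combining the $L^{q}(w)\to L^{2}(w)$ off-diagonal decay of $tL_we^{-t^2L_w}$ with a Sobolev/Poincaré inequality on $(\R^n,w\,dx)$ in order to absorb the extra derivative, exactly as the analogous endpoint $(p_-(L))_*$ arises in the non-degenerate theory of \cite{Au07}.

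For the weighted statement, once the $L^p(w)$ bound is available on the open interval $(\max\{r_w,p_-(L_w)\},p_+(L_w))$, I would invoke the weighted extrapolation theorem for singular non-integral operators from the $A_\infty(w)$ theory — this is the $(\R^n,w\,dx)$-analogue of \cite[Theorems 3.13 and 4.9]{AMI07} as adapted in \cite{CMR15, PAII18} — which upgrades unweighted $L^{p_0}(w)$–$L^{q_0}(w)$ off-diagonal bounds to boundedness on $L^p(v\,dw)$ for every $p\in\mathcal{W}_v^w(p_0,q_0)$ and every $v\in A_\infty(w)$, here with $p_0=\max\{r_w,p_-(L_w)\}$ and $q_0=p_+(L_w)$. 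Combined with $\sqrt{L_w}=\mathcal{R}\circ\nabla$ this gives $\|\sqrt{L_w}f\|_{L^p(vdw)}\lesssim\|\mathcal{R}\|_{L^p(vdw)\to L^p(vdw)}\,\|\nabla f\|_{L^p(vdw)}\lesssim\|\nabla f\|_{L^p(vdw)}$ for all $f\in\mathcal{S}$, since $\mathcal{S}$ is contained in the natural form domain and all the integral representations make sense there after the usual density/truncation argument.

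The main obstacle I expect is not conceptual but bookkeeping: one must verify carefully that the off-diagonal (Gaffney) estimates for $tL_we^{-t^2L_w}$ and $t\nabla e^{-t^2L_w}$ on $L^p(w)$ hold with the right decay and on the right intervals $(p_-(L_w),p_+(L_w))$ and $(q_-(L_w),q_+(L_w))$ in the degenerate, space-of-homogeneous-type setting, and that the Sobolev embedding used to handle the extra derivative is valid on $(\R^n,w\,dx)$ with the exponent $(p)_{w,*}$ — this is where the hypothesis $w\in A_2(dx)$ (hence $A_{r_w}(dx)$) and the resulting $(1,p_-(L_w))$-Poincaré inequality for $dw$ enter. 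All of this is precisely the content of \cite[Sections 5, 6]{CMR15}, so in the paper the proof is simply a citation; the sketch above is how one would reconstruct it from the building blocks stated earlier in this excerpt.
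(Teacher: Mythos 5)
The paper itself does not prove this proposition; it is imported verbatim as \cite[Proposition 6.1]{CMR15}. So there is no internal argument to compare against, only the blueprint in that reference, which runs via a Calder\'on--Zygmund decomposition at the gradient level (the weighted analogue of \cite[Theorem~5.2]{Au07}) for the low exponents, the $L^2(w)$ Kato estimate of \cite{CUR15} at $p=2$, and functional-calculus / semigroup $L^p(w)$ bounds for $p>2$, followed by the weighted extrapolation machinery to pass to $L^p(v\,dw)$.

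Your sketch contains a genuine gap in the step you make central. You write
\[
t L_w e^{-t^2L_w}=\bigl(t\,w^{-1}\divv(wA\,\cdot)\bigr)\circ\bigl(\nabla e^{-t^2L_w}\bigr)
\]
(which is correct) and then conclude $\sqrt{L_w}=\mathcal{R}\circ\nabla$ with $\mathcal{R}\vec g=\int_0^\infty t\,w^{-1}\divv\bigl(wA\,e^{-t^2L_w}\vec g\bigr)\frac{dt}{t}$. This does not follow: $\nabla$ and $e^{-t^2L_w}$ do not commute, so $\nabla e^{-t^2L_w}f$ is not $e^{-t^2L_w}(\nabla f)$, and your $\mathcal{R}$ applied to $\nabla f$ is not $\sqrt{L_w}f$. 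The factorization that does hold comes from $L_w e^{-t^2L_w}=e^{-t^2L_w}L_w$, giving $\sqrt{L_w}f=-L_w^{-1/2}w^{-1}\divv(wA\nabla f)$, i.e.\ the semigroup acting on the \emph{scalar} $w^{-1}\divv(wA\vec g)$, not on the vector field. That is a different operator, and its boundedness is exactly dual to the Riesz transform for $L_w^*$ on $L^{p'}(w)$ --- which produces a range governed by $q_\pm(L_w^*)$, not the Sobolev-shifted endpoint $(p_-(L_w))_{w,*}$ appearing in the statement. The fact that $\nabla$ does not pass through the semigroup is precisely the obstruction the reverse inequality has to overcome; the factorization as you set it up hides this rather than resolving it.

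The second route you mention in passing --- the Calder\'on--Zygmund decomposition on Sobolev spaces (Lemma~\ref{lem:CZweighted} in this paper) combined with the weighted Poincar\'e--Sobolev inequality and the off-diagonal estimates of $tL_we^{-t^2L_w}$ --- is in fact the argument that produces the lower endpoint $(p_-(L_w))_{w,*}$, and is what should be the backbone of the proof for $p<2$. As written, you attribute that endpoint to ``absorbing the extra derivative'' inside the factorization framework, which does not hold together; you would need to make the CZ/weak-type scheme primary and drop the factorization, or else correct the factorization and accept that it only yields the Riesz-transform dual range. The weighted-to-$L^p(v\,dw)$ upgrade via $A_\infty(w)$ extrapolation is the right idea and matches how \cite{CMR15} and the present paper routinely transfer from $L^p(w)$ to $L^p(v\,dw)$.
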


\bigskip

\subsection{Off-diagonal estimates}\label{section:off}
\

\begin{definition}
Let $\{T_t\}_{t>0}$ be a family of sublinear operators and let $1< p< \infty$. Given a doubling measure $\mu$ we say that $\{T_t\}_{t>0}$ satisfies 
$L^p(\mu)-L^p(\mu)$ full off-diagonal estimates, denoted by $T_t\in\mathcal F(L^p(\mu)-L^p(\mu))$, if there exist constants $C,c>0$ such that 
for all closed sets $E$ and $F$, all $f\in L^p(\R^n)$, and all $t>0$ we have 
\begin{equation}\label{off:full}
\left(\int_F \abs{T_t(\chi_E f)}^p d\mu\right)^{\frac{1}{p}} 
\leq C\, 
 e^{-\frac{cd(E,F)^2}{t}} \left(\int_E |f|^p d\mu\right)^{\frac{1}{p}},
\end{equation}
where $d(E,F)=\inf\{|x-y|:x\in E, y\in F\}$.
\end{definition}

Set $\Upsilon(s)=\max\{s,s^{-1}\}$ for $s>0$. Recall that, given a ball $B$, we use the notation $C_j(B)=2^{j+1}B\backslash 2^jB$ for $j\geq 2$, and for any doubling measure $\mu$
$$
\fint_B hd\mu=\frac{1}{\mu(B)}\int_B hd\mu,\quad \fint_{C_j(B)} hd\mu=\frac{1}{\mu(2^{j+1}(B))} \int_{C_j(B)} hd\mu.
$$

\begin{definition}
Given $1\leq p\leq q\leq \infty$ and any doubling measure $\mu$, we say that a family of sublinear operators $\{T_t\}_{t>0}$ satisfies 
$L^p(\mu)-L^q(\mu)$ off-diagonal estimates on balls, denoted by $T_t\in  \mathcal O(L^p(\mu)-L^q(\mu))$, if there exist $\theta_1,\theta_2>0$ and $c>0$ such that for all $t>0$ and for all ball $B$ with radius $r_B$, 
\begin{equation}\label{off:BtoB}
\left(\fint_B \abs{T_t(f \chi_B)}^q d\mu\right)^{\frac{1}{q}} \lesssim \Upsilon\left(\frac{r_B}{\sqrt t}\right)^{\theta_2} \left(\fint_B |f|^p d\mu\right)^{\frac{1}{p}},
\end{equation}
and for $j\geq 2$, 
\begin{equation}\label{off:CtoB}
\left(\fint_B \abs{T_t(f \chi_{C_j(B)})}^q d\mu\right)^{\frac{1}{q}} 
\lesssim 2^{j\theta_1} \Upsilon\left(\frac{2^j r_B}{\sqrt t}\right)^{\theta_2} e^{-\frac{c4^j r_B^2}{t}} \Bigg(\fint_{C_j(B)} |f|^p d\mu\Bigg)^{\frac{1}{p}},
\end{equation}
and
\begin{equation}\label{off:BtoC}
\left(\fint_{C_j(B)} \abs{T_t(f \chi_B)}^q d\mu\right)^{\frac{1}{q}} 
\lesssim 2^{j\theta_1} \Upsilon\left(\frac{2^j r_B}{\sqrt t}\right)^{\theta_2} e^{-\frac{c4^j r_B^2}{t}} \left(\fint_B |f|^p d\mu\right)^{\frac{1}{p}}.
\end{equation}
\end{definition}

Let us recall some results about off-diagonal estimates on balls for the heat semigroup associated with $L_w$.

\begin{lemma}[{\cite[Section 2]{AMII07},\,\cite[Sections 3 and 7]{CMR15}}]\label{lem:ODweighted}
Let $L_w$ be a degenerate elliptic operator with $w\in A_2(dx)$. 
\begin{list}{$(\theenumi)$}{\usecounter{enumi}\leftmargin=1cm \labelwidth=1cm\itemsep=0.2cm\topsep=.2cm \renewcommand{\theenumi}{\alph{enumi}}}

\item If $p_-(L_w)<p\leq q<p_+(L_w)$, then $e^{-t L_w}$ and $(tL_w)^m e^{-t L_w}$, for every $m\in\N$, belong to $\mathcal O(L^p(w)-L^q(w))$. 

\item Let $p_-(L_w)<p\le q<p_+(L_w)$. If $v\in A_{p/p_-(L_w)}(w) \cap RH_{(p_+(L_w)/q)'}(w)$, then $e^{-t L_w}$ and $(tL_w)^m e^{-t L_w}$, for every $m\in\N$, belong to $\mathcal O(L^p(vdw)-L^q(vdw))$.

\item There exists an interval $\mathcal K(L_w)$ such that if $p,q \in \mathcal K(L_w)$, with $p\leq q$, then $\sqrt t\nabla e^{-t L_w} \in \mathcal O(L^p(w)-L^q(w))$. Moreover, denoting by $q_-(L_w)$ and $q_+(L_w)$ the left and right endpoints of $\mathcal K(L_w)$, then $q_-(L_w)=p_-(L_w)$, $2<q_+(L_w)\leq(q_+(L_w))^*_w\leq p_+(L_w)$. 

\item Let $q_-(L_w)<p\le q<q_+(L_w)$. If $v\in A_{p/q_-(L_w)}(w) \cap RH_{(q_+(L_w)/q)'}(w)$, then $\sqrt t\nabla e^{-t L_w}\in \mathcal O(L^p(vdw)-L^q(vdw))$.

\item If $p=q$  and $\mu$ is a doubling measure then $\mathcal{F}(L^p(\mu)-L^p(\mu))$ and $\mathcal{O}(L^p(\mu)-L^p(\mu))$ are equivalent.
\end{list}
\end{lemma}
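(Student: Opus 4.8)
The plan is to derive all five items from two ingredients: $L^2(w)-L^2(w)$ Davies--Gaffney estimates for the families involved, and the definitions of $p_\pm(L_w)$, $q_\pm(L_w)$ as the interiors of the intervals of uniform $L^p(w)$-boundedness of $\{e^{-tL_w}\}_{t>0}$ and $\{\sqrt t\,\nabla e^{-tL_w}\}_{t>0}$; these then get upgraded to off-diagonal estimates on balls by interpolation and a local Sobolev gain. Since the lemma is essentially a compilation of \cite[Section 2]{AMII07} and \cite[Sections 3 and 7]{CMR15}, I will only lay out the architecture rather than reproduce the calculations.

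First I would establish the $L^2(w)-L^2(w)$ full off-diagonal (Gaffney) estimates. For $e^{-tL_w}$ and $(tL_w)^m e^{-tL_w}$, $m\in\N$, this is the classical Davies exponential-perturbation argument applied to the sesquilinear form $\mathfrak a(u,v)=\int_{\R^n}A\nabla u\cdot\overline{\nabla v}\,w\,dx$: one conjugates the semigroup by $e^{\rho\varphi}$ with $\varphi$ Lipschitz and $|\nabla\varphi|\le1$, uses the accretivity \eqref{eq:elliptic-intro:w} and the $L^\infty$ bound on $A$ to obtain $\|e^{\rho\varphi}e^{-tL_w}e^{-\rho\varphi}\|_{L^2(w)\to L^2(w)}\lesssim e^{c\rho^2 t}$, and optimizes $\rho\sim d(E,F)/t$; the $(tL_w)^m$ factors come from analyticity of the semigroup via Cauchy's formula in $t$. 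For $\sqrt t\,\nabla e^{-tL_w}$ one combines the Caccioppoli identity $\Re\mathfrak a(u,u)\ge\lambda\|\nabla u\|_{L^2(w\,dx)}^2$, applied to $u=e^{-tL_w/2}f$ localised near $F$, with the $L^2(w)$-Gaffney of $e^{-tL_w/2}$.

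Next I would upgrade from $L^2$ to the full intervals. On $(p_-(L_w),p_+(L_w))$, resp.\ on the interior $(q_-(L_w),q_+(L_w))$ of $\mathcal K(L_w)$, the relevant family is uniformly $L^p(w)$-bounded by definition, and interpolating each such bound against the $L^2(w)$-Gaffney estimate yields $\mathcal F(L^r(w)-L^r(w))$ for all $r$ in the interval, with a possibly smaller decay constant. The improvement from $L^p(w)$ to $L^q(w)$ with $p\le q$ on balls --- which produces the factor $\Upsilon(r_B/\sqrt t)^{\theta_2}$ in \eqref{off:BtoB}--\eqref{off:BtoC} --- is where $w\in A_2(dx)$ enters: the Fabes--Kenig--Serapioni weighted Sobolev inequality \cite{FKS82}, iterated against the $L^r(w)$ off-diagonal bounds just obtained (a Nash--Moser bootstrap at scale $\sqrt t$, then a sum over a Whitney cover of $B$ by balls of radius $\sqrt t$ when $\sqrt t<r_B$), gives the claimed $\mathcal O(L^p(w)-L^q(w))$ estimates with $\theta_2$ the weighted dimension times $1/p-1/q$. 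This settles $(a)$ and the existence of $\mathcal K(L_w)$ in $(c)$. For the endpoints in $(c)$, $q_-(L_w)=p_-(L_w)$ follows from the factorisation $\nabla e^{-t^2L_w}=(\nabla L_w^{-1/2})\,t^{-1}(t^2L_w)^{1/2}e^{-t^2L_w}$ together with boundedness of the Riesz transform $\nabla L_w^{-1/2}$ on $L^p(w)$ for $p_-(L_w)<p\le2$ (the forward companion of Proposition \ref{prop:wRR}, resting on the Kato estimate for $L_w$), while $2<q_+(L_w)\le(q_+(L_w))^*_w\le p_+(L_w)$ comes from the Meyers--Gehring self-improvement of the Caccioppoli inequality in the $A_2$-weighted geometry together with the weighted Sobolev embedding, which carries the improved gradient exponent back inside the semigroup range.

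Finally, the weighted statements $(b)$ and $(d)$ would follow from $(a)$ and $(c)$ by the off-diagonal extrapolation underlying the sets $\mathcal W_v^w$: choose $p_-(L_w)<p_0<p\le q<q_0<p_+(L_w)$ (resp.\ with $q_\pm$ in place of $p_\pm$) so that the hypothesis $v\in A_{p/p_-(L_w)}(w)\cap RH_{(p_+(L_w)/q)'}(w)$ gives $v\in A_{p/p_0}(w)\cap RH_{(q_0/q)'}(w)$, and transfer $\mathcal O(L^{p_0}(w)-L^{q_0}(w))$ to $\mathcal O(L^p(vdw)-L^q(vdw))$ on each annulus $C_j(B)$ by Hölder against these classes --- that is, by Lemma \ref{ARHsinpesoconpeso}, the doubling of $vw$, and \eqref{pesosineq:Ap}--\eqref{pesosineq:RHq} --- the resulting polynomial losses being absorbed into the exponential factor in \eqref{off:CtoB}--\eqref{off:BtoC}. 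Statement $(e)$ is routine: $\mathcal F(L^p(\mu)-L^p(\mu))\Rightarrow\mathcal O(L^p(\mu)-L^p(\mu))$ by testing \eqref{off:full} on the various pairs among $B$ and $C_j(B)$ and normalising by $\mu$ via doubling (one may take $\theta_1=\theta_2=0$), while $\mathcal O\Rightarrow\mathcal F$ follows by covering two closed sets at positive distance by balls of radius $\sim d(E,F)$ and summing over dyadic annuli. The hard part throughout is the $L^p\to L^q$ smoothing and the location of the endpoints of $\mathcal K(L_w)$, both resting essentially on the weighted Sobolev inequality for $A_2$ weights and the Meyers--Gehring self-improvement in the degenerate setting; everything else is bookkeeping with the $\mathcal W_v^w$ formalism.
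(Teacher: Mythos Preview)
The paper does not supply its own proof of this lemma: it is stated as a compilation of known results from \cite[Section~2]{AMII07} and \cite[Sections~3 and~7]{CMR15}, and the text moves directly to Remark~\ref{rem:stable}. Your outline is a faithful summary of the architecture of those references --- Davies--Gaffney at $L^2(w)$, interpolation against uniform $L^p(w)$-boundedness, the weighted Sobolev/Nash iteration for the $L^p\to L^q$ gain, and the $A_p(w)\cap RH_{s}(w)$ transfer for the $vdw$ versions --- so there is no discrepancy to report.

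One small caution on $(c)$: your Riesz-transform factorisation $\sqrt t\,\nabla e^{-tL_w}=\nabla L_w^{-1/2}\cdot (tL_w)^{1/2}e^{-tL_w}$ together with the $L^p(w)$-boundedness of $\nabla L_w^{-1/2}$ for $p_-(L_w)<p\le 2$ yields $q_-(L_w)\le p_-(L_w)$, but the reverse inequality is a separate (easier) step that you do not mention. In \cite{CMR15} this direction comes from combining off-diagonal estimates for $\sqrt t\,\nabla e^{-tL_w}$ with the weighted Poincar\'e inequality to recover off-diagonal bounds for the semigroup itself. It is worth flagging since otherwise the sketch reads as if the factorisation alone gives equality.
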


\begin{remark}\label{rem:stable}
Since off-diagonal estimates on balls are stable under composition (see \cite[Theorem 2.3]{AMII07}), it follows from 
$ (b)$ and $ (d)$ that $\sqrt{t}\nabla tL_w e^{-t L_w}\in \mathcal O(L^p(vdw)-L^q(vdw))$ for $q_-(L_w)<p\le q<q_+(L_w)$ and $v\in A_{p/q_-(L_w)}(w) \cap RH_{(q_+(L_w)/q)'}(w)$. 
\end{remark}
Moreover, in the following result, which is a weighted version of \cite[(5.12)]{MaPAII17} (see also \cite{HMay09}), we show off-diagonal estimates for the family $\{\mathcal{T}_{t,s}\}_{s,t>0}:=\{(e^{-t^2L_w}-e^{-(t^2+s^2)L_w})^M\}_{s,t>0}$, for all $M\in \N$.

\begin{proposition}\label{prop:lebesgueoff-dBQ}
Let $p\in (p_-(L_w),p_+(L_w))$ and let $0<t,s<\infty$. Then, for all sets $E_1,E_2\subset \R^n$ and $f\in L^p(w)$ such that $\supp (f)\subset  E_1$,  we have that $\{\mathcal{T}_{t,s}\}_{s,t>0}$ satisfies the following $L^p(w)-L^p(w)$ off-diagonal estimates: 
\begin{align}\label{AB}
\left\|\chi_{E_2}\mathcal{T}_{t,s}f\right\|_{L^{p}(w)}
\lesssim
\left(\frac{s^2}{t^2}\right)^M
e^{-c\frac{d({E}_1,{E}_2)^2}{t^2+s^2}}\|f\chi_{E_1}\|_{L^{p}(w)}.
\end{align}
In particular, there holds
\begin{equation}\label{boundednesstsr}
\|\mathcal{T}_{t,s}f\|_{L^{p}(w)}\lesssim \left(\frac{s^2}{t^2}\right)^M\|f\|_{L^{p}(w)}.
\end{equation}
\end{proposition}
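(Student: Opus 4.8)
The plan is to reduce the estimate for the difference semigroup to the standard $L^p(w)-L^p(w)$ off-diagonal (Gaffney) estimates for $\{(uL_w)^m e^{-uL_w}\}_{u>0}$ provided by Lemma \ref{lem:ODweighted}\,$(a)$ (combined with part $(e)$, which upgrades off-diagonal estimates on balls to full off-diagonal estimates when $p=q$). First I would record the elementary identity
\[
e^{-t^2 L_w}-e^{-(t^2+s^2)L_w}
=
\int_0^{s^2} L_w e^{-(t^2+\sigma)L_w}\,d\sigma,
\]
so that, after changing variables $\sigma = s^2 \tau$, one power of the difference contributes a factor controlled by $s^2$ times an average over $\tau\in(0,1)$ of the operators $L_w e^{-(t^2+s^2\tau)L_w}$. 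Iterating $M$ times gives $\mathcal{T}_{t,s}$ as an $M$-fold integral over $(0,1)^M$ of a composition of $M$ such operators, each evaluated at a time $t^2 + s^2(\tau_1+\cdots)$ lying between $t^2$ and $t^2+Ms^2$. Writing $u L_w e^{-uL_w}$ with $u$ of size comparable to $t^2+s^2$ isolates the gain: each factor is $u^{-1}(uL_w)e^{-uL_w}$ with $u^{-1}\lesssim t^{-2}$, which after collecting the $M$ factors and the outer $s^{2M}$ yields exactly the prefactor $(s^2/t^2)^M$.

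Next I would handle the spatial decay. Each operator $(uL_w)e^{-uL_w}$ satisfies, by Lemma \ref{lem:ODweighted}\,$(a)$ and $(e)$, the full off-diagonal bound
\[
\big\|\chi_{F}\,(uL_w)e^{-uL_w}(\chi_{E}g)\big\|_{L^p(w)}
\lesssim
e^{-c\,d(E,F)^2/u}\,\|g\chi_E\|_{L^p(w)}
\]
with $u \simeq t^2+s^2$ on the relevant range. To propagate the decay through the composition of $M$ such operators one inserts intermediate annular decompositions: split $\R^n$ into the sets $R_k$ defined by dyadic-type distances to $E_1$ (or use the standard telescoping argument, e.g.\ as in \cite[Theorem 2.3]{AMII07}), apply the single-operator estimate on each link of the chain, and sum the resulting geometric-type series in the $R_k$'s. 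The elementary inequality $\sum_{k} e^{-c\,a_k^2/u}\lesssim e^{-c'\,d(E_1,E_2)^2/u}$ (valid because the $a_k$ telescope to $d(E_1,E_2)$), together with $u\lesssim t^2+s^2$, produces the claimed factor $e^{-c\,d(E_1,E_2)^2/(t^2+s^2)}$. Integrating the uniform bound over $(\tau_1,\dots,\tau_M)\in(0,1)^M$ costs nothing, and \eqref{AB} follows; taking $E_1=E_2=\R^n$ (so the exponential is $1$) and noting $\supp f$ arbitrary gives \eqref{boundednesstsr}.

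The main obstacle I anticipate is purely bookkeeping: controlling the composition of $M$ off-diagonal operators while simultaneously tracking (i) that the time parameter stays comparable to $t^2+s^2$ uniformly in the $\tau_j$'s, so the prefactor is genuinely $(s^2/t^2)^M$ and not something weaker, and (ii) that the spatial exponentials combine with the correct constant $d(E_1,E_2)^2/(t^2+s^2)$ rather than $d(E_1,E_2)^2/t^2$ (which would be false when $s\gg t$). Both are resolved by the telescoping/annular-decomposition lemma for off-diagonal estimates, which is standard in this circle of ideas; the case $M=1$ already contains all the analytic content, and the general $M$ is obtained by the stability of off-diagonal estimates under composition together with induction. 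I would also remark that since $p=q$ here, one does not need the more delicate $L^p-L^q$ machinery, and the doubling of $w$ (guaranteed by $w\in A_2(dx)$) is all that is used.
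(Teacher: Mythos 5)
Your starting identity and the mechanism producing the $(s^2/t^2)^M$ gain coincide with the paper's, but the way you propose to handle the spatial decay is an unnecessary detour. You regard the $M$-fold product
\[
\Big(\int_0^{s^2} L_w e^{-(t^2+\sigma)L_w}\,d\sigma\Big)^{M}
\]
as a genuine composition of $M$ separate off-diagonal operators $L_w e^{-(t^2+\sigma_j)L_w}$ and propose to propagate Gaffney decay through the chain via the telescoping/composition lemma of \cite[Theorem~2.3]{AMII07}. That route is viable, but it is extra machinery which also degrades the Gaffney constant at every link. The paper instead uses the semigroup property: since all $M$ factors are exponentials of the \emph{same} generator $L_w$, the composition collapses to a single operator,
\[
\prod_{j=1}^M L_w e^{-(t^2+\sigma_j)L_w}
=
\frac{1}{\big(Mt^2+\sum_j\sigma_j\big)^{M}}
\Big(\big(Mt^2+{\textstyle\sum_j}\sigma_j\big) L_w\Big)^{M} e^{-(Mt^2+\sum_j\sigma_j)L_w},
\]
and the full $L^p(w)$--$L^p(w)$ off-diagonal estimate for $(\tau L_w)^M e^{-\tau L_w}$ from Lemma~\ref{lem:ODweighted} is applied exactly once, with $\tau=Mt^2+\sum_j\sigma_j\in[Mt^2,\,M(t^2+s^2)]$. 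Both of the ``bookkeeping obstacles'' you flag then vanish automatically: $\tau\geq Mt^2$ gives the $t^{-2M}$, the volume $(s^2)^M$ of the integration cube gives the $s^{2M}$, and $\tau\leq M(t^2+s^2)$ puts the exponential directly on the scale $t^2+s^2$ (so the worry about getting the false decay $e^{-cd^2/t^2}$ never arises). Your plan would reach the same conclusion, but the composition-of-off-diagonal-estimates lemma is superfluous here; collapse the product by the semigroup law first and then invoke a single Gaffney bound, as in the paper.
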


\begin{proof}
Note that  we have
\begin{align*}
&\left\|\chi_{E_2}\mathcal{T}_{t,s}f\right\|_{L^{p}(w)}
\\
&=
\Bigg\|\chi_{E_2}\left(e^{-t^2L_w}-e^{-(t^2+s^2)L_w}\right)^M f\Bigg\|_{L^{p}(w)}
=
\Bigg\|\chi_{E_2}\Bigg(\int_{0}^{s^2}\partial_r e^{-(r+t^2)L_w}\,dr\Bigg)^M f\,  \Bigg\|_{L^{p}(w)}
\\ \nonumber
&\leq
\int_{0}^{s^2}\!\!\!\dots \int_{0}^{s^2}\!\Big\|\chi_{E_2}
\Big(\Big(\sum_{i=1}^M r_i+M t^2\Big) L_w \Big)^{\!\!M} \!e^{-\big(\sum_{i=1}^M r_i+M t^2\big) L_w}f\Big\|_{L^{p}(w)} \ \!\!\!\!\frac{dr_1 \dots dr_M}{\big(\sum_{i=1}^M r_i+M t^2\big)^M}
\\ \nonumber
&\lesssim
\int_{0}^{s^2}\!\!\!\dots \int_{0}^{s^2}
e^{-c\frac{d({E}_1,{E}_2)^2}{\sum_{i=1}^M r_i+M t^2}}\frac{dr_1 \dots dr_M}{\big(\sum_{i=1}^M r_i+M t^2\big)^M}
\|\chi_{E_1}f\|_{L^{p}(w)}
\\ \nonumber
&
\lesssim
\left(\frac{s^2}{t^2}\right)^M
e^{-c\frac{d({E}_1,{E}_2)^2}{t^2+s^2}}\|\chi_{E_1}f\|_{L^{p}(w)},
\end{align*}
where we have applied the fact that  $(t L_w)^M e^{-tL_w}\in \mathcal{F}(L^{p}(w)- L^{p}(w))$, for all $p\in (p_-(L_w),p_+(L_w))$, (see Lemma \ref{lem:ODweighted}).
\end{proof}

\medskip

We conclude this section by introducing the following  off-diagonal estimates on Sobolev spaces (for non-degenerate elliptic operators see \cite{Au07}).

\begin{lemma}\label{lem:Gsum}
Let $q\in (q_-(L_w),q_+(L_w))$ and $\alpha>0$. Assume that $p$ satisfies $\max\left\{r_w, (q_{-}(L_w))_{w,*}\right\}<p\le q$. Then, for every $(x,t)\in \R^{n+1}_+$, there exists $\theta>0$ such that
\begin{equation}\label{eq:Gsum}
\left(\,\fint_{B(x,\alpha t)} \abs{\nabla e^{-t^2L_w}f}^q dw\right)^{\frac{1}{q}} \lesssim \Upsilon(\alpha)^{\theta}\sum_{j=1}^{\infty} e^{-c4^j} \left(\,\fint_{B(x,2^{j+1}\alpha t)} |\nabla f|^{p} dw\right)^{\frac{1}{p}}.
\end{equation}
\end{lemma}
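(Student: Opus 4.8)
The goal is a gradient off-diagonal estimate on Sobolev spaces for the heat semigroup, with the subtlety that the left-hand side involves the \emph{ungraded} gradient $\nabla e^{-t^2 L_w}f$ while the right-hand side is controlled by $\nabla f$ rather than by $f$. The natural approach is to ``commute'' $\nabla$ with the semigroup via the Riesz transform factorization $\nabla e^{-t^2 L_w} = (\nabla L_w^{-1/2})(\sqrt{t^2 L_w}\,e^{-t^2 L_w})(t^{-1}L_w^{1/2}\cdot t^{-1})$, or more cleanly to observe that on the scale of Sobolev spaces the pair $(\nabla, L_w)$ intertwines: $\nabla e^{-t^2 L_w} f$ should be comparable, up to the Riesz transform $\nabla L_w^{-1/2}$ which is bounded on the relevant $L^p(w)$, to $\sqrt{L_w}\,e^{-t^2 L_w} f$, and this last object is $t^{-1}(\sqrt{t^2 L_w}\,e^{-t^2 L_w})f$. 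However, that route would force $f$ on the right, not $\nabla f$. So instead I would use the known reverse Riesz inequality (Proposition~\ref{prop:wRR}, valid precisely for $\max\{r_w,(p_-(L_w))_{w,*}\}<p<p_+(L_w)$, which is implied by the hypothesis since $q_-(L_w)=p_-(L_w)$ and $(q_-(L_w))_{w,*}=(p_-(L_w))_{w,*}$) to pass from $\|\sqrt{L_w}(\cdot)\|$ back to $\|\nabla(\cdot)\|$ after localizing.

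\textbf{Key steps.} First I would reduce to a single Whitney-type estimate: fix $(x,t)$, set $B=B(x,\alpha t)$, and decompose $f = \sum_{j\ge 1} f\chi_{C_j(B)}$ where $C_1(B)=4B$ and $C_j(B)=2^{j+1}B\setminus 2^j B$. For the local piece $f\chi_{4B}$, one should not literally split $f$ this way because $\nabla$ does not commute with truncation; rather, choose a smooth cutoff $\eta$ adapted to $4B$ and write $f = \eta f + (1-\eta)f$, so that $\nabla(\eta f)$ is controlled by $\nabla f$ on $8B$ plus a zero-order term that is absorbed using a Poincaré-type/Sobolev inequality on $8B$ (this is where the Sobolev exponent $(q_-(L_w))_{w,*}$ and the condition $p>(q_-(L_w))_{w,*}$ enter — it lets us control $\fint |f - f_{8B}|^{p_*}$ by $\fint |\nabla f|^p$ on the enlarged ball, and the constant average is killed by the semigroup acting on constants, or handled via the inhomogeneous structure). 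Then apply Lemma~\ref{lem:ODweighted}(d) (gradient off-diagonal estimates for $\sqrt{t}\nabla e^{-tL_w}$ on $L^p(w)$-$L^q(w)$, valid on $(q_-(L_w),q_+(L_w))$) to each annular piece, summing the Gaussian decay $e^{-c4^j}$ and the polynomial $\Upsilon$-factors coming from the change from radius $\alpha t$ to $2^{j+1}\alpha t$ and from the doubling of $w$. The $\Upsilon(\alpha)^\theta$ prefactor is produced exactly by \eqref{off:BtoB}--\eqref{off:BtoC} with their $\Upsilon(r_B/\sqrt t)^{\theta_2}$ factors evaluated at $r_B \sim \alpha t$ over $\sqrt{t^2} = t$.

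\textbf{The main obstacle.} The genuinely delicate point is the local term: commuting $\nabla$ past $e^{-t^2 L_w}$ while keeping $\nabla f$ (not $f$) on the right, on a \emph{localized} ball, without paying an uncontrolled zero-order price. The clean way is: on $8B$, write $f = (f-c) + c$ with $c=\fint_{8B}f\,dw$; since $e^{-t^2 L_w}$ applied to a constant need not vanish (unlike the non-degenerate divergence-form case, here $L_w 1 = 0$ still holds, so $e^{-t^2 L_w}1 = 1$ and $\nabla e^{-t^2 L_w}c = 0$), the constant piece contributes nothing to $\nabla e^{-t^2 L_w}f$. One is then left with $\nabla e^{-t^2 L_w}(f-c)$, to which the unweighted-ball gradient estimate of Lemma~\ref{lem:ODweighted}(d) applies with data $f-c$; combining with the weighted Sobolev--Poincaré inequality $(\fint_{8B}|f-c|^{p_{w}^*})^{1/p_w^*} \lesssim r_B (\fint_{8B}|\nabla f|^p)^{1/p}$ valid for $w\in A_\infty(dx)$ in the range dictated by $(q_-(L_w))_{w,*}<p$, and noting that the extra $r_B \sim \alpha t$ is exactly cancelled by the $\sqrt{t}$ normalization in $\sqrt{t}\nabla e^{-tL_w}$, yields the $j=1$ term of \eqref{eq:Gsum}. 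Extending this to the far annuli $C_j(B)$ requires replacing the single cutoff by a telescoping family and re-running the argument, with the Gaussian factor $e^{-c4^j r_B^2/(t^2)} = e^{-c4^j\alpha^2}$ absorbing the growth; ensuring the Poincaré constants and the off-diagonal exponents combine so that the sum over $j$ converges is the bookkeeping heart of the proof, but it is routine once the local case is set up correctly.
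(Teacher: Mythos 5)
Your final approach --- subtracting $c=\fint_{4B} f\,dw$, exploiting the conservation property $e^{-t^2L_w}1=1$ so that $\nabla e^{-t^2L_w}c=0$, decomposing $f-c$ over the annuli $C_j(B)$, applying the $L^{p_0}(w)$--$L^q(w)$ off-diagonal estimates for $\sqrt{\tau}\nabla e^{-\tau L_w}$ from Lemma~\ref{lem:ODweighted}, and closing with the weighted Poincar\'e--Sobolev inequality --- is essentially the paper's proof. Two points, though. First, the detour in your ``Key steps'' paragraph rests on a misunderstanding: you worry that ``$\nabla$ does not commute with truncation'' and therefore propose a smooth cutoff $\eta$ and estimates for $\nabla(\eta f)$. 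But the operator here is $\nabla e^{-t^2L_w}$, with the gradient landing \emph{after} the semigroup, so sharply truncating the \emph{data} $h=f-f_{4B,w}$ into pieces $h_j=h\chi_{C_j(B)}$ is entirely harmless; one simply applies the linear map $\nabla e^{-t^2 L_w}$ to each $h_j$ and never differentiates a cutoff. No smooth partition is needed and the paper uses none. Second, you dismiss the far annuli as ``routine bookkeeping,'' but that is precisely where the subtraction of the average earns its keep: the paper bounds $\bigl(\fint_{C_j(B)}|h|^{q_0}\,dw\bigr)^{1/q_0}$ by telescoping through the averages $f_{2^lB,w}$, $l=2,\dots,j$, applying Poincar\'e--Sobolev at each scale to obtain $\sum_{l=2}^{j} 2^l\alpha t\,\bigl(\fint_{2^{l+1}B}|\nabla f|^p\,dw\bigr)^{1/p}$, and then uses the Gaussian $e^{-c4^j}$ to absorb the factor $2^l$ after interchanging the sums in $j$ and $l$. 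Writing out that telescoping is the one non-trivial calculation; as presented, your outline has the right ideas but leaves it unchecked.
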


\begin{proof} 
For simplicity, we write $B:=B(x,\alpha t)$ and $h:=f-f_{4B,w}$, where, for every $\lambda>0$, $f_{\lambda B,w}$ is the average of $f$ in $\lambda B$ with respect to the measure $dw$. By the conservation property,  that  is $ e^{-t^2L_w}1=1$,
$$
\nabla e^{-t^2L_w}f=\nabla e^{-t^2L_w}(f-f_{4B,w})=\sum_{j=1}^{\infty} \nabla e^{-t^2L_w}h_j,
$$
with $h_j:=h\chi_{C_j(B)}$. By Lemma \ref{lem:ODweighted},  for any $q_-(L_w)<q_0<q$, we have that $\sqrt{t}\nabla e^{-tL_w} \in \Ocal(L^{q_0}(w)-L^q(w))$,  then 
\begin{align*} 
\left(\,\fint_{B} \abs{\nabla e^{-t^2L_w}f}^q dw\right)^{\frac{1}{q}}
& \le \sum_{j=1}^{\infty} \left(\,\fint_{B} \abs{\nabla e^{-t^2L_w}h_j}^q dw\right)^{\frac{1}{q}}
\\ &\lesssim \Upsilon(\alpha)^{\theta_2}
\sum_{j=1}^{\infty} \frac{2^{j(\theta_1+\theta_2)} e^{-c4^j}}{t}\left(\fint_{C_j(B)} \abs{ h}^{q_0} dw\right)^{\frac{1}{q_0}}.
\end{align*}
Using the weighted Poincar\'e-Sobolev inequality (see \cite[Theorem 2.1]{CMR15} and also \cite[Theorem 1.6]{FKS82}), we obtain that for any $p>\max\{r_w,(q_0)_{w,*}\}$,
\begin{align*}
\left(\,\fint_{C_j(B)} \abs{ h}^{q_0} dw\right)^{\frac{1}{q_0}} 
&\le \left(\,\fint_{2^{j+1}B} \abs{ f-f_{2^{j+1}B,w}}^{q_0} dw\right)^{\frac{1}{q_0}}+ \sum_{l=2}^{j} \abs{f_{2^{l+1}B,w}-f_{2^{l}B,w}}
\\ &\lesssim 
 \sum_{l=2}^{j} \left(\,\fint_{2^{l+1}B} \abs{ f-f_{2^{l+1}B,w}}^{q_0} dw\right)^{\frac{1}{q_0}}
 \\ &\lesssim 
 \sum_{l=2}^{j} 2^{l}\alpha t \left(\,\fint_{2^{l+1}B} \abs{ \nabla f}^{p} dw\right)^{\frac{1}{p}}.
\end{align*}
Hence,
\begin{multline*}
\left(\,\fint_{B} \abs{\nabla e^{-t^2L_w}f}^q dw\right)^{\frac{1}{q}}
\lesssim
\Upsilon(\alpha)^{\theta}
\sum_{j=1}^{\infty} \frac{e^{-c4^j}}{t}  \sum_{l=2}^{j} 2^{l}t \left(\,\fint_{2^{l+1}B} \abs{ \nabla f}^{p} dw\right)^{\frac{1}{p}}
\\ 
\lesssim \Upsilon(\alpha)^{\theta}
\sum_{j=1}^{\infty} e^{-c4^j} \left(\,\fint_{2^{j+1}B} \abs{ \nabla f}^{p} dw\right)^{\frac{1}{p}}.
\end{multline*}
This completes the proof.
\end{proof}
\subsection{Change of angles}\label{section:change-of-angles}
We shall use two change of angles results. The  first one is a  version of \cite[Proposition 3.30]{MaPAI17} in the weighted degenerate case.
\begin{proposition}{\cite[Proposition A.2]{ChMPA16}}\label{prop:Q} 
Let $w\in A_{\widehat{r}}(dx)$ and $v\in RH_{r'}(w)$ with $1\le \widehat{r},r<\infty$. For every $1\le q\le \widehat{r}$, $0<\alpha\leq 1$ and $t>0$, there holds
\begin{multline}\label{G-alpha}
\int_{\mathbb{R}^n}\left(\int_{B(x,\alpha t)}|h(y,t)| \, \frac{dw(y)}{w(B(y,\alpha t))} \right)^{\frac{1}{q}}v(x)dw(x)
\\ \lesssim 
\alpha^{n\widehat{r}\left(\frac{1}{r}-\frac{1}{q}\right)} 
\int_{\mathbb{R}^n}\left(\int_{B(x,t)}|h(y,t)| \, \frac{dw(y)}{w(B(y,t))} \right)^{\frac{1}{q}}v(x)dw(x).
\end{multline}
\end{proposition}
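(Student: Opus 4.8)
\textbf{Proof strategy for Proposition \ref{prop:Q}.}
The plan is to reduce the inequality to a pointwise comparison between the two ``averaged'' quantities followed by an application of the weighted maximal function bound under the hypothesis $v \in RH_{r'}(w)$. First I would fix $t>0$, write $\alpha t = s$, and recall that since $w$ is doubling we have the comparability $w(B(y,t)) \approx w(B(y,\alpha t))$ only up to a factor $\alpha^{-n\widehat r}$ (this is precisely where $w \in A_{\widehat r}(dx)$ enters, via \eqref{pesosineqw:Ap}); this accounts for part of the exponent on the right-hand side. The genuinely nontrivial gain comes from covering the large ball $B(x,t)$ by roughly $\alpha^{-n}$ balls of radius $\alpha t$ and exploiting the exponent $1/q \le 1$ together with the $RH_{r'}(w)$ condition on $v$.

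The key steps, in order, are: (i) For each $x$, cover $B(x,t)$ by a bounded-overlap family of balls $\{B_k\}_k = \{B(x_k, \alpha t)\}_k$ with cardinality $\lesssim \alpha^{-n}$ and with $B(x_k,\alpha t) \subset B(x,2t)$; then split the inner integral over $B(x,t)$ accordingly. (ii) Use $\big(\sum_k a_k\big)^{1/q} \le \sum_k a_k^{1/q}$ (valid since $1/q \le 1$) to pass the sum outside the power $1/q$. (iii) For each $k$, observe that $B(x,\alpha t) \ni x$ can be centered near $x_k$: integrate in $x$ and interchange the $x$-integral with the $y$-integral, recognizing that the map $x \mapsto \int_{B(x_k,\alpha t)} |h(y,t)| w(B(y,\alpha t))^{-1} dw(y)$ localized to a ball of radius $\alpha t$ is controlled by a weighted average; this is where one invokes that $v \in RH_{r'}(w)$ to estimate $\int_{\text{ball of radius } \alpha t} v\, dw$ in terms of $(w(\text{ball}))$-averages, producing the factor $\alpha^{n\widehat r/r}$ after combining with \eqref{pesosineqw:Ap} and \eqref{pesosineq:RHq}. (iv) Re-sum over $k$, using bounded overlap, to reconstruct the integral with angle $1$ on the right-hand side and collect the powers of $\alpha$: the covering contributes $\alpha^{-n}$, the $A_{\widehat r}$ volume comparison contributes $\alpha^{n\widehat r/q}$ (from $w(B(y,t))^{-1/q}$ vs $w(B(y,\alpha t))^{-1/q}$, losing $\alpha$ in the numerator direction) and the $RH_{r'}$ step contributes $\alpha^{n\widehat r/r}$, and a careful bookkeeping yields the net exponent $n\widehat r(1/r - 1/q)$.

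I expect the main obstacle to be the careful accounting of the powers of $\alpha$ across the several reverse-Hölder and $A_{\widehat r}$ applications: one must be sure that the losses in each direction (shrinking versus enlarging balls, and the interplay between the exponent $1/q$ and the weight $v$) cancel to give exactly $\alpha^{n\widehat r(1/r - 1/q)}$ and not merely some power of $\alpha$. A secondary technical point is ensuring the Fubini interchange is legitimate — this requires $h(\cdot,t) \ge 0$ (or working with $|h|$) and finiteness, which one may assume by a standard truncation/monotone convergence argument, or simply note that the estimate is trivial if the right-hand side is infinite. Since this is stated as a known result (\cite[Proposition A.2]{ChMPA16}), the write-up can afford to be brief and cite that source, but the sketch above is the route one would take to verify it directly.
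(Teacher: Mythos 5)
The paper does not prove Proposition \ref{prop:Q} at all: the statement is quoted verbatim from \cite[Proposition A.2]{ChMPA16}, and no argument is given in the present text. So there is no in-paper proof to compare against, and the last sentence of your sketch — ``the write-up can afford to be brief and cite that source'' — is in fact exactly what the authors do and is the appropriate resolution here.

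On the sketch itself, as written it has two concrete gaps. First, you propose to cover the ball $B(x,t)$ — which appears only in the inner integral on the \emph{right}-hand side — and then apply $\big(\sum_k a_k\big)^{1/q}\le \sum_k a_k^{1/q}$. That subadditivity produces an \emph{upper} bound for the right-hand side, whereas what you need in order to conclude $\mathrm{LHS}\lesssim \mathrm{RHS}$ is a \emph{lower} bound for the right-hand side (or an upper bound for the left). The covering and subadditivity have to be applied to the left-hand side's $y$-integration (or to the $x$-integration), not to $B(x,t)$; as stated the step points the wrong way. Second, the $\alpha$-bookkeeping does not close. Passing from $w(B(y,\alpha t))^{-1}$ to $w(B(y,t))^{-1}$ via \eqref{pesosineqw:Ap} \emph{costs} a factor $\alpha^{-n\widehat r/q}$ (not $\alpha^{+n\widehat r/q}$) once the $1/q$-power is taken, and combining this with the reverse H\"older gain $\alpha^{n\widehat r/r}$ already yields precisely the target exponent $\alpha^{n\widehat r(1/r-1/q)}$ — in particular the covering step must contribute only an $O(1)$ constant, and your claimed extra factor $\alpha^{-n}$ (coming from summing over $\sim\alpha^{-n}$ small balls) is exactly the over-counting that the actual argument has to avoid. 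Indeed, with the three contributions you list, $\alpha^{-n}\cdot\alpha^{\pm n\widehat r/q}\cdot\alpha^{n\widehat r/r}$ never equals $\alpha^{n\widehat r(1/r-1/q)}$ under the standing assumption $q\le\widehat r$. So the ``careful bookkeeping'' you flag as the main obstacle is a genuine one, and the route to fill it is to consult \cite[Proposition A.2]{ChMPA16} rather than to take the sketched path at face value.
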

The second result was proved for  the unweighted non-degenerate   case in \cite{ChanglesAuscher} and for the weighted non-degenerate case in \cite[Proposition 3.2]{MaPAI17}. Consider, for $\alpha>0$, the following operator acting over measurable functions $F$ defined in $\R^{n+1}_+$: 
\begin{align*}
\mathcal{A}_w^{\alpha}F(x):=\left(\iint_{\Gamma^{\alpha}(x)}|F(y,t)|^2\frac{dw(y)\,dt}{tw(B(y,t))}\right)^{\frac{1}{2}},\quad x\in \R^n,
\end{align*}
where $\Gamma^{\alpha}(x)$ is the cone of aperture $\alpha$ and vertex at $x$, $\Gamma^{\alpha}(x)=\{(y,t)\in\R^{n+1}_+:|x-y|<\alpha t\}$.
\begin{proposition}{\cite[Proposition 4.9]{ChMPA16}}\label{prop:alpha}
Let $0< \alpha\leq \beta<\infty$.
\begin{list}{$(\theenumi)$}{\usecounter{enumi}\leftmargin=1cm \labelwidth=1cm\itemsep=0.2cm\topsep=.2cm \renewcommand{\theenumi}{\alph{enumi}}}
\item  For every $w\in A_{\widetilde{r}}(dx)$ and  $v\in A_r(w)$, $1\leq r,\widetilde{r}<\infty$,
there holds
\begin{align}\label{change-alph-1}
\norm{\mathcal{A}_w^{\beta}F}_{L^p(vdw)}\
\leq 
C \left(\frac{\beta}{\alpha}\right)^{\frac{n\,\widetilde{r}\,r}{p}}
 \norm{\mathcal{A}^{\alpha}_w F}_{L^p(vdw)} \quad \textrm{for all} \quad 0<p\leq 2r,
\end{align}
 where   $C\ge 1$ depends on $n$, $p$, $r$, $\widetilde{r}$, $[w]_{A_{\widetilde{r}}(dx)}$, and  $[v]_{A_r(w)}$, but it is independent of  $\alpha$ and $\beta$.

\item  For every $w\in RH_{\widetilde{s}'}(dx)$ and $v\in RH_{s'}(w)$, $1\leq s,\widetilde{s}<\infty$, there holds
\begin{align}\label{change-alph-2}
\norm{\mathcal{A}^{\alpha}_w F}_{L^p(vdw)}
\leq  
C\left(\frac{\alpha}{\beta}\right)^{\frac{n}{s\,\widetilde{s}\,p}} \norm{\mathcal{A}^{\beta}_w F}_{L^p(vdw)}
\quad \text{for all} \quad \frac{2}{s}\leq p<\infty,
\end{align}
 where   $C\ge 1$ depends on $n$, $p$, $s$, $\widetilde{s}$, $[w]_{RH_{\widetilde{s}'}(dx)}$, and  $[v]_{RH_{s'}(w)}$, but it is independent of  $\alpha$ and $\beta$.
\end{list}
\end{proposition}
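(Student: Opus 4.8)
The plan is to prove both inequalities first at the single exponent $p=2$, where Fubini's theorem applies, and then to propagate them to the stated ranges by interpolation and a good-$\lambda$/stopping-time argument, following the template of the non-degenerate cases in \cite{ChanglesAuscher} and \cite[Proposition 3.2]{MaPAI17}. For the first step one observes that for any aperture $\gamma>0$, expanding the square and integrating in $x$ first,
\[
\norm{\mathcal{A}_w^{\gamma}F}_{L^2(vdw)}^2=\iint_{\R^{n+1}_+}|F(y,t)|^2\,\frac{v(B(y,\gamma t))}{w(B(y,t))}\,\frac{dw(y)\,dt}{t},
\]
so everything reduces to comparing $v(B(y,\beta t))$ with $v(B(y,\alpha t))$. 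Since $\alpha\le\beta$ and $w\in A_{\widetilde r}(dx)$, \eqref{pesosineqw:Ap} applied to $B(y,\alpha t)\subset B(y,\beta t)$ gives $w(B(y,\beta t))\lesssim(\beta/\alpha)^{n\widetilde r}\,w(B(y,\alpha t))$; running the proof of \eqref{pesosineqw:Ap} with $dw$ in place of $dx$ and invoking $v\in A_r(w)$ then yields $v(B(y,\beta t))\lesssim\big(w(B(y,\beta t))/w(B(y,\alpha t))\big)^{r}\,v(B(y,\alpha t))\lesssim(\beta/\alpha)^{n\widetilde r r}\,v(B(y,\alpha t))$, which inserted into the identity above produces \eqref{change-alph-1} at $p=2$ with constant $(\beta/\alpha)^{n\widetilde r r/2}$. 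Symmetrically, using $w\in RH_{\widetilde s'}(dx)$, \eqref{pesosineqw:RHq} and $v\in RH_{s'}(w)$ one gets $v(B(y,\alpha t))\lesssim(\alpha/\beta)^{n/(s\widetilde s)}\,v(B(y,\beta t))$, hence \eqref{change-alph-2} at $p=2$ with constant $(\alpha/\beta)^{n/(2s\widetilde s)}$.

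For the second step, the reverse-Hölder inequality \eqref{change-alph-2} I would obtain by interpolating (with change of weight, in the scale of weighted tent spaces over the space of homogeneous type $(\R^n,dw)$) the $L^2(vdw)$-bound just proved against the aperture-independent endpoint: for every $\gamma$, $\mathcal{A}_w^{\gamma}F$ is dominated, up to a harmless dilation of the defining balls, by the Carleson-type functional $\sup_{B}w(B)^{-1}\iint_{\widehat B}|F(y,t)|^2\,\frac{dw(y)\,dt}{t}$, which does not see the aperture; interpolating the $L^2$-estimate (exponent $(\alpha/\beta)^{n/(2s\widetilde s)}$) with this $T^\infty$-estimate (exponent $1$) delivers precisely the $p$-dependent exponent $(\alpha/\beta)^{n/(s\widetilde s p)}$ for all $2/s\le p<\infty$, the range $2/s\le p<2$ being reached via the reverse-Hölder self-improvement of $v$. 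The inequality \eqref{change-alph-1}, i.e.\ the low-integrability range $0<p\le 2r$, must be handled differently since the Fubini identity lives only at $p=2$: here I would compare the distribution functions of $\mathcal{A}_w^{\beta}F$ and $\mathcal{A}_w^{\alpha}F$ through a Whitney/stopping-time decomposition of the open sets $\{\mathcal{A}_w^{\alpha}F>\lambda\}$, the key geometric point being that passing from aperture $\alpha$ to aperture $\beta$ inside a fixed Whitney ball (or tent) costs only the factor $(\beta/\alpha)^{n\widetilde r}$ furnished by the $A_{\widetilde r}(dx)$-density estimate \eqref{pesosineqw:Ap}; combining this with the localized $L^2$-estimate of the first step and the standard weighted good-$\lambda$ bookkeeping involving $\mathcal{M}^w$ (legitimate in the range under consideration since $v\in A_r(w)$ and this class is open) upgrades the $p=2$ bound to the full range $0<p\le 2r$ with the sharp exponent $(\beta/\alpha)^{n\widetilde r r/p}$.

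The main obstacle is exactly the range $0<p\le 2r$ of \eqref{change-alph-1}: away from $p=2$ there is no clean Fubini identity, so one is forced to argue at the level of level sets, where the aperture change genuinely interacts with the weighted densities of $w$ and of $v$ on $(\R^n,dw)$; moreover producing the \emph{sharp}, $p$-dependent exponent $n\widetilde r r/p$ — rather than some crude $p$-independent bound — is precisely what rules out a purely soft interpolation argument and forces the stopping-time comparison of distribution functions. Once \eqref{change-alph-1} and \eqref{change-alph-2} are secured at $p=2$ and the aperture-independent $T^\infty$ endpoint is available, the remaining work (tracking the dependence of all constants on $n$, $p$, $r$, $s$, $\widetilde r$, $\widetilde s$ and the relevant weight characteristics, and checking independence of $\alpha$ and $\beta$) is routine.
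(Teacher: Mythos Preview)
The paper does not supply its own proof of this proposition: it is quoted verbatim from \cite[Proposition 4.9]{ChMPA16} and used as a black box, so there is nothing in this paper to compare your attempt against. Your outline is nonetheless the right one and matches the strategy of the cited references: the $p=2$ case is indeed a Fubini computation (note only that the quantity appearing after integration in $x$ is $vw(B(y,\gamma t))=\int_{B(y,\gamma t)}v\,dw$, not $v(B(y,\gamma t))$; the comparison then follows directly from \eqref{pesosineq:Ap} and \eqref{pesosineq:RHq}), and the extension of part (a) to $0<p\le 2r$ is done exactly by the level-set/good-$\lambda$ argument you describe, as in \cite{ChanglesAuscher} and \cite[Proposition 3.2]{MaPAI17}.

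For part (b) your $T^\infty$-interpolation sketch is plausible for $p\ge 2$ and does yield the correct exponent there, but the step ``the range $2/s\le p<2$ being reached via the reverse-H\"older self-improvement of $v$'' is too quick: self-improvement of $v\in RH_{s'}(w)$ to $RH_{(s-\varepsilon)'}(w)$ lets you run the $p=2$ argument with $s-\varepsilon$ in place of $s$, but that produces the exponent $n/((s-\varepsilon)\widetilde s\,p)$ on the wrong range $p\ge 2/(s-\varepsilon)$, not the sharp exponent $n/(s\widetilde s\,p)$ on the full range $p\ge 2/s$. In the cited proofs part (b) is also handled by a direct comparison of distribution functions (dual to the one you propose for (a)), which delivers both the sharp exponent and the full range simultaneously; your interpolation route would need an additional argument to close this gap.
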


\subsection{Calder\'on-Zygmund decomposition on Sobolev spaces}

Our proofs rely on the following Calder\'on-Zygmund decomposition on Sobolev spaces. 
\begin{lemma}[{\cite[Lemma 6.6]{AMIII06}}]\label{lem:CZweighted}
Let $n\ge 1$, $\alpha>0$, $\varpi\in A_{\infty}(dx)$, and let $1\le p<\infty$ be such that $\varpi\in A_p(dx)$. Assume that $f\in \Scal $ is such that $\|\nabla f\|_{L^p(\varpi)}<\infty$.  Then, there exist a collection of balls $\{B_i\}_{i}$ with radii $r_{B_i}$, smooth functions $\{b_i\}_i$ and  a function $g\in L_{\loc}^1(\varpi)$ such that
\begin{equation}\label{CZ:decomp}
f=g+\sum_{i} b_i
\end{equation}
and the following properties hold:
\begin{equation}\label{CZ:g}
|\nabla g(x)| \leq C\alpha,\quad \text{for }\mu\text{-a.e. }x,
\end{equation}
\begin{equation}\label{CZ:b}
\supp b_i\subset B_i\quad \text{and}\quad \int_{B_i}|\nabla b_i|^p d\varpi\leq C\alpha^p \varpi(B_i),
\end{equation}
\begin{equation}\label{CZ:sum}
\sum_{i}\varpi(B_i)\leq \frac{C}{\alpha^p}\int_{\R^n}|\nabla f|^p d\varpi,
\end{equation}
\begin{equation}\label{CZ:overlap}
\sum_{i} \chi_{4B_i}\leq N,
\end{equation}
where $C$ and $N$ depend only on $n$, $p$, and $\varpi$. In addition,  for $1\le q<p_{\varpi}^*$, where $p_{\varpi}^*$ is defined in \eqref{p_w^*},
\begin{equation}\label{CZ:PS}
\left(\fint_{B_i} |b_i|^q d\varpi\right)^{\frac{1}{q}} \lesssim \alpha\, r_{B_i}.
\end{equation}
\end{lemma}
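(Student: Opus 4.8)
The plan is to follow the classical Calderón--Zygmund decomposition on Sobolev spaces, transcribing every ingredient to the doubling measure $\varpi\,dx$; this is legitimate since $\varpi\in A_p(dx)\subset A_\infty(dx)$ makes $(\R^n,|\cdot|,\varpi\,dx)$ a space of homogeneous type. Let $M_\varpi$ denote the uncentered Hardy--Littlewood maximal operator associated with $\varpi\,dx$ and set
\[
\Omega:=\bigl\{x\in\R^n:\ M_\varpi(|\nabla f|^p)(x)>\alpha^p\bigr\}.
\]
If $\Omega=\emptyset$, Lebesgue differentiation in $(\R^n,|\cdot|,\varpi\,dx)$ gives $|\nabla f|\le\alpha$ $\varpi$-a.e., and we take $g:=f$ with no bad functions; so assume $\Omega\neq\emptyset$. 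Then $\Omega$ is open by lower semicontinuity of $M_\varpi$, and the weak $(1,1)$ bound for $M_\varpi$ on $L^1(\varpi)$ gives $\varpi(\Omega)\le C\alpha^{-p}\|\nabla f\|_{L^p(\varpi)}^p<\infty$, so $\Omega\neq\R^n$. First I would fix a Whitney covering $\{B_i\}_i$ of $\Omega$ by balls with $4B_i\subset\Omega$, $r_{B_i}\simeq\operatorname{dist}(B_i,\R^n\setminus\Omega)$ and bounded overlap $\sum_i\chi_{4B_i}\le N$ (this is \eqref{CZ:overlap}), together with a smooth partition of unity $\{\eta_i\}_i$ with $\supp\eta_i\subset B_i$, $0\le\eta_i\le1$, $\sum_i\eta_i=\chi_\Omega$ and $\|\nabla\eta_i\|_\infty\lesssim r_{B_i}^{-1}$.

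Next I would set $c_i:=\fint_{B_i}f\,d\varpi$, $b_i:=(f-c_i)\eta_i$, and $g:=f-\sum_i b_i=f\chi_{\R^n\setminus\Omega}+\sum_i c_i\eta_i$; since every compact subset of $\Omega$ meets only finitely many $B_i$, the sum $\sum_i b_i$ is locally finite in $\Omega$, whence $g\in L^1_{\loc}(\varpi)$. Two facts drive the estimates for $b_i$: (i) each Whitney ball has a point $x_i\in 4B_i\setminus\Omega$, so $\fint_{4B_i}|\nabla f|^p\,d\varpi\le M_\varpi(|\nabla f|^p)(x_i)\le\alpha^p$, and doubling gives $\fint_{B_i}|\nabla f|^p\,d\varpi\lesssim\alpha^p$; (ii) the weighted Poincaré and Sobolev--Poincaré inequalities for $A_p$ weights (see \cite[Theorem 2.1]{CMR15} and \cite[Theorem 1.6]{FKS82}) give $\bigl(\fint_{B_i}|f-c_i|^q\,d\varpi\bigr)^{1/q}\lesssim r_{B_i}\bigl(\fint_{B_i}|\nabla f|^p\,d\varpi\bigr)^{1/p}\lesssim\alpha\,r_{B_i}$ for $1\le q<p_{\varpi}^*$, in particular for $q=p$. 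From $\nabla b_i=\eta_i\nabla f+(f-c_i)\nabla\eta_i$ together with (i), (ii) and $\varpi(4B_i)\lesssim\varpi(B_i)$ one gets $\supp b_i\subset B_i$ and $\int_{B_i}|\nabla b_i|^p\,d\varpi\lesssim\alpha^p\varpi(B_i)$, which is \eqref{CZ:b}, while (ii) directly yields \eqref{CZ:PS}. The bound \eqref{CZ:sum} is then immediate: by finite overlap and the weak $(1,1)$ estimate, $\sum_i\varpi(B_i)\le N\varpi(\Omega)\le C\alpha^{-p}\|\nabla f\|_{L^p(\varpi)}^p$.

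The remaining point is \eqref{CZ:g}, $|\nabla g|\le C\alpha$ $\varpi$-a.e. On $\R^n\setminus\Omega$ one has $g=f$, hence $\nabla g=\nabla f$ $\varpi$-a.e.\ there, and Lebesgue differentiation gives $|\nabla f|\le\alpha$ $\varpi$-a.e.\ on that set. On $\Omega$, since $\sum_i\nabla\eta_i\equiv0$ there, for $x\in B_j$ I would write $\nabla g(x)=\sum_{i:\,x\in B_i}(c_i-c_j)\nabla\eta_i(x)$, a sum of boundedly many terms by Whitney geometry, and estimate $|c_i-c_j|\lesssim\alpha\,r_{B_i}$ whenever $B_i\cap B_j\neq\emptyset$: comparing $c_i$ and $c_j$ to the $\varpi$-average of $f$ over a common dilate $\widetilde B\supset B_i\cup B_j$ of comparable $\varpi$-measure that still satisfies $\fint_{\widetilde B}|\nabla f|^p\,d\varpi\lesssim\alpha^p$ (because $\widetilde B\subset CB_i$ meets $\R^n\setminus\Omega$), and invoking the weighted Poincaré inequality once more. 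Then $|\nabla g(x)|\lesssim\sum_{i:\,x\in B_i}\alpha\,r_{B_i}\cdot r_{B_i}^{-1}\lesssim\alpha$, finishing the proof. I expect the main obstacle to be precisely this last step: keeping the overlapping partition-of-unity terms and the differences $|c_i-c_j|$ of touching balls under uniform control, which requires combining the $A_p$-weighted Poincaré inequality with the precise Whitney geometry (comparability of radii of touching balls, bounded overlap of dilates, doubling of $\varpi$) --- everything else being a routine transcription of the unweighted argument to the homogeneous-type setting $(\R^n,|\cdot|,\varpi\,dx)$.
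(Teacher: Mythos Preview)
The paper does not prove this lemma; it is quoted verbatim from \cite[Lemma 6.6]{AMIII06} and used as a black box. Your outline is precisely the argument given there (which in turn adapts the Sobolev Calder\'on--Zygmund decomposition from \cite{Au07} to the doubling measure $\varpi\,dx$): define $\Omega$ via the maximal function of $|\nabla f|^p$, take a Whitney cover with partition of unity, set $b_i=(f-c_i)\eta_i$, and control $\nabla g$ on $\Omega$ through the differences $c_i-c_j$ via the weighted Poincar\'e inequality. So the approach is correct and matches the cited source.

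One point needs fixing: you write both ``$4B_i\subset\Omega$'' and ``each Whitney ball has a point $x_i\in 4B_i\setminus\Omega$'', which are mutually exclusive. The correct Whitney geometry is $B_i\subset\Omega$ with $r_{B_i}\simeq\mathrm{dist}(B_i,\R^n\setminus\Omega)$, so that some fixed dilate $cB_i$ (with $c$ a dimensional constant, possibly larger than $4$) meets $\R^n\setminus\Omega$; the bounded overlap $\sum_i\chi_{4B_i}\le N$ holds regardless of whether $4B_i$ stays inside $\Omega$, since intersecting Whitney balls have comparable radii. Once you choose the constants consistently, every step you describe goes through, including the estimate $|c_i-c_j|\lesssim\alpha\,r_{B_i}$ for touching balls (take $\widetilde B$ to be a dilate of $B_j$ large enough to contain $B_i$ and to reach $\R^n\setminus\Omega$, which is possible by the Whitney relation $r_{B_i}\simeq r_{B_j}$ together with $r_{B_j}\simeq\mathrm{dist}(B_j,\R^n\setminus\Omega)$).
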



\subsection{Non-homogeneous vertical square function}\label{section:nono-homo-vertical}
In this section, we study the weighted boundedness of $\widetilde\Grm_{\hh}^w$. Our result is the following.
\begin{theorem}\label{thm:wtG}
Let $w\in A_2(dx)$ and let $L_w$ be a degenerate elliptic operator. Given  $v\in A_{\infty}(w)$, 
assume that $\mathcal{W}_v^w\left(\max\{r_w, q_{-}(L_w)\},q_+(L_w)\right)\neq \emptyset$. Then,  for every $f\in \Scal$  and $p\in\mathcal{W}_v^w\left(\max\{r_w, (q_{-}(L_w))_{w,*}\},q_+(L_w)\right)$, it holds,
\begin{equation}\label{eq:Lp-wtG}
\big\|\widetilde\Grm_{\hh}^{w}f\big\|_{L^p(vdw)} \lesssim \norm{\nabla f}_{L^p(vdw)}.
\end{equation}
\end{theorem}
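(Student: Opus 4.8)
The plan is to reduce the estimate \eqref{eq:Lp-wtG} to known square function bounds via a duality/interpolation scheme built around the Calder\'on–Zygmund decomposition on Sobolev spaces (Lemma \ref{lem:CZweighted}), combined with off-diagonal estimates on Sobolev spaces (Lemma \ref{lem:Gsum}) and the reverse Riesz inequality of Proposition \ref{prop:wRR}. The key observation is that $\widetilde\Grm_{\hh}^w$ differs from the homogeneous gradient square function $\Grm_{\hh}^w$ (which is $L^p(vdw)$-bounded on $\mathcal{W}_v^w(q_-(L_w),q_+(L_w))$ by Lemma \ref{lem:wVSF}(b)) by one power of $t$, so that morally $\widetilde\Grm_{\hh}^w f \approx \Grm_{\hh}^w (L_w^{-1/2} f) \approx \Grm_{\hh}^w$ applied to something controlled by $\nabla f$. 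Indeed, using the representation $L_w^{-1/2} = c\int_0^\infty s\, e^{-s^2 L_w}\,\frac{ds}{s}$ together with the reverse Riesz estimate $\|\sqrt{L_w}f\|_{L^p(vdw)}\lesssim\|\nabla f\|_{L^p(vdw)}$, one expects to absorb the missing power of $t$ and land on a genuinely homogeneous object. However, this purely operator-theoretic route requires $p$ above $\max\{r_w, p_-(L_w)\}$, and the subtlety here is the appearance of the Sobolev-lowered exponent $(q_-(L_w))_{w,*}$ in the hypothesis, which signals that a direct good-$\lambda$/CZ argument on $\nabla f$ is really needed at the lower endpoint.

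Concretely, I would proceed as follows. First, handle the range $p \geq 2$ (or more precisely $p$ in the part of $\mathcal{W}_v^w(\ldots)$ where things are ``easy'') by duality: test $\widetilde\Grm_{\hh}^w f$ against an $L^{p'}((vw)^{1-p'})$ function, expand the square function as an $L^2(\frac{dt}{t})$-valued object, and use the vector-valued boundedness coming from Lemma \ref{lem:wVSF}(b) applied to $\Grm_{\hh}^w$ after writing $\nabla t^2 L_w e^{-t^2 L_w} f = \nabla t^2 L_w e^{-t^2 L_w} L_w^{-1/2}(\sqrt{L_w}f)$ and controlling $\sqrt{L_w}f$ by $\nabla f$ via Proposition \ref{prop:wRR}; one must check the composed operator $t\nabla t^2 L_w e^{-t^2 L_w} L_w^{-1/2}$ still behaves like $\Grm_{\hh}^w$ up to harmless factors, which follows from the subordination-type identity and the analyticity of the semigroup. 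Second, for the low range $\max\{r_w,(q_-(L_w))_{w,*}\} < p \le 2$, I would run the Calder\'on–Zygmund decomposition of Lemma \ref{lem:CZweighted} with $\varpi = vw$ (legitimate since $vw \in A_\infty(dx)$ and $p$ can be chosen so that $vw \in A_p(dx)$, by Remark \ref{remark:product-weight} and \eqref{intervalrsw}), splitting $f = g + \sum_i b_i$. The good part $g$ has $\|\nabla g\|_\infty \lesssim \alpha$, so one interpolates between the $L^2(dx)$-type bound (ultimately Kato, i.e. \cite{CUR15}) and an $L^\infty$ control to get $\|\widetilde\Grm_{\hh}^w g\|$ under control. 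For the bad part, one estimates $vw\{\widetilde\Grm_{\hh}^w(\sum_i b_i) > \alpha\}$ by splitting the $t$-integral at the scale $r_{B_i}$ of each ball: for $t \le r_{B_i}$ use the Poincar\'e–Sobolev control \eqref{CZ:PS} on $b_i$ together with the off-diagonal estimates of Lemma \ref{lem:Gsum}; for $t > r_{B_i}$ exploit the extra $t$-decay in $\widetilde\Grm_{\hh}^w$ (one more power of $t$ than $\Grm_{\hh}^w$), using $\nabla t^2 L_w e^{-t^2 L_w}b_i = \nabla t^2 L_w e^{-t^2 L_w} L_w^{-1}(L_w b_i)$ and that $L_w b_i = -w^{-1}\div(wA\nabla b_i)$, so that $\|\nabla b_i\|$ controls everything with favorable powers of $r_{B_i}/t$.

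The main obstacle I expect is the low-range bad-part estimate, specifically making the off-diagonal/Sobolev machinery deliver summable-in-$j$ and summable-in-$i$ bounds while keeping track of three nested scales: the ball radius $r_{B_i}$, the semigroup time $t$, and the annular index $j$. This is exactly where the exponent $(q_-(L_w))_{w,*}$ enters — one needs $p$ large enough that the weighted Poincar\'e–Sobolev inequality (\cite[Theorem 2.1]{CMR15}, used in the proof of Lemma \ref{lem:Gsum}) applies with the right gain, and the interplay of the $A_p(dx)$ condition on $vw$, the reverse-H\"older self-improvement, and the bounded-overlap property \eqref{CZ:overlap} has to be orchestrated carefully. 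A secondary technical point is justifying the operator identities (commuting $L_w^{-1/2}$ or $L_w^{-1}$ past the gradient and the semigroup) on the dense class $\Scal$ with the a priori bounds we are trying to prove — this is routine but needs the off-diagonal framework of Section \ref{section:off} and Remark \ref{rem:stable} to make rigorous. Once these pieces are in place, interpolation (with change of measure, since we work on $L^p(vdw)$) between the two ranges, using that $\mathcal{W}_v^w$-type intervals are open, closes the argument.
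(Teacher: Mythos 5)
Your proposal gets the overall skeleton right (easy range via comparison with $\sqrt{L_w}f$ and Proposition \ref{prop:wRR}; low range via the Calder\'on--Zygmund decomposition of Lemma \ref{lem:CZweighted} together with weak-type estimates and real interpolation on Sobolev scales) and also correctly isolates the bad-part estimate as the hard step. However, the concrete mechanism you propose for the bad part does not work. First, you assert that $\widetilde\Grm_{\hh}^w$ has ``one more power of $t$'' than $\Grm_{\hh}^w$ and exploit this as ``extra $t$-decay'' for $t>r_{B_i}$. This is backwards: $\Grm_{\hh}^wf=\|t\nabla t^2L_we^{-t^2L_w}f\|_{L^2(dt/t)}$ has $t^3\nabla L_we^{-t^2L_w}$ inside, while $\widetilde\Grm_{\hh}^wf=\|\nabla t^2L_we^{-t^2L_w}f\|_{L^2(dt/t)}$ has only $t^2\nabla L_we^{-t^2L_w}$; the inhomogeneous object is missing a power of $t$, not carrying an extra one. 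Second, the identity $\nabla t^2L_we^{-t^2L_w}b_i=\nabla t^2L_we^{-t^2L_w}L_w^{-1}(L_wb_i)$ is vacuous, and unpacking $L_wb_i=-w^{-1}\div(wA\nabla b_i)$ followed by scaling gives $\|t^2\nabla e^{-t^2L_w}(L_wb_i)\|\sim (t/r_{B_i})\|\nabla b_i\|$, which is the \emph{unfavorable} direction precisely in the regime $t>r_{B_i}$ you are targeting. No summable-in-$i$ bound can be extracted this way without a further cancellation mechanism that your outline does not supply.

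The paper's actual treatment differs at exactly these two points. For the large-$t$ regime ($r_i\le t$) it does \emph{not} bound $T_ib_i$ term by term; instead it writes $f_t=\frac1t\sum_{i:r_i\le t}b_i$, performs the Littlewood--Paley aggregation $\int_0^\infty|f_t|^2\,\frac{dt}{t}\lesssim\sum_k|\beta_k|^2$ with $\beta_k=\sum_{i:r_i=2^k}b_i/r_i$ (the trick of \cite{AC05}), and then applies the uniform $L^q(vdw)$-boundedness \eqref{eq:2q} of the family $T_t=t^3\nabla L_we^{-t^2L_w}$ obtained by extrapolation. This sidesteps the loss you would incur summing in $i$. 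For the small-$t$ regime ($r_i>t$) the paper restricts to $\R^n\setminus\bigcup_i16B_i$, so the spatial separation together with $\Ocal(L^{p_0}(w)-L^{q_0}(w))$ estimates for $t^{3/2}\nabla L_we^{-tL_w}$ from Remark \ref{rem:stable} produce exponential decay $e^{-c4^j}$ in the annular index and summability via Lemma \ref{lemma:lastestimate}. Your reliance on Lemma \ref{lem:Gsum} here is also misplaced: that lemma concerns $\nabla e^{-t^2L_w}$ and rests on subtracting averages using the conservation property; it is not stated for $\nabla L_we^{-t^2L_w}$, which is the operator actually appearing inside $\widetilde\Grm_{\hh}^w$, so some further argument would be needed to transfer it.
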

Before starting with the proof, we make some remarks and prove Lemma \ref{lemma:lastestimate} (stated below).  These results not only will be useful in this proof but also in the remainder of the paper.
\begin{remark}\label{remark:intervalnotempty} 
Given $w\in A_2(dx)$ and $v\in A_{\infty}(w)$, let $0<q_-<q_+<\infty$ and  $p>\mathfrak{r}_v(w)\max\{r_w,(q_-)_{w,*}\}$. Assuming that 
$$
\big(\mathfrak{r}_v(w)\max\{r_w,q_-\},q_+/\mathfrak{s}_v(w)\big)=\mathcal{W}_v^w(\max\{r_w,q_-\},q_+)\neq \emptyset,
$$
we claim  that
\begin{align}\label{intervalnotempty2}
\big(\mathfrak{r}_v(w)\max\{r_w,q_-\},\min\{q_+/\mathfrak{s}_v(w),p_{vw}^*\}\big)\neq \emptyset,
\end{align}
where we recall that  by Remark \ref{remark:product-weight} $vw\in A_{\infty}(dx)$, and $p_{vw}^*$ is defined in \eqref{p_w^*}.
Indeed,  since by hypothesis $\mathfrak{r}_v(w)\max\{r_w,q_-\}<q_+/\mathfrak{s}_v(w)$, this can be seen from the fact that
\begin{align}\label{intervalnotempty}
\mathfrak{r}_v(w)\max\{r_w,q_-\}<p_{vw}^*.
\end{align}
 To prove \eqref{intervalnotempty}, we distinguish two cases. 
If $\mathfrak{r}_v(w)\max\{r_w,q_-\}=\mathfrak{r}_v(w)r_w$, since we are taking $p$ such that $p>\mathfrak{r}_v(w)\max\{r_w,(q_-)_{w,*}\}$  and since $(q_-)_{w,*}\leq q_-$ (see \eqref{p_{w,*}}), then
\[
\mathfrak{r}_v(w)\max\{r_w,q_-\}=\mathfrak{r}_v(w)\max\{r_w,(q_-)_{w,*}\}<p<p_{vw}^*.
\]
If now $\mathfrak{r}_v(w)\max\{r_w,q_-\}=\mathfrak{r}_v(w)q_-$, we can assume that $nr_{vw}>p$ (otherwise $p_{vw}^*=\infty$ and the inequality is trivial). Hence, by hypothesis and by \eqref{p_w^*},
\begin{align}\label{lateruse}
\frac{1}{p_{vw}^*}&=\frac{1}{p}-\frac{1}{nr_{vw}}
<\frac{1}{\mathfrak{r}_v(w)(q_{-})_{w,*}}-\frac{1}{nr_{vw}}
=
\frac{nr_w+q_-}{\mathfrak{r}_v(w)q_{-}nr_w}-\frac{1}{nr_{vw}}
\\\nonumber&
=
\frac{1}{\mathfrak{r}_v(w)q_{-}}-\frac{1}{nr_{vw}}\left(1-\frac{r_{vw}}{r_w\mathfrak{r}_v(w)}\right)
\leq 
\frac{1}{\mathfrak{r}_v(w)q_{-}}=\frac{1}{\mathfrak{r}_v(w)\max\{r_w,q_-\}}.
\end{align}
\end{remark}

\begin{remark}\label{remark:Kolmogorov}
Let $\{B_i\}_{i}$ be a collection of balls with bounded overlap, $w\in A_{\infty}(dx)$, and $v\in A_{\infty}(w)$. Besides, consider
$1< \widetilde{p}<\infty$,  $u\in L^{\widetilde{p}'}(vdw)$ such that $\|u\|_{L^{\widetilde{p}'}(vdw)}= 1$, and $\mathcal{M}^{vw}$  the weighted maximal operator defined as
$$
\mathcal{M}^{vw}f(x):=\sup_{B\ni x} \dashint_{B}|f(y)|d(vw)(y),
$$
then, by Kolmogorov's inequality,
we have that
\begin{multline}\label{maximal-u}
\left(\sum_{i} \int_{B_i}\left(\mathcal{M}^{vw}(|u|^{\widetilde{p}'})\right)^{\frac{1}{\widetilde{p}'}}vdw\right)^{\widetilde{p}}
\lesssim
\left(\int_{\cup_{i}B_i}\left(\mathcal{M}^{vw}(|u|^{\widetilde{p}'})\right)^{\frac{1}{\widetilde{p}'}}vdw\right)^{\widetilde{p}}
\\
\lesssim
vw\Big(\bigcup_{i}B_i\Big)\|u\|_{L^{\widetilde{p}'}(vdw)}^{\widetilde{p}}\lesssim vw\Big(\bigcup_{i}B_i\Big).
\end{multline}
\end{remark}

We next state a technical lemma which will be used 
several times. We notice that the statement, which may appear slightly clumsy, is written so that it can be easily invoked in  some of our proofs.
\begin{lemma}\label{lemma:lastestimate}
Given $w\in A_2(dx)$ and $v\in A_{\infty}(w)$, fix $\alpha>0$, $1< p_1<\infty$, $\{B_i\}_{i}$ a collection of balls in $\R^n$ with bounded overlap.
Assume that there is a sequence of positive numbers $\{\mathcal{I}_{ij}\}_{i,j}$ (whose significance will become clear when applying the result) so that 
\begin{align}\label{hypothesislastestimate}
\mathcal{I}_{ij}\,\le \widehat{C} \alpha vw(2^{j+1}B_i)^{\frac{1}{p_1}}
2^{-j(2M-\widetilde{C})},\quad j\geq 4,
\end{align}
where $\widehat{C}, \widetilde{C}$ are fixed (harmless) constants, and $2M>\widetilde{C}+nr_w\mathfrak{r}_v(w)$, then 
$$
\sup_{\|u\|_{L^{p_1'}(vdw)}=1}\sum_{i}\sum_{j\geq 4}\mathcal{I}_{ij}\|u\chi_{C_j(B_i)}\|_{L^{p_1'}(vdw)}
\lesssim
\alpha  vw\Big(\bigcup_{i}B_i\Big)^{\frac1{p_1}}.
$$
\end{lemma}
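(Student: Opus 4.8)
The plan is to fix a function $u\in L^{p_1'}(vdw)$ with $\|u\|_{L^{p_1'}(vdw)}=1$ and bound the double sum by a quantity independent of $u$; taking the supremum is then immediate. First I would use the hypothesis \eqref{hypothesislastestimate} to reduce matters to estimating
\[
\alpha\sum_i\sum_{j\ge 4} vw(2^{j+1}B_i)^{\frac1{p_1}}\,2^{-j(2M-\widetilde{C})}\,\|u\chi_{C_j(B_i)}\|_{L^{p_1'}(vdw)}.
\]

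For each pair $i,j$ I would first note $C_j(B_i)\subset 2^{j+1}B_i$, so that $\|u\chi_{C_j(B_i)}\|_{L^{p_1'}(vdw)}\le vw(2^{j+1}B_i)^{1/p_1'}\big(\dashint_{2^{j+1}B_i}|u|^{p_1'}d(vw)\big)^{1/p_1'}$, and then, since every $x\in B_i$ belongs to $2^{j+1}B_i$, bound $\big(\dashint_{2^{j+1}B_i}|u|^{p_1'}d(vw)\big)^{1/p_1'}\le\dashint_{B_i}\big(\mathcal{M}^{vw}(|u|^{p_1'})\big)^{1/p_1'}d(vw)$ through the weighted maximal operator $\mathcal{M}^{vw}$. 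Using $\frac1{p_1}+\frac1{p_1'}=1$ this gives
\[
vw(2^{j+1}B_i)^{\frac1{p_1}}\|u\chi_{C_j(B_i)}\|_{L^{p_1'}(vdw)}\le\frac{vw(2^{j+1}B_i)}{vw(B_i)}\int_{B_i}\big(\mathcal{M}^{vw}(|u|^{p_1'})\big)^{\frac1{p_1'}}d(vw).
\]

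Next I would bring in that $vw\in A_\infty(dx)$ with $r_{vw}\le r_w\mathfrak{r}_v(w)$ from Remark \ref{remark:product-weight}: since $nr_{vw}\le nr_w\mathfrak{r}_v(w)<2M-\widetilde{C}$, one can pick $p_0$ with $r_{vw}\le p_0<(2M-\widetilde{C})/n$ and $vw\in A_{p_0}(dx)$, and the doubling estimate \eqref{doublingcondition} yields $vw(2^{j+1}B_i)/vw(B_i)\lesssim 2^{jnp_0}$. Hence the $j$-dependent factor is $\lesssim 2^{-j\varepsilon_0}$ with $\varepsilon_0:=2M-\widetilde{C}-np_0>0$, which is summable, so that the inner sum over $j\ge 4$ is controlled by $\int_{B_i}\big(\mathcal{M}^{vw}(|u|^{p_1'})\big)^{1/p_1'}d(vw)$. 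Summing over $i$, the bounded overlap of $\{B_i\}_i$ together with the Kolmogorov-type inequality \eqref{maximal-u} (Remark \ref{remark:Kolmogorov}, applied with $\widetilde p=p_1$ and using $\|u\|_{L^{p_1'}(vdw)}=1$) gives $\sum_i\int_{B_i}\big(\mathcal{M}^{vw}(|u|^{p_1'})\big)^{1/p_1'}d(vw)\lesssim vw(\bigcup_i B_i)^{1/p_1}$, which is the desired bound. The only genuinely delicate point is securing an exponent $p_0<(2M-\widetilde{C})/n$ with $vw\in A_{p_0}(dx)$; this is exactly where the assumption $2M>\widetilde{C}+nr_w\mathfrak{r}_v(w)$ together with $r_{vw}\le r_w\mathfrak{r}_v(w)$ enters, and everything else is a routine geometric-series computation.
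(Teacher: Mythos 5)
Your argument is correct and follows essentially the same route as the paper's proof: fix $u$ of unit $L^{p_1'}(vdw)$-norm, fold the average over $C_j(B_i)$ into the maximal operator $\mathcal{M}^{vw}$ evaluated on $B_i$, sum the geometric series in $j$, and finish with the Kolmogorov/bounded-overlap bound \eqref{maximal-u}. The only (cosmetic) difference is how you control $vw(2^{j+1}B_i)/vw(B_i)$: you invoke $r_{vw}\le r_w\mathfrak{r}_v(w)$ from Remark \ref{remark:product-weight} and apply the doubling inequality \eqref{doublingcondition} directly to $vw$ with an exponent $p_0$ chosen slightly above $r_{vw}$, whereas the paper reaches an equivalent bound via the two-weight inequality \eqref{pesosineq:Ap} with $w\in A_p(dx)$, $v\in A_q(w)$ and $r=pq$ slightly above $r_w\mathfrak{r}_v(w)$; both produce a summable factor $2^{-j(2M-\widetilde C-n\cdot(\text{exponent}))}$ under the standing hypothesis $2M>\widetilde C+nr_w\mathfrak{r}_v(w)$. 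One pedantic remark: you should take $p_0>r_{vw}$ strictly (not $\ge$) to guarantee $vw\in A_{p_0}(dx)$, since the infimum defining $r_{vw}$ need not be attained; there is ample room for this because $r_{vw}\le r_w\mathfrak{r}_v(w)<(2M-\widetilde C)/n$.
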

\begin{proof}
Fix $u$ so that $\|u\|_{L^{p_1'}(vdw)}=1$. 
Note that we can find $p>r_w$, $q>\mathfrak{r}_v(w)$ so that $2M>\widetilde{C}+nr$ with $r=pq$. In particular, $w\in A_p(dx)$, $v\in A_q(w)$ and we have \eqref{pesosineq:Ap} at our disposal. 
This, together with \eqref{hypothesislastestimate} and \eqref{maximal-u} with $\widetilde{p}=p_1$, allows us   to show  that
\begin{align*}
&\sum_{i}\sum_{j\geq 4}\mathcal{I}_{ij}\|u\chi_{C_j(B_i)}\|_{L^{p_1'}(vdw)}\\
&\qquad\lesssim
\alpha
\sum_{i}\sum_{j\geq 4}vw(B_i)2^{-j(2M-\widetilde{C}-n r)}\left(\dashint_{C_j(B_i)}|u(x)|^{p_1'}d(vw)(x)\right)^{\frac{1}{p_1'}}
\\
&\qquad\lesssim
\alpha\sum_{i}vw(B_i)\inf_{x\in B_i}\left(\mathcal{M}^{vw}\big(|u|^{p_1'}\big)(x)\right)^{\frac{1}{p_1'}}
\\
&\qquad\lesssim
\alpha
\sum_{i}\int_{B_i}\left(\mathcal{M}^{vw}\big(|u|^{p_1'}\big)(x)\right)^{\frac{1}{p_1'}}v(x)dw(x)
\lesssim
\alpha vw\Big(\bigcup_{i}B_i\Big)^{\frac1{p_1}}.
\end{align*}
This readily leads to the desired estimate.
\end{proof}
\subsubsection*{Proof of Theorem \ref{thm:wtG}}
Throughout the proof fix $w\in A_2(dx)$ and denote   $q_-:=q_-(L_w)$ and $q_+:=q_+(L_w)$. 

If $p\in\mathcal{W}_v^w\left(\max\{r_w,q_-\},q_+\right)$, then \eqref{eq:Lp-wtG} follows easily from Lemma \ref{lem:wVSF} and  Proposition \ref{prop:wRR}. Indeed, we have
\begin{multline}\label{verticalGtildesmallrange}
\big\|\widetilde\Grm_{\hh}^w f\big\|_{L^p(vdw)}=
\Bigg\|\left(\int_{0}^{\infty} 
\big|t \nabla (t^2L_w)^{\frac{1}{2}} e^{-t^2 L_w}\big(\sqrt{L_w}f\big)\big|^2 \frac{dt}{t}\right)^{\frac{1}{2}}\Bigg\|_{L^p(vdw)}
\\
\lesssim \big\|\sqrt{L_w} f\big\|_{L^p(vdw)} \lesssim \|\nabla f\|_{L^p(vdw)}.
\end{multline}

In accordance with \eqref{intervalrsw}, to go below $\mathfrak{r}_v(w)\max\{r_w,q_-\}$, we shall show that if
{  $p$ satisfies}
\begin{align}\label{choicepvertical}
\mathfrak{r}_v(w)\max\left\{r_w, (q_-)_{w,*}\right\}<p<\mathfrak{r}_v(w)\max\{r_w,q_-\},
\end{align}
then  for any  $\alpha>0$ and $f\in\mathcal S$ it follows that 
 \begin{align}\label{weaknormvertical}
vw\left(\bbr{x\in \R^n: \widetilde\Grm_{\hh}^w f(x)>\alpha}\right) \lesssim \frac1{\alpha^p} \int_{\R^n} |\nabla f|^p vdw.
\end{align}
Hence using interpolation between Sobolev spaces (see \cite{Ba09}), we shall conclude the desired estimate.

In order to prove \eqref{weaknormvertical}, we apply to $f$ the Calder\'on-Zygmund decomposition in Lemma \ref{lem:CZweighted}  at height $\alpha>0$ for the product weight $vw$ (recall that $r_{vw}\leq r_{w}\mathfrak{r}_v(w)<p$, see Remark \ref{remark:product-weight}). 
Thus by \eqref{CZ:decomp}
\begin{multline*}
vw\left(\bbr{x\in \R^n: \widetilde\Grm_{\hh}^w f(x)>\alpha}\right)
\leq 
vw\left(\bbr{x\in \R^n: \widetilde\Grm_{\hh}^w g(x)>\frac{\alpha}{3}}\right)
\\
+
vw\Big(\Big\{x\in \R^n: \widetilde\Grm_{\hh}^w \Big(\sum_{i} b_i\Big)(x)>\frac{2\alpha}{3}\Big\}\Big) =:I+II.
\end{multline*}

Note that by {  Remark \ref{remark:intervalnotempty}} we can pick $q$ such that 
\begin{align}\label{pickq}
\mathfrak{r}_v(w)\max\{r_w,q_-\}<q<\min\Big\{\frac{q_+}{\mathfrak{s}_v(w)},p_{vw}^*\Big\}.
\end{align}
Keeping this choice of $q$, by \eqref{verticalGtildesmallrange} we have $\big\|\widetilde\Grm_{\hh}^w f\big\|_{L^{q}(vdw)}\lesssim  \|\nabla f\|_{L^{q}(vdw)}$. Besides, since $p<q$ (see \eqref{choicepvertical}), properties \eqref{CZ:g}-\eqref{CZ:overlap} yield
$$
I \lesssim \frac1{\alpha^{q}}\int_{\R^n} |\widetilde\Grm_{\hh}^w  g|^{q} vdw 
\lesssim \frac1{\alpha^{q}}\int_{\R^n} |\nabla g|^{q} vdw \lesssim \frac1{\alpha^p} \int_{\R^n} |\nabla f|^p vdw.
$$

To estimate  term $II$, for every $k\in \Z$, let $r_i:=2^{k}$ if $2^k\le r_{B_i}< 2^{k+1}$.  Then,
\begin{align*}
II&\le\,
vw\Big(\bigcup_{i} 16B_i\Big)
\\
&\quad+
vw\Bigg(\Bigg\{ x\in \R^n:\Bigg(\int_{0}^{\infty} \Big|t^2 \nabla L_w e^{-t^2 L_w}\Big(\sum_{i: r_i\le t}b_i\Big)(x)\Big|^2 \frac{dt}{t}\Bigg)^{\frac{1}{2}}>\frac{\alpha}{3}\Bigg\}\Bigg)
\\ &\quad+
vw\Bigg(\Bigg\{ x\in \R^n\setminus \bigcup_{i}16B_i: \Bigg(\int_{0}^{\infty} \Big|t^2 \nabla L_w e^{-t^2 L_w}\Big(\sum_{i: r_i>t}b_i\Big)(x)\Big|^2 \frac{dt}{t}\Bigg)^{\frac{1}{2}}>\frac{\alpha}{3}\Bigg\}\Bigg)
\\& \lesssim
 \frac{1}{\alpha^p}\int_{\R^n}|\nabla f|^pvdw+II_1+II_2,
\end{align*}
where we have used \eqref{doublingcondition} and \eqref{CZ:sum}. 

In order to estimate term $II_1$, write
\begin{multline}\label{1000}
\Bigg(\int_{0}^{\infty} \Big|\nabla t^2L_w e^{-t^2 L_w}\Big(\sum_{i: r_i\le t}b_i\Big)(x)\Big|^2 \frac{dt}{t}\Bigg)^{\frac{1}{2}}
\\
=
 \Bigg(\int_{0}^{\infty} \Big|t^3 \nabla L_w e^{-t^2 L_w}\Big(\frac1t\sum_{i: r_i \le t} b_i\Big)(x)\Big|^2 \frac{dt}{t}\Bigg)^{\frac{1}{2}}
 =\left(\int_0^{\infty}|T_tf_t(x)|^2\frac{dt}{t}\right)^{\frac{1}{2}},
\end{multline}
where $T_{t}:=t^3 \nabla L_w e^{-t^2 L_w}$ and $f_t(x):=\frac1t\sum_{i: r_i \le t} b_i(x)$.

Moreover, note that 
$
2\in \left(\max\{r_w,q_-\},q_+\right),
$
 then, by Remark \ref{rem:stable}, for every $v_0\in A_{2/\max\{r_w,q_-\}}(w)\cap RH_{\left(q_+/2\right)'}(w)$ we have $t^{\frac{3}{2}} \nabla L_w e^{-tL_w}\in \mathcal{O}(L^2(v_0dw)-L^2(v_0dw))$. In particular, $T_t$ is bounded from $L^2(v_0dw)$ to $L^2(v_0dw)$.
Consequently,
\begin{align*}
\left\|\left(\int_{0}^{\infty} |T_{t} f_t|^2 \frac{dt}{t}\right)^{\frac{1}{2}}\right\|_{L^2(v_0dw)}^2
 &=
\int_{0}^{\infty} \int_{\R^n}|T_{t} f_t|^2  v_0dw\frac{dt}{t}
\\ &\lesssim
\int_{0}^{\infty} \int_{\R^n}|f_t|^2  v_0dw \frac{dt}{t} =\norm{\left(\int_{0}^{\infty} |f_t|^2 \frac{dt}{t}\right)^{\frac{1}{2}}}_{L^2(v_0dw)}^2.
\end{align*}
Now, by extrapolation (see \cite[Theorem A.1]{ChMPA16} and also \cite[Theorem 3.31]{CUMPe11}), we obtain that for $\widetilde{v}\in A_{\infty}(w)$ and any $\widetilde{q}\in \mathcal{W}_{\widetilde{v}}^w(\max\{r_w,q_-\},q_+)$,
\begin{equation}\label{eq:2q}
\norm{\left(\int_{0}^{\infty} |T_{t} f_t|^2 \frac{dt}{t}\right)^{\frac{1}{2}}}_{L^{\widetilde{q}}(\widetilde{v}dw)}
\lesssim \norm{\left(\int_{0}^{\infty} |f_t|^2 \frac{dt}{t}\right)^{\frac{1}{2}}}_{L^{\widetilde{q}}(\widetilde{v}dw)}.
\end{equation}
In particular the above inequality holds for our choices of $q$ and $v$.

Next, the proof follows much as in \cite[p. 543]{AC05}, but we write the details for the sake of completeness. Consider the following sum:
$$
\beta_k:=\sum_{i:r_i=2^k} \frac{b_i}{r_i},
$$
and note that
$$
f_t=\frac1t\sum_{i: r_i \le t} b_i = \sum_{k:2^k\le t} \frac{2^k}t\sum_{i: r_i =2^k} \frac{b_i}{r_i} = \sum_{k:2^k\le t} \frac{2^k}t \beta_k.
$$
By  Cauchy-Schwartz inequality, for every $t>0$,
$$
|f_t|^2 \le \Bigg(\sum_{k:2^k\le t} \frac{2^k}t |\,\beta_k|^2\Bigg) \Bigg(\sum_{k:2^k\le t} \frac{2^k}t \Bigg)\lesssim \sum_{k:2^k\le t} \frac{2^k}t |\,\beta_k|^2 =\sum_{k\in \Z} \frac{2^k}t |\,\beta_k|^2\chi_{[2^k,\infty)}(t),
$$
and hence,
$$
\int_{0}^{\infty} |f_t|^2 \frac{dt}{t} \lesssim \sum_{k\in \Z} \int_{2^k}^{\infty}  \frac{2^k}t \frac{dt}{t} \, |\,\beta_k|^2 =\sum_{k\in \Z}  |\,\beta_k|^2.
$$
Using the bounded overlap property \eqref{CZ:overlap}, the fact that $r_i\approx r_{B_i}$, and also \eqref{CZ:PS}, we have
\begin{align*}
\left\|\left(\int_{0}^{\infty} |f_t|^2 \frac{dt}{t}\right)^{\frac{1}{2}}\right\|_{L^q(vdw)}^q 
&\lesssim
\Bigg\|\Big(\sum_{k\in \Z}  |\,\beta_k|^2\Big) ^{\frac{1}{2}} \Bigg\|_{L^q(vdw)}^q \!\!
\lesssim\!\! \int_{\R^n} \sum_{i} \frac{|b_i|^q}{r_i^q} vdw
\\
&\lesssim \alpha^q \sum_{i} vw(B_i) \lesssim \alpha^{q-p} \int_{\R^n} |\nabla f|^p vdw.
\end{align*}
This estimate, \eqref{1000}, and \eqref{eq:2q} with $q$ and $v$, yield as desired
$$
II_1 \lesssim \frac{1}{\alpha^q} \left\|\left(\int_{0}^{\infty} |T_{t} f_t|^2 \frac{dt}{t}\right)^{\frac{1}{2}}\right\|_{L^q(vdw)}^q
\!\!\!\lesssim \frac{1}{\alpha^q} \Bigg\|\Big(\sum_{k\in \Z}  |\,\beta_k|^2\Big) ^{\frac{1}{2}} \Bigg\|_{L^q(vdw)}^q
\!\!\!\lesssim \frac{1}{\alpha^p} \int_{\R^n} |\nabla f|^p vdw.
$$

In order to estimate term $II_2$,
notice that
$$
\Bigg(\int_{0}^{\infty} \Big|t^2 \nabla L_w e^{-t^2 L_w}\Big(\sum_{i: r_i> t}b_i\Big)\Big|^2 \frac{dt}{t}\Bigg)^{\frac{1}{2}}
\!\!\!\le\! \sum_{i} \Bigg(\int_{0}^{r_i} \Big|t^2 \nabla L_w e^{-t^2 L_w}b_i\Big|^2 \frac{dt}{t}\Bigg)^{\frac{1}{2}}\!=: \sum_{i} T_i b_i.
$$
Then, by duality, we have
\begin{align*}
II_2 &\lesssim \frac{1}{\alpha^{q}} \int_{\R^n\setminus \bigcup_{i}16B_i} \Big|\sum_{i} T_i b_i(x)\Big|^{q}v(x)dw(x)
\\
&
\leq \frac{1}{\alpha^{q}} \Bigg(\sup_{\norm{u}_{L^{q'}(vdw)}=1} \sum_{i}\int_{\R^n \setminus \bigcup_{i}16B_i}  \left|T_i b_i(x)\right|\,  \left|u(x)\right|\, v(x)dw(x) \Bigg)^{q} 
\\
&
\leq \frac{1}{\alpha^{q}} \Bigg(\sup_{\norm{u}_{L^{q'}(vdw)}=1} \sum_{i}\sum_{j\geq 4}\int_{C_j(B_i)}  \left|T_i b_i(x)\right|\,  \left|u(x)\right|\, v(x)dw(x) \Bigg)^{q} 
\\ &\lesssim
\frac1{\alpha^{q}} \Bigg(\sup_{\norm{u}_{L^{q'}(vdw)}=1} \sum_{i} \sum_{j\ge 4} \norm{T_i b_i}_{L^{q}(C_j(B_i),vdw)} \norm{u}_{L^{q'}(C_j(B_i),vdw)} \Bigg)^{q}.
\end{align*}

{ To  estimate $ \norm{T_i b_i}_{L^{q}(C_j(B_i),vdw)} $, we pick $p_0$ close enough to $q_-$, and $q_0$ close enough to $q_+$  such that 
\begin{equation}\label{p0q0non-homG}
q_-<p_0<2<q_0<q_{+},  \quad \text{and} \quad v\in A_{\frac{q}{p_0}}(w)\cap RH_{\left(\frac{q_0}{q}\right)'}(w).
\end{equation}
Note that  $\mathcal{W}_v^w(q_-,q_+)\neq \emptyset$ since by assumption $\mathcal{W}_v^w(\max\{r_w,q_-\},q_+)\neq \emptyset$ and $\mathcal{W}_v^w(\max\{r_w,q_-\},q_+)\subset \mathcal{W}_v^w(q_-,q_+)$. 
 Notice also that applying Remark \ref{rem:stable} with $v\equiv 1$,  we have $t^{\frac{3}{2}} \nabla L_w e^{-t L_w} \in \Ocal(L^{p_0}(w)-L^{q_0}(w))$.
Then, by Minkowski's integral inequality, Lemma \ref{ARHsinpesoconpeso} $(a)$ and $(b)$ (see \eqref{p0q0non-homG}), \eqref{CZ:PS} (see \eqref{pickq}), and recalling that $r_i\approx r_{B_i}$, for $j\geq 2$,
\begin{align*}
\left\|T_i\right.& b_i\left.\right\|_{L^{q}(C_j(B_i),vdw)}  
 =vw(2^{j+1}B)^{\frac{1}{q}}
\Bigg(\dashint_{C_j(B_i)} \left(\int_{0}^{r_i} \Big|t^3 \nabla L_w e^{-t^2 L_w}b_i\Big|^2 \frac{dt}{t^3}\right)^{\frac{q}{2}}  d(vw)\Bigg)^{\frac{1}{q}}
\\
& \lesssim vw(2^{j+1}B_i)^{\frac{1}{q}}
\Bigg(\dashint_{C_j(B_i)} \left(\int_{0}^{r_i} \abs{t^3 \nabla L_w e^{-t^2 L_w}b_i}^2 \frac{dt}{t^3}\right)^{\frac{q_0}{2}}  dw\Bigg)^{\frac{1}{q_0}}
\\
& \leq vw(2^{j+1}B_i)^{\frac{1}{q}}
\left(\int_{0}^{r_i}\Bigg(\dashint_{C_j(B_i)}  \abs{t^3 \nabla L_w e^{-t^2 L_w}b_i}^{q_0}  dw\Bigg)^{\frac{2}{q_0}}\frac{dt}{t^3}\right)^{\frac{1}{2}} 
\\ & \lesssim 
2^{j\theta_1}  vw(2^{j+1}B_i)^{\frac{1}{q}} \br{\fint_{B_i} |b_i|^{p_0} dw}^{\frac{1}{p_0}} \Bigg(\int_{0}^{r_i}  \br{\frac{2^jr_{B_i}}{t}}^{2\theta_2} e^{-c\frac{4^j r_{B_i}^2}{t^2}}  \frac{dt}{t^3}\Bigg)^{\frac{1}{2}}
\\ & \lesssim
e^{-c4^j}\,vw(2^{j+1}B_i)^{\frac{1}{q}}\br{\fint_{B_i} \left|\frac{b_i}{r_{B_i}}\right|^{q} d(vw)}^{\frac{1}{q}}
 \lesssim
e^{-c4^j}\alpha  \,vw(2^{j+1}B_i)^{\frac{1}{q}}.
\end{align*}
%

 Now we use Lemma \ref{lemma:lastestimate} with $p_1=q$, $\mathcal{I}_{ij}=\|T_ib_i\|_{L^q(C_j(B_i),vdw)}$, $\{B_i\}_{i}$ the collection of balls given by Lemma \ref{lem:CZweighted}, and with $e^{-c4^j}$ replacing $2^{-j(2M-\widetilde{C})}$ (consequently $M$ and $\widetilde{C}$ do not play any role here).
Therefore, Lemma \ref{lemma:lastestimate} and \eqref{CZ:sum} imply
\begin{align*}
II_2 \lesssim
 vw\Big(\bigcup_{i}B_i\Big)
\lesssim \frac{1}{\alpha^p} \int_{\R^n} |\nabla f|^p vdw.
\end{align*}
Collecting the above estimates, we get the desired result.
\qed

\subsection{Non-homogeneous conical square function }\label{section:nono-homo-conical}
In this section, we shall prove weighted boundedness in Sobolev spaces for the inhomogeneous conical square function  $\widetilde\Scal_{\hh}^{w}$ defined in \eqref{conicalheat}.
The analogous result  for elliptic operators was studied in \cite{HMay09} for the Riesz transform characterization of Hardy spaces. See also \cite{PA17} for the the Riesz transform characterization of weighted Hardy spaces.
Our result is stated as follows.
\begin{theorem}\label{thm:boundednesswidetildeS}
Given $w\in A_2(dx)$, $v\in A_{\infty}(w)$,
assume that
\begin{align}\label{notempty}
\mathcal{W}_v^w\left(\max\left\{r_w,q_-(L_w)\right\},q_+(L_w)\right)\neq \emptyset.
\end{align}
Then, for every $h\in \mathcal S$ and $p\in\mathcal{W}_v^w\left(\max\left\{r_w,(p_-(L_w))_{w,*}\right\},p_+(L_w)\right)$, it holds
\begin{align}\label{boundednesswidetilde}
\big\|\widetilde{\Scal}_{\hh}^wh\big\|_{L^p(w)}\lesssim \|\nabla h\|_{L^p(w)}.
\end{align}
\end{theorem}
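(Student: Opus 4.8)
\textbf{Proof strategy for Theorem \ref{thm:boundednesswidetildeS}.}
The plan is to mirror the argument just carried out for $\widetilde\Grm_{\hh}^w$ in Theorem \ref{thm:wtG}, but now in the conical (tent-space) setting and with the \emph{unweighted} Sobolev exponent $p$, so I will work with the weight $\varpi = w$ itself in the Calder\'on--Zygmund decomposition. First I would dispose of the easy range: for $p\in\mathcal{W}_w^w(\max\{r_w,p_-(L_w)\},p_+(L_w))$ (which is nonempty by \eqref{notempty}, since $\mathcal{W}_v^w(\cdot)\neq\emptyset$ forces the corresponding unweighted interval to be nonempty), one writes
\[
\widetilde\Scal_{\hh}^w h(x)
=\left(\iint_{\Gamma(x)}\big|t^{-1}(t^2L_w)e^{-t^2L_w}\big(L_w^{-1/2}\sqrt{L_w}\,h\big)(y)\big|^2\frac{dw(y)\,dt}{t\,w(B(y,t))}\right)^{1/2},
\]
and after rewriting $t^{-1}(t^2L_w)e^{-t^2L_w}L_w^{-1/2} = t\,L_w^{1/2}e^{-t^2L_w}$ one recognizes a conical square function bounded on $L^p(w)$ by the comparability of $\sqrt{L_w}h$ and $\nabla h$ from Proposition \ref{prop:wRR}; here I would invoke the boundedness on $L^p(w)$ of the relevant conical square function (built on $t\nabla e^{-t^2L_w}$-type kernels, or rather $t\sqrt{L_w}e^{-t^2L_w}$), using the off-diagonal estimates of Lemma \ref{lem:ODweighted} together with the extrapolation/good-$\lambda$ machinery exactly as in \cite{ChMPA16}. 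The point of this first step is that it reduces matters to \emph{lowering} the exponent $p$ down to $\max\{r_w,(p_-(L_w))_{w,*}\}$.

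Second, for $p$ in the new lower range $\max\{r_w,(p_-(L_w))_{w,*}\}<p<\max\{r_w,p_-(L_w)\}$, I would prove the weak-type estimate
\[
w\big(\{x:\widetilde\Scal_{\hh}^w h(x)>\alpha\}\big)\lesssim \frac1{\alpha^p}\int_{\R^n}|\nabla h|^p\,dw
\]
and then interpolate between Sobolev spaces as in \cite{Ba09}. To do this I apply the Calder\'on--Zygmund decomposition (Lemma \ref{lem:CZweighted}) to $h$ at height $\alpha$ with respect to $dw$ (note $w\in A_p(dx)$ since $p>r_w$), obtaining $h=g+\sum_i b_i$. The good part $g$ is handled by the $L^q(w)$-bound for some $q$ with $\max\{r_w,p_-(L_w)\}<q<\min\{p_+(L_w)/s_w,\,p_w^*\}$ (such $q$ exists by the analogue of Remark \ref{remark:intervalnotempty} in the unweighted case) combined with $|\nabla g|\lesssim\alpha$ and property \eqref{CZ:sum}. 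For the bad part, I split $\sum_i b_i = \sum_{i:r_i\le t}b_i + \sum_{i:r_i>t}b_i$ inside the cone integral, exactly as in the proof of Theorem \ref{thm:wtG}: the "$r_i\le t$" piece is treated by the vector-valued/tent-space bound obtained from $L^2(w)$-boundedness of $t^{-1}(t^2L_w)e^{-t^2L_w}$ (which is $\sqrt{t}\,\cdot t^{3/2}L_we^{-t^2L_w}$-type and hence in $\Ocal(L^2(w)-L^2(w))$ by Lemma \ref{lem:ODweighted} and Remark \ref{rem:stable}) followed by the extrapolation to $L^q(w)$ and the Cauchy--Schwarz/$\beta_k$-trick with \eqref{CZ:PS}; the "$r_i>t$" piece is estimated by duality against $u\in L^{q'}(w)$, decomposing $\R^n\setminus\bigcup_i 16B_i$ into annuli $C_j(B_i)$, and bounding $\|T_ib_i\|_{L^q(C_j(B_i),w)}$ using Minkowski's inequality, the $\Ocal(L^{p_0}(w)-L^{q_0}(w))$ off-diagonal estimates for a suitable pair $p_-(L_w)<p_0<2<q_0<p_+(L_w)$, and \eqref{CZ:PS}; the resulting geometric gain $e^{-c4^j}$ lets me apply Lemma \ref{lemma:lastestimate} (with $v\equiv 1$, $p_1=q$) and then \eqref{CZ:sum} to close.

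\textbf{Main obstacle.} The delicate point, as in Theorem \ref{thm:wtG}, is the annular estimate for the "$r_i>t$" term: one must convert the time-integral $\int_0^{r_i}|t^{-1}(t^2L_w)e^{-t^2L_w}b_i|^2\frac{dt}{t\,w(B(y,t))}$ over the cone $\Gamma(x)$ with $x\in C_j(B_i)$, $j\ge 4$, into a clean bound of the form $e^{-c4^j}\alpha\,w(2^{j+1}B_i)^{1/q}$. This requires carefully tracking where the doubling measure $w(B(y,t))$ in the denominator can be compared to $w(B_i)$ (using $w\in A_\infty(dx)$ and the fact that on the relevant part of the cone $t\lesssim r_i\le r_{B_i}$ while $|y-x|\lesssim t$), extracting the Gaussian factor $e^{-c4^jr_{B_i}^2/t^2}$ from the off-diagonal estimates with enough room to absorb the polynomial factors $(2^jr_{B_i}/t)^{\theta}$ and the volume growth, and making sure the non-homogeneous power of $t$ (one fewer than the order of $L_w$ would dictate) is compensated exactly by the factor $r_i\approx r_{B_i}$ coming from $\|b_i/r_{B_i}\|$ in \eqref{CZ:PS}. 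Everything else is, as above, a faithful transcription of the $\widetilde\Grm_{\hh}^w$ argument with conical square functions replacing vertical ones and with $v$ set to $1$.
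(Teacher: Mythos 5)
Your overall architecture is right (prove the Sobolev bound on the easy range, then lower the exponent via a Calder\'on--Zygmund decomposition plus a weak-type estimate plus real interpolation), but the decomposition you propose for the bad part is not the one the paper uses, and I do not think it closes. You propose to split $\sum_i b_i = \sum_{i:r_i\le t}b_i + \sum_{i:r_i>t}b_i$ inside the cone integral, transplanting the argument of Theorem \ref{thm:wtG}. The paper's proof of Theorem \ref{thm:boundednesswidetildeS} instead writes $h = g + \widetilde b + \widehat b$ with $\widetilde b = \sum_i A_{r_{B_i}} b_i$, $\widehat b = \sum_i B_{r_{B_i}} b_i$, where $B_{r_{B_i}}=(I-e^{-r_{B_i}^2L_w})^M$ and $A_{r_{B_i}}=I-B_{r_{B_i}}$. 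The smoothed piece $\widetilde b$ is then controlled through Lemma \ref{lemma:arbi} (which bounds $\|\nabla\widetilde b\|_{L^{p_1}(vdw)}$) and fed into the already-proved bound \eqref{Sobolev}, while $\widehat b$ inherits the $(s^2/t^2)^M$ decay of $\mathcal{T}_{t,s}=(e^{-t^2L_w}-e^{-(t^2+s^2)L_w})^M$ from Proposition \ref{prop:lebesgueoff-dBQ}. This is the Hofmann--Mayboroda-style decomposition that is tailored to conical square functions; your $r_i\le t$/$r_i>t$ split is tailored to the pointwise vertical square function.

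The concrete gap in your route is the global \emph{``$r_i\le t$''} term. In the vertical case one has the pointwise inequality $\int_0^\infty|f_t(x)|^2\,dt/t\lesssim\sum_k|\beta_k(x)|^2$, and bounded overlap finishes the job. In the conical case the corresponding quantity is $\iint_{\Gamma(x)}|f_t(y)|^2\,\frac{dw(y)\,dt}{t\,w(B(y,t))}$, and the Cauchy--Schwarz/$\beta_k$ argument only yields a bound of the shape $\lesssim\sum_k \Mcal^w(|\beta_k|^2)(x)$. Controlling $\bigl\|\bigl(\sum_k\Mcal^w(|\beta_k|^2)\bigr)^{1/2}\bigr\|_{L^q(vdw)}$ then requires a vector-valued Fefferman--Stein estimate with inner exponent $1$ and outer exponent $q/2$, which is only available for $q>2$, whereas the CZ exponent $q$ chosen in this argument need not exceed $2$. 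Similarly, the claimed reduction ``$\|\mathcal{A}_w(T_tf_t)\|\lesssim\|\mathcal{A}_w(f_t)\|$'' is a tent-space bound involving the parabolic weight $v_0 w(B(y,t))/w(B(y,t))$, not a plain $L^2(w)\to L^2(w)$ uniform bound, and this needs a separate justification that you do not supply. Two additional, smaller remarks: (i) your treatment of the easy range skips the key comparability $\|\Scal_{1/2,\hh}^w f\|\approx\|\Scal_{\hh}^w f\|$, which the paper establishes as a non-trivial separate statement (Proposition \ref{prop:widetildeS-heatS}) using change-of-angle estimates — it is not an off-the-shelf consequence of \cite{ChMPA16}; (ii) the displayed conclusion $L^p(w)$ in the statement is very likely a typo for $L^p(vdw)$ (the range of $p$ depends on $v$, and the theorem is subsequently invoked in $L^p(vdw)$), so your choice $\varpi=w$ in the CZ decomposition would give a strictly weaker statement than what the paper actually uses downstream; the paper runs the CZ decomposition with $\varpi=vw$.
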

In order to prove this theorem, we shall use Lemma \ref{lemma:arbi}  and Proposition \ref{prop:widetildeS-heatS}. Lemma \ref{lemma:arbi} will be also useful in the proof of Proposition \ref{prop:timeD} (all these results are stated below). 
 \begin{lemma}\label{lemma:arbi}
Let $w\in A_2(dx)$ and $v\in A_{\infty}(w)$ be such that $\mathcal{W}_v^w(q_-(L_w),q_+(L_w))\neq \emptyset$, and let  
$$
p\in (\mathfrak{r}_v(w)\max\{r_w,(q_-(L_w))_{w,*}\},\mathfrak{r}_v(w)\max\{r_w,q_-(L_w)\}).
$$
Given $\alpha >0$ and $f\in \mathcal{S}$ 
such that $\|\nabla f\|_{L^p(vdw)}<\infty$, let $\{b_i\}_i$ be the collection of smooth functions from Lemma \ref{lem:CZweighted} (applied to $f$, $p$, $\alpha$, and  ${\varpi}=vw$). Write $\widetilde{b}:=\sum_{i=1}^{\infty}A_{r_{B_i}}b_i$, where $A_{r_{B_i}}:=I-(I-e^{-r_{B_i}^2L_w})^M$ and 
$M\in \N$ is arbitrarily large. 
 Then, for $p_1\in \mathcal{W}_v^w(q_-(L_w),q_+(L_w))$ such that
$1\leq p_1<p_{vw}^*$ (note that following \eqref{lateruse} we get that $\mathfrak{r}_v(w)q_-(L_w)<p_{vw}^*$), there holds
\begin{align*}
\|\nabla\widetilde{b}\,\|_{L^{p_1}(vdw)}^{p_1}\lesssim 
\alpha^{p_1-p}\|\nabla f\|_{L^p(vdw)}^p.
\end{align*}
\end{lemma}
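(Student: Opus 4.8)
\textbf{Proof proposal for Lemma \ref{lemma:arbi}.}
The plan is to expand $A_{r_{B_i}} = I - (I - e^{-r_{B_i}^2 L_w})^M$ via the binomial theorem, so that $A_{r_{B_i}} b_i$ becomes a sum over $k=1,\dots,M$ of terms of the form $c_k\, e^{-k r_{B_i}^2 L_w} b_i$; thus $\nabla \widetilde b = \sum_i \sum_{k=1}^M c_k\, \nabla e^{-k r_{B_i}^2 L_w} b_i$. The key point is that each piece $\nabla e^{-k r_{B_i}^2 L_w} b_i$ carries a gradient acting on a localized bump $b_i$ supported in $B_i$, so we are exactly in a position to use off-diagonal estimates for $\sqrt t\,\nabla e^{-tL_w}$ (Lemma \ref{lem:ODweighted}(d) and its weighted consequences) together with the Poincaré-type control \eqref{CZ:PS} of $b_i$, namely $(\fint_{B_i}|b_i|^q d(vw))^{1/q}\lesssim \alpha r_{B_i}$ valid for $q<p_{vw}^*$, which is why the hypothesis $p_1<p_{vw}^*$ is imposed.

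The main steps, in order, would be: first, by the triangle inequality in $L^{p_1}(vdw)$ and the binomial expansion, reduce to estimating $\|\nabla e^{-k r_{B_i}^2 L_w} b_i\|_{L^{p_1}(vdw)}$ uniformly in $k\le M$. Second, decompose $\R^n = \bigcup_{j\ge 1} C_j(B_i)$ (with $C_1(B_i)=4B_i$) and on each annulus apply the weighted off-diagonal estimate for $\sqrt t\,\nabla e^{-tL_w}$ with $t \approx r_{B_i}^2$: choosing $p_0$ near $q_-(L_w)$ and $q_0$ near $q_+(L_w)$ with $v\in A_{p_1/p_0}(w)\cap RH_{(q_0/p_1)'}(w)$ (possible since $p_1\in\mathcal W_v^w(q_-(L_w),q_+(L_w))$), pass from the weighted norm on $C_j(B_i)$ to an unweighted $L^{q_0}$ average via Lemma \ref{ARHsinpesoconpeso}, apply $\sqrt t\nabla e^{-tL_w}\in\mathcal O(L^{p_0}(w)-L^{q_0}(w))$, then come back to the weighted $L^{p_0}$ average on $B_i$ by Lemma \ref{ARHsinpesoconpeso} again, and finally invoke \eqref{CZ:PS} with exponent $\max\{p_0,\dots\}<p_{vw}^*$; this produces a bound of the form $\|\nabla e^{-k r_{B_i}^2 L_w}(b_i\chi_{C_j(B_i)})\|_{L^{p_1}(C_l(B_i),vdw)} \lesssim 2^{j\theta_1}e^{-c4^{\max(j,l)}}\,\alpha\, vw(2^{l+1}B_i)^{1/p_1}$ after also controlling the "$\nabla$ hitting faraway annuli" pieces. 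Third, sum the geometric-in-$j$ and Gaussian-in-$4^{\max(j,l)}$ factors; combined with the bounded overlap \eqref{CZ:overlap} of $\{4B_i\}$ and raising to the power $p_1$, one gets $\|\nabla\widetilde b\|_{L^{p_1}(vdw)}^{p_1} \lesssim \alpha^{p_1}\sum_i vw(B_i)$. Fourth, invoke \eqref{CZ:sum}, $\sum_i vw(B_i)\lesssim \alpha^{-p}\int_{\R^n}|\nabla f|^p\,vdw$, to conclude $\|\nabla\widetilde b\|_{L^{p_1}(vdw)}^{p_1}\lesssim \alpha^{p_1-p}\|\nabla f\|_{L^p(vdw)}^p$ as claimed.

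The step I expect to be the main obstacle is the careful bookkeeping of the double annular decomposition when both the bump $b_i$ is split into $C_j(B_i)$ and the target norm is taken over a possibly different annulus $C_l(B_i)$ --- one must organize the off-diagonal decay so that the sum $\sum_{i}\sum_{j,l}$ collapses cleanly, and in particular make sure the $L^{p_1}$-to-$L^{q_0}$ and $L^{p_0}$-to-$L^{p_1}$ exchanges via Lemma \ref{ARHsinpesoconpeso} are applied with exponents compatible with the $A$/$RH$ memberships of $v$ and with $p_1<p_{vw}^*$ for \eqref{CZ:PS}. A minor but genuine point is that the gradient does not commute with the localization, so one should either use that $e^{-tL_w}1=1$ and work with $b_i$ directly (since $b_i$ is already compactly supported, no conservation trick is even needed here), which simplifies matters; the real work is purely the off-diagonal summation, so no new analytic input beyond what is already in Lemmas \ref{lem:ODweighted}, \ref{ARHsinpesoconpeso}, \ref{lem:CZweighted} and Remark \ref{rem:stable} should be required.
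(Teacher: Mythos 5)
There is a genuine gap in the summation over $i$. After expanding $A_{r_{B_i}}$ and applying the off-diagonal estimates, the best pointwise input you can hope for is an estimate of the form $\|\nabla e^{-kr_{B_i}^2L_w}b_i\|_{L^{p_1}(C_l(B_i),vdw)}\lesssim e^{-c4^l}\alpha\,vw(2^{l+1}B_i)^{1/p_1}$. The near-field pieces ($l=1$, supported in $4B_i$) indeed have bounded overlap, and there the elementary $\|\sum_i g_i\|_{L^{p_1}}^{p_1}\lesssim\sum_i\|g_i\|_{L^{p_1}}^{p_1}$ applies. But the far-field pieces live on $C_l(B_i)$ for $l\ge 2$, and for fixed $l\ge 2$ the dilates $2^{l+1}B_i$ have \emph{no} bounded-overlap property at all; the only tool your plan leaves is the triangle inequality, which gives $\|\sum_i g_i^{\rm far}\|_{L^{p_1}}\lesssim\alpha\sum_i vw(B_i)^{1/p_1}$. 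Raising that to the power $p_1$ produces $\alpha^{p_1}(\sum_i vw(B_i)^{1/p_1})^{p_1}$, which can be arbitrarily larger than $\alpha^{p_1}\sum_i vw(B_i)$ when $p_1>1$ (take $N$ balls of comparable measure: you get $N^{p_1}$ instead of $N$). So the sentence ``combined with the bounded overlap of $\{4B_i\}$ and raising to the power $p_1$, one gets $\|\nabla\widetilde b\|_{L^{p_1}(vdw)}^{p_1}\lesssim\alpha^{p_1}\sum_i vw(B_i)$'' does not follow from what precedes it; the far field is exactly the part that breaks, and you flagged the double annular bookkeeping as a ``minor obstacle'' when it is in fact the crux.

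The paper resolves this by dualizing rather than summing norms: write $\|\nabla\widetilde b\|_{L^{p_1}(vdw)}^{p_1}\lesssim\sup_{\|u\|_{L^{p_1'}(vdw)}=1}(\sum_k\sum_i\int|\sqrt{k}r_{B_i}\nabla e^{-kr_{B_i}^2L_w}(b_i/r_{B_i})|\,|u|\,vdw)^{p_1}$, then for fixed $u$ and $i$ decompose into annuli $C_j(B_i)$. The off-diagonal estimate $\sqrt\tau\nabla e^{-\tau L_w}\in\mathcal O(L^{p_1}(vdw)$-$L^{p_1}(vdw))$ (valid since $p_1\in\mathcal W_v^w(q_-,q_+)$) plus \eqref{CZ:PS} gives a factor $e^{-c4^j}\alpha\,vw(B_i)$, and the $u$-part on $C_j(B_i)$ is absorbed into $\inf_{x\in B_i}\mathcal M^{vw}(|u|^{p_1'})(x)^{1/p_1'}$. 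Crucially, after this step the sum over $i$ is a sum of $\int_{B_i}(\mathcal M^{vw}(|u|^{p_1'}))^{1/p_1'}vdw$, i.e., integrals over the original balls $B_i$ --- which \emph{do} have bounded overlap --- and Remark \ref{remark:Kolmogorov} (Kolmogorov's inequality, \eqref{maximal-u}) then yields the bound $\alpha\,vw(\bigcup_i B_i)^{1/p_1}$ before raising to the power $p_1$. That duality-plus-maximal-function mechanism is not an implementation detail you can skip: it is the specific device that makes the far-field tails summable with the right exponent. You should incorporate it, and you can also simplify your intermediate steps --- since $b_i$ is already supported in $B_i$ there is no need to split $b_i$ itself into annuli (only the output), and using $\sqrt\tau\nabla e^{-\tau L_w}$ directly on $L^{p_1}(vdw)$ spares you the $L^{p_0}$-$L^{q_0}$ detour through Lemma \ref{ARHsinpesoconpeso}.
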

\begin{proof}   First of all denote $q_-:=q_-(L_w)$ and $q_+:=q_+(L_w)$. By duality and expanding $A_{r_{B_i}}$, we have
\begin{multline*}
\|\nabla\widetilde{b}\|_{L^{p_1}(vdw)}^{p_1}
=\int_{\R^n} \Big|\nabla\Big(\sum_{i}\sum_{k=1}^{M}{C_{k,M}}e^{-kr_{B_i}^2L_w}b_i\Big)\Big|^{p_1}vdw
\\
\lesssim\sup_{\|u\|_{L^{p_1'}(vdw)}=1}\left(\sum_{k=1}^{M}\sum_{i}\int_{\R^n} \Big|\sqrt{k}r_{B_i}\nabla e^{-kr_{B_i}^2L_w}\Big(\frac{b_i}{r_{B_i}}\Big)\Big|\,|u|vdw\right)^{p_1}.
\end{multline*}
Besides, recall that by hypothesis $v\in A_{\frac{p_1}{q_-}}(w)\cap RH_{\left(\frac{q_+}{p_1}\right)'}(w)$ (see \eqref{intervalrsw}) and hence $\sqrt{\tau}\nabla e^{-\tau L_w}\in \mathcal{O}(L^{p_1}(vdw)-L^{p_1}(vdw))$. Using this, \eqref{CZ:PS}, and also \eqref{doublingcondition}, 
\begin{align*}
\int_{\R^n}& \left|\sqrt{k}r_{B_i}\nabla e^{-kr_{B_i}^2L_w}\left(\frac{b_i}{r_{B_i}}\right)\right|\,|u|vdw
\\\nonumber
&\lesssim \sum_{j\geq 1}vw(2^{j+1}B_i)\!\left(\dashint_{C_j(B_i)}\!\left|\!\sqrt{k}r_{B_i}\nabla e^{-kr_{B_i}^2L_w}\!\!\left(\frac{b_i}{r_{B_i}}\right)\!\right|^{p_1}\!\!\!\!d(vw)\!\right)^{\!\!\frac{1}{p_1}}\!\!\left(\dashint_{C_j(B_i)}\!\!|u|^{p_1'}d(vw)\right)^{\!\!\frac{1}{p_1'}}
\\\nonumber
&\lesssim
\sum_{j\geq 1}e^{-c4^j}vw(B_i)\left(\dashint_{B_i} \left|\frac{b_i}{r_{B_i}}\right|^{p_1}d(vw)\right)^{\frac{1}{p_1}}\,\inf_{x\in B_i}\left(\mathcal{M}^{vw}(|u|^{p_1'})(x)\right)^{\frac{1}{p_1'}}
\\
&
\lesssim \alpha \int_{B_i}\left(\mathcal{M}^{vw}(|u|^{p_1'})\right)^{\frac{1}{p_1'}}vdw.
\end{align*}
Consequently,  \eqref{maximal-u} with $\widetilde{p}=p_1$ and \eqref{CZ:sum} imply
\begin{multline*}
\|\nabla\widetilde{b}\|_{L^{p_1}(vdw)}^{p_1}
\lesssim 
\alpha^{p_1}\sup_{\|u\|_{L^{p_1'}(vdw)}=1}\left(\sum_{i}\int_{B_i}\left(\mathcal{M}^{vw}(|u|^{p_1'})\right)^{\frac{1}{p_1'}}vdw\right)^{p_1}
\\
\lesssim 
\alpha^{p_1} vw\Big(\bigcup_{i}B_i\Big)
\lesssim \alpha^{p_1-p}\int_{\R^n}|\nabla f|^{p}vdw.
\end{multline*}
\end{proof}
To formulate  our next result (proceeding similarly as in
 \cite{HMay09,PA17}), we introduce the following  conical square function
$$
\Scal_{1/2,\hh}^wf(x):=\left(\int_{B(x,t)}\int_0^{\infty}\big|t\sqrt{L_w}e^{-t^2L_w}f(y)\big|^2\frac{dw(y)\, dt}{tw(B(y,t))}\right)^{\frac{1}{2}}.
$$
Observe that $\widetilde\Scal_{\hh}^{w}f=\Scal_{1/2,\hh}^w\sqrt{L_w}f$. Our goal is to see that $\Scal_{1/2,\hh}^wf$ compares with $\Scal_{\hh}^wf$ (defined in \eqref{eq:conicalheat}) in some weighted spaces (see \cite[Proposition 4.5]{PAII18} for a general version of this result).
For the following statement we recall that $p_+(L_w)_w^{k,*}$ was defined in \eqref{p_w^*}. 

\begin{proposition}\label{prop:widetildeS-heatS}
Given $w\in A_2(dx)$, $v\in A_{\infty}(w)$, and $f\in L^2(w)$, there hold
\begin{list}{$(\theenumi)$}{\usecounter{enumi}\leftmargin=1cm \labelwidth=1cm\itemsep=0.2cm\topsep=.2cm \renewcommand{\theenumi}{\alph{enumi}}}

\item $\|\Scal_{\hh}^wf\|_{L^p(vdw)}\lesssim
\|\Scal_{1/2,\hh}^wf\|_{L^p(vdw)}$,\, for all\,  $p\in \mathcal{W}_v^w(0,p_+(L_w)^{2,*}_w)$;

\item $\|\Scal_{1/2,\hh}^wf\|_{L^p(vdw)}\lesssim\|\Scal_{\hh}^wf\|_{L^p(vdw)}$,\, for all\, $p\in \mathcal{W}_v^w(0,p_+(L_w)^{*}_w)$.
\end{list}

{  In particular, if $p\in \mathcal{W}_v^w(0,p_+(L_w)^{*}_w)$, we have
\begin{align*}
\|\Scal_{1/2,\hh}^wf\|_{L^p(vdw)}\approx\|\Scal_{\hh}^wf\|_{L^p(vdw)}.
\end{align*}}
\end{proposition}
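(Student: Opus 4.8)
The statement to prove is Proposition \ref{prop:widetildeS-heatS}, namely the two-sided comparison $\|\Scal_{\hh}^wf\|_{L^p(vdw)}\lesssim\|\Scal_{1/2,\hh}^wf\|_{L^p(vdw)}$ for $p\in \mathcal{W}_v^w(0,p_+(L_w)^{2,*}_w)$ and the reverse $\|\Scal_{1/2,\hh}^wf\|_{L^p(vdw)}\lesssim\|\Scal_{\hh}^wf\|_{L^p(vdw)}$ for $p\in \mathcal{W}_v^w(0,p_+(L_w)^{*}_w)$. Both inequalities follow from one general principle: if two conical square functions are built out of families $\{\phi_k(t^2L_w)\}$ and $\{\psi_k(t^2L_w)\}$ related by $\phi_k(t^2L_w) = \psi_k(t^2L_w)\circ\theta_k(t^2L_w)$ (or a sum of such products over a finite number of Taylor-type remainders) where the multiplier $\theta$ enjoys appropriate off-diagonal decay and a Poincaré-type smoothing gain, then the $L^p(vdw)$-norm of the first conical function is controlled by that of the second. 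Here the relevant algebraic identities are $t^2L_we^{-t^2L_w}= (t\sqrt{L_w}e^{-\frac{t^2}{2}L_w})\circ(t\sqrt{L_w}e^{-\frac{t^2}{2}L_w})$ for part $(a)$, and for part $(b)$ one writes $t\sqrt{L_w}e^{-t^2L_w}$ in terms of $s^2L_w e^{-s^2 L_w}$ via the standard resolvent/Calderón reproducing formula, i.e. using that $\sqrt{L_w}=c\int_0^\infty L_w e^{-\sigma^2 L_w}\,d\sigma$ truncated appropriately.

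\textbf{Step 1: reduce to a Schur-type / tent-space argument.} First I would set $F(y,t) = t\sqrt{L_w}e^{-t^2L_w}f(y)$ and $G(y,t) = t^2L_we^{-t^2L_w}f(y)$, so that $\Scal_{1/2,\hh}^wf = \mathcal A_w^1(F)$ and $\Scal_{\hh}^wf=\mathcal A_w^1(G)$ in the notation of Section \ref{section:change-of-angles} (up to the cosmetic reordering of the domain of integration in the definition of $\Scal_{1/2,\hh}^w$, which is just Fubini). For $(a)$ the identity $G(y,t)=\tfrac12\, (t\sqrt{L_w}e^{-\frac{t^2}2 L_w})\big[F(\cdot,t/\sqrt2)\big](y)\cdot(\text{constant})$—more precisely $t^2L_we^{-t^2L_w}=\big(\tfrac{t}{\sqrt2}\sqrt{L_w}e^{-\frac{t^2}2L_w}\big)^2$—lets me write $G(\cdot,t) = T_t\big(\widetilde F(\cdot,t)\big)$ where $\widetilde F(y,t)=\tfrac{t}{\sqrt 2}\sqrt{L_w}e^{-\frac{t^2}2L_w}f(y)$ is an honest rescaling of $F$, and $T_t = \tfrac{t}{\sqrt2}\sqrt{L_w}e^{-\frac{t^2}2L_w}$ satisfies the off-diagonal estimates on balls from Lemma \ref{lem:ODweighted}$(d)$ and Remark \ref{rem:stable} on the range dictated by $(q_-(L_w),q_+(L_w))$. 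Then a now-standard argument (cf. \cite[Proposition 3.2]{MaPAI17}, \cite{ChanglesAuscher}) shows that an operator-valued family acting fibrewise with such off-diagonal decay is bounded on the weighted tent space $T^p_2(vdw)$—equivalently on $L^p(vdw)$ for the conical functional—on the corresponding $\mathcal W_v^w$ range; the upper endpoint gets shifted down from $p_+(L_w)$ to $p_+(L_w)^{2,*}_w$ precisely because one invokes the gradient/square-root off-diagonal bounds which cost a Sobolev exponent (two derivatives' worth, hence the superscript $2$). A change-of-angles step via Proposition \ref{prop:alpha} absorbs the aperture mismatch created by the rescaling $t\mapsto t/\sqrt2$.

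\textbf{Step 2: the reverse inequality $(b)$.} For the reverse bound I would use a Calderón reproducing formula: there is $\varphi\in C_c^\infty(0,\infty)$ (or simply a suitable combination of heat-type functions) with $\int_0^\infty \varphi(\sigma)\,\tfrac{d\sigma}{\sigma}\cdot (\text{normalisation})$ such that $t\sqrt{L_w}e^{-t^2L_w}f = \int_0^\infty \Psi(t,\sigma)\big[(\sigma^2 L_w)e^{-\sigma^2 L_w}f\big]\,\tfrac{d\sigma}\sigma$ with a kernel $\Psi(t,\sigma)$ that decays rapidly in $\max\{t/\sigma,\sigma/t\}$; this is exactly the type of identity exploited in \cite{HMay09,PA17}. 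Plugging this into $\Scal_{1/2,\hh}^w$, applying Minkowski, and then the off-diagonal + change-of-angle machinery once more yields the pointwise-in-$L^p$ domination by $\Scal_{\hh}^w$, now on the range $\mathcal W_v^w(0,p_+(L_w)^*_w)$ (only one Sobolev exponent is lost here because only $\sqrt{L_w}$, i.e. one derivative, intervenes). The ``in particular'' statement is then immediate by intersecting the two ranges: on $\mathcal W_v^w(0,p_+(L_w)^*_w)$ both $(a)$ and $(b)$ hold since $p_+(L_w)^*_w\le p_+(L_w)^{2,*}_w$.

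\textbf{Main obstacle.} The technical heart is controlling the weighted off-diagonal tails uniformly after the rescaling/reproducing-formula step and feeding them through the conical (tent-space) functional while keeping track of the exact $\mathcal W_v^w$ exponent ranges — in particular verifying that the Sobolev shifts in the upper endpoints ($p_+(L_w)^{2,*}_w$ versus $p_+(L_w)^{*}_w$) are sharp and that the change-of-angles constants from Proposition \ref{prop:alpha} do not degenerate. Everything else is bookkeeping: the algebraic semigroup identities are elementary, and the $L^2(w)$ case is already contained in the Kato-type estimates of \cite{CUR15,CMR15}, so extrapolation (\cite[Theorem A.1]{ChMPA16}) upgrades $L^2(w)$ to the full weighted range once the off-diagonal bounds are in hand.
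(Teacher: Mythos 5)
Your outline catches the correct high-level skeleton (extrapolate to weighted $L^2$, exploit the reproducing formula for $\sqrt{L_w}$, split into regimes, feed off-diagonal estimates and change-of-angle results into the conical functional), and it is in the same spirit as the paper. But there are two genuine gaps in the way you set up the estimates, and both concern the source and handling of the Sobolev shift in the upper endpoint.

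First, your Step~1 rests on the claim that $T_t=\tfrac{t}{\sqrt 2}\sqrt{L_w}\,e^{-\frac{t^2}{2}L_w}$ satisfies the off-diagonal estimates on balls of Lemma~\ref{lem:ODweighted}$(d)$/Remark~\ref{rem:stable}. This is not available: those results concern $\sqrt{t}\,\nabla e^{-tL_w}$, not $\sqrt{t}\,\sqrt{L_w}\,e^{-tL_w}$, and the latter does not inherit exponential off-diagonal decay. In fact, if one expands $\sqrt{L_w}$ through the representation~\eqref{representationsquarerootofL}, the integral over the range $s>t$ produces only algebraic (power) decay in $d(E,F)/t$, not Gaussian decay, because $\sqrt{L_w}$ is a genuinely nonlocal operator. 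The paper's proof never treats $\sqrt{L_w}e^{-t^2L_w}$ as a single operator with Gaussian off-diagonal bounds; it writes $\Scal_{\hh}^w f$ directly via~\eqref{representationsquarerootofL} acting on $t^2\sqrt{L_w}e^{-t^2L_w}f$, splits into $s<t$ and $s>t$, and treats the $s>t$ piece with a distinct mechanism. Without that split, a Schur-type tent-space boundedness argument for a fibrewise $T_t$ does not get off the ground.

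Second, the mechanism you give for the appearance of $p_+(L_w)^{2,*}_w$ (versus $p_+(L_w)^{*}_w$) is misattributed to ``gradient/square-root off-diagonal bounds costing a Sobolev exponent.'' No gradient estimate enters the proof of this proposition at all. The Sobolev loss arises from the $s>t$ regime of the reproducing formula: there one is forced to invoke the small-angle change-of-angles Proposition~\ref{prop:Q} with some $q_0<p_+(L_w)$, which produces a factor $(t/s)^{n\widehat r(\frac1r-\frac{2}{q_0})}$, and the balance against the algebraic factor $(t/s)^{2k}$ (with $k=2$ for part~(a) and $k=1$ for part~(b)) plus an interpolation-in-$M$ loss $-2/M$ is what yields the condition $v_0\in RH_{\left(\frac{p_+(L_w)^{k,*}_w}{2}\right)'}(w)$ at the $L^2$ level. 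This is exactly the role of the inequality~\eqref{positive-III-1}. Relatedly, the ``rapid decay in $\max\{t/\sigma,\sigma/t\}$'' that you posit in Step~2 for the reproducing kernel is not true: decay is rapid only in the $\sigma<t$ direction, and only polynomial for $\sigma>t$; overstating this would lead you to expect no upper-endpoint loss at all, which contradicts the statement. Filling both gaps would require precisely the decomposition into $s\lessgtr t$, the use of $e^{-\tau L_w},\,\tau L_w e^{-\tau L_w}\in\mathcal O(L^2(w)-L^{q_0}(w))$, Proposition~\ref{prop:Q}, and the $M$-parameter Young-type trick, as in the paper.
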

\begin{proof}
We shall use extrapolation to prove both inequalities. Indeed, \cite[Theorem A.1, ($b$)]{ChMPA16} (or \cite[Theorem A.1, ($c$)]{ChMPA16}, if $p_+(L_w)^{*}_w=\infty$) allows us to obtain  $(a)$ from
\begin{align}\label{extrapol-1}
\|\Scal_{\hh}^wf\|_{L^2(v_0dw)}^2\lesssim\|{\Scal^w_{1/2,\hh}}f\|_{L^2(v_0dw)}^2,\quad \forall v_0\in  RH_{\left(\frac{p_+(L_w)^{2,*}_w}{2}\right)'}(w)
\end{align}
and $(b)$ from 
\begin{align}\label{extrapol2}
\|\Scal_{1/2,\hh}^wf\|_{L^2(v_0dw)}^2\lesssim\|\Scal_{\hh}^wf\|_{L^2(v_0dw)}^2, \quad \forall v_0\in  RH_{\left(\frac{p_+(L_w)^{*}_w}{2}\right)'}(w).
\end{align}

 To set the stage, fix $w\in A_2(dx)$ and  $v_0\in RH_{\left(\frac{p_+(L_w)^{k,*}_w}{2}\right)'}(w)$. 
 Here and below $k$ is either $1$ or $2$, depending whether we are proving \eqref{extrapol2} or \eqref{extrapol-1} respectively.

  Note that we can find $\widehat{r}$, $q_0$, $r$, and $M\in \N$ such that  $r_w<\widehat{r}<2$, $2<q_0<p_+(L_w)$, $q_0/2\leq r<\infty$, $v_0\in RH_{r'}(w)$, and
\begin{align}\label{positive-III-1}
k+\frac{n\,\widehat{r}}{2r}-\frac{n\,\widehat{r}}{q_0}-\frac{1}{M}
> 0.
\end{align}

Indeed, if $n\,r_w>kp_+(L_w)$, note that we can take $r_w<\widehat{r}<2$ close enough to $r_w$, $\varepsilon_0>0$ small enough, and $2<q_0<p_+(L_w)$, close enough to $p_+(L_w)$
so that for $r:=\frac{q_0n\widehat{r}}{2(1+\varepsilon_0)(n\widehat{r}-kq_0)}$ we have that
 $q_0/2\leq r<\infty$, $v_0\in RH_{r'}(w)$, and
\begin{align*}
k+\frac{n\widehat{r}}{2r}-\frac{n\widehat{r}}{q_0}=\frac{\varepsilon_0}{q_0}\left(n\widehat{r}-kq_0\right)>\frac{\varepsilon_0}{q_0}\left(nr_w-kp_+(L_w)\right)>0.
\end{align*}
Then, taking $M\in \N$ large enough we obtain \eqref{positive-III-1}.

If now  $n\,r_w\leq kp_+(L_w)$, our condition on the weight $v_0$ becomes $v_0\in A_{\infty}(w)$. Then, we take $r>\mathfrak{s}_{v_0}(w)$ and $q_0$ satisfying
$\max\left\{2,\frac{2rp_+(L_w)}{p_+(L_w)+2r}\right\}<q_0<\min\left\{p_+(L_w),2r\right\}$ if $p_+(L_w)<\infty$, or $q_0=2r$ if $p_+(L_w)=\infty$. Therefore, we have that $2<q_0<p_+(L_w)$, $q_0/2\leq r<\infty$, $v_0\in RH_{r'}(w)$, and
\begin{align*}
k+\frac{n\,r_w}{2r}-\frac{n\,r_w}{q_0}
>
k-\frac{n\,r_w}{p_+(L_w)}\geq 0.
\end{align*}
Taking further $r_w<\widehat{r}<2$ close enough to $r_w$ and $M\in \N$ large enough, we obtain that
\begin{align*}
k+\frac{n\,\widehat{r}}{2r}-\frac{n\,\widehat{r}}{q_0}-\frac{1}{M}
>
0.
\end{align*}
After this observation we show the desired estimate.

\medskip

We first prove \eqref{extrapol-1}.  
Use \eqref{representationsquarerootofL}, Minkowski's integral inequality, and also \eqref{doublingcondition} after noticing that $B(x,t)\subset B(y,2t)$, for all $y\in B(x,t)$. Thus,
\begin{align*}
&\Scal_{\hh}^wf(x)
\\
&\lesssim
\left(\int_0^{\infty}\left(\int_{0}^{t}\left(\int_{B(x,t)}|sL_we^{-s^2L_w}t^2\sqrt{L_w}e^{-t^2L_w}f(y)|^2dw(y)\right)^{\frac{1}{2}}\frac{ds}{s}\right)^2\frac{dt}{tw(B(x,t))}\right)^{\!\!\frac{1}{2}}
\\&
\,+
\left(\int_0^{\infty}\left(\int_{t}^{\infty}\left(\int_{B(x,t)}\left|sL_we^{-s^2L_w}t^2\sqrt{L_w}e^{-t^2L_w}f(y)\right|^2dw(y)\right)^{\frac{1}{2}}\frac{ds}{s}\right)^2\frac{dt}{tw(B(x,t))}\right)^{\!\!\frac{1}{2}}
\\
&
=:I+II.
\end{align*}

For $I$, consider $F(y,t):=t\sqrt{L_w}e^{-\frac{t^2}{2}L_w}f(y)$. Using the fact that $\tau L_we^{-\tau L_w}\in \mathcal{F}(L^2(w)-L^2(w))$, we get
\begin{align*}
&I
\\
&
\leq \!\!
\Bigg(\int_0^{\infty}\!\!\Bigg(\int_{0}^{t}\!\frac{st}{\frac{t^2}{2}+s^2}\!\Bigg(\mathop{\int}\limits_{B(x,t)}\!\Big|\Big(s^2+\frac{t^2}{2}\Big)L_we^{-\left(s^2+\frac{t^2}{2}\right)L_w}F(y,t)\Big|^{\!2}\!\!dw(y)\!\Bigg)^{\!\! \!\!\frac{1}{2}}\!\frac{ds}{s}\!\!\Bigg)^{\!\!\!\!2}\!\frac{dt}{tw(B(x,t))}\!\!\Bigg)^{\!\!\frac{1}{2}}
\\ 
&
\lesssim
\sum_{j\geq 1}e^{-c4^{j}}\left(
\int_0^{\infty}\left(\int_{0}^{t}\frac{s}{t}\frac{ds}{s}\right)^{2}\int_{B(x,2^{j+1}t)}|F(y,t)|^2\frac{dw(y)\,dt}{tw(B(x,t))}\right)^{\frac{1}{2}}
\\
&
\lesssim
\sum_{j\geq 1}e^{-c4^{j}}\left(
\int_0^{\infty}\int_{B(x,2^{j+2}t)}|F(y,\sqrt{2}t)|^2\frac{dw(y)\,dt}{tw(B(y,t))}\right)^{\frac{1}{2}},
\end{align*}
where in the last inequality we have changed the variable $t$ into $\sqrt{2}t$ and used \eqref{doublingcondition}.
Then, applying change of angles (Proposition \ref{prop:alpha}), we conclude that
\begin{align*}
\|I\|_{L^2(v_0dw)}
\lesssim
\sum_{j\geq 1}e^{-c4^j}2^{j\theta_{v_0,w}}
\|\Scal_{1/2,\hh}^wf\|_{L^2(v_0dw)}
\lesssim\|\Scal_{1/2,\hh}^wf\|_{L^2(v_0dw)}.
\end{align*}

For the estimate of $II$, consider $\widetilde{F}(y,s):=(s\sqrt{L_w})^3e^{-s^2L_w}f(y)$.  We apply Cauchy-Schwartz's inequality in the integral in $s$, the fact that $e^{-\tau L_w}\in \mathcal{F}(L^2(w)- L^2(w))$,     Jensen's inequality in the integral in $y$, Fubini's theorem, and \eqref{doublingcondition}.  
Hence,
\begin{align*}
II
&
\lesssim
\left(\int_0^{\infty}\left(\int_{t}^{\infty}\left(\frac{t}{s}\right)^{\frac{1}{M}}
\left(\frac{t}{s}\right)^{2-\frac{1}{M}}\left(\dashint_{B(x,t)}\big|e^{-t^2L_w}\widetilde{F}(y,s)\big|^2dw(y)\right)^{\frac{1}{2}}\frac{ds}{s}\right)^2\frac{dt}{t}\right)^{\frac{1}{2}}
\\
&
\lesssim
\left(\int_0^{\infty}\left(\int_{t}^{\infty}\left(\frac{t}{s}\right)^{\frac{2}{M}}\frac{ds}{s}\right)\left(\int_{t}^{\infty}
\left(\frac{t}{s}\right)^{4-\frac{2}{M}}\!\dashint_{B(x,t)}\big|e^{-t^2L_w}\widetilde{F}(y,s)\big|^2dw(y)\frac{ds}{s}\right)\!\frac{dt}{t}\right)^{\!\!\frac{1}{2}}
\\
&
\lesssim
\sum_{j\geq 1}e^{-c4^j}
\left(\int_0^{\infty}\int_{t}^{\infty}\left(\frac{t}{s}\right)^{4-\frac{2}{M}}\dashint_{B(x,2^{j+1}t)}\big|\widetilde{F}(y,s)\big|^2dw(y)\frac{ds}{s}\frac{dt}{t}\right)^{\frac{1}{2}}
\\&
\lesssim
\sum_{j\geq 1}e^{-c4^j}\left(\int_0^{\infty}\int_{t}^{\infty}\left(\frac{t}{s}\right)^{4-\frac{2}{M}}\left(\dashint_{B(x,2^{j+1}t)}\big|\widetilde{F}(y,s)\big|^{q_0}dw(y)\right)^{\frac{2}{q_0}}\frac{ds}{s}\frac{dt}{t}\right)^{\frac{1}{2}}
\\&
\lesssim
\sum_{j\geq 1}e^{-c4^j}\Bigg(\int_0^{\infty}\!\!\!\int_{0}^{s}\left(\frac{t}{s}\right)^{4-\frac{2}{M}}\Bigg(\mathop{\int}\limits_{B(x,2^{j+1}st/s)}\!\!\big|\widetilde{F}(y,s)\big|^{q_0}\frac{dw(y)}{w(B(y,2^{j+1}st/s))}\Bigg)^{\!\!\frac{2}{q_0}}\!\frac{dt}{t}\frac{ds}{s}\Bigg)^{\!\!\frac{1}{2}}\!\!.
\end{align*}
Then, for  $t<s$, applying Propositions \ref{prop:Q} and \eqref{doublingcondition},
\begin{multline*}
\int_{\R^n}\left(\int_{B(x,2^{j+1}st/s)}\big|\widetilde{F}(y,s)\big|^{q_0}\frac{dw(y)}{w(B(y,2^{j+1}st/s))}\right)^{\frac{2}{q_0}}v_0(x)dw(x)
\\
\lesssim \left(\frac{t}{s}\right)^{n\widehat{r}\left(\frac{1}{r}-\frac{2}{q_0}\right)}
\int_{\R^n}\left(\dashint_{B(x,2^{j+1}s)}\big|\widetilde{F}(y,s)\big|^{q_0}dw(y)\right)^{\frac{2}{q_0}}v_0(x)dw(x).
\end{multline*}
Denote $\widehat{F}(y,s):=s\sqrt{L_w}e^{-\frac{s^2}{2}L_w}$. Since  $\tau L_we^{-\tau L_w}\in \Ocal(L^{2}(w)-L^{q_0}(w))$, then we have
\begin{multline*}
\left(\dashint_{B(x,2^{j+1}t)}\big|\widetilde{F}(y,s)\big|^{q_0}dw(y)\right)^{\frac{2}{q_0}}
\lesssim
\left(\dashint_{B(x,2^{j+1}s)}\left|\frac{s^2}{2}L_we^{-\frac{s^2}{2}L_w}\widehat{F}(y,s)\right|^{q_0}dw(y)\right)^{\frac{2}{q_0}}
\\
\lesssim
\sum_{l\geq 1}e^{-c4^l}2^{j\theta_2}
\dashint_{B(x,2^{l+j+2}s)}\big|\widehat{F}(y,s)\big|^{2}dw(y).
\end{multline*}
Consequently, applying Fubini's theorem, \eqref{positive-III-1} with $k=2$, changing the variable $s$ into $\sqrt{2}s$, and by \eqref{doublingcondition} and change of angles (Proposition \ref{prop:alpha}), we get, for  $\widetilde{C}:=4+n\widehat{r}\left(\frac{1}{r}-\frac{2}{q_0}\right)-\frac{2}{M}$,
\begin{align*}
&\|II\|_{L^2(v_0dw)}
\\
&
\lesssim
\sum_{j\geq 1}e^{-c4^j}\!\sum_{l\geq 1}e^{-c4^l}2^{j\theta_2}\!\Bigg(\!\int_{\R^n}\!\int_0^{\infty}\!\!\int_{0}^{s}\!\left(\frac{t}{s}\right)^{\!\!\widetilde{C}}\!\frac{dt}{t}
\mathop{\dashint}_{\!B(x,2^{l+j+2}s)}\!\big|\widehat{F}(y,s)\big|^{\!2}dw(y)\frac{ds}{s}v_0(x)dw(x)\!\!\Bigg)^{\!\!\!\frac{1}{2}}
\\&
\lesssim
\sum_{j\geq 1}e^{-c4^j}
\sum_{l\geq 1}e^{-c4^l}\left(\int_{\R^n}\int_0^{\infty}
\int_{B(x,2^{l+j+3}s)}\big|\widehat{F}(y,\sqrt{2}s)\big|^{2}\frac{dw(y)ds}{sw(B(y,s))}v_0(x)dw(x)\right)^{\frac{1}{2}}
\\&
\lesssim
\sum_{j\geq 1}e^{-c4^j}\sum_{l\geq 1}e^{-c4^l}2^{(l+j)\theta_{v_0,w}}\|\Scal_{1/2,\hh}^wf\|_{L^2(v_0dw)}
\\&
\lesssim
\|\Scal_{1/2,\hh}^wf\|_{L^2(v_0dw)}.
\end{align*}

\medskip

As for proving \eqref{extrapol2}, using again \eqref{doublingcondition}, \eqref{representationsquarerootofL}, and Minkowski's integral inequality, we obtain
\begin{align*}
&\Scal_{1/2,\hh}^wf(x)
\\&\,\lesssim
\left(\int_0^{\infty}\left(\int_0^{t}\left(\int_{B(x,t)}|tsL_we^{-s^2L_w}e^{-t^2L_w}f(y)|^2dw(y)\right)^{\frac{1}{2}}\frac{ds}{s}\right)^{{2}}\frac{dt}{tw(B(x,t))}\right)^{\frac{1}{2}}
\\
&\quad+
\left(\int_0^{\infty}\left(\int_{t}^{\infty}\left(\int_{B(x,t)}|tsL_we^{-s^2L_w}e^{-t^2L_w}f(y)|^2dw(y)\right)^{\frac{1}{2}}\frac{ds}{s}\right)^{{2}}\frac{dt}{tw(B(x,t))}\right)^{\frac{1}{2}}
\\&\,=:\widetilde{I}+\widetilde{II}.
\end{align*}

We first estimate $\widetilde{I}$. Using that $s<t$ and applying the fact that $e^{-\tau L_w}\in \mathcal{F}(L^2(w)-L^2(w))$, and \eqref{doublingcondition}, we have
\begin{align*}
\widetilde{I} &=
\left(\int_0^{\infty}\left(\int_0^{t}\frac{s}{t}\left(\int_{B(x,t)}|e^{-s^2L_w}t^2L_we^{-t^2L_w}f(y)|^2dw(y)\right)^{\frac{1}{2}}\frac{ds}{s}\right)^{{2}}\frac{dt}{tw(B(x,t))}\right)^{\frac{1}{2}}
\\
&\lesssim
\sum_{j\geq 1}e^{-c4^j}\left(\int_0^{\infty}\left(\int_0^{t}\frac{s}{t}\frac{ds}{s}\right)^2\int_{B(x,2^{j+1}t)}|t^2L_we^{-t^2L_w}f(y)|^2\frac{dw(y)\,dt}{tw(B(y,t))}\right)^{\frac{1}{2}}
\\
&\lesssim
\sum_{j\geq 1}e^{-c4^j}
\left(\int_0^{\infty}\int_{B(x,2^{j+1}t)}|t^2L_we^{-t^2L_w}f(y)|^2\frac{dw(y)\,dt}{tw(B(y,t))}\right)^{\frac{1}{2}}.
\end{align*}
Therefore, applying change of angles (Proposition \ref{prop:alpha}), we get
$$
\|\widetilde{I}\|_{L^2(v_0dw)}
\lesssim
\sum_{j\geq 1}e^{-c4^j}2^{j\theta_{v_0,w}}\|\Scal_{\hh}^wf\|_{L^2(v_0dw)}\lesssim \|\Scal_{\hh}^wf\|_{L^2(v_0dw)}.
$$

The estimate of $\widetilde{II}$ is very similar to that of $II$ (in the proof of \eqref{extrapol-1}), so we skip some details. We  apply again the fact that $e^{-\tau L_w}\in \mathcal{F}(L^2(w)-L^2(w))$, Cauchy-Schwartz's inequality in the integral in $s$,   Jensen's inequality in the integral in $y$,   Fubini's theorem, and
\eqref{doublingcondition}. Hence, we have
\begin{align*}
&\!\widetilde{II}
\\
&\lesssim
\sum_{j\geq 1}e^{-c4^j}\!
\Bigg(\int_0^{\infty}\!\Bigg(\int_{t}^{\infty}\!\!\left(\frac{t}{s}\right)^{\frac{1}{M}}\!\!\left(\frac{t}{s}\right)^{1-\frac{1}{M}}\!\Bigg(\!
\mathop{\dashint}_{B(x,2^{j+1}t)}\!\!\!\!\!|s^2L_we^{-s^2L_w}f(y)|^{2}dw(y)\Bigg)^{\!\!\!\frac{1}{2}}\!\frac{ds}{s}\Bigg)^{\!\!2}\frac{dt}{t}\Bigg)^{\!\!\!\frac{1}{2}}
\\
&\lesssim
\sum_{j\geq 1}e^{-c4^j}
\left(\int_0^{\infty}\int_{t}^{\infty}\left(\frac{t}{s}\right)^{2-\frac{2}{M}}\left(
\dashint_{B(x,2^{j+1}t)}|s^2L_we^{-s^2L_w}f(y)|^{q_0}dw(y)\right)^{\frac{2}{q_0}}\frac{ds}{s}\frac{dt}{t}\right)^{\frac{1}{2}}
\\
&\lesssim
\sum_{j\geq 1}e^{-c4^j}\!\!
\Bigg(\!\!\int_0^{\infty}\!\!\int_{0}^{s}\!\!\left(\frac{t}{s}\right)^{2-\frac{2}{M}}\!\!\Bigg(
\mathop{\int}_{B(x,2^{j+1}st/s)}\!\!\!\!\!\!|s^2L_we^{-s^2L_w}f(y)|^{q_0}\frac{dw(y)}{w(B(y,2^{j+1}st/s))}\!\Bigg)^{\!\!\!\frac{2}{q_0}}\!\frac{dt}{t}\!\frac{ds}{s}\!\Bigg)^{\!\!\!\frac{1}{2}}\!\!.
\end{align*}
Note that, for $t<s$,   Proposition \ref{prop:Q}, and  \eqref{doublingcondition} imply
\begin{multline*}
\int_{\R^n}
\left(\int_{B(x,2^{j+1}st/s)}|s^2L_we^{-s^2L_w}f(y)|^{q_0}\frac{dw(y)}{B(y,2^{j+1}st/s)}\right)^{\frac{2}{q_0}}v_0(x)dw(x)
\\
\lesssim
\left(\frac{t}{s}\right)^{n\,\widehat{r}\left(\frac{1}{r}-\frac{2}{q_0}\right)}\int_{\R^n}
\left(\dashint_{B(x,2^{j+1}s)}|s^2L_we^{-s^2L_w}f(y)|^{q_0}dw(y)\right)^{\frac{2}{q_0}}v_0(x)dw(x).
\end{multline*}
Besides, since $e^{-\tau L_w}\in \mathcal{O}(L^{2}(w)\rightarrow L^{q_0}(w))$
\begin{align*}
\Bigg(\mathop{\dashint}_{B(x,2^{j+1}s)}\!|s^2L_we^{-s^2L_w}f(y)|^{q_0}dw(y)\Bigg)^{\!\!\!\frac{2}{q_0}}
\!\!\!\lesssim\!
\sum_{l\geq 1}e^{-c4^l}2^{j\theta_2}\!\!\!\!\!\!\!\mathop{\dashint}_{B(x,2^{j+l+2}s)}\!\!\!|s^2L_we^{-\frac{s^2}{2}L_w}f(y)|^{2}dw(y).
\end{align*}
Hence, applying Fubini's theorem, \eqref{positive-III-1} with $k=1$, changing the variable $s$ into $\sqrt{2}s$, by \eqref{doublingcondition} and  Proposition \ref{prop:alpha} we have
\begin{align*}
&\|\widetilde{II}\|_{L^2(v_0dw)}
\\&
\lesssim
\sum_{j\geq 1}e^{-c4^j} \sum_{l\geq 1}e^{-c4^l}
\left(\int_{\R^n}\int_0^{\infty}\!\!\!\int_{B(x,2^{j+l+3}s)}|s^2L_we^{-s^2L_w}f(y)|^2\frac{
dw(y)\,ds}{sw(B(y,s))}v_0(x)dw(x)\!\right)^{\!\!\!\frac{1}{2}}
\\&
\lesssim
\sum_{j\geq 1}e^{-c4^j} \sum_{l\geq 1}e^{-c4^l}2^{(l+j)\theta_{w,v_0}}\|\Scal_{\hh}^wf\|_{L^2(v_0dw)}
\lesssim\|\Scal_{\hh}^wf\|_{L^2(v_0dw)}.
\end{align*} 
This and the estimate obtained for $\|\widetilde{I}\|_{L^2(v_0dw)}$ give us \eqref{extrapol2}.
\end{proof}

\begin{proof}[Proof of Theorem \ref{thm:boundednesswidetildeS}]
First of all fix $w\in A_2(dx)$, and denote $q_-:=p_-(L_w)=q_-(L_w)$ (see Lemma \ref{lem:ODweighted}), $q_+:=q_+(L_w)$, and $p_+:=p_+(L_w)$.

We claim that for all $p\in \mathcal{W}_v^w\left(\max\{r_w,q_-\},p_+\right)$
and $h\in \Scal $,
 \begin{align}\label{Sobolev}
 \|\widetilde{\Scal}_{\hh}^wh\|_{L^p(vdw)}\lesssim \|\nabla h\|_{L^p(vdw)}.
 \end{align}
Indeed,  applying Proposition \ref{prop:widetildeS-heatS},  \cite[Theorem 3.1]{ChMPA16}, and Proposition \ref{prop:wRR}, we have that
\begin{multline*}
\|\widetilde{\Scal}_{\hh}^wh\|_{L^p(vdw)} 
= \|\Scal_{1/2,\hh}^w\sqrt{L_w}h\|_{L^p(vdw)}
\approx \|\Scal_{\hh}^w\sqrt{L_w}h\|_{L^p(vdw)}
\\
\lesssim \|\sqrt{L_w}h\|_{L^p(vdw)}\lesssim\|\nabla h\|_{L^p(vdw)}.
\end{multline*}

Note that $\mathcal{W}_v^w\left(\max\{r_w,q_-\},p_+\right)=\big(\mathfrak{r}_v(w)\max\{r_w,q_-\},p_+/\mathfrak{s}_v(w)\big)$. Therefore,  for every $p$ satisfying 
\begin{align}\label{choiceofp2}
\mathfrak{r}_v(w)\max\left\{r_w,(q_-)_{w,*}\right\}<p<\mathfrak{r}_v(w)\max\{r_w,q_-\},
\end{align}
if we show that
 \begin{align}\label{weakinterpolation}
 \|\widetilde{\Scal}_{\hh}^wh\|_{L^{p,\infty}(vdw)}\lesssim \|\nabla h\|_{L^p(vdw)}, \quad \forall h\in \Scal,
 \end{align}
then, by interpolation (see \cite{Ba09} and Remark \ref{remark:product-weight}) we would conclude \eqref{boundednesswidetilde}.

Now fix $p$ as in \eqref{choiceofp2}, and note that $vw\in A_p(dx)$, since $r_{vw}\leq r_w\mathfrak{r}_v(w)<p$ (see Remark \ref{remark:product-weight}). 
Given $\alpha>0$, we apply Lemma \ref{lem:CZweighted} to $h\in \Scal$, $\alpha$, the product weight $\varpi=vw$ and $p$.
Let $\{B_i\}_{i}$ be the collection of balls given by Lemma \ref{lem:CZweighted}. Consider for $M\in \N$ arbitrarily large, 
\[
B_{r_{B_i}}:=(I-e^{-r_{B_i}^2L_w})^M,\qquad A_{r_{B_i}}:=I-B_{r_{B_i}}=\sum_{k=1}^MC_{k,M}e^{-kr_{B_i}^2L_w}.
\]
Then
$$
h= g+\sum_{i}A_{r_{B_i}}b_i+\sum_{i}B_{r_{B_i}}b_i=:g+\widetilde{b}+\widehat{b}.
$$ 
It follows that
\begin{multline}\label{spplitingriesz}
vw\left(\left\{x\in \R^n: \widetilde{\Scal}_{\hh}^w h(x)>\alpha\right\}\right)
\leq
vw\left(\left\{x\in \R^n: \widetilde{\Scal}_{\hh}^w g(x)>\frac{\alpha}{3}\right\}\right)
\\
+
vw\left(\left\{x\in \R^n: \widetilde{\Scal}_{\hh}^w \widetilde{b}(x)>\frac{\alpha}{3}\right\}\right)
\\
+
vw\left(\left\{x\in \R^n: \widetilde{\Scal}_{\hh}^w \widehat{b}(x)>\frac{\alpha}{3}\right\}\right)
=:I+II+III.
\end{multline}

Now, since $\mathcal{W}_v^w(\max\{r_w,q_-\},q_+)\!\neq \!\emptyset$ by assumption and $p\!>\!\mathfrak{r}_v(w)\max\{r_w,(q_-)_{w,*}\}$ (see \eqref{choiceofp2}), by Remark \ref{remark:intervalnotempty} 
we can pick
$p_1$ such that 
\begin{align}\label{choicep_1}
\mathfrak{r}_v(w)\max\{r_w,q_-\}<p_1<\min\left\{\frac{q_+}{\mathfrak{s}_v(w)},p_{vw}^*\right\}.
\end{align}
Observe that 
\[
p_1>p, \quad \text{and} \quad p_1\in \mathcal{W}_v^w\left(\max\{r_w,q_-\},q_+\right).
\]
Note also that in particular $\mathfrak{r}_v(w)\,q_-<p_1<\frac{q_+}{\mathfrak{s}_v(w)}$, that is,
\begin{align}\label{choicep_1-2}
v\in A_{\frac{p_1}{q_-}}(w)\cap RH_{\left(\frac{q_+}{p_1}\right)'}(w).
\end{align}

Now we are ready to estimate $I$.
Applying Chebyshev's inequality with $p_1$, \eqref{Sobolev}, and properties \eqref{CZ:g}-\eqref{CZ:overlap}, we obtain
\begin{equation}\label{Itermriesz}
I\lesssim \frac{1}{\alpha^{p_1}}\int_{\R^n}|\widetilde{\Scal}_{\hh}^w g|^{p_1}vdw
\lesssim \frac{1}{\alpha^{p_1}}
\int_{\R^n}|\nabla g|^{p_1}vdw
\lesssim
\frac{1}{\alpha^{p}}
\int_{\R^n}|\nabla h|^{p}vdw.
\end{equation}

\medskip

In order to estimate $II$, apply Chebyshev's inequality, \eqref{Sobolev}, and Lemma \ref{lemma:arbi} (with $f=h$). Then
\begin{align}\label{IItermrieszII}
II \lesssim\frac{1}{\alpha^{p_1}}\int_{\R^n} \left|\widetilde{\Scal}_{\hh}^w\widetilde{b}\right|^{p_1}vdw
\lesssim\frac{1}{\alpha^{p_1}}\int_{\R^n} \big|\nabla \widetilde{b}\big|^{p_1}vdw
\lesssim
\frac{1}{\alpha^{p}}\int_{\R^n}|\nabla h|^{p}vdw.
\end{align}

Next, we estimate $III$.  Note that, by \eqref{CZ:sum}
\begin{align}\label{III-Riesz}
III &\lesssim 
vw\Big(\bigcup_{i}16B_i\Big)
+
vw\Big(\Big\{x\in \R^n\setminus \bigcup_{i}16B_i: \widetilde{\Scal}_{\hh}^w \widehat{b}(x)>\frac{\alpha}{3}\Big\}\Big)
\\\nonumber
&\lesssim
\frac{1}{\alpha^{p}}\int_{\R^n}|\nabla h|^{p}vdw+
III_1,
\end{align}
where 
$$
III_1:=
vw\Big\{\Big(x\in \R^n\setminus \bigcup_{i}16B_i: \widetilde{\Scal}_{\hh}^w \widehat{b}(x)>\frac{\alpha}{3}\Big\}\Big).
$$
By Chebyshev's inequality, duality, splitting the integral in $x$,  and applying H\"older's inequality:
\begin{align}\label{III2-Riesz}
 III_1&\lesssim
 \frac{1}{\alpha^{p_1}}\int_{\R^n\setminus 
 \cup_{i}16B_i}\big|\widetilde{\Scal}_{\hh}^w\widehat{b}\big|^{p_1}vdw
 \\ \nonumber
 &\lesssim
 \frac{1}{\alpha^{p_1}}
\Bigg(\sup_{\|u\|_{L^{p_1'}(vdw)}=1} \sum_{i}\sum_{j\geq 4}\Bigg(\int_{C_j(B_i)}\abs{\widetilde{\Scal}_{\hh}^w \br{B_{r_{B_i}}b_i}}^{p_1}vdw\Bigg)^{\!\!\!\frac{1}{p_1}}\!\|u\chi_{C_j(B_i)}\|_{L^{p_1'}(vdw)}\Bigg)^{p_1}
\\ \nonumber
&=:\frac{1}{\alpha^{p_1}}\sup_{\|u\|_{L^{p_1'}(vdw)}=1}\left(\sum_{i}\sum_{j\geq 4}\mathcal{III}_{ij}\,\|u\chi_{C_j(B_i)}\|_{L^{p_1'}(vdw)}\right)^{p_1}.
\end{align}
Splitting the integral in $t$ (recall that $j\geq 4$), we have
\begin{align}\label{defIIIij2}
&\mathcal{III}_{ij}
\lesssim\!\!
\Bigg(\!\int_{C_j(B_i)}\!\!\Bigg(\!\int_0^{2^{j-2}r_{B_i}}\!\!\int_{B(x,t)}\!\left|tL_we^{-t^2L_w}\!\left(B_{r_{B_i}}b_i\right)\!(y)\right|^{\!2}\!\!\frac{dw(y)\,dt}{tw(B(y,t))}\!\Bigg)^{\!\!\frac{p_1}{2}}\!\!v(x)dw(x)\!\Bigg)^{\!\!\frac{1}{p_1}}
\\\nonumber
&\,
+\!\!
\Bigg(\!\int_{C_j(B_i)}\!\!\Bigg(\!\int_{2^{j-2}r_{B_i}}^{\infty}\!\!\int_{B(x,t)}\!\Big|t^2L_we^{-t^2L_w}\!\Big(B_{r_{B_i}}\!\Big(\frac{b_i}{r_{B_i}}\Big)\!\Big)\!(y)\Big|^{2}\frac{dw(y)\,dt}{tw(B(y,t))}\!\Bigg)^{\!\!\frac{p_1}{2}}\!\!v(x)dw(x)\!\Bigg)^{\!\!\frac{1}{p_1}}
\\\nonumber
&
=:
\mathcal{III}_{ij}^1+\mathcal{III}_{ij}^2.
\end{align}

Before estimating $\mathcal{III}_{ij}^1$ and $\mathcal{III}_{ij}^2$, we take $p_0$ close enough to $q_-$, and $q_0$  close enough to $q_+$ so that 
\begin{equation} \label{p0q0choice}
q_-<p_0<\min\{2,p_1\},\quad \max\{2,p_1\}<q_0<q_+,
\quad
v\in A_{\frac{p_1}{p_0}}(w) \cap RH_{\left(\frac{q_0}{p_1}\right)'}(w).
\end{equation}

Hence, by  Lemma \ref{ARHsinpesoconpeso} (b),
\begin{multline}\label{plug2}
\mathcal{III}_{ij}^1 
\\
\!
\lesssim \!
vw(2^{j+1}B_i)^{\!\frac{1}{p_1}}\!\!
\Bigg(\!\dashint_{C_j(B_i)}\!\!\!\Bigg(\!\int_0^{2^{j-2}r_{B_i}}\!\!\!\!\int_{B(x,t)}\!\left|tL_we^{-t^2L_w}\!\left(B_{r_{B_i}}b_i\right)\!\!(y)\right|^{2}\!\!\!\frac{dw(y)\,dt}{tw(B(y,t))}\Bigg)^{\!\!\!\!\frac{q_0}{2}}\!\!\!dw(x)\!\Bigg)^{\!\!\!\frac{1}{q_0}}\!\!\!.
\end{multline}

Besides, note that for $x\in C_j(B_i)$ and $0<t\leq 2^{j-2}r_{B_i}$ we have that $B(x,t)\subset 2^{j+2}B_i\setminus 2^{j-1}B_i$. Then, by \eqref{doublingcondition}, recalling that $q_0>2$, applying Jensen's  inequality with respect to $dw(y)\,dt$, and Fubini's theorem, we get
\begin{align}\label{plug1}
&\Bigg(\!\dashint_{C_j(B_i)}\!\!\Bigg(\int_0^{2^{j-2}r_{B_i}}\!\!\!\!\int_{B(x,t)}\!\left|tL_we^{-t^2L_w}\left(B_{r_{B_i}}b_i\right)(y)\right|^{2}\!\frac{dw(y)\,dt}{tw(B(y,t))}\Bigg)^{\frac{q_0}{2}}dw(x)\Bigg)^{\frac{1}{q_0}}
\\\nonumber
&\quad\lesssim(2^{j}r_{B_i})^{\frac{1}{2}}\!\!\Bigg(\!\dashint_{C_j(B_i)}\!\!\Bigg(\dashint_0^{2^{j-2}r_{B_i}}\!\!\!\!\dashint_{B(x,t)}\!\frac{1}{t}\left|tL_we^{-t^2L_w}\left(B_{r_{B_i}}b_i\right)\!(y)\right|^{2}\!dw(y)\,dt\Bigg)^{\!\!\frac{q_0}{2}}\!\!dw(x)\!\Bigg)^{\!\!\frac{1}{q_0}}
\\\nonumber
&\quad
\lesssim 
\left(\!\dashint_{C_j(B_i)}\!\int_0^{2^{j-2}r_{B_i}}\!\!\left(\frac{2^{j}r_{B_i}}{t}\right)^{\!\!\frac{q_0}{2}-1}\!\!\dashint_{B(x,t)}\!\left|tL_we^{-t^2L_w}\!\!\left(B_{r_{B_i}}b_i\right)\!(y)\right|^{q_0}\!\frac{dw(y)\,dt}{t}dw(x)\right)^{\!\!\frac{1}{q_0}}
\\\nonumber
&\quad
\lesssim 
\left(\!\dashint_{C_j(B_i)}\!\int_0^{2^{j-2}r_{B_i}}\!\!\left(\frac{2^{j}r_{B_i}}{t}\right)^{\!\frac{q_0}{2}-1}\!\!\!\int_{B(x,t)}\!\left|tL_we^{-t^2L_w}\!\left(B_{r_{B_i}}b_i\right)\!(y)\right|^{q_0}\!\!\!\frac{dw(y)\,dt}{tw(B(y,t))}dw(x)\!\right)^{\!\!\!\frac{1}{q_0}}
\\\nonumber
&\quad
\lesssim 
\left(\!\int_0^{2^{j-2}r_{B_i}}\!\!\left(\frac{2^{j}r_{B_i}}{t}\right)^{\frac{q_0}{2}-1}t^{-q_0}\dashint_{2^{j+2}B_i\setminus 2^{j-1}B_i}\!\left|t^2L_we^{-t^2L_w}\left(B_{r_{B_i}}b_i\right)(y)\right|^{q_0}\!\frac{dw(y)\,dt}{t}\right)^{\!\!\frac{1}{q_0}}\!\!.
\end{align}
We estimate the integral in $y$ by using functional calculus.  The notation is taken from  \cite{Au07} and \cite[Section 7]{AMIII06}.
We write $\vartheta\in[0,\pi/2)$ for the supremum of $|{\rm arg}(\langle L_wf,f\rangle_{L^2(w)})|$ over all $f$ in the domain of $L_w$.
Let $0<\vartheta <\theta<\nu<\mu<\pi /2$ and note that, for a fixed $t>0$, $\phi(z,t):=e^{-t^2 z}(1-e^{-r_{B_i}^2 z})^M$ is holomorphic in the open sector $\Sigma_\mu=\{z\in\mathbb{C}\setminus\{0\}:|{\rm arg} (z)|<\mu\}$ and satisfies $|\phi(z,t)|\lesssim |z|^M\,(1+|z|)^{-2M}$ (with implicit constant depending on $\mu$, $t>0$, $r_{B_i}$, and $M$) for every $z\in\Sigma_\mu$. 
Hence, we can write
\begin{equation}\label{afaerverv}
	\phi(L_w,t)=\int_{\Gamma } e^{-zL_w }\eta (z,t)dz,
\qquad \text{where} \quad 
\eta(z,t) = \int_{\gamma} e^{\zeta z} \phi(\zeta,t) d\zeta.
\end{equation}
Here $\Gamma=\partial\Sigma_{\frac\pi2-\theta}$ with positive orientation (although orientation is irrelevant for our computations) and 
$\gamma=\R_+e^{i\,{\rm sign}({\rm Im} (z))\,\nu}$. It is not difficult to see that for every $z\in \Gamma$,
$$
|\eta(z,t)| \lesssim \frac{r_{B_i}^{2M}}{(|z|+t^2)^{M+1}}.
$$
Moreover, observe that $2^{j+2}B_i\setminus 2^{j-1}B_i= \bigcup_{l=1}^{3}C_{l+j-2}(B_i)$, $\forall j\ge 4$. Also, our choices of $p_0$ and $q_0$ in \eqref{p0q0choice} yield that $zL_we^{-zL_w}\in \mathcal O(L^{p_0}(w)- L^{q_0}(w))$. Thus, by these facts and Minkowski's integral inequality, we obtain 
\begin{align*}
&\left(\dashint_{2^{j+2}B_i\setminus 2^{j-1}B_i}\right.\left.\left|t^2L_we^{-t^2L_w}\left(B_{r_{B_i}}b_i\right)\right|^{q_0}dw\right)^{\frac{1}{q_0}}
\\&\qquad
\lesssim
\int_{\Gamma}
\left(\dashint_{2^{j+2}B_i\setminus 2^{j-1}B_i}\left|zL_we^{-zL_w} b_i\right|^{q_0}dw\right)^{\frac{1}{q_0}}\frac{t^2}{|z|}\frac{r_{B_i}^{2M}}{(|z|+t^2)^{M+1}}|dz|
\\&\qquad
\lesssim 2^{j\theta_1} 
\left(\dashint_{B_i}\left|b_i\right|^{p_0}dw\right)^{\frac{1}{p_0}}\int_{\Gamma}\Upsilon\left(\frac{2^jr_{B_i}}{|z|^{\frac{1}{2}}}\right)^{\theta_2}e^{-c\frac{4^jr_{B_i}^2}{|z|}}
\frac{t^2}{|z|}\frac{r_{B_i}^{2M}}{(|z|+t^2)^{M+1}}|dz|
\\&\qquad
\lesssim 2^{j\theta_1} t^2
\left(\dashint_{B_i}\left|b_i\right|^{p_1}d(vw)\right)^{\frac{1}{p_1}}\int_{0}^{\infty}
\Upsilon\left(\frac{2^jr_{B_i}}{s^{\frac{1}{2}}}\right)^{\theta_2}e^{-c\frac{4^jr_{B_i}^2}{s}}
\frac{r_{B_i}^{2M}}{s^{M+1}}\frac{ds}{s}
\\&\qquad
\lesssim \alpha\,  r_{B_i}^{-1}2^{-j(2M+2-\theta_1)}t^2
\int_{0}^{\infty}\Upsilon\left(s\right)^{\theta_2}e^{-cs^2}
s^{2M+2}\frac{ds}{s}
\\&\qquad
\lesssim \alpha\,  r_{B_i}^{-1}\,2^{-j(2M+2-\theta_1)}t^2,
\end{align*}
where, to obtain the last inequality, we need to take $M\in \N$ large enough so that $2M+2>\theta_2$. Besides, we use Lemma 2.16 (a) in the third inequality; and the forth inequality  follows from (3.7) (see (3.36)) and the change of variable $s$ into $4^{j}r_{B_i}^2/s^2$.

Plugging the above estimate into \eqref{plug1} and changing the variable $t$ into $2^j r_{B_i}t$, allows us to obtain
\begin{multline*}
\left(\!\dashint_{C_j(B_i)}\!\!\left(\int_0^{2^{j-2}r_{B_i}}\!\!\!\!\int_{B(x,t)}\!\left|tL_we^{-t^2L_w}\left(B_{r_{B_i}}b_i\right)(y)\right|^{2}\!\frac{dw(y)\,dt}{tw(B(y,t))}\right)^{\frac{q_0}{2}}dw(x)\right)^{\frac{1}{q_0}}
\\
\lesssim
\alpha \, r_{B_i}^{-1}\,2^{-j(2M+2-\theta_1)}
\left(\int_0^{2^{j-2}r_{B_i}}
\left(\frac{2^{j}r_{B_i}}{t}\right)^{\frac{q_0}{2}-1}t^{q_0}
\frac{dt}{t}\right)^{\frac{1}{q_0}}
\lesssim
\alpha\,  2^{-j(2M+1-\theta_1)}.
\end{multline*}
This and \eqref{plug2} yield, for $M\in \N$ such that $2M+2>\theta_2$, 
\begin{align}\label{termcalIII}
\mathcal{III}_{ij}^1
\lesssim \alpha\, vw(2^{j+1}B_i)^{\frac{1}{p_1}}2^{-j(2M+1-\theta_1)}.
\end{align}

In order to estimate $\mathcal{III}_{ij}^2$, we first change the variable $t$ into $t\theta_M:=t\sqrt{M+1}$. Then
\begin{align}\label{plug3}
&
\left(\int_{2^{j-2}r_{B_i}}^{\infty}\int_{B(x,t)}\left|t^2L_we^{-t^2L_w}\left(B_{r_{B_i}}\left(\frac{b_i}{ r_{B_i}}\right)\right)(y)\right|^{2}\frac{dw(y)\,dt}{tw(B(y,t))}\right)^{\frac{1}{2}}
\\\nonumber&
\lesssim
\left(\int_{2^{j-2}r_{B_i}}^{\infty}\dashint_{B(x,t)}\left|t^2L_we^{-t^2L_w}\left(B_{r_{B_i}}\left(\frac{b_i}{ r_{B_i}}\right)\right)(y)\right|^{2}\frac{dw(y)\,dt}{t}\right)^{\frac{1}{2}}
\\\nonumber&
\lesssim
\left(\int_{\frac{2^{j-2}r_{B_i}}{\theta_M}}^{\infty}\dashint_{B(x,\theta_Mt)}\left|\mathcal{T}_{t,r_{B_i}}t^2L_we^{-t^2L_w}\left(\frac{b_i}{ r_{B_i}}\right)(y)\right|^{2}\frac{dw(y)\,dt}{t}\right)^{\frac{1}{2}},
\end{align}
where we recall that $\mathcal{T}_{t,r_{B_i}}:=(e^{-t^2L_w}-e^{-(t^2+r_{B_i}^2)L_w})^M$. 

 {  Next, fix $x\in C_{j}(B_i)$ and $t>\frac{2^{j-2} r_{B_i}}{\theta_M}$}. In this case, $B_i\subset B(x,12\theta_{M}t)$.
Thus,  by \eqref{boundednesstsr} and the fact that $\tau L_we^{-\tau L_w}\in \mathcal{O}(L^{p_0}(w)-L^2(w))$,  for $\tau >0$, we get
\begin{align*}
&\dashint_{B(x,\theta_Mt)}\left|\mathcal{T}_{t,r_{B_i}}t^2L_we^{-t^2L_w}\left(\frac{b_i}{ r_{B_i}}\right)(y)\right|^{2} dw(y),
\\&  \qquad\quad
\lesssim
 \left(\frac{r_{B_i}^2}{t^2}\right)^{2M}
\frac{1}{w(B(x,\theta_Mt))}\int_{\R^n}\left|t^2L_we^{-t^2L_w}\left(\chi_{B(x,12\theta_Mt)}\frac{b_i}{ r_{B_i}}\right)(y)\right|^{2}dw(y)
\\& \qquad\quad
\lesssim 
\left(\frac{r_{B_i}^2}{t^2}\right)^{2M}
\sum_{l\geq 1}2^{2nl}\dashint_{C_l(B(x,12\theta_M t))}\left|t^2L_we^{-t^2L_w}\left(\chi_{B(x,12\theta_Mt)}\frac{b_i}{ r_{B_i}}\right)(y)\right|^{2}dw(y)
\\& \qquad\quad
\lesssim 
\left(\frac{r_{B_i}^2}{t^2}\right)^{2M}
\sum_{l\geq 1}e^{-c4^l}\left(\dashint_{B(x,12\theta_M t)}\left|\frac{b_i(y)}{ r_{B_i}}\right|^{p_0}dw(y)\right)^{\frac{2}{p_0}}
\\& \qquad\quad
\lesssim 
\left(\frac{r_{B_i}^2}{t^2}\right)^{2M}
\left(\frac{w(B_i)}{w(B(x,12\theta_M t))}\right)^{\frac{2}{p_0}}
\left(\dashint_{B_i}\left|\frac{b_i}{ r_{B_i}}\right|^{p_0}dw\right)^{\frac{2}{p_0}}
\\& \qquad\quad
\lesssim 
\left(\frac{r_{B_i}^2}{t^2}\right)^{2M}
\left(\dashint_{B_i}\left|\frac{b_i}{ r_{B_i}}\right|^{p_1}d(vw)\right)^{\frac{2}{p_1}}
\lesssim 
\left(\frac{r_{B_i}^2}{t^2}\right)^{2M}
\alpha^2.
\end{align*}
Here the next-to-last inequality is due to Lemma \ref{ARHsinpesoconpeso} (a) and the fact that 
$B_i\subset B(x,12\theta_M t)$,
and 
the last inequality follows from  \eqref{CZ:PS}.

Plugging the above estimate into \eqref{plug3} and recalling the definition of $\mathcal{III}_{ij}^2$ in \eqref{defIIIij2} allows us to obtain
\begin{align*}
\mathcal{III}_{ij}^2
\lesssim \alpha\Bigg(\int_{C_j(B_i)}
\Bigg(\int_{\frac{2^{j-2}r_{B_i}}{\theta_M}}^{\infty}
\left(\frac{r_{B_i}^2}{t^2}\right)^{2M}\frac{dt}{t}\Bigg)^{\frac{p_1}{2}}vdw\Bigg)^{\frac{1}{p_1}}
\lesssim
\alpha vw(2^{j+1}B_i)^{\frac{1}{p_1}}
2^{-j2M}.
\end{align*}
By this  and \eqref{termcalIII},  for $M\in \N$ such that $2M>\theta_2-2$, we have
\[
\mathcal{III}_{ij}\lesssim \alpha vw(2^{j+1}B_i)^{\frac{1}{p_1}}2^{-j(2M-\theta_1)}.
\] 

Then, by Lemma \ref{lemma:lastestimate} with $\mathcal{I}_{ij}=\mathcal{III}_{ij}$, $\widetilde{C}=\theta_1$, and $\{B_i\}_{i}$ the collection of balls given by Lemma \ref{lem:CZweighted}, and by \eqref{CZ:sum} and \eqref{III2-Riesz},
 for $M\in \N$ so that $2M>\max\{\theta_2-2,\theta_1+r_w\mathfrak{r}_v(w)n\}$, we conclude that
\begin{align*}
III_1
\lesssim vw\Big(\bigcup_{i}B_i\Big)
\lesssim \frac{1}{\alpha^{p}}\int_{\R^n}|\nabla h|^{p}vdw.
\end{align*}

This, together with \eqref{Itermriesz}-\eqref{III-Riesz} and \eqref{spplitingriesz}, yields
\eqref{weakinterpolation}.
\end{proof}


\section{Proof of Theorem \ref{thm:maindegenerate}}\label{sec:proofmain}
In this section, we prove Theorem \ref{thm:maindegenerate} for functions $f\in \Scal$, and  conclude the result by a density argument.
Fix $w\in A_2(dx)$, $v\in A_{\infty}(w)$ and $f\in \Scal$, and
note that for every $(x,t)\in \R^{n+1}_+$  and $u(x,t):=\nabla_{x,t}e^{-t\sqrt{L_w}}f(x)$,
\begin{align*}
|u(x,t)|^2=|\nabla e^{-t\sqrt{L_w}}f(x)|^2+|\partial_te^{-t\sqrt{L_w}}f(x)|^2,
\end{align*}
where we define the  Poisson semigroup $\{e^{-t\sqrt{L_w}}\}_{t>0}$ using the classical subordination formula, or the functional calculus for $L_w$ (see \cite{Au07,CMR15}):
\begin{align}\label{subordinationformula}
 e^{-t\sqrt{L_w}}=C\int_{0}^{\infty}e^{-\lambda}\lambda^{\frac{1}{2}}e^{-\frac{t^2}{4\lambda}L_w}\frac{d\lambda}{\lambda}.
\end{align}
Therefore, it suffices to see that if $\mathcal{W}_v^w\left(\max\left\{r_w,q_-(L_w)\right\},q_+(L_w)\right)\neq \emptyset$, then 
\begin{align*}
\|\Ncal_w(\nabla e^{-t\sqrt{L_w}}f)\|_{L^p(vdw)}\!\lesssim\! \|\nabla f\|_{L^p(vdw)}
\quad
\!\textrm{and}
 \!\quad
\|\Ncal_w(\partial_te^{-t\sqrt{L_w}}f)\|_{L^p(vdw)}\!\lesssim\! \|\nabla f\|_{L^p(vdw)},
\end{align*}
for all $p\in \mathcal{W}_v^w\left(\max\left\{r_w,(q_-(L_w))_{w,*}\right\},q_+(L_w)\right)$.  We shall see this in Propositions \ref{prop:spaceD} and \ref{prop:timeD} below.
%
\subsection{Non-tangential maximal function estimate for the spatial derivatives }

\begin{proposition} \label{prop:spaceD}
Given $w\in A_2(dx)$ and $v\in A_{\infty}(w)$ such that  
$$
\mathcal{W}_v^w\left(\max\left\{r_w,q_-(L_w)\right\},q_+(L_w)\right)\neq \emptyset.
$$ 
Then, for all $f\in \Scal$ and $p\in \mathcal{W}_v^w\left(\max\left\{r_w,(q_-(L_w))_{w,*}\right\},q_+(L_w)\right)$, we have
\begin{equation}\label{eq:Gweighted}
\norm{\Ncal_w (\nabla e^{-t\sqrt{L_w}}f)}_{L^p(vdw)} \lesssim \norm{\nabla f}_{L^p(vdw)}.
\end{equation}
\end{proposition}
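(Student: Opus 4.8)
\textbf{Proof plan for Proposition \ref{prop:spaceD}.}

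The plan is to control the non-tangential maximal function $\Ncal_w(\nabla e^{-t\sqrt{L_w}}f)$ by a conical square function via a standard Poisson-to-heat subordination argument, and then invoke Theorem \ref{thm:boundednesswidetildeS} together with the change of angles results. First I would use the subordination formula \eqref{subordinationformula} to write $\nabla e^{-t\sqrt{L_w}}f$ as an average of $\nabla e^{-\frac{t^2}{4\lambda}L_w}f$ against the kernel $e^{-\lambda}\lambda^{1/2}\,d\lambda/\lambda$. Fixing $(x,t)$ and a Whitney region $W(x,t)$, I would estimate the $L^2(dw\,ds)$-average of $|\nabla e^{-s\sqrt{L_w}}f|$ over $W(x,t)$; after applying Minkowski's integral inequality in $\lambda$ and a Cauchy--Schwarz splitting of the $\lambda$-integral at $\lambda\sim 1$, this reduces to controlling, for each dyadic scale of $\lambda$, averages of $|\nabla e^{-\tau L_w}f|$ over slightly enlarged Whitney regions (enlarged both in the spatial aperture and in the time window), with a good decaying factor in $\lambda$. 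The point is that $\nabla e^{-\tau L_w}f$ at a fixed height is pointwise bounded, via the off-diagonal-on-Sobolev estimate of Lemma \ref{lem:Gsum} (applied with a parameter $q\in(q_-(L_w),q_+(L_w))$ and $p$ as in the hypothesis, so that $p>\max\{r_w,(q_-(L_w))_{w,*}\}$), by a sum over annuli of averages of $|\nabla f|$; this produces the spatial $L^p$-control we need.

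Next I would relate the resulting quantity to a conical square function. The natural object is $\widetilde\Scal_{\hh}^w$ from \eqref{conicalheat}: by the fundamental theorem of calculus in $\tau$ (writing $\nabla e^{-\tau L_w}f$ as $\nabla f$ minus an integral of $\nabla \tau' L_w e^{-\tau' L_w} f$, i.e. of the integrand of $\widetilde\Grm_{\hh}^w$ rewritten suitably) one expects a pointwise bound
\[
\Ncal_w(\nabla e^{-t\sqrt{L_w}}f)(x)\lesssim \mathcal{M}(|\nabla f|^{p_0})(x)^{1/p_0}+\widetilde\Scal_{\hh}^{\beta,w}(\cdot)(x)
\]
for a suitable aperture $\beta$ and some $p_0<p$ with $p_0>\max\{r_w,q_-(L_w)\}$, possibly after also bringing in $\mathcal M^w$ and the Riesz-transform reverse inequality of Proposition \ref{prop:wRR}. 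Taking $L^p(vdw)$ norms, the maximal-function term is controlled since $p/p_0>\mathfrak r_v(w)$ forces $v\in A_{p/p_0}(w)$, and the conical-square-function term is handled by Proposition \ref{prop:alpha} (change of angles) to reduce $\beta$ to aperture $1$, followed by Theorem \ref{thm:boundednesswidetildeS}, which gives exactly $\|\widetilde\Scal_{\hh}^w(\cdot)\|_{L^p(vdw)}\lesssim\|\nabla f\|_{L^p(vdw)}$ on the stated range $p\in\mathcal W_v^w(\max\{r_w,(p_-(L_w))_{w,*}\},p_+(L_w))$ (recall $p_-(L_w)=q_-(L_w)$ and $q_+(L_w)\le p_+(L_w)$ by Lemma \ref{lem:ODweighted}, so the hypothesis range is contained in the range where the theorem applies).

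The main obstacle I anticipate is the bookkeeping in the subordination step: one must track how the Gaussian/off-diagonal decay in $d(E,F)^2/\tau$ interacts with the $\lambda$-integration so that the enlargement of the Whitney region (both aperture dilation and the shift of the time interval) is absorbed with summable constants, and simultaneously keep the power of $\tau$ in accordance with the \emph{inhomogeneous} square function $\widetilde\Scal_{\hh}^w$ rather than the homogeneous $\Scal_{\hh}^w$ — this is precisely why the non-homogeneous square function was introduced. A secondary technical point is justifying the pointwise passage from the $L^2$-Whitney average defining $\Ncal_w$ to averages at the single height appearing in the square function, which requires the $L^{p_0}(w)$--$L^2(w)$ off-diagonal estimates for $\sqrt t\,\nabla e^{-tL_w}$ from Lemma \ref{lem:ODweighted} and the self-improvement of the $A_\infty$ condition used in Lemma \ref{lemma:lastestimate}-type arguments. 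Once these are in place, the weighted estimates follow cleanly by combining Propositions \ref{prop:wRR}, \ref{prop:alpha}, and Theorem \ref{thm:boundednesswidetildeS}.
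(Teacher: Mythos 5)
Your opening moves match the paper: subordination formula, Minkowski's inequality, the split of the $\lambda$-integral at $\lambda\sim1$, the application of Lemma \ref{lem:Gsum} to the $\nabla e^{-t^2 L_w}f$ term and the large-$\lambda$ tail, and the fundamental theorem of calculus applied to $\nabla e^{-t^2/(4\lambda)L_w}f-\nabla e^{-t^2L_w}f$. Up to that point you are essentially reconstructing the paper's argument.

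There is, however, a genuine gap in the key step: you propose to land on the conical square function $\widetilde\Scal_{\hh}^w$ and invoke Theorem \ref{thm:boundednesswidetildeS} together with the change-of-angles Proposition \ref{prop:alpha}. But $\widetilde\Scal_{\hh}^w$ is built from $t^{-1}(t^2L_w)e^{-t^2L_w}=tL_we^{-t^2L_w}$, with no gradient; it is precisely the tool used for the time derivative $\partial_t e^{-t\sqrt{L_w}}f=-\sqrt{L_w}e^{-t\sqrt{L_w}}f$ in Proposition \ref{prop:timeD}, not for the spatial gradient. The fundamental theorem of calculus applied to $\nabla e^{-\tau L_w}f$ produces the integrand $s^2\nabla L_we^{-s^2L_w}f(z)$ at a single spatial point $z$, and after Cauchy--Schwarz in $s$ one gets the pointwise bound by $(\log\lambda^{-1})^{1/2}\,\widetilde\Grm_{\hh}^wf(z)$ — the \emph{vertical} gradient square function, not a conical object. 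The ensuing Whitney average then yields $\sup_t\bigl(\fint_{B(x,\alpha t)}|\widetilde\Grm_{\hh}^wf|^{p_0}\,dw\bigr)^{1/p_0}\lesssim\Mcal^w_{p_0}(\widetilde\Grm_{\hh}^wf)(x)$, which is a weighted Hardy--Littlewood maximal function of $\widetilde\Grm_{\hh}^wf$, not a conical square function of $f$; the $s$-variable has already been integrated out and plays no role of a cone coordinate. The paper accordingly finishes with the boundedness of $\Mcal^w_{p_0}$ (valid since $v\in A_{p/p_0}(w)$) and Theorem \ref{thm:wtG} (the weighted Sobolev bound for $\widetilde\Grm_{\hh}^w$), not Theorem \ref{thm:boundednesswidetildeS}. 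Your plan would need an additional, nontrivial passage from the gradient-valued Whitney average to a bound by $\widetilde\Scal_{\hh}^wf$, and no such comparison is available (nor needed): the correct key lemma here is Theorem \ref{thm:wtG}.
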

\begin{proof}First of all, fix $w\in A_2(dx)$
 and   define $q_-:=q_-(L_w)$ and $q_+:=q_+(L_w)$.  

 In the context of \eqref{eq:NTdeg}
we set $\alpha:=c_0c_1$. We claim that
\begin{align}\label{Nwspacecontrol}
\Ncal_w(\nabla e^{-t\sqrt{L_w}}f)(x)  
\lesssim
\sup_{t>0} \br{\,\fint_{B(x,\alpha t)} \abs{\nabla e^{-t\sqrt{L_w}} f(z)}^2 dw(z)}^{\frac{1}{2}}.
\end{align}
Indeed,  by \eqref{doublingcondition},
\begin{align*}
\Ncal_w (\nabla e^{-t\sqrt{L_w}}f)(x) 
&= \sup_{t>0} \br{\,\fint_{c_0^{-1}t<s<c_0t} \fint_{B(x,c_1t)}  \abs{\nabla e^{-s\sqrt{L_w}}  f(z)}^2 dw(z) ds}^{\frac{1}{2}}
\\ &\lesssim
\sup_{t>0} \sup_{c_0^{-1}t<s<c_0t} \br{\,\fint_{B(x,\alpha s)} \abs{\nabla e^{-s\sqrt{L_w}} f(z) }^2 dw(z) }^{\frac{1}{2}}
\\ &\lesssim
\sup_{t>0} \br{\,\dashint_{B(x,\alpha t)} \abs{\nabla e^{-t\sqrt{L_w}}  f(z)}^2 dw(z)}^{\frac{1}{2}}.
\end{align*}

Besides, by the subordination formula \eqref{subordinationformula} and Minkowski's integral inequality, 
\begin{multline*}
\br{\,\fint_{B(x,\alpha t)} \abs{\nabla e^{-t\sqrt{L_w}} f(z)}^2 dw(z)}^{\frac{1}{2}}
\\
\lesssim
 \int_{0}^{\frac{1}{4}} e^{-{\lambda}} {\lambda}^{\frac{1}{2}} \br{\,\fint_{B(x,\alpha t)} \Big|\nabla e^{-\frac{t^2}{4{\lambda}} L_w}f(z)\Big|^2 dw(z)}^{\frac{1}{2}} \frac{d{\lambda}}{{\lambda}}
\\  +
\int_{\frac{1}{4}}^{\infty} e^{-{\lambda}} {\lambda}^{\frac{1}{2}} \br{\,\fint_{B(x,\alpha t)} \Big|\nabla e^{-\frac{t^2}{4{\lambda}} L_w}f(z)\Big|^2 dw(z)}^{\frac{1}{2}} \frac{d{\lambda}}{{\lambda}}
=:I+II.
\end{multline*}

Dealing first with term $I$, note that
\begin{align*}
I \le &
\int_0^{\frac{1}{4}}  {\lambda}^{\frac{1}{2}} \br{\,\fint_{B(x,\alpha t)} \Big|\nabla e^{-t^2L_w}f(z)\Big|^2 dw(z)}^{\frac{1}{2}} \frac{d{\lambda}}{{\lambda}}
\\ &+
\int_{0}^{\frac{1}{4}} {\lambda}^{\frac{1}{2}} \br{\,\fint_{B(x,\alpha t)} \Big|\Big(\nabla e^{-\frac{t^2}{4{\lambda}} L_w}-\nabla e^{-t^2L_w}\Big)f(z)\Big|^2 dw(z)}^{\frac{1}{2}} \frac{d{\lambda}}{{\lambda}} 
=:I_1+I_2.
\end{align*}
In order to estimate term $I_1$, for any $p\in\mathcal{W}_v^w\left(\max\left\{r_w, (q_-)_{w,*}\right\},q_+\right)$, we pick $p_0$ in the interval $\left(\max\left\{r_w, (q_-)_{w,*}\right\},\min\{2,p\}\right)$, close enough to $\max\left\{r_w, (q_-)_{w,*}\right\}$ so that $v\in A_{\frac{p}{p_0}}(w)$ (see \eqref{eq:defi:rvw} and \eqref{intervalrsw}). Therefore $\Mcal_{p_0}^w(f):=\left(\Mcal_{p_0}^w(|f|^{p_0})\right)^{\frac{1}{p_0}}$ is bounded on $L^{p}(vdw)$. This and Lemma \ref{lem:Gsum} yield
\begin{align*}
 \Bigg\|\sup_{t>0} &\left(\fint_{B(\cdot,\alpha t)} \abs{\nabla e^{-t^2L_w}f(z)}^2 dw(z)\right)^{\frac{1}{2}}\Bigg\|_{L^p(vdw)}
\\ &\lesssim
\Bigg\|\sup_{t>0}\sum_{j\ge1} e^{-c4^j} \!\br{\,\fint_{B(\cdot,2^{j+1}\alpha t)} |\nabla f(z)|^{p_0} dw(z)}^{\frac{1}{p_0}}\Bigg\|_{L^p(vdw)}
\\ &\lesssim
\sum_{j\ge1}e^{-c4^j} \norm{\Mcal_{p_0}^w(\nabla f)}_{L^p(vdw)}
 \lesssim
\norm{\nabla f}_{L^p(vdw)}.
\end{align*}
Consequently, by Minkowski's integral inequality,
$$
\Big\|\sup_{t>0}I_1\Big\|_{L^p(vdw)} \!\!\!
\le\!\! \int_0^{\frac{1}{4}} \! {\lambda}^{\frac{1}{2}} \Bigg\|\sup_{t>0}\!\br{\, \fint_{B(\cdot,\alpha t)} \Big|\nabla e^{-t^2L_w}f(z)\Big|^2\! dw(z)\!}^{\!\!\frac{1}{2}}\Bigg\|_{\!L^p(vdw)} \!\!\frac{d{\lambda}}{{\lambda}} 
\lesssim \norm{\nabla f}_{L^p(vdw)}.
$$

Now we turn to the estimate of term $I_2$. Write 
\[
\nabla e^{-\frac{t^2}{4{\lambda}} L_w}-\nabla e^{-t^2L_w}=\nabla e^{-\frac{t^2}{2} L_w}\br{e^{-(\frac{1}{4{\lambda}}-\frac{1}{2}) t^2 L_w}-e^{-\frac{t^2}{2} L_w}}
\] 
and use again Lemma \ref{lem:Gsum} and \eqref{doublingcondition}. Then,
\begin{align*}
I_2 &=
\int_{0}^{\frac{1}{4}} {\lambda}^{\frac{1}{2}} \br{\fint_{B(x,\alpha t)} \abs{\nabla e^{-\frac{t^2}{2} L_w} \Big(e^{-(\frac{1}{4{\lambda}}-\frac{1}{2}) t^2 L_w}-e^{-\frac{t^2}{2} L_w}\Big)f(z)}^2 dw(z)}^{\frac{1}{2}} \frac{d{\lambda}}{{\lambda}}
\\ &\lesssim
 \sum_{j=1}^{\infty} e^{-c4^j} \int_{0}^{\frac{1}{4}} {\lambda}^{\frac{1}{2}} \br{\,\fint_{B(x,2^{j+2}\alpha t)} \abs{\nabla\Big(e^{-(\frac{1}{4{\lambda}}-\frac{1}{2}) t^2 L_w}-e^{-\frac{t^2}{2} L_w}\Big)f(z)}^{p_0} dw(z)}^{\frac{1}{p_0}} \frac{d{\lambda}}{{\lambda}}.
\end{align*}
Since $0<{\lambda}\leq1/4$, there holds 
\begin{align*}
&\abs{\Big(\nabla e^{-(\frac{1}{4{\lambda}}-\frac{1}{2}) t^2 L_w}-\nabla e^{-\frac{t^2}{2} L_w}\Big)f(z)} 
\\ &\leq 
\int_{\frac{t}{\sqrt 2}}^{t\sqrt{\frac{1}{4{\lambda}}-\frac{1}{2}}} \abs{\partial_s \nabla e^{-s^2 L_w} f(z)} ds
 \lesssim 
\int_{\frac{t}{\sqrt 2}}^{t\sqrt{\frac{1}{4{\lambda}}-\frac{1}{2}}} \abs{s^2\nabla L_w e^{-s^2 L_w} f(z)} \frac{ds}{s}
\\
&\lesssim
\br{\int_{0}^{\infty} \abs{s^2 \nabla L_w e^{-s^2 L_w} f(z)}^2 \frac{ds}{s}}^{\frac{1}{2}} \br{\log(2{\lambda})^{-\frac{1}{2}}}^{\frac{1}{2}}
 \lesssim
 \left(\log {\lambda}^{-1}\right)^{\frac{1}{2}}  \widetilde\Grm_{\hh}^{w} f(z), 
\end{align*}
where $ \widetilde\Grm_{\hh}^{w}$ is the vertical square function defined in \eqref{verticalnabaheat}. Then, we get
\begin{align*}
\sup_{t>0}I_2 &\lesssim
 \sum_{j\geq 1} e^{-c4^j}\int_{0}^{\frac{1}{4}} {\lambda}^{\frac{1}{2}} \left(\log {\lambda}^{-1}\right)^{\frac{1}{2}} \sup_{t>0} \br{\,\fint_{B(x,2^{j+2}\alpha t)} \abs{\widetilde\Grm_{\hh}^{w}f(z)}^{p_0} dw(z)}^{\frac{1}{p_0}} \frac{d{\lambda}}{{\lambda}}
\\ 
& \lesssim  \Mcal_{p_0}^w \br{\widetilde\Grm_{\hh}^{w}f}(x).
\end{align*}
Then, since  $\Mcal_{p_0}^w$ is bounded on $L^p(vdw)$, the above computation and  Theorem \ref{thm:wtG} imply
$$
\Big\|\sup_{t>0}I_2\Big\|_{L^p(vdw)} 
\le  \big\| \Mcal_{p_0}^w \big(\widetilde\Grm_{\hh}^{w} f\big)\big\|_{L^p(vdw)} 
\lesssim \big\|\widetilde\Grm_{\hh}^{w} f\big\|_{L^p(vdw)} \lesssim \norm{\nabla f}_{L^p(vdw)}.
$$

We finally estimate term $II$. Applying Lemma \ref{lem:Gsum},  we have that, for every $t>0$,
\begin{multline*}
\br{\fint_{B(x,\alpha 2\sqrt{{\lambda}}t)}\! |\nabla e^{-t^2L_w}f(z)|^2 dw(z)}^{\frac{1}{2}}
\\
\lesssim \Upsilon(\sqrt{{\lambda}})^{\theta}\!\sum_{j=1}^{\infty} e^{-c4^j}\! \br{\,\fint_{B(x,2^{j+2}\alpha  \sqrt{{\lambda}}t)}|\nabla f(z)|^{p_0} dw(z)}^{\frac{1}{p_0}}
\lesssim \Upsilon(\sqrt{{\lambda}})^{\theta}
\Mcal_{p_0}^w (\nabla f)(x).
\end{multline*}
Hence,
\begin{align*}
\Big\|\sup_{t>0}II\Big\|_{L^p(vdw)} 
 &\lesssim 
\int_{\frac{1}{4}}^{\infty} e^{-{\lambda}} \Upsilon(\sqrt{{\lambda}})^{\theta} \frac{d{\lambda}}{{\lambda}} \norm{\Mcal_{p_0}^w (\nabla f)}_{L^p(vdw)}
\lesssim \norm{\nabla f}_{L^p(vdw)}.
\end{align*}

Collecting the above estimates, we conclude \eqref{eq:Gweighted}.
\end{proof}

\subsection{Non-tangential maximal function estimate for the time derivative}
\begin{proposition} \label{prop:timeD}
Given $w\in A_2(dx)$ and $v\in A_{\infty}(w)$, assume that
$$
\mathcal{W}_v^w\left(\max\left\{r_w,q_-(L_w)\right\},q_+(L_w)\right)\neq \emptyset.
$$ 
Then, for all $p\in \mathcal{W}_v^w\left(\max\left\{r_w,(p_-(L_w))_{w,*}\right\},p_+(L_w)\right)$ and  $f\in \mathcal{S}$, we have
\begin{equation}\label{eq:timeD}
\norm{\Ncal_w (\partial_t e^{-t\sqrt{L_w}}f)}_{L^p(vdw)} \lesssim \norm{\nabla f}_{L^p(vdw)}.
\end{equation}
\end{proposition}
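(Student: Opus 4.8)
The starting point is the identity $\partial_t e^{-t\sqrt{L_w}}=-\sqrt{L_w}\,e^{-t\sqrt{L_w}}=-e^{-t\sqrt{L_w}}\sqrt{L_w}$. Differentiating the subordination formula \eqref{subordinationformula} in $t$ and reorganizing the powers of $t$ and $\lambda$ produces
\[
\partial_t e^{-t\sqrt{L_w}}f(x)=-C\int_0^\infty e^{-\lambda}\,\tau^{-1}(\tau^2L_w)e^{-\tau^2L_w}f(x)\,\frac{d\lambda}{\lambda},\qquad \tau:=\frac{t}{2\sqrt\lambda},
\]
so that $\partial_t e^{-t\sqrt{L_w}}f$ is, up to the integrable weight $e^{-\lambda}\frac{d\lambda}{\lambda}$, a superposition of the modified heat operators $\tau^{-1}(\tau^2L_w)e^{-\tau^2L_w}$ occurring in $\widetilde\Scal_{\hh}^{w}$; this family has the homogeneity of $\nabla$, matching the right-hand side of \eqref{eq:timeD}. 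As in the proof of Proposition \ref{prop:spaceD}, the Whitney-region definition of $\Ncal_w$ reduces matters, via \eqref{doublingcondition}, to estimating $\sup_{t>0}\big(\fint_{B(x,\alpha t)}|\partial_t e^{-t\sqrt{L_w}}f(z)|^2dw(z)\big)^{1/2}$ with $\alpha=c_0c_1$. Splitting the $\lambda$-integral into $\{\lambda\ge\frac14\}$ (where $\tau\le t$) and $\{0<\lambda<\frac14\}$ (where $\tau>t$) and mimicking the three-fold decomposition of that proof: on the first region one writes $\tau^{-1}(\tau^2L_w)e^{-\tau^2L_w}=(\tau^2L_w)^{1/2}e^{-\tau^2L_w}\sqrt{L_w}$ and applies the off-diagonal estimates on balls for $(\tau^2L_w)^{1/2}e^{-\tau^2L_w}$ (which follow from Lemma \ref{lem:ODweighted} and functional calculus) together with the harmless factor $\Upsilon(\tau/(\alpha t))^\theta\lesssim\Upsilon(\sqrt\lambda)^\theta$, absorbed by $e^{-\lambda}$; on the second region the now-large parameter $\tau$ is handled, after a telescoping and change-of-angle argument in the spirit of Section \ref{section:nono-homo-conical} (using Remark \ref{rem:stable} and Proposition \ref{prop:alpha}), by $\widetilde\Scal_{\hh}^{w}$. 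In this way one is led to a pointwise bound
\[
\Ncal_w(\partial_t e^{-t\sqrt{L_w}}f)(x)\lesssim \Mcal_{p_0}^w\big(\sqrt{L_w}f\big)(x)+\Mcal_{p_0}^w\big(\widetilde\Scal_{\hh}^{w}f\big)(x),
\]
with $p_0$ close to $p_-(L_w)$ (plus a remainder disposed of directly), where $\Mcal_{p_0}^w h:=(\Mcal^w(|h|^{p_0}))^{1/p_0}$.

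By Theorem \ref{thm:boundednesswidetildeS} (in its weighted form $\|\widetilde\Scal_{\hh}^{w}h\|_{L^p(vdw)}\lesssim\|\nabla h\|_{L^p(vdw)}$, as established in its proof) and the boundedness of $\Mcal_{p_0}^w$ on $L^p(vdw)$ for $v\in A_{p/p_0}(w)$, the conical term $\Mcal_{p_0}^w(\widetilde\Scal_{\hh}^{w}f)$ is $\lesssim\|\nabla f\|_{L^p(vdw)}$ on the whole range $\mathcal{W}_v^w(\max\{r_w,(p_-(L_w))_{w,*}\},p_+(L_w))$ once $p_0$ is chosen suitably close to $\max\{r_w,(p_-(L_w))_{w,*}\}$. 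It remains to dominate $\Mcal_{p_0}^w(\sqrt{L_w}f)$. On the ``upper'' range $\mathcal{W}_v^w(\max\{r_w,p_-(L_w)\},p_+(L_w))$ — which is nonempty by the hypothesis $\mathcal{W}_v^w(\max\{r_w,q_-(L_w)\},q_+(L_w))\ne\emptyset$, since $q_-(L_w)=p_-(L_w)$ and $q_+(L_w)\le p_+(L_w)$ — this is immediate from the reverse Riesz inequality of Proposition \ref{prop:wRR}: $\|\Mcal_{p_0}^w(\sqrt{L_w}f)\|_{L^p(vdw)}\lesssim\|\sqrt{L_w}f\|_{L^p(vdw)}\lesssim\|\nabla f\|_{L^p(vdw)}$, and therefore \eqref{eq:timeD} holds for $p$ in the upper range.

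To reach the smaller exponents $\mathfrak{r}_v(w)\max\{r_w,(p_-(L_w))_{w,*}\}<p\le\mathfrak{r}_v(w)\max\{r_w,p_-(L_w)\}$, I would run a Calderón--Zygmund argument exactly in the spirit of the proof of Theorem \ref{thm:boundednesswidetildeS}, controlling $\Mcal_{p_0}^w(\sqrt{L_w}f)$ directly. Apply Lemma \ref{lem:CZweighted} to $f$ at height $\alpha$ with the weight $vw$ (licit since $r_{vw}\le r_w\mathfrak{r}_v(w)<p$ by Remark \ref{remark:product-weight}), and write $f=g+\widetilde b+\widehat b$ with $\widetilde b=\sum_iA_{r_{B_i}}b_i$, $\widehat b=\sum_iB_{r_{B_i}}b_i$, where $A_{r_{B_i}}=I-(I-e^{-r_{B_i}^2L_w})^M$, $B_{r_{B_i}}=(I-e^{-r_{B_i}^2L_w})^M$ and $M$ is taken large. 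Splitting $vw(\{\Ncal_w(\partial_t e^{-t\sqrt{L_w}}f)>\alpha\})\le I+II+III$ accordingly and fixing $p_1$ in the upper range with $p_1>p$ (possible by Remark \ref{remark:intervalnotempty}): term $I$ is handled by Chebyshev at $p_1$, the upper-range bound applied to $g$, and \eqref{CZ:g}--\eqref{CZ:overlap}; term $II$ by Chebyshev at $p_1$, the upper-range bound applied to $\widetilde b$ (the reverse Riesz inequality being available at $p_1$), and Lemma \ref{lemma:arbi} to control $\|\nabla\widetilde b\|_{L^{p_1}(vdw)}$; and term $III$, after discarding $\bigcup_i16B_i$ via \eqref{CZ:sum}, by a direct duality and annuli estimate: dualize against $u\in L^{p_1'}(vdw)$, split into the rings $C_j(B_i)$, and bound $\|\Ncal_w(\partial_t e^{-t\sqrt{L_w}}B_{r_{B_i}}b_i)\|_{L^{p_1}(C_j(B_i),vdw)}$ using the subordination expression above, the off-diagonal estimates on balls (Remark \ref{rem:stable}), Lemma \ref{ARHsinpesoconpeso}, the holomorphic functional-calculus contour representation of $\tau^2L_we^{-\tau^2L_w}(I-e^{-r_{B_i}^2L_w})^M$ (as in the analysis of the terms $\mathcal{III}_{ij}$ in the proof of Theorem \ref{thm:boundednesswidetildeS}), and \eqref{CZ:PS}, so as to obtain $\lesssim\alpha\, vw(2^{j+1}B_i)^{1/p_1}2^{-j(2M-\widetilde C)}$; then Lemma \ref{lemma:lastestimate} and \eqref{CZ:sum} yield $III\lesssim\alpha^{-p}\int_{\R^n}|\nabla f|^pvdw$. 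Interpolation between the resulting weak-type bound at an exponent slightly below $p$ and the strong bound at $p_1$ (Sobolev-space interpolation, \cite{Ba09}; cf.\ Remark \ref{remark:product-weight}) then gives \eqref{eq:timeD} on the full range.

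The step I expect to be the main obstacle is the $\widehat b$-term $III$: one must control $\partial_t e^{-t\sqrt{L_w}}$ applied to $B_{r_{B_i}}b_i$ on the far rings $C_j(B_i)$ uniformly in the supremum over $t$, which forces one to track the $t^{-1}$ factor produced by the subordination formula and to combine it correctly with the smoothing provided by $(I-e^{-r_{B_i}^2L_w})^M$ and the Gaussian decay coming from the heat off-diagonal estimates, so as to win simultaneously a power $2^{-j2M}$ in the annulus index and an integrable behaviour in $t$ (and in $\lambda$); the functional-calculus contour integral is the natural device for this. A secondary technical point is to justify the upper-range bound — in particular the reverse Riesz inequality and Theorem \ref{thm:boundednesswidetildeS} — for the functions $g$ and $\widetilde b$, which need not be Schwartz; this is handled by a routine density and approximation argument.
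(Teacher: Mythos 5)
Your overall architecture—subordination formula, a reduction of $\Ncal_w$ to known operators on the good range, a Calder\'on--Zygmund decomposition with $A_{r_{B_i}}=I-(I-e^{-r_{B_i}^2L_w})^M$ and $B_{r_{B_i}}=(I-e^{-r_{B_i}^2L_w})^M$, Lemma \ref{lemma:arbi}, functional calculus with contour integrals, Lemma \ref{lemma:lastestimate}, and Sobolev interpolation—is indeed the same as the paper's. However, there is a concrete flaw in the way you reduce to known operators, and it matters for the range of $p$.

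The pointwise bound $\Ncal_w(\partial_t e^{-t\sqrt{L_w}}f)(x)\lesssim\Mcal_{p_0}^w(\sqrt{L_w}f)(x)+\Mcal_{p_0}^w(\widetilde\Scal_{\hh}^wf)(x)$ does not hold, and the range bookkeeping attached to it is inconsistent. For the small-$\lambda$ regime of the subordination integral you end up with a conical square function of large aperture $2\sqrt\lambda$ (this is exactly the term $\mathfrak{O}_3 h(x)=\int_{1/4}^\infty e^{-c\lambda}\Scal_\hh^{2\sqrt\lambda,w}h(x)\,\frac{d\lambda}{\lambda}$ with $h=\sqrt{L_w}f$ in the paper's decomposition \eqref{comparisonpoisson}); the change-of-angles estimate of Proposition \ref{prop:alpha} that tames the aperture is an $L^p(vdw)$-norm statement, not a pointwise one, so you cannot absorb it into a maximal function of $\widetilde\Scal_\hh^w f$. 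More seriously, you first fix $p_0>p_-(L_w)$ (forced by the off-diagonal estimates of Lemma \ref{lem:ODweighted} that you use to get the pointwise bound) and then later ask to take $p_0$ ``close to $\max\{r_w,(p_-(L_w))_{w,*}\}$'' so that $\Mcal_{p_0}^w$ is bounded on $L^p(vdw)$ down to the claimed endpoint. These two requirements are incompatible: since $(p_-(L_w))_{w,*}<p_-(L_w)$, in the generic case $\max\{r_w,(p_-(L_w))_{w,*}\}<p_-(L_w)$, and with $p_0>p_-(L_w)$ the operator $\Mcal_{p_0}^w$ is bounded on $L^p(vdw)$ only for $p>p_0\mathfrak{r}_v(w)>p_-(L_w)\mathfrak{r}_v(w)$, i.e., only on the upper range. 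So even granting the pointwise bound, your control of $\Mcal_{p_0}^w(\widetilde\Scal_\hh^w f)$ does not reach the full range $\mathcal{W}_v^w\big(\max\{r_w,(p_-(L_w))_{w,*}\},p_+(L_w)\big)$.

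The paper avoids both obstacles by not wrapping the conical piece in a maximal function at all: it passes through $\Ncal_w u\lesssim\Ncal_\pp^{\alpha,w}(\sqrt{L_w}f)$ (inequality \eqref{eq:NTpoisson-w}), reduces $\alpha$ to $1$ via Lemma \ref{lema:changeofangelN}, decomposes $\Ncal_\pp^w h$ into $\Ncal_\hh^w h+\mathfrak{O}_{2,l}h+\mathfrak{O}_3 h$, and in the Calder\'on--Zygmund step treats the $\mathfrak{O}_3$-contribution of $\widehat b$ as a separate term $IV$: writing $\widehat b=b-\widetilde b$, it applies Chebyshev at height $p$, Proposition \ref{prop:alpha} (change of angles in norm), Proposition \ref{prop:widetildeS-heatS}, Theorem \ref{thm:boundednesswidetildeS} directly to $b$ and $\widetilde b$, and Lemma \ref{lemma:arbi}, with no $\Mcal^w_{p_0}$ in front. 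The remaining pieces $\mathfrak{O}_{1,l},\mathfrak{O}_{2,l}$ of $\widehat b$ are the ones that need the functional calculus and the $(r_{B_i}^2/t^2)^M$ decay from $\mathcal{T}_{t,r_{B_i}}$, and they are the analogue of your term $III$. Your term-$III$ estimate of $\Ncal_w(\partial_te^{-t\sqrt{L_w}}B_{r_{B_i}}b_i)$ on the far rings is plausible in spirit, but you would still need to treat the large-aperture conical piece of the subordination integral either separately (as the paper does) or with a genuinely pointwise replacement for the change-of-angles step; as written, that step fails.
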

To prove this result, we need
 Theorem \ref{thm:boundednesswidetildeS}, a change of angles result in $L^p(vdw)$ for the operator defined in \eqref{eq:NTkappa}, and the boundedness of the non-tangential maximal square functions defined in \eqref{nontangheat} and \eqref{nontangpoisson}. We obtain these results in Lemma \ref{lema:changeofangelN},   and Proposition \ref{prop:boundedpoisson} below.

Our next result is an extension of \cite[Lemma 6.2]{HMay09} (see also \cite[Lemma 7.3]{MaPAII17}).
\begin{lemma}\label{lema:changeofangelN}
 Given $w\in A_{r}(dx)$ and $v\in A_{\widehat{r}}(w)$, $1\leq r,\widehat{r}<\infty$, let $0<p<\infty$ and $\kappa\geq 1$. There hold
\begin{equation}\label{changeangleNweakLp}
\|\mathcal{N}^{\kappa,w}F\|_{L^{p,\infty}(vdw)}\lesssim \kappa^{n\left(\frac{r+1}{2}+\frac{r\,\widehat{r}}{p}\right)} \|\mathcal{N}^{w}F\|_{L^{p,\infty}(vdw)},
\end{equation}
and 
\begin{equation}\label{changeangleNLp}
\|\mathcal{N}^{\kappa,w}F\|_{L^{p}(vdw)}
\lesssim
  \kappa^{n\left(\frac{r+1}{2}+\frac{r\,\widehat{r}}{p}\right)} 
\|\mathcal{N}^{w}F\|_{L^{p}(vdw)}.
\end{equation}
\end{lemma}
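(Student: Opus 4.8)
The plan is to prove the change-of-angle inequality \eqref{changeangleNLp} for the non-tangential maximal function $\mathcal{N}^{\kappa,w}$ by reducing to a level-set (weak-type) estimate and then using a good-$\lambda$ type argument in the space of homogeneous type $(\R^n,dw,|\cdot|)$. The key observation, as in \cite[Lemma 6.2]{HMay09} and \cite[Lemma 7.3]{MaPAII17}, is that if $x$ belongs to the open set $\mathcal{O}_\lambda:=\{\mathcal{N}^{w}F>\lambda\}$ then there is a nearby point whose ordinary cone captures a large average; when one opens the aperture to $\kappa$, the enlarged Whitney ball $B(x,\kappa t)$ can be covered by $C\kappa^n$ balls of the form $B(y_i,t)$, and for any $y$ at distance $\lesssim \kappa t$ from $x$ the ball $B(y,t)$ is controlled by $B(x,C\kappa t)$, whose $w$-measure is $\lesssim \kappa^{n r}w(B(x,t))$ by the $A_r(dx)$ doubling property \eqref{doublingcondition}. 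This gives the pointwise comparison
\[
\mathcal{N}^{\kappa,w}F(x)\lesssim \kappa^{\frac{nr}{2}}\,\mathcal{M}^{w}_{1/2}\big((\mathcal{N}^{w}F)^{1/2}\big)(x)^{?}
\]
is too crude; instead one shows the standard distributional estimate
\[
vw\big(\{\mathcal{N}^{\kappa,w}F>\lambda\}\big)\lesssim \kappa^{\gamma}\, vw\big(\{\mathcal{N}^{w}F>c\kappa^{-\beta}\lambda\}\big),
\]
for suitable exponents, and then integrates in $\lambda$.

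Concretely, first I would fix $\lambda>0$ and let $\mathcal{O}_\lambda=\{x:\mathcal{N}^w F(x)>\lambda\}$, an open set of finite $vw$-measure (we may assume the right-hand side of \eqref{changeangleNLp} is finite). For $x\notin \mathcal O_\lambda$ but with $\mathcal{N}^{\kappa,w}F(x)>\mu$, there is $t>0$ with $\fint_{B(x,\kappa t)}|F(y,t)|^2\,\frac{dw(y)}{w(B(x,t))}>\mu^2$. Cover $B(x,\kappa t)$ by balls $B(z_i,t)$, $i=1,\dots,N_\kappa$, $N_\kappa\simeq \kappa^n$, with $z_i\in B(x,\kappa t)$. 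For each $i$, every point of $B(z_i,t)$ lies in the cone of aperture one over any $\xi\in B(z_i,2t)$, so a Chebyshev/pigeonhole argument produces at least one index $i$ with $\fint_{B(z_i,t)}|F(y,t)|^2\,dw(y)\gtrsim \frac{\mu^2}{\kappa^{nr}N_\kappa}\,w(B(z_i,t))$ (using $w(B(x,\kappa t))\lesssim \kappa^{nr}w(B(x,t))\lesssim \kappa^{nr}w(B(z_i,t))$ from \eqref{doublingcondition}). Hence for every $\xi\in B(z_i,t)$ one has $\mathcal{N}^w F(\xi)\gtrsim \kappa^{-n(r+1)/2}\mu$, i.e. $B(z_i,t)\subset \mathcal{O}_{c\kappa^{-n(r+1)/2}\mu}$. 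Since $x\in B(z_i,\kappa t)\subset B(\xi,(\kappa+1)t)$ this shows $x$ lies in a ball centered in $\mathcal{O}_{c\kappa^{-n(r+1)/2}\mu}$ of radius comparable to $\kappa$ times the radius of a ball contained in that set. A Vitali covering argument over $x$ then yields
\[
vw\big(\{\mathcal{N}^{\kappa,w}F>\mu\}\big)\lesssim \sup_{B\subset \mathcal{O}_{c\kappa^{-n(r+1)/2}\mu}}\frac{vw(\kappa B)}{vw(B)}\; vw\big(\mathcal{O}_{c\kappa^{-n(r+1)/2}\mu}\big)\lesssim \kappa^{n r\widehat r}\, vw\big(\{\mathcal{N}^{w}F>c\kappa^{-n(r+1)/2}\mu\}\big),
\]
where the last inequality uses that $v\in A_{\widehat r}(w)$ combined with $w\in A_r(dx)$ gives $vw(\kappa B)\lesssim \kappa^{n r\widehat r}vw(B)$ via \eqref{pesosineq:Ap} and \eqref{doublingcondition}. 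This is exactly the weak-type statement \eqref{changeangleNweakLp} after renaming $\mu$.

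Finally, to pass from the weak-type bound to the strong-type bound \eqref{changeangleNLp}, I would integrate: writing $\sigma:=c\kappa^{-n(r+1)/2}$,
\[
\|\mathcal{N}^{\kappa,w}F\|_{L^p(vdw)}^p=p\int_0^\infty \mu^{p-1}\,vw\big(\{\mathcal{N}^{\kappa,w}F>\mu\}\big)\,d\mu\lesssim \kappa^{n r\widehat r}\,p\int_0^\infty \mu^{p-1}\,vw\big(\{\mathcal{N}^{w}F>\sigma\mu\}\big)\,d\mu,
\]
and the substitution $\nu=\sigma\mu$ produces the factor $\sigma^{-p}=c^{-p}\kappa^{np(r+1)/2}$, so altogether $\|\mathcal{N}^{\kappa,w}F\|_{L^p(vdw)}^p\lesssim \kappa^{nr\widehat r+np(r+1)/2}\|\mathcal{N}^{w}F\|_{L^p(vdw)}^p$, which upon taking $p$-th roots is precisely \eqref{changeangleNLp} with exponent $n\big(\frac{r+1}{2}+\frac{r\widehat r}{p}\big)$. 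The main obstacle I anticipate is bookkeeping the exponents in the covering step cleanly — in particular getting the dilation factor on $w(B(x,\kappa t))$ versus $w(B(z_i,t))$ correct (this is where the power $\frac{r+1}{2}$, rather than $\frac{r}{2}$, comes from: one factor $\kappa^{nr}$ from comparing the two $w$-balls and one factor $\kappa^n$ from the number of covering balls, the square root then giving $\kappa^{n(r+1)/2}$) and making sure the Vitali selection is carried out with respect to the measure $vw$, which is doubling since $w\in A_r(dx)$ and $v\in A_{\widehat r}(w)$. The weak-type estimate \eqref{changeangleNweakLp} is an immediate by-product of the same argument, stopping before the integration step.
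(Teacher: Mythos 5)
Your proposal correctly identifies the good-$\lambda$ skeleton of the argument, and the final bookkeeping is right: the factor $\kappa^{n(r+1)/2}$ comes from $\sim\kappa^n$ covering balls times a $\kappa^{nr}$ ratio between $w$-masses of dilated balls (via $w\in A_r(dx)$), and the $\kappa^{nr\widehat r/p}$ comes from the $A_{r\widehat r}(dx)$ doubling of $vw$ used in the level-set comparison. Your implementation, however, differs from the paper's: you run a Vitali covering over the balls $B(z_i,t/2)$ that carry large averages, whereas the paper works through the $\gamma$-density set $E_\lambda^*=\{x:\,w(E_\lambda\cap B(x,\rho))/w(B(x,\rho))\geq\gamma\ \forall\rho\}$, with $\gamma=1-\frac{1}{[w]_{A_r(dx)}(11\kappa)^{rn}}$, proves the pointwise bound $\mathcal{N}^{\kappa,w}F\lesssim\kappa^{n(r+1)/2}\lambda$ on $E_\lambda^*$, and then controls $vw(\R^n\setminus E_\lambda^*)$ by $C\kappa^{nr\widehat r}\,vw(\{\mathcal{N}^wF>\lambda\})$ using the weak $L^{\widehat r}(vdw)$-boundedness of $\mathcal{M}^w$ (since $\R^n\setminus E_\lambda^*=\{\mathcal{M}^w\chi_{O_\lambda}>1-\gamma\}$). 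Both routes give the same good-$\lambda$ inequality; the density-set route avoids the Vitali selection and keeps the constants explicit.

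There is, however, a concrete slip in your pigeonhole step that must be repaired. The assertion that ``every point of $B(z_i,t)$ lies in the cone of aperture one over any $\xi\in B(z_i,2t)$'' is false: for $y\in B(z_i,t)$ and $\xi\in B(z_i,2t)$ one has $|y-\xi|<3t$, so $(y,t)\in\Gamma^3(\xi)$, not $\Gamma(\xi)$, and moreover one must make sure the level $t$ in $F(\cdot,t)$ matches the level appearing in $\mathcal{N}^wF(\xi)$. The fix is the one the paper uses: cover $B(x,\kappa t)$ by $\lesssim\kappa^n$ balls of radius $t/2$ (not $t$). Then for $\xi\in B(z_i,t/2)$ one has $B(z_i,t/2)\subset B(\xi,t)$, so
\[
\mathcal{N}^wF(\xi)^2\ \ge\ \frac{w(B(x,t))}{w(B(\xi,t))}\cdot\frac{1}{w(B(x,t))}\int_{B(z_i,t/2)}|F(y,t)|^2\,dw(y),
\]
with the correct level $t$; the ratio $w(B(x,t))/w(B(\xi,t))\gtrsim\kappa^{-nr}$ by \eqref{pesosineqw:Ap}, and together with the $\kappa^{-n}$ from the pigeonhole you obtain $\mathcal{N}^wF(\xi)\gtrsim\kappa^{-n(r+1)/2}\mu$ as intended. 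With this correction, and after noting that the radii in your Vitali family are uniformly bounded (which holds once the right-hand side of \eqref{changeangleNLp} is assumed finite, so that $\mathcal{O}_{\sigma\mu}$ has finite $vw$-measure), your argument goes through and gives the stated exponent.
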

\begin{proof}
We will just prove \eqref{changeangleNweakLp}, the proof of \eqref{changeangleNLp} follows analogously by writing the $L^p(vdw)$-norm as an integral of the level sets. Details are left to the interested reader. 

Consider, for any $\lambda>0$,
$$
O_{\lambda}:=\{x\in \R^n: \mathcal{N}^wF(x)>\lambda\},\quad
 E_{\lambda}:=\R^n\backslash O_{\lambda},
$$
 and, for $\gamma=1-\frac{1}{[w]_{A_r(dx)}(11\kappa)^{rn}}$, the set of $\gamma$-density 
$$
E_{\lambda}^*:=
\Big\{x\in \R^n: \forall\, r>0,\,\frac{w(E_{\lambda}\cap B(x,r))}{w(B(x,r))}\geq\gamma\Big\}.
$$
Note that $O_{\lambda}^*:=\R^n\setminus E_{\lambda}^*=\left\{x\in \R^n:\mathcal{M}^w(\chi_{O_{\lambda}})(x)>\frac{1}{[w]_{A_r(dx)}(11\kappa)^{rn}}\right\}$.

We claim that
for every $\lambda>0$,
\begin{align}\label{claim:changeofanglesN}
\mathcal{N}^{\kappa,w}F(x)\leq  [w]_{A_r(dx)} 2^{\frac{nr}{2}}(9\kappa)^{\frac{n(r+1)}{2}}\lambda, \quad \quad \forall x\in E^{*}_{\lambda}.
\end{align}
Assuming this momentarily, let $0<p<\infty$. Since  $\mathcal{M}^w:L^{\widehat{r}}(vdw)\rightarrow L^{\widehat{r},\infty}(vdw)$, as we are assuming that $v\in A_{\widehat{r}}(w)$,   we get
\begin{align*}
&\|\mathcal{N}^{\kappa,w}F\|^p_{L^{p,\infty}(vdw)}
=
\sup_{\lambda>0}\lambda^{p}vw\br{\bbr{x\in \R^n:\mathcal{N}^{\kappa,w}F(x)>\lambda}}
\\
&=\!
\sup_{\lambda>0}\!\Big([w]_{A_r(dx)}  2^{\frac{nr}{2}} (9\kappa)^{\frac{n(r+1)}{2}}\lambda\Big)^{\!p} 
vw\Big(\!\Big\{x\in \R^n:\mathcal{N}^{\kappa,w}F(x)>[w]_{A_r(dx)}  2^{\frac{nr}{2}} (9\kappa)^{\frac{n(r+1)}{2}}\lambda\Big\}\!\Big)
\\
&\leq
[w]_{A_r(dx)}^{p}  2^{\frac{pnr}{2}} (9\kappa)^{\frac{n(r+1)p}{2}}\sup_{\lambda>0}\lambda^{p}vw(O_\lambda^*)
\lesssim
\kappa^{n\left(\frac{(r+1)p}{2}+r\,\widehat{r}\right)}\sup_{\lambda>0}\lambda^{p}vw(O_\lambda)
\\
&=
\kappa^{n\left(\frac{(r+1)p}{2}+r\,\widehat{r}\right)}
\|\mathcal{N}^wF\|_{L^{p,\infty}(vdw)}^p,
\end{align*}
which would finish the proof. 

It remains to show \eqref{claim:changeofanglesN}. First, note that if  $x\in E_{\lambda}^*$ and $t>0$, for every $y\in B(x,2\kappa t)$ we have $B(y,t/2)\cap E_{\lambda}\neq \emptyset$. To prove this, suppose by way of contradiction that $B(y,t/2)\subset O_{\lambda}$. Then, by \eqref{pesosineqw:Ap}, since $B(y,t/2)\subset B(x,3\kappa t)$ and $B(x,3\kappa t)\subset B(y, 5\kappa t)$, 
$$
\mathcal{M}^w(\chi_{O_{\lambda}})(x)\geq\frac{w(B(y,t/2))}{w(B(x,3\kappa t))}\geq \frac{w(B(y,t/2))}{w(B(y,5\kappa t))}\geq \frac{1}{[w]_{A_r(dx)}(10\kappa)^{rn}}>\frac{1}{[w]_{A_r(dx)}(11\kappa)^{rn}}.
$$ 
This implies that $x\in O_{\lambda}^*$, which contradicts our assumption.

Fix now $x\in E_{\lambda}^*$ and $t>0$, and note that if $y\in B(x,2\kappa t)$ there exists $x_0\in B(y,t/2)\cap E_{\lambda}$, hence $\mathcal{N}^wF(x_0)\leq \lambda$. Besides, since $B(y,t/2)\subset B(x_0,t)$ and by \eqref{pesosineqw:Ap}, for every $y\in B(x,2\kappa t)$,
\begin{multline}\label{menor-lambda}
\left(\int_{B(y,t/2)}|F(z,t)|^2\frac{dw(z)}{w(B(y,t/2))}\right)^{\frac{1}{2}}
\\
\leq [w]_{A_r(dx)}^{\frac{1}{2}}2^{\frac{nr}{2}}\sup_{s>0}
\left(\int_{B(x_0,s)}|F(z,s)|^2\frac{dw(z)}{w(B(x_0,s))}\right)^{\frac{1}{2}}
\\
= [w]_{A_r(dx)}^{\frac{1}{2}}2^{\frac{nr}{2}}\mathcal{N}^wF(x_0)\leq [w]_{A_r(dx)}^{\frac{1}{2}}2^{\frac{nr}{2}}\lambda.
\end{multline}

On the other hand, for every $x\in \R^n$ and $t>0$, we have that 
$B(x,\kappa t)\subset\bigcup_iB(x_i,t/2)$, where $\{B(x_i,t/2)\}_i$ is a collection of at most $(9\kappa)^n$ balls such that, for every $i$, we have that $x_i\in B(x,2\kappa t)$. In particular, $B(x_i, t/2),\,B(x,t)\subset B(x_i,3\kappa t)$. 

Therefore, by the above observations and  \eqref{pesosineqw:Ap}, we conclude that
\begin{multline*}
\int_{B(x,\kappa t)}|F(y,t)|^2\frac{dw(y)}{w(B(x,t))}
\leq
[w]_{A_r(dx)}(3\kappa)^{nr}
 \sum_i
\int_{B(x_i,t/2)}|F(y,t)|^2\frac{dw(y)}{w(B(x_i,t/2))}
\\ \leq 
(9\kappa)^{n(r+1)} [w]_{A_r(dx)}^2 2^{{nr}} \lambda^2,
\end{multline*}
where we have used \eqref{menor-lambda}, since  $x_i\in B(x, 2\kappa t)$.
Finally, taking the supremum over all $t>0$, we obtain
$$
\mathcal{N}^{\kappa,w}F(x)^2\leq [w]_{A_r(dx)}^2 2^{nr}(9\kappa)^{n(r+1)} \lambda^2,\quad \forall\,x\in E_{\lambda}^*.
$$
This readily gives
\eqref{claim:changeofanglesN} and the proof is complete.
\end{proof}

 \begin{proposition}\label{prop:boundedpoisson}
Let $L_w$ be a degenerate elliptic operator with $w\in A_2(dx)$ and let $v\in A_{\infty}(w)$. Then
 \begin{list}{$(\theenumi)$}{\usecounter{enumi}\leftmargin=1cm \labelwidth=1cm\itemsep=0.2cm\topsep=.0cm \renewcommand{\theenumi}{\alph{enumi}}}
 
\item $\Ncal_{\hh}^{w} $ is bounded on $L^p(vdw)$  for all $p\in \mathcal{W}_v^w\left(p_-(L_w),\infty\right)$.
 
\item $\Ncal_{\pp}^{w} $ is bounded on $L^p(vdw)$ for all $p\in \mathcal{W}_v^w\left(p_-(L_w),p_+(L_w)\right)$.

\end{list}
 \end{proposition}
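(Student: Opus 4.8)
The plan is to prove both statements by a reduction to known conical square function estimates together with the change of angles lemma just established. For part $(a)$, I would start from the observation that pointwise
\[
\Ncal_{\hh}^{w}f(x)=\sup_{t>0}\left(\fint_{B(x,t)}\left|e^{-t^2L_w}f(y)\right|^2dw(y)\right)^{\frac12},
\]
and that the mapping $t\mapsto e^{-t^2L_w}f$ can be controlled via the classical trick of writing $e^{-t^2L_w}f=f-\int_0^{t^2}L_we^{-sL_w}f\,ds$ in order to split $\Ncal_{\hh}^w$ into a term dominated by the weighted Hardy--Littlewood maximal function $\Mcal^w$ (hence bounded on $L^p(vdw)$ for every $p$ with $v\in A_\infty(w)$, provided $\Mcal^w$ acts on the relevant exponent, which is automatic) and a ``tail'' term that is dominated by a conical square function of the type $\Scal_{\hh}^{w}$. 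Concretely, one bounds the oscillation term by $\mathcal A_w^{\kappa}$ applied to $t^2L_we^{-t^2L_w}f$ for a suitable aperture $\kappa$, and invokes the boundedness of $\Scal_{\hh}^{w}$ on $L^p(vdw)$ for $p\in\mathcal W_v^w(p_-(L_w),\infty)$ recalled right after \eqref{eq:conicalheat}, together with Lemma \ref{lema:changeofangelN} to adjust the aperture back to $\kappa=1$. I would also use the $L^2$-off-diagonal estimates from Lemma \ref{lem:ODweighted} to handle the interplay of scales inside each Whitney ball.

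For part $(b)$, the natural route is the subordination formula \eqref{subordinationformula}: writing $e^{-t\sqrt{L_w}}=C\int_0^\infty e^{-\lambda}\lambda^{1/2}e^{-\frac{t^2}{4\lambda}L_w}\frac{d\lambda}{\lambda}$ and applying Minkowski's integral inequality inside the definition of $\Ncal_{\pp}^{w}$, one reduces to controlling
\[
\int_0^\infty e^{-\lambda}\lambda^{1/2}\,\Ncal^{\,\max\{1,(2\sqrt\lambda)^{-1}\},w}_{\hh,\sqrt\lambda}f\,\frac{d\lambda}{\lambda},
\]
where the inner quantity is, up to a change of aperture governed by $\lambda$, exactly a dilated version of $\Ncal_{\hh}^{w}$ evaluated at a rescaled time. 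Splitting the $\lambda$-integral at $\lambda=1/4$ (as is done in the proof of Proposition \ref{prop:spaceD}) separates the regime where the Gaussian scale is comparable to or smaller than $t$ from the regime where it is larger; in the first regime one uses part $(a)$ directly, and in the second one uses part $(a)$ together with the aperture change from Lemma \ref{lema:changeofangelN}, which produces only a polynomial factor $\Upsilon(\sqrt\lambda)^{\theta}$ in $\lambda$ that is absorbed by $e^{-\lambda}$. The restriction to $p\in\mathcal W_v^w(p_-(L_w),p_+(L_w))$ rather than the larger interval comes in because the estimates for $e^{-t^2L_w}$ that survive the subordination integral (including the off-diagonal decay needed near the right endpoint) require the upper bound $p_+(L_w)$; here one relies on Lemma \ref{lem:ODweighted}$(b)$ to get the weighted $L^p(vdw)$-$L^q(vdw)$ bounds for $(tL_w)^me^{-tL_w}$ in the appropriate range, and on the conical square function boundedness from \cite{ChMPA16}.

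I would organize the write-up as: (i) prove a pointwise domination $\Ncal_{\hh}^{w}f(x)\lesssim \Mcal^w f(x)+\mathcal A_w^{\kappa}(t^2L_we^{-t^2L_w}f)(x)$ with a harmless $\kappa$, using the fundamental theorem of calculus in $t$ and $L^2$-off-diagonal bounds to sum over the annular decomposition of each Whitney ball; (ii) conclude $(a)$ via boundedness of $\Mcal^w$ on $L^p(vdw)$, boundedness of $\Scal_{\hh}^w=\mathcal A_w^{1}(t^2L_we^{-t^2L_w}\cdot)$ on $L^p(vdw)$, and Lemma \ref{lema:changeofangelN} to pass from aperture $\kappa$ to $1$; (iii) deduce $(b)$ by plugging the subordination formula into the definition of $\Ncal_{\pp}^{w}$, using Minkowski, splitting the $\lambda$-integral at $1/4$, and applying $(a)$ on each piece with the aperture correction absorbed by the exponential weight, taking care that the scale $p_+(L_w)$ is not exceeded. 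The main obstacle I anticipate is step (i): getting a clean pointwise control of the non-tangential maximal function of the semigroup by a single conical square function plus a maximal function, with a uniform aperture, requires carefully tracking the Whitney geometry $W(x,t)=(c_0^{-1}t,c_0t)\times B(x,c_1t)$ and summing the off-diagonal tails in a way that does not lose the doubling constants unboundedly; everything after that is a bookkeeping exercise in extrapolation ranges and in the routine $\lambda$-integral estimates already present in the proof of Proposition \ref{prop:spaceD}.
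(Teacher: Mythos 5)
Your plan for part $(a)$ has a genuine gap. The paper does not go through a conical square function at all for $(a)$: it simply writes $e^{-t^2L_w}f = \sum_{j\ge 1} e^{-t^2L_w}(\chi_{C_j(B(x,t))}f)$, applies the off-diagonal bound $e^{-\tau L_w}\in\mathcal O(L^{p_0}(w)-L^2(w))$ for some $p_-(L_w)<p_0<\min\{2,p\}$ with $v\in A_{p/p_0}(w)$, and obtains directly the pointwise bound $\Ncal_{\hh}^w f(x)\lesssim \Mcal_{p_0}^w f(x)$, which covers the whole range $p\in\mathcal W_v^w(p_-(L_w),\infty)$. Your decomposition $e^{-t^2L_w}f=f-\int_0^{t^2}L_we^{-sL_w}f\,ds$ produces the term $\big(\fint_{B(x,t)}|f|^2\,dw\big)^{1/2}=\Mcal_2^w f(x)$, not $\Mcal^w f(x)$, and $\Mcal_2^w$ is bounded on $L^p(vdw)$ only when $v\in A_{p/2}(w)$, i.e. $p>2\,\mathfrak r_v(w)$; since $p_-(L_w)$ can be strictly less than $2$, this misses a chunk of the claimed range, and there is no off-diagonal estimate available for the bare $f$ term to lower that exponent. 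You also face a secondary problem: the inner integral $\int_0^t 2r^2L_we^{-r^2L_w}f\,\frac{dr}{r}$ cannot be squared via Cauchy--Schwarz without producing a divergent $\int_0^t\frac{dr}{r}$, so the passage to a conical square function of $t^2L_we^{-t^2L_w}f$ is not as immediate as you suggest.

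Part $(b)$ also has a gap, and it is instructive to see why. You do not subtract $e^{-t^2L_w}$ before applying subordination; the paper does, writing $\Ncal_{\pp}^w f\le \Ncal_{\hh}^w f + \sup_t\big(\fint_{B(x,t)}|(e^{-t\sqrt{L_w}}-e^{-t^2L_w})f|^2 dw\big)^{1/2}$ and only then using \eqref{subordinationformula} on the \emph{difference}. This is essential. Without it, on the piece $0<\lambda<\tfrac14$ (after the change of scale $s=t/(2\sqrt\lambda)$) the quantity you must control is $\sup_s\big(\fint_{B(x,2\sqrt\lambda s)}|e^{-s^2L_w}f|^2 dw\big)^{1/2}$, normalized by $w(B(x,2\sqrt\lambda s))$, not by $w(B(x,s))$. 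Passing from this to $\Ncal_{\hh}^w f$ or to $\Ncal_{\hh}^{2\sqrt\lambda,w} f$ forces a doubling factor $\big(w(B(x,s))/w(B(x,2\sqrt\lambda s))\big)^{1/2}\sim \lambda^{-\theta}$, and the integral $\int_0^{1/4}e^{-\lambda}\lambda^{1/2-\theta}\,\frac{d\lambda}{\lambda}$ diverges once the doubling exponent is large enough; so ``use part $(a)$ directly'' does not close the argument. The paper avoids this by applying the fundamental theorem of calculus to the semigroup difference and bounding the resulting $s$-integral over $[t/\sqrt 2,\,t\sqrt{1/(4\lambda)-1/2}]$ by $(\log\lambda^{-1})^{1/2}\,\mathrm{g}_{\hh}^w f(z)$, i.e.\ a \emph{vertical} square function; the mild logarithm is what makes the $\lambda$-integral converge, and the boundedness of $\mathrm g_{\hh}^w$ is precisely where the upper endpoint $p_+(L_w)$ enters. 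Only on $\lambda>\tfrac14$ does the paper use a conical square function $\Scal_{\hh}^{2\sqrt\lambda,w}$ together with the conical change-of-angle in Proposition \ref{prop:alpha} (not Lemma \ref{lema:changeofangelN}, which in fact is not used in this proof at all, only later in Proposition \ref{prop:timeD}). I would encourage you to redo $(b)$ starting from the splitting $\Ncal_{\pp}^w\le\Ncal_{\hh}^w+\,$difference and working out the two regimes carefully, since the $\lambda\to 0$ behavior is the genuine issue here.
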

\begin{proof}
We first prove part $(a)$. Fix $p\in \mathcal{W}_v^w(p_-(L_w),\infty)$ and choose $p_0$ close enough to $p_-(L_w)$ so that 
\begin{equation}\label{p0comparison}
p_-(L_w)<p_0<\min\{2,p\} \quad \text{and}\quad v\in A_{\frac{p}{p_0}}(w).
\end{equation}
Then $e^{-\tau L_w}\in \mathcal O(L^{p_0}(w)-L^2(w))$. 
This fact and  \eqref{doublingcondition} yield
\begin{multline*}
\Ncal_{\hh}^{w}f(x)
 \lesssim  \sup_{t>0}
\sum_{j\geq 1} \br{\fint_{B(x, t)} \abs{e^{-t^2L_w} \left(\chi_{C_j(B(x,t))}f\right)(z)}^2 dw(z)}^{\frac{1}{2}}
\\ 
\lesssim \sup_{t>0}
\sum_{j\geq 1} 2^{j\theta_1} \Upsilon\br{2^{j+1}}^{\theta_2} e^{-c 4^j} \br{\fint_{C_j(B(x,t))} |f(z)|^{p_0} dw(z)}^{\frac{1}{p_0}}
\lesssim 
\Mcal_{p_0}^wf(x).
\end{multline*}
Consequently,  
$$
\|\Ncal_{\hh}^{w}f\|_{L^p(vdw)}\lesssim \|\mathcal{M}_{p_0}^wf\|_{L^p(vdw)}\lesssim \|f\|_{L^p(vdw)},
$$
since  $\Mcal_{p_0}^w$ is bounded on $L^p(vdw)$ by our choice of $p_0$.

\medskip

We now prove part $(b)$. Note that 
\begin{align*}
\!\!\!\!\Ncal_{\pp}^{w}f(x)\! \leq \!\Ncal_{\hh}^{w}f(x)
\!+\!
\sup_{t>0}\! \br{\fint_{B(x, t)} \!\abs{\!\br{\!e^{-t\sqrt {L_w}}\!-\!e^{-t^2L_w}\!\!}\!f(z)}^2\!\! dw(z)\!\!}^{\!\!\!\frac{1}{2}}\!\!\!=:\!\Ncal_{\hh}^wf(x)\!+ \!\sup_{t>0}I\!.
\end{align*}
By the subordination formula \eqref{subordinationformula} and Minkowski's integral inequality, 
\begin{align*}
I&
=
\br{\fint_{B(x,t)} \abs{\int_{0}^{\infty} e^{-{\lambda}} {\lambda}^{\frac{1}{2}} \br{e^{-\frac{t^2}{4{\lambda}} L_w}-e^{-t^2L_w}}f(z) \frac{d{\lambda}}{{\lambda}} }^2 dw(z)}^{\frac{1}{2}}
\\ &\leq
\int_{0}^{\infty} e^{-{\lambda}} {\lambda}^{\frac{1}{2}} \br{\fint_{B(x,t)} \abs{\br{e^{-\frac{t^2}{4{\lambda}} L_w}-e^{-t^2L_w}}f(z)}^2 dw(z)}^{\frac{1}{2}} \frac{d{\lambda}}{{\lambda}}
\\& =
\int_{0}^{\frac{1}{4}} e^{-{\lambda}} {\lambda}^{\frac{1}{2}} \br{\fint_{B(x,t)} \abs{\br{e^{-\frac{t^2}{4{\lambda}} L_w}-e^{-t^2L_w}}f(z)}^2 dw(z)}^{\frac{1}{2}} \frac{d{\lambda}}{{\lambda}}
\\&\qquad\quad +
\int_{\frac{1}{4}}^{\infty} e^{-{\lambda}} {\lambda}^{\frac{1}{2}} \br{\fint_{B(x,t)} \abs{\br{e^{-\frac{t^2}{4{\lambda}} L_w}-e^{-t^2L_w}}f(z)}^2 dw(z)}^{\frac{1}{2}} \frac{d{\lambda}}{{\lambda}}
\\
&=:I_1+I_2.
\end{align*}
In order to estimate $I_1$ and $I_2$, for each $t>0$ consider $h_t:=\br{e^{-(\frac{1}{4{\lambda}}-\frac{1}{2}) t^2 L_w}-e^{-\frac{t^2}{2} L_w}}f$. Next fix $p\in \mathcal{W}_v^w\left(p_-(L_w),p_+(L_w)\right)$, and choose $p_0$ as  in \eqref{p0comparison}. Then, applying the fact that $e^{-\tau L_w} \in \mathcal O(L^{p_0}(w)-L^2(w))$, we have 
\begin{align*}
I_1 
&
\leq
\int_{0}^{\frac{1}{4}} {\lambda}^{\frac{1}{2}} \br{\fint_{B(x,t)} \abs{e^{-\frac{t^2}{2} L_w} h_t(z)}^2 dw(z)}^{\frac{1}{2}} \frac{d{\lambda}}{{\lambda}}
\\
&
\leq
\int_{0}^{\frac{1}{4}} {\lambda}^{\frac{1}{2}} \sum_{j\geq 1} \br{\fint_{B(x, t)} \abs{e^{-\frac{t^2}{2} L_w} \left(\chi_{C_j(B)}h_t\right)(z)}^2 dw(z)}^{\frac{1}{2}} \frac{d{\lambda}}{{\lambda}}
\\
&
\leq
\int_{0}^{\frac{1}{4}} {\lambda}^{\frac{1}{2}} \sum_{j\geq 1} 2^{j\theta_1} \Upsilon\br{2^{j+1}}^{\theta_2} e^{-c4^j} \br{\fint_{B(x,2^{j+1} t)} |h_t(z)|^{p_0} dw(z)}^{\frac{1}{p_0}} \frac{d{\lambda}}{{\lambda}}
\\
&
\lesssim \sum_{j\geq 1} e^{-c4^j} \int_{0}^{\frac{1}{4}} {\lambda}^{\frac{1}{2}} \br{\fint_{B(x,2^{j+1} t)} |h_t(z)|^{p_0} dw(z)}^{\frac{1}{p_0}} \frac{d{\lambda}}{{\lambda}}.
\end{align*}
When $0<{\lambda}\leq1/4$,   Cauchy-Schwartz inequality implies
\begin{multline*}
\abs{\br{e^{-(\frac{1}{4{\lambda}}-\frac{1}{2}) t^2 L_w}-e^{-\frac{t^2}{2} L_w}}f(z)} 
\leq 
\int_{\frac{t}{\sqrt 2}}^{t\sqrt{\frac{1}{4{\lambda}}-\frac{1}{2}}} \abs{\partial_s e^{-s^2 L_w} f(z)} ds
\\
\lesssim
\int_{\frac{t}{\sqrt 2}}^{t\sqrt{\frac{1}{4{\lambda}}-\frac{1}{2}}} \abs{s^2 L_w e^{-s^2 L_w} f(z)} \frac{ds}{s}
\lesssim
\mathrm{g}_{\hh,t}^{w} f(z)\, (\log {\lambda}^{-1})^{\frac{1}{2}},
\end{multline*}
where 
\begin{align}\label{ghht}
\mathrm{g}_{\hh,t}^{w} f(z):=\br{\int_{\frac{t}{2}}^{\infty} \abs{s^2 L_w e^{-s^2 L_w} f(z)}^2 \frac{ds}{s}}^{\frac{1}{2}}.
\end{align}
Therefore,
\begin{multline*}
I_1
\lesssim  
\sum_{j\geq 1} e^{-c4^j} \int_{0}^{\frac{1}{4}} {\lambda}^{\frac{1}{2}} (\log {\lambda}^{-1})^{\frac{1}{2}} \frac{d{\lambda}}{{\lambda}}  \br{\fint_{B(x,2^{j+1} t)} |\mathrm{g}_{\hh,t}^{w} f(z)|^{p_0} dw(z)}^{\frac{1}{p_0}} 
\\
\lesssim  
\sum_{j\geq 1} e^{-c4^j} \br{\fint_{B(x,2^{j+1} t)} |\mathrm{g}_{\hh,t}^{w} f(z)|^{p_0} dw(z)}^{\frac{1}{p_0}}.
\end{multline*}

As for term $I_2$, note that if ${\lambda}>1/4$, we have that
\begin{multline*}
\abs{\br{e^{-\frac{t^2}{4{\lambda}} L_w}-e^{-t^2L_w}}f(z)}
\lesssim
\int_{\frac{t}{2 \sqrt {\lambda}}}^{t} \abs{s^2L_w e^{-s^2 L_w} f(z)} \frac{ds}{s}
\\
\lesssim
\br{\int_{\frac{t}{2 \sqrt {\lambda}}}^{t} \abs{s^2L_w e^{-s^2 L_w} f(z)}^2 \frac{ds}{s}}^{\frac{1}{2}} \br{\log (4 {\lambda})}^{\frac{1}{2}}.
\end{multline*}
Then, by \eqref{doublingcondition},
\begin{align*}
I_2 
&\lesssim
\int_{\frac{1}{4}}^{\infty} e^{-c{\lambda}}  \br{\int_{\frac{t}{2 \sqrt {\lambda}}}^{t}\int_{B(x, t)}  \abs{s^2L_w e^{-s^2 L_w} f(z)}^2 \frac{dw(z)ds}{sw(B(z,t))} }^{\frac{1}{2}} \frac{d{\lambda}}{{\lambda}} 
\\ 
&\lesssim
\int_{\frac{1}{4}}^{\infty} e^{-c{\lambda}}
\br{\int_{\frac{t}{2 \sqrt {\lambda}}}^{t} \int_{B(x,2\sqrt {\lambda} s)} \abs{s^2L_w e^{-s^2 L_w} f(z)}^2 \frac{ dw(z)\,ds}{s\,w(B(z,s))}}^{\frac{1}{2}} \frac{d{\lambda}}{{\lambda}} 
\\ 
&\lesssim
\int_{\frac{1}{4}}^{\infty}  e^{-c{\lambda}}\Scal_{\hh}^{2\sqrt {\lambda},w}f(x) \frac{d{\lambda}}{{\lambda}},
\end{align*}
where $\Scal_{\hh}^{2\sqrt {\lambda},w}$ is defined in \eqref{eq:conicalheat}.

From the above estimates for $I_1$ and $I_2$, we obtain that, for all $x\in \R^n$,
\begin{align}\label{comparisonpoisson}
\Ncal_{\pp}^{w}f(x)
&\lesssim
\Ncal_{\hh}^{w} f(x)+
\sup_{t>0}
\sum_{j\geq 1} e^{-c4^j} \br{\fint_{B(x,2^{j+1} t)} |\mathrm{g}_{\hh,t}^{w} f(z)|^{p_0} dw(z)}^{\frac{1}{p_0}}
\\\nonumber&\qquad \quad
+\int_{\frac{1}{4}}^{\infty}  e^{-c{\lambda}} \Scal_{\hh}^{2\sqrt {\lambda},w}f(x) \frac{d{\lambda}}{{\lambda}}
\\\nonumber&
\lesssim \Ncal_{\hh}^{w} f(x)+\Mcal_{p_0}^w (\mathrm{g}_{\hh}^w f)(x)+\int_{\frac{1}{4}}^{\infty}  e^{-c{\lambda}} \Scal_{\hh}^{2\sqrt {\lambda},w}f(x) \frac{d{\lambda}}{{\lambda}},
\end{align}
where $\mathrm{g}_{\hh}^w$ is defined in \eqref{verticalheat-1}.

To proceed, we first note that $\mathcal{M}_{p_0}^w$ is bounded on $L^p(vdw)$, since $v\in A_{\frac{p}{p_0}}(w)$, and so is $\mathrm{g}_{\hh}^w$ (see \cite{CMR15}) since $p\in \mathcal{W}_v^w(p_-(L_w),p_+(L_w))$. Using this and invoking Proposition \ref{prop:alpha} and  \cite[Theorem 3.1]{ChMPA16},  for some $\theta>0$ depending on $v$, $w$, and $n$, we conclude that
\begin{multline*}
\norm{\Ncal_{\pp}^{w}f}_{L^p(vdw)} 
\lesssim 
\norm{\Ncal_{\hh}^{w} f}_{L^p(vdw)}
+
\norm{\Mcal_{p_0}^w \br{\mathrm{g}_{\hh}^{w} f}}_{L^p(vdw)}
+
\int_{\frac{1}{4}}^{\infty}  e^{-c{\lambda}} \norm{\Scal_{\hh}^{2\sqrt {\lambda},w}f}_{L^p(vdw)} \frac{d{\lambda}}{{\lambda}}
\\ 
\lesssim 
\norm{f}_{L^p(vdw)}
+
\norm{\mathrm{g}_{\hh}^{w} f}_{L^p(vdw)}
+
\norm{f}_{L^p(vdw)} \int_{\frac{1}{4}}^{\infty} {\lambda}^{\theta}e^{-c{\lambda}} \frac{d{\lambda}}{{\lambda}}
\lesssim 
\norm{f}_{L^p(vdw)}.
\end{multline*} 
This completes the proof.
\end{proof}

\medskip

\begin{proof}[Proof of Proposition \ref{prop:timeD}]
{  First of all, fix $w\in A_2(dx)$ and denote 
\[
q_-:=p_-(L_w)=q_-(L_w), \quad q_+:=q_+(L_w), \quad p_+:=p_+(L_ w),
\]
 and 
\[
u(x,t):=\partial_t e^{-t\sqrt{L_w}}f(x)=-\sqrt{L_w}e^{-t\sqrt{L_w}}f(x).
\]}
  
From the definitions of $\Ncal_w$ and $\Ncal_{\pp}^{w}$ (see  \eqref{eq:NTdeg} and \eqref{nontangpoisson}), proceeding as in the proof of \eqref{Nwspacecontrol} we have that
\begin{equation}\label{eq:NTpoisson-w}
\Ncal_w u(x) \lesssim \Ncal_{\pp}^{\alpha,w}(\!\sqrt{L_w} f)(x),\quad \forall x\in \R^n,
\end{equation}
with $\alpha=c_0c_1$.
Consequently, Lemma \ref{lema:changeofangelN}, and Propositions  \ref{prop:boundedpoisson} $(b)$ and  \ref{prop:wRR} imply
\begin{equation*}
\|\Ncal_wu\|_{L^p(vdw)}\lesssim \big\|\Ncal_{\pp}^{w} (\!\sqrt{L_w} f)\big\|_{L^p(vdw)}\lesssim \big\|\!\sqrt{L_w}f\big\|_{L^p(vdw)} \lesssim \norm{\nabla f}_{L^p(vdw)},
\end{equation*}
  for all  $p\in \mathcal{W}_v^w(\max\{r_w,q_-\},p_+)=\big(\mathfrak{r}_v(w)\max\{r_w,q_-\},p_+/\mathfrak{s}_v(w)\big)$ and $f\in \Scal$.
  
Our goal is to obtain \eqref{eq:timeD} for  all 
$$
p\in \mathcal{W}_v^w(\max\{r_w,(q_-)_{w,*}\},p_+)=\big(\mathfrak{r}_v(w)\max\{r_w,(q_-)_{w,*}\},p_+/\mathfrak{s}_v(w)\big).
$$ 
Recall that $(q_-)_{w,*}<q_-$ (see \eqref{p_{w,*}}).
Hence,
 fix $p$  such that
 \begin{align}\label{pinterval}
\mathfrak{r}_v(w)\max\left\{r_w,(q_-)_{w,*}\right\}<p<\mathfrak{r}_v(w)\max\left\{r_w,q_-\right\}.
 \end{align} 
   Then, in view of
 inequality \eqref{eq:NTpoisson-w} and Lemma \ref{lema:changeofangelN}, if we show that, for all $f\in \Scal$,
\begin{align}\label{goal}
\big\|\mathcal{N}_{\pp}^w\big(\!\sqrt{L_w}f\big)\big\|_{L^{p,\infty}(vdw)}\lesssim \|\nabla f\|_{L^p(vdw)} ,
\end{align}
by interpolation, see \cite{Ba09} and Remark \ref{remark:product-weight}, we would conclude the desired estimate.

Given $\alpha>0$, take a function $f\in \Scal$. We apply Lemma \ref{lem:CZweighted} to $f$, $\alpha$, and the product weight ${\varpi}=vw$ (note that $vw\in A_p(dx)$ since $r_{vw}\leq r_w\mathfrak{r}_v(w)<p$, see Remark \ref{remark:product-weight}). 
Let $\{B_i\}_{i}$ be the collection of balls given by Lemma \ref{lem:CZweighted}. Consider for $M\in \N$ arbitrarily large, 
\[
B_{r_{B_i}}:=(I-e^{-r_{B_i}^2L_w})^M,\qquad A_{r_{B_i}}:=I-B_{r_{B_i}}=\sum_{k=1}^MC_{k,M}e^{-kr_{B_i}^2L_w}.
\]
 Hence,
\begin{align}\label{decompositionf}
f=g+\sum_{i}A_{r_{B_i}}b_i+\sum_{i}B_{r_{B_i}}b_i=:g+\widetilde{b}+\widehat{b}.
\end{align}

   To prove the weak-type estimates for $g,\widetilde{b}$, and $\widehat{b}$, we  need some preparations. On the one hand, since we assume that $\mathcal{W}_v^w(\max\{r_w,q_-\},q_+)\neq \emptyset$, by \eqref{pinterval} and \eqref{intervalnotempty2} we can take $p_1$ satisfying
 \begin{align}\label{p_1interval}
 \mathfrak{r}_v(w)\max\left\{r_w,q_-\right\}<p_1<\min\Big\{\frac{q_+}{\mathfrak{s}_v(w)},p_{vw}^*\Big\}.
 \end{align}
In particular,  $\mathfrak{r}_v(w)q_-<p_1<\frac{q_+}{\mathfrak{s}_v(w)}$, that is, $p_1\in \mathcal{W}_v^w(q_-,q_+)$. This can be written as
  \begin{align}\label{p_1clasesARH}
 v\in A_{\frac{p_1}{q_-}}(w)\cap RH_{\left(\frac{q_+}{p_1}\right)'}(w).
 \end{align}
  
  On the other hand,   take $p_0$ satisfying $q_-<p_0<\min\{2,p_1\}$ close enough to $q_-$, and $q_0$ satisfying $\max\{2,p_1\}<q_0<q_+$ close enough to $q_+$, so that 
\begin{align}\label{p_0choice}
v\in A_{\frac{p_1}{p_0}}(w) \cap RH_{\left(\frac{q_0}{p_1}\right)'}(w).
\end{align}

Next, {  by \eqref{comparisonpoisson} and  \eqref{doublingcondition},  we have that for any function $h\in L^2(w)$
\begin{align*}
\mathcal{N}_{\pp}^wh(x)
\lesssim\,&
 \Ncal_{\hh}^wh(x)
+
\sum_{l\geq 1}e^{-c4^l}
\sup_{t>0}\left(\dashint_{B(x,2^{l+1}t)}
\left|\mathrm{g}_{\hh,t}^wh(y)\right|^{{p_0}}dw(y)\right)^{\frac{1}{p_0}}
\\
&+ 
\int_{\frac{1}{4}}^{\infty}e^{-c\lambda}\Scal_{\hh}^{\,2\sqrt{\lambda},w}h(x)\frac{d\lambda}{\lambda}
=: \Ncal_{\hh}^wh(x)+\sum_{l\geq 1}e^{-c4^{l}}\mathfrak{O}_{2,l}h(x)+\mathfrak{O}_3h(x),
\end{align*}
where we recall that  $\mathrm{g}_{\hh,t}^w$ is defined in \eqref{ghht}.
Besides, note that, the fact that $e^{-\tau L_w} \in \Ocal(L^{p_0}(w)-L^2(w))$, and  \eqref{doublingcondition} yield
\begin{align*}
 \Ncal_{\hh}^wh(x)&=\sup_{t>0}\left(\dashint_{B(x,t)}\left|e^{-t^2L_w}h(y)\right|^2dw(y)\right)^{\frac{1}{2}}
 \\&
 \lesssim
 \sum_{l\geq 1}
 e^{-c4^l}
 \sup_{t>0}\left(\dashint_{B(x,2^{l+1}t)}\left|e^{-\frac{t^2}{2}L_w}h(y)\right|^{p_0}dw(y)\right)^{\frac{1}{p_0}}
  \\&
\lesssim
 \sum_{l\geq 1}
 e^{-c4^l}
 \sup_{t>0}\left(\dashint_{B(x,2^{l+2}t)}\left|e^{-t^2L_w}h(y)\right|^{p_0}dw(y)\right)^{\frac{1}{p_0}}
 \\
 &=:\sum_{l\geq 1}e^{-c4^l}\mathfrak{O}_{1,l}h(x).
\end{align*}
Therefore, for any function $h\in L^2(w)$, we have that
\begin{align*}
\Ncal_{\pp}^wh(x)
\leq C\left(
\sum_{l\geq 1}e^{-c4^l}{\mathfrak{O}}_{1,l}h(x)
+
\sum_{l\geq 1}e^{-c4^l}\mathfrak{O}_{2,l}h(x)
+\mathfrak{O}_3h(x)\right),\quad \forall x\in \R^n.
\end{align*}}
Using this and \eqref{decompositionf}, we get
\begin{align}\label{maintimederivative}
&vw\Big(\Big\{x\in \R^n:  \, \Ncal_{\pp}^w\left(\sqrt{L_w}f\right)(x)>\alpha\Big\}\Big)
\\\nonumber
&\qquad\leq
vw\left(\left\{x\in \R^n:\Ncal_{\pp}^w\left(\sqrt{L_w}g\right)(x)>\frac{\alpha}{5}\right\}\right)
\\\nonumber&
\qquad\qquad+
vw\left(\left\{x\in \R^n:\Ncal_{\pp}^w\left(\sqrt{L_w}\,\widetilde{b}\,\right)(x)>\frac{\alpha}{5}\right\}\right)
\\\nonumber&
\qquad\qquad+\sum_{m=1}^{2}
vw\Big(\Big\{x\in \R^n:C\sum_{l\geq 1}e^{-c4^l}\mathfrak{O}_{m,l}\Big(\sqrt{L_w}\, \widehat{b}\Big)(x)>\frac{\alpha}{5}\Big\}\Big)
\\\nonumber&
\qquad\qquad+
vw\left(\left\{x\in \R^n:C\mathfrak{O}_{3}\left(\sqrt{L_w}\,\widehat{b}\right)(x)>\frac{\alpha}{5}\right\}\right)
\\\nonumber&
\qquad=:I+II+\sum_{m=1}^2III_{m}+IV.
\end{align}

In order to estimate $I$, first note that $p<p_1$ (see \eqref{pinterval} and \eqref{p_1interval}). Then, 
apply Chebyshev's inequality, Propositions \ref{prop:boundedpoisson} $(b)$ and  \ref{prop:wRR}, and properties \eqref{CZ:decomp}-\eqref{CZ:overlap} to get
\begin{align}\label{firstermsum}
I
&\lesssim
 \frac{1}{\alpha^{p_1}}\int_{\R^n}\big|\mathcal{N}_{\pp}^w\big(\sqrt{L_w}g\big)\big|^{p_1}vdw
 \lesssim 
 \frac{1}{\alpha^{p_1}}\int_{\R^n}|\nabla g|^{p_1}vdw
\lesssim
   \frac{1}{\alpha^{p}}\int_{\R^n}|\nabla f|^{p}vdw.
\end{align}

Now we estimate
$II$. 
To this end, apply Chebyshev's inequality, Propositions \ref{prop:boundedpoisson} $(b)$ and \ref{prop:wRR}, and Lemma \ref{lemma:arbi}. Then,
\begin{multline}\label{IItermtimederivative}
II \lesssim\frac{1}{\alpha^{p_1}}\int_{\R^n} \left|{\Ncal}_{\pp}^w\left(\sqrt{L_w}\,\widetilde{b}\,\right)\right|^{p_1}vdw
\lesssim\frac{1}{\alpha^{p_1}}\int_{\R^n} \big|\nabla\widetilde{b}\big|^{p_1}vdw
\\
\lesssim \frac{1}{\alpha^{p}}\int_{\R^n}|\nabla f|^{p}vdw.
\end{multline}

We next estimate $IV$.
With this aim, we write  $b=\sum_i b_i$  so that $\widehat{b}=b-\widetilde{b}$, and note that 
\begin{multline*}
IV
\leq  vw\left(\left\{x\in \R^n:C\mathfrak{O}_{3}\left(
\sqrt{L_w}b\right)(x)>\frac{\alpha}{10}\right\}\right)
\\
+
 vw\left(\left\{x\in \R^n:C\mathfrak{O}_{3}\left(\sqrt{L_w}\,\widetilde{b}\right)(x)>\frac{\alpha}{10}\right\}\right)
=: IV_1+IV_2.
\end{multline*}
In order to estimate $IV_1$ apply Chebyshev's inequality, Minkowski's integral inequality, and Proposition \ref{prop:alpha}, then 
 \begin{align*}
IV_1
&
\lesssim
\frac{1}{\alpha^{p}}\left(\int_{\frac{1}{4}}^{\infty}
e^{-cu}\left\|\Scal_{\hh}^{2\sqrt{u},w}\big(\sqrt{L_w}\,b\big)\right\|_{L^{p}(vdw)}
\frac{du}{u}\right)^{p}
\lesssim
\frac{1}{\alpha^{p}}\left\|\Scal_{\hh}^w\big(\sqrt{L_w}\, {b}\big)\right\|_{L^{p}(vdw)}^{p}
\\
&
\lesssim
\frac{1}{\alpha^{p}}\left\|\Scal_{1/2,\hh}^w\big(\sqrt{L_w}\, {b}\big)\right\|_{L^{p}(vdw)}^{p}
= \frac{1}{\alpha^{p}}\left\|\wt\Scal_{\hh}^w {b} \right\|_{L^{p}(vdw)}^{p}
\lesssim
\frac{1}{\alpha^{p}} \left\|\nabla b\right\|_{L^{p}(vdw)}^p
\\
&\lesssim
\frac{1}{\alpha^{p}} \sum_{i}\int_{B_i}|\nabla b_i|^p vdw
\lesssim 
\sum_{i}vw(B_i)
\lesssim \frac{1}{\alpha^{p}}\int_{\R^n}|\nabla f|^p vdw,
\end{align*}
where we have used Proposition \ref{prop:widetildeS-heatS} in the third inequality, Theorem \ref{thm:boundednesswidetildeS} in the fourth inequality, and  the last two inequalities follow from \eqref{CZ:b} and \eqref{CZ:sum}.

As for the estimate of $IV_2$  apply again Chebyshev's inequality, Minkowski's integral inequality and Proposition \ref{prop:alpha}.
Then, \cite[Theorem 3.1]{ChMPA16},  Proposition \ref{prop:wRR}, and Lemma \ref{lemma:arbi} readily give 
\begin{multline*}
IV_2
\lesssim
\frac{1}{\alpha^{p_1}}\left(\int_{\frac{1}{4}}^{\infty}
e^{-cu}\left\|\Scal_{\hh}^{2\sqrt{u},w}\big(\sqrt{L_w}\, \widetilde{b}\,\big)\right\|_{L^{p_1}(vdw)}
\frac{du}{u}\right)^{p_1}
\\
\lesssim
\frac{1}{\alpha^{p_1}}
\left\|\Scal_{\hh}^w\big(\sqrt{L_w}\, \widetilde{b}\,\big)\right\|_{L^{p_1}(vdw)}^{p_1}
\lesssim
\frac{1}{\alpha^{p_1}}
\big\|\nabla \widetilde{b}\,\big\|_{L^{p_1}(vdw)}^{p_1}
\lesssim 
\frac{1}{\alpha^{p}}\int_{\R^n}|\nabla f|^p vdw.
\end{multline*}
Therefore,  we conclude that
\begin{align}\label{termIIItimederivative}
IV
\lesssim
\frac{1}{\alpha^{p}}\int_{\R^n}|\nabla f|^{p}vdw.
\end{align}

 Now, it remains to estimate
$
III_m$, for $m=1,2$.
Note that by  \eqref{CZ:sum},
\begin{align}\label{termII_2}
III_{m}
\leq &
\,vw\!\Big(\!\bigcup_{i} 16B_i\!\Big)\!\!+\!\!
vw\Big(\Big\{x\in \R^n\setminus \cup_{i} 16B_i:C\!\sum_{l\geq 1}e^{-c4^l}\!\mathfrak{O}_{m,l}\big(\sqrt{L_w}\,\widehat{b}\,\big)(x)\!>\!\frac{\alpha}{5}\Big\}\Big)
\\\nonumber
 \lesssim &
\frac{1}{\alpha^{p}}\!\int_{\R^n}\!|\nabla f|^{p}vdw\!+\!\sum_{l\geq 1}
\!vw\Big(
\Big\{x\in \R^n\setminus \cup_{i} 16B_i:\mathfrak{O}_{m,l}\big(\sqrt{L_w}\,\widehat{b}\,\big)(x)\!>\!\frac{e^{c4^l}\alpha}{C2^{l}}
\Big\}\Big)
\\\nonumber
=: &
\frac{1}{\alpha^{p}}\int_{\R^n}|\nabla f|^{p}vdw+\sum_{l\geq 1}III_{m,l}.
\end{align}
Applying Chebyshev's inequality,  duality, and H\"older's inequality, it follows that
\begin{align}\label{sum:termII_2}
&III_{m,l}\lesssim
\frac{e^{-c4^l}}{\alpha^{p_1}}
\int_{\R^n\setminus \cup_{i} 16B_i}
\left|\mathfrak{O}_{m,l}\big(\sqrt{L_w}\,\widehat{b}\,\big)\right|^{p_1}vdw
 \\\nonumber&\,\,
\lesssim
\frac{e^{-c4^l}}{\alpha^{p_1}}\!\Bigg(\sup_{\|u\|_{\!L^{\!p_1'}(vdw)}\!=1}\!\sum_{i}\sum_{j\geq 4}\!\br{\int_{C_j(B_i)}\!
\abs{\mathfrak{O}_{m,l}\!\left(\!\sqrt{L_w}\br{\!B_{r_{B_i}}b_i\!}\!\right)\!}^{\!p_1}\!vdw\!\!}^{\!\!\!\frac{1}{p_1}}\!\!\|u\chi_{C_j(B_i)}\|_{\!L^{p_1'}(vdw)}\Bigg)^{\!\!\!p_1}
\\\nonumber &\,\,
=:
\frac{e^{-c4^l}}{\alpha^{p_1}}\Bigg(\sup_{\|u\|_{L^{p_1'}(vdw)}=1}\sum_{i}\sum_{j\geq 4}I_{m,l}^{ij}\,\|u\chi_{C_j(B_i)}\|_{L^{p_1'}(vdw)}\Bigg)^{p_1}.
\end{align}
Then, for $m=1$, we have that 
\begin{align*}
&{I}_{1,l}^{ij}
\\&
\lesssim\!\!
\left(\!\int_{C_j(B_i)}\!\!\left(\sup_{0<t<2^{j-l-3}r_{B_i}}\!\!\left(\dashint_{B(x,2^{l+2}t)}\!
\left|e^{-t^2L_w}\!\sqrt{L_w}\!\left(\!B_{r_{B_i}}b_i\!\right)\!(y)\!\right|^{p_0}\!\!dw(y)\!\right)^{\!\!\frac{1}{p_0}}\!\right)^{\!\!p_1}\!\!\!v(x)dw(x)\!\right)^{\!\!\!\frac{1}{p_1}}
\\
&\,+\!
\left(\!\int_{C_j(B_i)}\!\!\left(\sup_{t\geq 2^{j-l-3}r_{B_i}}\!\!\left(\dashint_{B(x,2^{l+2}t)}\!
\left|e^{-t^2L_w}\!\sqrt{L_w}\!\left(B_{r_{B_i}}b_i\right)\!(y)\right|^{p_0}\!dw(y)\!\right)^{\!\!\frac{1}{p_0}}\!\right)^{\!p_1}\!\!v(x)dw(x)\!\right)^{\!\!\!\frac{1}{p_1}}
\\&=:\mathfrak{C}_1+\mathfrak{C}_2.
\end{align*}

In order to estimate $\mathfrak{C}_1$, we use functional calculus as in the proof of Theorem \ref{thm:boundednesswidetildeS}. Recall \eqref{afaerverv} and take $\phi(z,t):=tz^{\frac{1}{2}}e^{-t^2 z}(1-e^{-r_{B_i}^2 z})^M$. Then $\phi(z,t)$ is holomorphic in the open sector $\Sigma_\mu=\{z\in\mathbb{C}\setminus\{0\}:|{\rm arg} (z)|<\mu\}$ and satisfies $|\phi(z,t)|\lesssim |z|^M\,(1+|z|)^{-2M}$ (with implicit constant depending on $\mu$, $t>0$, $r_{B_i}$, and $M$) for every $z\in\Sigma_\mu$. We can check that  for every $z\in \Gamma=\partial\Sigma_{\frac{\pi}{2}-\theta}$,
$$
|\eta(z,t)| \lesssim \frac{tr_{B_i}^{2M}}{(|z|+t^2)^{M+\frac{3}{2}}}.
$$
Now fix $x\in C_{j}(B_i)$, $j\geq 4$, and $0<t<2^{j-l-3}r_{B_i}$, 
then $B(x,2^{l+2}t)\subset 2^{j+2}B_i\setminus 2^{j-1}B_i$. This and Minkowski's integral inequality imply
\begin{align*}
&\left(\dashint_{B(x,2^{l+2}t)}
\left|e^{-t^2L_w}\sqrt{L_w}\left(B_{r_{B_i}}b_i\right)(y)\right|^{p_0}dw(y)\right)^{\frac{1}{p_0}}
\\& \quad
=
\left(\dashint_{B(x,2^{l+2}t)}
\left|\phi(L_w,t)\left(\frac{b_i}{t}\right)(y)\right|^{p_0}dw(y)\right)^{\frac{1}{p_0}}
\\& \quad
\lesssim
\int_{\Gamma}\left(\dashint_{B(x,2^{l+2}t)}
\left|e^{-zL_w}\left(\frac{b_i}{t}\right)(y)\right|^{p_0}dw(y)\right)^{\frac{1}{p_0}}\frac{tr_{B_i}^{2M}}{(|z|+t^2)^{M+\frac{3}{2}}}|dz|
\\& \quad
\lesssim
\int_{\Gamma}\left(\dashint_{B(x,2^{l+2}t)}
\left|\chi_{2^{j+2}B_i\setminus 2^{j-1}B_i}e^{-zL_w}b_i(y)\right|^{p_0}dw(y)\right)^{\frac{1}{p_0}}\frac{r_{B_i}^{2M}}{|z|^{M+\frac{3}{2}}}|dz|
\\& \quad
\lesssim
\int_{\Gamma}\mathcal{M}_{p_0}^{w}\left(\chi_{2^{j+2}B_i\setminus 2^{j-1}B_i}e^{-zL_w}b_i\right)(x)\frac{r_{B_i}^{2M}}{|z|^{M+\frac{3}{2}}}|dz|.
\end{align*}
Recalling that $\mathcal{M}_{p_0}^w$ on $L^{p_1}(vdw)$ since $v\in A_{\frac{p_1}{p_0}}(w)$, and applying again  Minkowski's integral inequality, we get
\begin{align*}
\mathfrak{C}_1
&\lesssim
\int_{\Gamma}
\left(
\int_{C_j(B_i)}
\left|\mathcal{M}_{p_0}^w\left(
\chi_{2^{j+2}B_i\setminus 2^{j-1}B_i}e^{-zL_w}b_i\right)(x)\right|^{p_1}v(x)
dw(x)\right)^{\frac{1}{p_1}}
\frac{r_{B_i}^{2M}}{|z|^{M+\frac{3}{2}}}|dz|
\\
&\lesssim
\int_{\Gamma}\left(\int_{2^{j+2}B_i\setminus 2^{j-1}B_i}\left|e^{-zL_w}b_i(y)\right|^{p_1}v(y)dw(y)\right)^{\frac{1}{p_1}}
\frac{r_{B_i}^{2M}}{|z|^{M+\frac{3}{2}}}|dz|.
\end{align*}
Observe that  $2^{j+2}B_i\setminus 2^{j-1}B_i=\cup_{l=1}^3C_{l+j-2}(B_i)$.
Then by the fact that $e^{-zL_w}\in\mathcal{O}(L^{p_1}(vdw)-L^{p_1}(vdw))$,  \eqref{CZ:PS}, and changing the variable $s$ into $\frac{4^jr_{B_i}^2}{s^2}$, 
\begin{align*}
\mathfrak{C}_1
&\lesssim
vw(2^{j+1}B_i)^{\frac{1}{p_1}}
2^{j\theta_1}
\left(\dashint_{B_i}\left|b_i\right|^{p_1}d(vw)\right)^{\frac{1}{p_1}}
\int_{0}^{\infty}\Upsilon\left(\frac{2^jr_{B_i}}{s^{\frac{1}{2}}}\right)^{\theta_2}
e^{-c\frac{4^jr_{B_i}^2}{s}}
\frac{sr_{B_i}^{2M}}{s^{M+\frac{3}{2}}}\frac{ds}{s}
\\
&\lesssim
\alpha
vw(2^{j+1}B_i)^{\frac{1}{p_1}}
2^{-j(2M+1-\theta_1)}
\int_{0}^{\infty}\Upsilon\left(s\right)^{\theta_2}
e^{-cs^2}
s^{2M+1}
\frac{ds}{s}
\\ &\lesssim
\alpha
vw(2^{j+1}B_i)^{\frac{1}{p_1}}
2^{-j(2M+1-\theta_1)},
\end{align*}
 provided $2M+1>\theta_2$.

We continue by estimating $\mathfrak{C}_2$. To this end, first change the variable $t$ into $t\sqrt{M+1}=:t\theta_M$. Next, for any $x\in C_j(B_i)$ and $t\geq\frac{2^{j-1}r_{B_i}}{2^{l+2}\theta_M}$,  note that $B_i\subset B(x_{B_i},\theta_M 2^{l+2}t)=:B_{i}^l\subset B(x,\theta_M2^{l+2}5t)$ ($x_{B_i}$ denotes the center of $B_i$). Then,
\begin{multline*}
\!\!\!\mathfrak{C}_2
\lesssim\!\!
\Bigg(\!\!\int_{C_j(B_i)}\!\!\Bigg(\sup_{t\geq\frac{2^{j-l-3}r_{B_i}}{\theta_M}}\!\dashint_{B(x,\theta_M 2^{l+2}t)}\!
\abs{\!\mathcal{T}_{t,r_{B_i}}\!\sqrt{L_w}e^{-t^2L_w}\!\br{\!\chi_{B_{i}^l}b_i\!}\!(y)}^{\!p_0}\!dw(y)\!\!\Bigg)^{\!\!\!\frac{p_1}{p_0}}\!\!d(vw)(x)\!\!\Bigg)^{\!\!\!\frac{1}{p_1}}
\\
\lesssim\!\!\!
\Bigg(\!\!\int_{\!C_j(B_i)}\!\!\!\Bigg(\!\sup_{t\geq\frac{2^{j-l-3}r_{B_i}}{\theta_M}}\!\!\!\!\!\!w(B(x,\theta_M 2^{l+2}t))^{\!-1}\!\!\!\!\int_{\!\R^n}\!\!
\abs{\!\mathcal{T}_{t,r_{B_i}}\!\sqrt{L_w}e^{-t^2L_w}\!\br{\!\chi_{B_{i}^l}b_i\!}\!(y)}^{\!p_0}\!\!\!\!dw(y)\!\!\Bigg)^{\!\!\!\!\frac{p_1}{p_0}}\!\!\!\!d(vw)(x)\!\!\Bigg)^{\!\!\!\frac{1}{p_1}}\!\!\!\!,
\end{multline*}
where  $\mathcal{T}_{t,r_{B_i}}:=\left(e^{-t^2L_w}-e^{-(t^2+r_{B_i}^2)L_w}\right)^M$.

In the above setting, \eqref{boundednesstsr}, Proposition \ref{prop:wRR},  the fact that $\sqrt{\tau}\nabla e^{-\tau L_w}\!\!\in\! \mathcal{O}(L^{p_0}(w)-L^{p_0}(w))$, \eqref{doublingcondition}, and 
Lemma \ref{ARHsinpesoconpeso} (a) (see \eqref{p_0choice}), imply
\begin{align}\label{termT}
&\left(\int_{\R^n}
\abs{\mathcal{T}_{t,r_{B_i}}\sqrt{L_w}e^{-t^2L_w}\br{\chi_{B_{i}^l}b_i}}^{p_0}dw\right)^{\frac{1}{p_0}}
\\\nonumber
&\quad
\lesssim
\left(\frac{r_{B_i}^2}{t^2}\right)^{M}\!\!\left(\int_{\R^n}
\left|\nabla e^{-t^2L_w}\left(\chi_{B_{i}^l}b_i\right)\!\right|^{p_0}\!dw\right)^{\!\!\frac{1}{p_0}}
\\\nonumber
&\quad
\lesssim 2^{l}
\sum_{N\geq 1}w(C_N(B_i^l))^{\frac{1}{p_0}}
\left(\frac{r_{B_i}^2}{t^2}\right)^{M}\!\!\left(\dashint_{C_N(B_{i}^l)}
\left|t\nabla e^{-t^2L_w}\left(\chi_{B_{i}^l}\frac{b_i}{r_{B_i}}\right)\!\right|^{p_0}\!dw\right)^{\!\!\frac{1}{p_0}}
\\\nonumber
&\quad
\lesssim 2^{l\theta}w(B_i^l)^{\frac{1}{p_0}}
\sum_{N\geq 1}e^{-c4^N}
\left(\frac{r_{B_i}^2}{t^2}\right)^{M}\left(\dashint_{B_{i}^l}
\left|\frac{b_i}{r_{B_i}}\right|^{p_0}dw\right)^{\frac{1}{p_0}}
\\\nonumber
&\quad
\lesssim 2^{l\theta}w(B_i^l)^{\frac{1}{p_0}}
\left(\frac{r_{B_i}^2}{t^2}\right)^{M}\left(\dashint_{B_i}
\left|\frac{b_i}{r_{B_i}}\right|^{p_1}d(vw)\right)^{\frac{1}{p_1}}
\\\nonumber
&\quad
\lesssim 2^{l\theta}w(B_i^l)^{\frac{1}{p_0}}
\alpha\left(\frac{r_{B_i}^2}{t^2}\right)^{M}
.
\end{align}
Consequently, 
\begin{align*}
\mathfrak{C}_2
\lesssim  2^{l\theta}\alpha
\Bigg(\int_{C_j(B_i)}\Bigg(\sup_{t\geq \frac{2^{j-l-3}r_{B_i}}{\theta_M}}\left(\frac{r_{B_i}^2}{t^2}\right)^{M}\Bigg)^{p_1}vdw\Bigg)^{\frac{1}{p_1}}
\lesssim
\alpha
vw(2^{j+1}B_i)^{\frac{1}{p_1}}2^{-j2M}2^{l(2M+\theta)}
,
\end{align*}
where in the first inequality, we have used that $w(B(x,\theta_M2^{l+2}t))^{-1}w(B_i^l)\leq C$,  since $B_{i}^l\subset B(x,\theta_M2^{l+2}5t)$.

Collecting the estimates obtained for $\mathfrak{C}_1$ and  $\mathfrak{C}_2$, we conclude that, for $M\in \N$ such that $2M+1>\theta_2$,
\begin{align}\label{term:I_1^ij}
I_{1,l}^{ij}\lesssim \alpha\, vw(2^{j+1}B_i)^{\frac{1}{p_1}}  2^{-j(2M-\theta_1)} 2^{l(2M+\theta)}.
\end{align}

\medskip

Next, let us  estimate term $I_{2,l}^{ij}$. Splitting the supremum in $t$, we have
\begin{align*}
I_{2,l}^{ij}&
\lesssim 
\left(
\int_{C_j(B_i)}
\sup_{0<t<2^{j-l-2}r_{B_i}}\!\!\left(\dashint_{B(x,2^{l+1}t)}\!\!
\left(\mathrm{g}_{\hh}^w\sqrt{L_w}\left(B_{r_{B_i}}b_i\right)\!\!(y)\right)^{p_0}dw(y)\right)^{\!\!\frac{p_1}{p_0}}v(x)dw(x)
\right)^{\!\!\frac{1}{p_1}}
\\
&\,\,+
\left(
\int_{C_j(B_i)}
\sup_{t\geq 2^{j-l-2}r_{B_i}}\!\!\left(\dashint_{B(x,2^{l+1}t)}
\!\!\left(\mathrm{g}_{\hh,t}^{w}\sqrt{L_w}\left(B_{r_{B_i}}b_i\right)(y)\right)^{p_0}dw(y)\right)^{\!\!\frac{p_1}{p_0}}v(x)dw(x)\!
\right)^{\!\!\frac{1}{p_1}}
\\&=:D_1^{ij}+D_2^{ij}.
\end{align*}

Regarding $D_{1}^{ij}$, we claim that 
\begin{align}\label{D1ijfinal}
D_{1}^{ij}
\lesssim
\alpha vw(2^{j+1}B_i)^{\frac{1}{p_1}}2^{-j(2M+1-\widetilde\theta_1)}.
\end{align}
To  this end, first note that for $0<t<2^{j-l-2}r_{B_i}$ and $x\in C_j(B_i)$, then $B(x,2^{l+1}t)\subset 2^{j+2}B_i\setminus 2^{j-1}B_i$. Next recall that  $\mathcal{M}_{p_0}^w$ is $L^{p_1}(vdw)$ bounded since $v\in A_{\frac{p_1}{p_0}}(w)$ (see \eqref{p_0choice}). Hence
\begin{multline*}
D_1^{ij}
\lesssim
\left(\int_{C_j(B_i)} \left| \mathcal{M}_{p_0}^w \left(\chi_{2^{j+2}B_i\setminus 2^{j-1}B_i}
\mathrm{g}_{\hh}^w\sqrt{L_w}\left(B_{r_{B_i}}b_i\right)\right)\right|^{p_1}vdw \right)^{\frac{1}{p_1}}
\\ \lesssim 
vw(2^{j+1}B_i)^{\frac{1}{p_1}} \left(
\dashint_{2^{j+2}B_i\setminus 2^{j-1}B_i}
\left|
\mathrm{g}_{\hh}^w\sqrt{L_w}\left(B_{r_{B_i}}b_i\right)
\right|^{p_1}d(vw)
\right)^{\frac{1}{p_1}}.
\end{multline*}
 In view of \eqref{p_0choice}, we apply Lemma \ref{ARHsinpesoconpeso} (b) and Minkowski's integral inequality to get
\begin{align}\label{D1ij}
D_1^{ij}
\lesssim
vw(2^{j+1}\!B_i)^{\!\frac{1}{p_1}}
\!\!
\left(\int_0^{\infty}\!\!\!\left(
\dashint_{2^{j+2}B_i\setminus 2^{j-1}B_i}\!
\left|
rL_wr\sqrt{L_w}e^{-r^2L_w}\!\left(B_{r_{B_i}}b_i(x)\right)\!
\right|^{\!q_0}\!\!dw(x)\!\!\right)^{\!\!\!\frac{2}{q_0}}\!\frac{dr}{r}
\right)^{\!\!\!\frac{1}{2}}\!\!.
\end{align}
In order to estimate the integral 
in $x$, we use functional calculus  as in the estimate of $\mathfrak{C}_1$. Apply the fact that $zL_we^{-zL_w}\in \mathcal{O}(L^{p_0}(w)-L^{q_0}(w))$,  Lemma \ref{ARHsinpesoconpeso} (a), and \eqref{CZ:PS}. 
Thus,
\begin{align*}
&\left(\dashint_{2^{j+2}B_i\setminus 2^{j-1}B_i}\right.\left.\left|rL_wr\sqrt{L_w}e^{-r^2L_w}\left(B_{r_{B_i}}b_i\right)\right|^{q_0}dw\right)^{\frac{1}{q_0}}
\\&\qquad
\lesssim
\int_{\Gamma}\left(
\dashint_{2^{j+2}B_i\setminus 2^{j-1}B_i}
|zL_w e^{-zL_w} b_i|^{q_0}dw\right)^{\frac{1}{q_0}}
\frac{r^2r_{B_i}^{2M}}{(|z|+r^2)^{M+\frac{3}{2}}}\frac{|dz|}{|z|}
\\&\qquad
\lesssim
2^{j\widetilde\theta_1}
\int_{0}^{\infty}\Upsilon(s)^{\wt\theta_2}
e^{-cs^2}
\frac{r^2r_{B_i}^{2M}}{(4^jr_{B_i}^2/s^2+r^2)^{M+\frac{3}{2}}}\frac{ds}{s}\left(\dashint_{B_i}|b_i|^{p_0}dw\right)^{\frac{1}{p_0}}
\\&\qquad
\lesssim
2^{j\widetilde\theta_1}
\int_{0}^{\infty}\Upsilon(s)^{\wt\theta_2}
e^{-cs^2}
r^2\frac{r_{B_i}^{2M}}{(4^jr_{B_i}^2/s^2+r^2)^{M+\frac{3}{2}}}\frac{ds}{s}\left(\dashint_{B_i}|b_i|^{p_1}d(vw)\right)^{\frac{1}{p_1}}
\\&\qquad
\lesssim
\alpha r_{B_i}
2^{j\widetilde\theta_1}
\int_{0}^{\infty}\Upsilon\left(s\right)^{\widetilde\theta_2}
e^{-cs^2}
r^2\frac{r_{B_i}^{2M}}{(4^jr_{B_i}^2/s^2+r^2)^{M+\frac{3}{2}}}\frac{ds}{s}.
\end{align*} 
Plugging this into \eqref{D1ij} and
 changing the variable $r$ into $2^jr_{B_i}r$, we obtain, for $M\in \N$ such that $2M>\widetilde\theta_2$,
\begin{align*}
D_{1}^{ij}
&
\lesssim \alpha r_{B_i}2^{j\widetilde\theta_1}vw(2^{j+1}B_i)^{\frac{1}{p_1}}\!
\left(\!\int_{0}^{\infty}\!\!\left(\int_{0}^{\infty}\!\Upsilon\left(s\right)^{\widetilde\theta_2}
e^{-cs^2}
r^2\frac{r_{B_i}^{2M}}{(4^jr_{B_i}^2/s^2+r^2)^{M+\frac{3}{2}}}\frac{ds}{s}\!\right)^{\!\!\!2}\!\!\frac{dr}{r}\!\right)^{\!\!\!\frac{1}{2}}
\\\nonumber&
\lesssim
\alpha vw(2^{j+1}B_i)^{\frac{1}{p_1}}
2^{-j(2M+1-\widetilde\theta_1)}\!
\left(\!\int_{0}^{\infty}\!\!r^{4}\!\!\left(\int_{0}^{\infty}\!\Upsilon\left(s\right)^{\widetilde\theta_2}
e^{-cs^2}\!
\frac{1}{(1/s^2+r^2)^{M+\frac{3}{2}}}\frac{ds}{s}\!\right)^{\!\!\!2}\!\!\frac{dr}{r}\!\!\right)^{\!\!\!\frac{1}{2}}
\\\nonumber&
\lesssim
\alpha vw(2^{j+1}B_i)^{\frac{1}{p_1}}
2^{-j(2M+1-\widetilde\theta_1)}\left(\left(
\int_{0}^{1}r^{4}\frac{dr}{r}\right)^{\frac{1}{2}}\int_{0}^{\infty}\Upsilon\left(s\right)^{\widetilde\theta_2}
e^{-cs^2}
s^{2M+3}\frac{ds}{s}\right.
\\\nonumber&\hspace*{6cm}
+
\left.
\left(\int_{1}^{\infty}r^{-2}\frac{dr}{r}\right)^{\frac{1}{2}}\int_{0}^{\infty}\Upsilon\left(s\right)^{\widetilde\theta_2}
e^{-cs^2}
s^{2M}\frac{ds}{s}\right)
\\\nonumber&
\lesssim
\alpha vw(2^{j+1}B_i)^{\frac{1}{p_1}}
2^{-j(2M+1-\widetilde\theta_1)}.
\end{align*}

Now turning to the estimate of $D_2^{ij}$, we claim
\begin{align}\label{D2ij}
D_2^{ij}
\lesssim 
2^{l\left(2M+\widetilde{\theta}\right)}\alpha vw(2^{j+1}B_i)^{\frac{1}{p_1}} 2^{-2jM}.
\end{align}
For any $t\geq 2^{j-l-2}r_{B_i}$ and $f\in L^2(w)$, we have that
$$
\mathrm{g}_{\hh,t}^wf(x)=\left(\int_{\frac{t}{2}}^{\infty}|r^2L_we^{-r^2L_w}f(x)|^2\frac{dr}{r}\right)^{\frac{1}{2}}\leq 
\left(\int_{2^{j-l-3}r_{B_i}}^{\infty}|r^2L_we^{-r^2L_w}f(x)|^2\frac{dr}{r}\right)^{\frac{1}{2}}.
$$
Moreover, recall that $p_0<q_0$ (see \eqref{p_0choice}), this implies the boundedness of the maximal operator $\Mcal_{p_0}^w$ on $L^{q_0}(w)$.
This,
together with Lemma \ref{ARHsinpesoconpeso} (b) and Minkowski's integral inequality, allows us to obtain
\begin{align}\label{D2ijplug}
&D_2^{ij}
\lesssim
vw(2^{j+1}B_i)^{\frac{1}{p_1}}
\left(
\dashint_{C_j(B_i)}
\left|\mathcal{M}^w_{p_0}
\br{\mathrm{g}_{\hh,2^{j-l-2}r_{B_i}}^{w}
\br{\sqrt{L_w}\left(B_{r_{B_i}}b_i\right)}}\right|^{q_0}dw
\right)^{\frac{1}{q_0}}
\\\nonumber
&
\lesssim\!
vw(2^{j+1}\!B_i)^{\!\frac{1}{p_1}}
w(2^{j+1}\!B_i)^{\!-\frac{1}{q_0}}
\!\!\left(\!
\int_{\R^n}\!\!\left(
\!\int_{2^{j-l-3}r_{B_i}}^{\infty}
\!\!\left|
r^2L_we^{-r^2L_w}\!\!
\left(
\sqrt{L_w}\left(
B_{r_{B_i}}b_i
\right)\!
\right)\!
\right|^{\!2}\!\frac{dr}{r}\!\right)^{\!\!\!\frac{q_0}{2}}\!\!\!dw\!\right)^{\!\!\!\frac{1}{q_0}}\!\!
\\\nonumber
&
\lesssim\!
vw(2^{j+1}\!B_i)^{\!\frac{1}{p_1}}
w(2^{j+1}\!B_i)^{\!-\frac{1}{q_0}}\!
\!\left(\!
\int_{\frac{2^{j-l-3}r_{B_i}}{\theta_M}}^{\infty}\!\!\left(\int_{\R^n}\!
\left|\mathcal{T}_{r,r_{B_i}}\!\sqrt{L_w}r^2L_we^{-r^2L_w}\!\left(\chi_{B_{i}^l} b_i\right)\!
\right|^{\!q_0}\!\!dw\!\right)^{\!\!\!\frac{2}{q_0}}\!\!\frac{dr}{r}\!\right)^{\!\!\!\frac{1}{2}}\!\!
,
\end{align}
where in the last inequality we have changed the variable $r$ into $r\theta_M:=r\sqrt{M+1}$, used that  $B_i\subset B(x_{B_i}, \theta_M 2^{l+1}r)=:B_{i}^l$, for $r>\frac{2^{j-l-3}r_{B_i}}{\theta_M}$ and $j\geq 4$ ($x_{B_i}$ denotes the center of $B_i$), and we recall that  
$\mathcal{T}_{r,r_{B_i}}:=(e^{-r^2L_w}-e^{-(r^2+r_{B_i}^2)L_w})^M$. 

Proceeding as in the estimate of \eqref{termT}, but
using  now the fact  that $\sqrt{\tau}\nabla \tau\! L_we^{-\tau L_w}\!\in\mathcal{O}(L^{p_0}(w)-L^{q_0}(w))$ instead of
 $\sqrt{\tau}\nabla e^{-\tau L_w}\in\mathcal{O}(L^{p_0}(w)-L^{p_0}(w))$, we get
\begin{multline*}
\br{\int_{\R^n}\abs{\mathcal{T}_{r,r_{B_i}}\sqrt{L_w}r^2L_we^{-r^2L_w}\br{\chi_{B_{i}^l}b_i}}^{q_0}dw}^{\frac{1}{q_0}}
\lesssim 2^{l\widetilde{\theta}}\alpha w(B_i^l)^{\frac{1}{q_0}}
\left(\frac{r_{B_i}^2}{r^2}\right)^{M}
\\
\lesssim 2^{l\left(\widetilde{\theta}+\frac{2n}{q_0}\right)}\alpha w(2^{j+1}B_i)^{\frac{1}{q_0}}2^{-\frac{2jn}{q_0}}
\left(\frac{r_{B_i}^2}{r^2}\right)^{M-\frac{n}{q_0}},
\end{multline*}
where in the last inequality we have used that  
for $r>\frac{2^{j-l-3}r_{B_i}}{\theta_M}$ and $j\geq 4$,  $2^{j+1}B_i\subset 2^{3}B_{i}^l$, and \eqref{pesosineqw:Ap}.
Plugging this into \eqref{D2ijplug} leads  to
\begin{align*}
D_2^{ij}
& \lesssim 
2^{l\left(\widetilde{\theta}+\frac{2n}{q_0}\right)}\alpha 2^{-\frac{2jn}{q_0}}
vw(2^{j+1}\!B_i)^{\frac{1}{p_1}}
\left(
\int_{\frac{2^{j-l-3}r_{B_i}}{\theta_M}}^{\infty}
\left(\frac{r_{B_i}^2}{r^2}\right)^{2M-\frac{2n}{q_0}}
\frac{dr}{r}\!\right)^{\!\!\frac{1}{2}}
\\ &\lesssim 
2^{l\left(2M+\widetilde{\theta}\right)}\alpha vw(2^{j+1}B_i)^{\frac{1}{p_1}} 2^{-2jM},
\end{align*}
 provided $2M>\frac{2n}{q_0}$.

Gather  \eqref{D1ijfinal} and \eqref{D2ij}, then for $M\in \N$ such that $2M>\max\{\widetilde\theta_2,2n/q_0\}$,
\begin{align*}
I_{2,l}^{ij}\lesssim 2^{l(2M+\widetilde{\theta})}\alpha vw(2^{j+1}B_i)^{\frac{1}{p_1}}2^{-j(2M-\widetilde\theta_1)}.
\end{align*}
This and  \eqref{term:I_1^ij} yield, for  $2M>\max\{\widetilde\theta_2,2n/q_0,\theta_2-1\}$, 
$$
I_{m,l}^{ij}\leq C_1\alpha vw(2^{j+1}B_i)^{\frac{1}{p_1}}2^{-j(2M-C_2)}, \quad m=1,2,
$$
with $C_2:=\max\{\theta_1,\widetilde{\theta}_1\}$ and $C_1:=C 2^{lC_M}$. 
Then, in view of  \eqref{sum:termII_2}, applying Lemma \ref{lemma:lastestimate} with $\mathcal{I}_{ij}=I_{m,l}^{ij}$ and $\{B_i\}_{i}$ the collection of balls given by Lemma \ref{lem:CZweighted}, and \eqref{CZ:sum}, for $2M\!\!>\!\max\!\big\{\!C_2\!+\!nr_{w}\mathfrak{r}_v(w),\widetilde\theta_2,\frac{2n}{q_0},\theta_2\!-\!1\!\big\}$\!, we get 
\begin{align*}
III_{m,l}
\lesssim e^{-c4^l}
vw\Big(\bigcup_{i}B_i\Big)
\lesssim e^{-c4^l}
\frac{1}{\alpha^{p}}\int_{\R^n}|\nabla f|^{p}vdw, \quad m=1,2.
\end{align*}
Therefore, by  \eqref{termII_2}
\begin{align*}
III_{m}
\lesssim \sum_{l\geq 1} e^{-c4^l}
\frac{1}{\alpha^{p}}\int_{\R^n}|\nabla f|^{p}vdw
\lesssim
\frac{1}{\alpha^{p}}\int_{\R^n}|\nabla f|^{p}vdw.
\end{align*}
Collecting this estimate and \eqref{maintimederivative}-\eqref{termIIItimederivative}, the proof is complete.
\end{proof}

\section{The regularity problem in unweighted Lebesgue spaces}\label{section:unweighted}

Our main result, Theorem \ref{thm:maindegenerate}, establishes the solvability of the regularity problem in $L^p(v\,dw)$ of the block operator $\L_w$. Recall that $w\in A_2(dx)$ is fixed and controls the degeneracy of the operator and that $v\in A_\infty(w)$. This means that we can establish the solvability of the regularity problem in unweighted Lebesgue spaces by taking $v=w^{-1}$. In this section our goal is to explore this idea and study ranges for which we can solve the regularity problem in terms of the weight $w$. A particular case of interest, where we can be more explicit, is that of power weights.

To start fix $w\in A_2(dx)$ and recall the definitions of $r_w$ and $s_w$ in  \eqref{eq:defi:rw}. As just mentioned, we let $v=w^{-1}$ and observe that from the definitions it is clear that for every $1\le r<\infty$ one has $w^{-1}\in A_r(w)$ if and only if $w\in RH_{r'}(dx)$, and $w^{-1}\in RH_{r'}(w)$ if and only if $w\in A_r(dx)$. Hence, according to \eqref{eq:defi:rvw} we have
$\mathfrak{r}_{w^{-1}}(w)=s_w$ and $\mathfrak{s}_{w^{-1}}(w)=r_w$. Then looking at Theorem \ref{thm:maindegenerate} and using \eqref{intervalrsw}, we see that \eqref{main:compa} is equivalent to 
\begin{equation}\label{cond1:apps}
\max\{r_w,q_-(L_w)\}\, s_w< \frac{q_+(L_w)}{r_w},
\end{equation}
and if that holds we have  $(R^{\L_w})_{L^p(dx)}$ solvability for $p$ so that
\begin{equation}\label{cond2:apps}
\max\Big\{r_w,\frac{n\,r_w\, q_-(L_w)}{n\,r_w+q_-(L_w)}\Big\}\, s_w< p<\frac{q_+(L_w)}{r_w}.
\end{equation}
It is important to note that $q_-(L_w)$ and $q_+(L_w)$ are defined in an abstract way and depend intrinsically on $w$. From \cite[Propositions 3.1 and 7.1]{CMR15} and recalling that $n\ge 2$,  we know that $q_-(L_w)=p_-(L_w)\le \frac{2\,n\,r_w}{n\,r_w+2}$, hence we have an estimate for $q_-(L_w)$ in terms of $n$ and $r_w$. On the other hand $q_+(L_w)>2$ and can be arbitrarily close to $2$ (even in the case $w\equiv 1$), and we do not have an explicit bound in terms of $w$ (see \cite[Proof of Theorem 11.8]{CMR15} in this regard). Taking this into account and in order to check that \eqref{cond1:apps} holds we will replace its right-hand side with $\frac{2}{r_w}$.

Our first result for general weights is as follows:

\begin{corollary}\label{corol:weights}
	Let $w\in A_2(dx)$ and let $\L_w$ be a block  degenerate elliptic operator in $\R_+^{n+1}$ as in \eqref{def:L-block}. Associated with $\L_w$ consider 
	the regularity problem $(R^{\L_w})_{L^p(dx)}$ as in Section \ref{section:intro}. Given $f\in C_c^\infty(\re^n)$ if one sets $u(x,t)=e^{-t\sqrt L_w}f(x)$, $(x,t)\in\R_+^{n+1}$, then 
	\begin{equation}\label{corol:main-est}
	\norm{\Ncal_w (\nabla_{x,t} u)}_{L^p(dx)} \leq C\norm{\nabla f}_{L^p(dx)}
	\end{equation}
	in any of the following scenarios:
	\begin{list}{$(\theenumi)$}{\usecounter{enumi}\leftmargin=1cm \labelwidth=1cm\itemsep=0.2cm\topsep=.0cm \renewcommand{\theenumi}{\alph{enumi}}}
		
		\item If $w\in A_1(dx)\cap RH_{1+\frac{n}2}(dx)$ and 
		\[
		\max\Big\{1,\frac{n\, q_-(L_w)}{n+q_-(L_w)}\Big\}\, s_w< p<q_+(L_w),
		\]
		in particular, in the range 
		\[
		\max\Big\{1,\frac{2\,n}{n+4}\Big\}\, s_w< p\le 2.
		\]
		
		\item If $w\in A_{r_0}(dx)\cap RH_{\infty}(dx)$ with $r_0:=\min\Big\{\sqrt{2}, \frac{1+\sqrt{1+\frac8n}}{2}\Big\}$ and
		\[
		\max\Big\{r_w,\frac{n\,r_w\, q_-(L_w)}{n\,r_w+q_-(L_w)}\Big\}< p<\frac{q_+(L_w)}{r_w},
		\]
		in particular, in the range 
		\[
		\max\Big\{r_w,\frac{2\,n\,r_w}{n\,r_w+4}\Big\}< p\le \frac{2}{r_w}.
		\]
		
		\item If $w\in A_r(dx)\cap RH_{s(r)'}(dx)$ with $1<r< r_0$ and $s(r)=\min\{\frac2{r^2}, \frac{n\,r+2}{n\,r^2}\}$, and 
		\[
		\max\Big\{r_w,\frac{n\,r_w\, q_-(L_w)}{n\,r_w+q_-(L_w)}\Big\}\, s_w< p<\frac{q_+(L_w)}{r_w},
		\]
		in particular, in the range
		
		\[
		\max\Big\{r_w,\frac{2\,n\,r_w}{n\,r_w+4}\Big\}\, s_w< p\le \frac{2}{r_w}.
		\]
		
		\item Given $\Theta\ge 1$ there exists $\epsilon_0=\epsilon_0(\Theta,n,\Lambda/\lambda)\in (0,\frac1{2n}]$, such that for every $w\in A_{1+\epsilon}(dx)\cap  RH_{\max\{\frac2{1-\epsilon}, 1+(1+\epsilon)\,\frac{n}2\}}(dx)$ with  $0\le\epsilon<\epsilon_0$ and $[w]_{A_2(dx)}\le \Theta$, then \eqref{corol:main-est} holds with $p=2$, or equivalently $(R^{\L_w})_{L^2(dx)}$ is solvable.
			\end{list}
	\end{corollary}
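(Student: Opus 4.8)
The plan is to deduce Corollary \ref{corol:weights} directly from Theorem \ref{thm:maindegenerate} applied with the weight $v = w^{-1}$, exploiting the duality between weighted and unweighted Muckenhoupt classes recorded in the excerpt. First I would record the elementary identities $\mathfrak{r}_{w^{-1}}(w) = s_w$ and $\mathfrak{s}_{w^{-1}}(w) = r_w$, which follow from the fact that $w^{-1}\in A_r(w)\iff w\in RH_{r'}(dx)$ and $w^{-1}\in RH_{r'}(w)\iff w\in A_r(dx)$, together with definitions \eqref{eq:defi:rw} and \eqref{eq:defi:rvw}. With these in hand, $L^p(v\,dw) = L^p(w^{-1}\cdot w\,dx) = L^p(dx)$, the hypothesis \eqref{main:compa} of Theorem \ref{thm:maindegenerate} translates via \eqref{intervalrsw} into \eqref{cond1:apps}, namely $\max\{r_w,q_-(L_w)\}\,s_w < q_+(L_w)/r_w$, and the admissible range of $p$ becomes \eqref{cond2:apps}. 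Since $q_+(L_w) > 2$ always, it suffices in each case to verify the cruder condition $\max\{r_w, q_-(L_w)\}\,s_w < 2/r_w$ and then read off the $p$-range from \eqref{cond2:apps}, using $q_-(L_w) = p_-(L_w) \le \frac{2nr_w}{nr_w+2}$ from \cite[Propositions 3.1 and 7.1]{CMR15} to obtain the explicit sub-intervals (where $\frac{nr_wq_-(L_w)}{nr_w+q_-(L_w)} \le \frac{2nr_w}{nr_w+4}$).

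Next I would dispose of the structural constraints case by case, the point being to choose the reverse-Hölder and Muckenhoupt hypotheses on $w$ precisely strong enough to force $\max\{r_w,q_-(L_w)\}\,s_w < 2/r_w$. In case $(a)$, $w\in A_1(dx)$ gives $r_w = 1$, and $w\in RH_{1+n/2}(dx)$ gives $s_w < \frac{2}{n+2}\cdot\frac{n+2}{2}$... more carefully, $s_w = \inf\{q: w\in RH_{q'}(dx)\}$, and $RH_{1+n/2}(dx) = RH_{(q')}$ with $q' = 1+n/2$, i.e. $q = \frac{n+2}{n}$, so $s_w \le \frac{n+2}{n} < 2$ when $n\ge 2$... then one checks $\max\{1, q_-(L_w)\}\,s_w = s_w < 2 = 2/r_w$ since $q_-(L_w) < 2$. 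In case $(b)$, $w\in RH_\infty(dx)$ gives $s_w = 1$, so the condition reduces to $\max\{r_w, q_-(L_w)\} < 2/r_w$, i.e. $r_w^2 < 2$ (using $q_-(L_w)\,r_w \le \frac{2nr_w^2}{nr_w+2} < 2$, which holds precisely when $r_w < \frac{1+\sqrt{1+8/n}}{2}$); the choice $r_0 = \min\{\sqrt2, \frac{1+\sqrt{1+8/n}}{2}\}$ and $w\in A_{r_0}(dx)$ ensures $r_w \le r_0$ and both inequalities. Case $(c)$ interpolates between these: $w\in A_r(dx)$ gives $r_w \le r$, $w\in RH_{s(r)'}(dx)$ gives $s_w \le s(r) = \min\{\frac2{r^2}, \frac{nr+2}{nr^2}\}$, and then $r_w\,s_w\,\max\{r_w,q_-(L_w)\} \le r\,s(r)\,r = r^2 s(r) \le 2$ (and $< 2$ strictly after accounting for the strict inequalities), so \eqref{cond1:apps} holds and \eqref{cond2:apps} yields the stated range.

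For part $(d)$, the extra ingredient is the \emph{quantitative} (uniform-in-$w$) control: one needs $p = 2$ to belong to the range \eqref{cond2:apps}, which amounts to
\[
\max\Big\{r_w,\frac{n\,r_w\,q_-(L_w)}{n\,r_w + q_-(L_w)}\Big\}\,s_w < 2 < \frac{q_+(L_w)}{r_w}.
\]
Taking $w$ in $A_{1+\epsilon}(dx)$ and in $RH_{\max\{\frac2{1-\epsilon},\,1+(1+\epsilon)n/2\}}(dx)$ forces $r_w \le 1+\epsilon$ and $s_w$ close to $1$, so the left inequality is clear for $\epsilon$ small. The right inequality $q_+(L_w) > 2\,r_w$ is the delicate one, since $q_+(L_w)$ can a priori be arbitrarily close to $2$; here one invokes the stability results of \cite[Proof of Theorem 11.8]{CMR15}, which — under the a priori bound $[w]_{A_2(dx)}\le\Theta$ — give a lower bound $q_+(L_w) \ge 2 + \delta$ with $\delta = \delta(\Theta, n, \Lambda/\lambda) > 0$; then choosing $\epsilon_0$ so that $2(1+\epsilon_0) < 2 + \delta$ (and $\epsilon_0 \le \frac1{2n}$) secures $q_+(L_w)/r_w > 2$. \textbf{The main obstacle} I anticipate is precisely this last point: extracting a uniform-in-$w$ lower bound for $q_+(L_w)$ strictly above $2$ from the $A_2$-bound $\Theta$ and the ellipticity constants. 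Everything else is bookkeeping with the identities $\mathfrak{r}_{w^{-1}}(w)=s_w$, $\mathfrak{s}_{w^{-1}}(w)=r_w$ and arithmetic on the exponents; the substance lies in quoting and correctly applying the quantitative bound from \cite{CMR15}.
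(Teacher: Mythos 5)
Your proposal follows the paper's proof verbatim in structure: set $v=w^{-1}$ in Theorem~\ref{thm:maindegenerate}, use $\mathfrak{r}_{w^{-1}}(w)=s_w$ and $\mathfrak{s}_{w^{-1}}(w)=r_w$ to translate the hypotheses into \eqref{cond1:apps}--\eqref{cond2:apps}, bound $q_-(L_w)\le\frac{2nr_w}{nr_w+2}$, replace $q_+(L_w)$ by $2$, and verify the resulting inequality case by case; for $(d)$ you correctly single out the lower bound $q_+(L_w)>2r_w$ from \cite[pp.~654--655]{CMR15} as the only nontrivial ingredient.

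Two of your intermediate inequalities, however, are wrong as written and should be repaired. In $(a)$ you assert $\max\{1,q_-(L_w)\}\,s_w=s_w$ ``since $q_-(L_w)<2$,'' but the maximum equals $s_w$ only if $q_-(L_w)\le 1$, which need not hold (for $n>2$ one has $\frac{2n}{n+2}>1$); the correct computation is the one you yourself set up in the first paragraph, namely $\max\{1,q_-(L_w)\}\,s_w\le\frac{2n}{n+2}\cdot\bigl(1+\frac2n\bigr)^{-}<2$. In $(c)$ the step $r_w\,s_w\,\max\{r_w,q_-(L_w)\}\le r\,s(r)\,r$ tacitly assumes $\max\{r_w,q_-(L_w)\}\le r$, which fails when $q_-(L_w)$ exceeds $r$ (possible, e.g., for $n=3$ and $r$ near $1$ where $\frac{2nr}{nr+2}>r$); one must instead bound the product by $\max\{r^2,\frac{2nr^2}{nr+2}\}\,s(r)$, and it is precisely the definition $s(r)=\min\{\frac{2}{r^2},\frac{nr+2}{nr^2}\}$ that makes this quantity equal to $2$ in both regimes. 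These are bookkeeping slips rather than conceptual gaps --- the framework, the choice of thresholds $r_0$ and $s(r)$, and the key quantitative point in $(d)$ are all correct.
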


\begin{proof}

We first consider $(a)$. Let $w\in A_1(dx)\cap RH_{1+\frac{n}2}(dx)$ then $r_w=1$ and $s_w<(1+\frac{n}2)'=1+\frac2{n}$. Using that $q_-(L_w)\le \frac{2\,n}{n+2}$ (since $n\ge 2$) we have
\[
\max\{r_w,q_-(L_w)\}\, s_w
<
\max\Big\{1,\frac{2\,n}{n+2}\Big\}\, \Big(1+\frac2{n}\Big)
=
2
<
q_+(L_w)
=
\frac{q_+(L_w)}{r_w}.
\]
That is, \eqref{cond1:apps} holds and according to \eqref{cond2:apps} we have $(R^{\L_w})_{L^p(dx)}$-solvability for $p$ so that
\[
\max\Big\{1,\frac{n\, q_-(L_w)}{n+q_-(L_w)}\Big\}\, s_w< p<q_+(L_w)
\]
and, in particular, in the range 
\[
\max\Big\{1,\frac{2\,n}{n+4}\Big\}\, s_w< p\le 2.
\]

To prove $(b)$ and $(c)$ assume that $w\in A_r(dx)\cap RH_{s(r)'}(dx)$ with $1<r\le \min\Big\{\sqrt{2}, \frac{1+\sqrt{1+\frac8n}}{2}\Big\}$ and $s(r)=\min\{\frac2{r^2}, \frac{n\,r+2}{n\,r^2}\}$ and note that the restriction on $r$ gives $s(r)\in [1,\infty)$. In particular, $r_w<r$, $s_w\le s(r)$, and
\begin{multline*}
r_w\,\max\{r_w,q_-(L_w)\}\, s_w
\le
r_w\,\max\Big\{r_w,\frac{2\,n\,r_w}{n\,r_w+2} \Big\}\, s_w
\\
<
\max\Big\{r^2,\frac{2\,n\,r^2}{n\,r+2} \Big\}\, s(r)= 2
< q_+(L_w).
\end{multline*}
This implies \eqref{cond1:apps} and we have $(R^{\L_w})_{L^p(dx)}$-solvability for $p$ in the range given by \eqref{cond2:apps}, and in particular for those $p$'s satisfying
\[
\max\Big\{r_w,\frac{2\,n\,r_w}{n\,r_w+4}\Big\}\, s_w< p\le \frac{2}{r_w}.
\]
All these show $(b)$ by taking $r=r_0$ so that $s(r)=1$ and hence $s_w=1$. Also $(c)$ follows from the case 
$1<r<r_0$.

To deal with $(d)$ we proceed as in \cite[pp.~654--655]{CMR15}. There, it is shown that given $\Theta\ge 1$ there exists $\epsilon_0=\epsilon_0(\Theta,n,\Lambda/\lambda)\in (0,\frac1{2n}]$ such that if $w\in A_{1+\epsilon}(dx)$ with $0\le\epsilon<\epsilon_0$ so that $[w]_{A_2(dx)}\le \Theta$ then $2\,r_w<q_+(L_w)$. That is, $2< \frac{q_+(L_w)}{r_w}$. On the other hand, if we additionally assume that $w\in RH_{\max\{\frac2{1-\epsilon}, 1+(1+\epsilon)\,\frac{n}2\}}(dx)$ then
\begin{multline*}
s_w
<\Big(\max\Big\{\frac2{1-\epsilon}, 1+(1+\epsilon)\,\frac{n}2\Big\}\Big)'
=
\min\Big\{\frac2{1+\epsilon}, 1+\frac2{n\,(1+\epsilon)}\Big\}
\\
<
\min\Big\{\frac2{r_w}, 1+\frac2{n\,r_w}\Big\}
=
\frac2{\max\Big\{r_w,\frac{2\,n\,r_w}{n\,r_w+2}\Big\}},
\end{multline*}
that is, 
\[
\max\Big\{r_w,\frac{2\,n\,r_w}{n\,r_w+2} \Big\}\, s_w<2.
\]
Altogether we have obtained that $\max\Big\{r_w,\frac{2\,n\,r_w}{n\,r_w+2} \Big\}\, s_w<2< \frac{q_+(L_w)}{r_w}$. This implies \eqref{cond1:apps} and also that $p=2$ satisfies \eqref{cond2:apps}. Consequently,  $(R^{\L_w})_{L^2(dx)}$ is solvable as desired. 
\end{proof}

Concerning power weights  we have the following result:

\begin{corollary}\label{corol:weights:power}
	Consider the power weight $w_\beta(x)=|x|^{n\,(\beta-1)}$ with $0<\beta<2$, and let $\L_{w_\beta}$ be the associated block operator
	\begin{equation}\label{def:L-block:power}
	\L_{w_\beta} u(x,t)
	=
	-|x|^{-n\,(\beta-1)} \divv_x \big(|x|^{n\,(\beta-1)}\,A(x) \nabla_x u(x,t)\big)-\partial_{t}^2 u(x,t)
	\end{equation}
	where $A$ is an $n\times n$ matrix of complex $L^\infty$-valued coefficients defined on $\R^n$, $n\geq 2$ satisfying the uniform ellipticity condition 
	\eqref{eq:elliptic-intro}. 
	
	Assume that
	\[
	\frac{n}{n+2} \le \beta \le \min\Big\{\sqrt{2}, \tfrac{1+\sqrt{1+\frac8n}}{2}\Big\}
	\]
	then for every $f\in C_c^\infty(\re^n)$ if one sets $u(x,t)=e^{-t\sqrt L_\beta}f(x)$, $(x,t)\in\R_+^{n+1}$, then 
	\begin{equation}\label{corol:main-est:power}
	\norm{\Ncal_{w_\beta} (\nabla_{x,t} u)}_{L^p(dx)} \leq C\norm{\nabla f}_{L^p(dx)},
	\end{equation}
	for every $p$ satisfying
	\[
	\max\Big\{1,\beta,\frac{n\, q_-(L_w)}{n+q_-(L_w)},\frac{n\,\beta\, q_-(L_w)}{n\,\beta+q_-(L_w)}\Big\}\,\max\{1,\beta^{-1}\}< p<\frac{q_+(L_w)}{\max\{1,\beta\}}.
	\]
In particular, in the non-empty range
	\[
	\max\Big\{1, \beta,\frac{2\,n}{n+4},\frac{2\,n\,\beta}{n\,\beta+4}\Big\}\,\max\{1,\beta^{-1}\}< p\le \frac{2}{\max\{1,\beta\}}
	\] 
	provided 
	\[
	\frac{n}{n+2} < \beta < \min\Big\{\sqrt{2}, \tfrac{1+\sqrt{1+\frac8n}}{2}\Big\}.
	\]

Moreover, there exists $\epsilon_1=\epsilon_1(n,\Lambda/\lambda)\in (0,\frac1{2\,n})$ such that if
\[
\frac{n}{n+2} < \beta <1+\epsilon_1 
\]
then \eqref{corol:main-est:power} holds with $p=2$, or equivalently $(R^{\L_{w_\beta}})_{L^2(dx)}$ is solvable.
\end{corollary}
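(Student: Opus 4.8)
The plan is to derive both assertions from Theorem~\ref{thm:maindegenerate} and Corollary~\ref{corol:weights}, applied with $v=w_\beta^{-1}$: since $v\,dw_\beta=w_\beta^{-1}\,w_\beta\,dx=dx$, the weighted estimate \eqref{main:main-est} for $\L_{w_\beta}$ becomes exactly \eqref{corol:main-est:power}, and $(R^{\L_{w_\beta}})_{L^p(w_\beta^{-1}dw_\beta)}$ coincides with $(R^{\L_{w_\beta}})_{L^p(dx)}$. First I would record the elementary facts about $w_\beta(x)=|x|^{n(\beta-1)}$: since $0<\beta<2$ one has $-n<n(\beta-1)<n$, hence $w_\beta\in A_2(dx)$; and the standard computations for $|x|^{\gamma}$ give $r_{w_\beta}=\max\{1,\beta\}$, $s_{w_\beta}=\max\{1,\beta^{-1}\}$, with $w_\beta\in RH_\infty(dx)$ precisely when $\beta\ge 1$. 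By the duality between the weighted and unweighted $A_p$, $RH_s$ conditions recalled just before \eqref{cond1:apps}, this yields $\mathfrak r_{w_\beta^{-1}}(w_\beta)=s_{w_\beta}$ and $\mathfrak s_{w_\beta^{-1}}(w_\beta)=r_{w_\beta}$. Note also that $r_0:=\min\{\sqrt 2,\tfrac{1+\sqrt{1+8/n}}{2}\}\in(1,2)$, so $[\tfrac n{n+2},r_0]\subset(0,2)$.

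Next I would check the only nontrivial hypothesis of Theorem~\ref{thm:maindegenerate}, namely \eqref{main:compa}, for every $\tfrac n{n+2}\le\beta\le r_0$. By \eqref{intervalrsw} this is equivalent to $r_{w_\beta}\,s_{w_\beta}\,\max\{r_{w_\beta},q_-(L_{w_\beta})\}<q_+(L_{w_\beta})$, and I would use the a priori bounds $q_-(L_{w_\beta})\le\tfrac{2n\,r_{w_\beta}}{n\,r_{w_\beta}+2}$ and $q_+(L_{w_\beta})>2$ (recalled in the paragraph preceding Corollary~\ref{corol:weights}, see also Lemma~\ref{lem:ODweighted}). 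If $\beta\le 1$, then $r_{w_\beta}=1$, $s_{w_\beta}=\beta^{-1}$, and the inequality reads $\max\{1,q_-(L_{w_\beta})\}<\beta\,q_+(L_{w_\beta})$, which holds since $\beta\,q_+(L_{w_\beta})>2\beta\ge\tfrac{2n}{n+2}\ge\max\{1,q_-(L_{w_\beta})\}$. If $\beta>1$, then $r_{w_\beta}=\beta$, $s_{w_\beta}=1$, and the inequality reads $\beta\,\max\{\beta,q_-(L_{w_\beta})\}<q_+(L_{w_\beta})$, which follows from $\beta\,\max\{\beta,q_-(L_{w_\beta})\}\le\max\{\beta^2,\tfrac{2n\beta^2}{n\beta+2}\}\le 2$, using $\beta\le r_0\le\sqrt 2$ (so $\beta^2\le 2$) and $\beta\le r_0\le\tfrac{1+\sqrt{1+8/n}}{2}$ (so $n\beta^2-n\beta-2\le0$). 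Once \eqref{main:compa} holds, Theorem~\ref{thm:maindegenerate} gives \eqref{corol:main-est:power} for every $p\in\mathcal{W}_{w_\beta^{-1}}^{w_\beta}\big(\max\{r_{w_\beta},(q_-(L_{w_\beta}))_{w_\beta,*}\},q_+(L_{w_\beta})\big)$; expanding this interval by \eqref{intervalrsw} together with $(q_-(L_{w_\beta}))_{w_\beta,*}=\tfrac{n\,r_{w_\beta}\,q_-(L_{w_\beta})}{n\,r_{w_\beta}+q_-(L_{w_\beta})}$ and $r_{w_\beta}=\max\{1,\beta\}$ reproduces, after separating the cases $\beta\le 1$ and $\beta>1$, exactly the range of $p$ in the statement. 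For the ``in particular'' clause I would replace $q_-(L_{w_\beta})$ by the bound $\tfrac{2n\,r_{w_\beta}}{n\,r_{w_\beta}+2}$ and $q_+(L_{w_\beta})$ by $2$; since $t\mapsto\tfrac{n\,r_{w_\beta}\,t}{n\,r_{w_\beta}+t}$ is increasing, this only raises the left endpoint to $\max\{1,\beta,\tfrac{2n}{n+4},\tfrac{2n\beta}{n\beta+4}\}\max\{1,\beta^{-1}\}$ and lowers the right endpoint to $\tfrac{2}{\max\{1,\beta\}}$, so the explicit interval is contained in the general one, and its nonemptiness for $\tfrac n{n+2}<\beta<r_0$ reduces, after multiplying through by $\max\{1,\beta\}$, to $\beta>\max\{\tfrac12,\tfrac n{n+4}\}$ when $\beta\le 1$, and to $\beta^2<2$ together with $n\beta^2-n\beta-4<0$ (i.e.\ $\beta<\tfrac{1+\sqrt{1+16/n}}{2}$) when $\beta>1$, all of which follow from $\tfrac n{n+2}<\beta<r_0$.

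It remains to handle the $L^2(dx)$ solvability for $\tfrac n{n+2}<\beta<1+\epsilon_1$. If $\tfrac n{n+2}<\beta\le 1$ then $r_{w_\beta}=1$, so the right endpoint of the $p$-range is $q_+(L_{w_\beta})>2$ while the left endpoint is at most $\max\{1,\tfrac{2n}{n+4}\}\beta^{-1}<2$ by the estimate just made; hence $p=2$ belongs to the range and $(R^{\L_{w_\beta}})_{L^2(dx)}$ is solvable, with no smallness needed. If $1<\beta<1+\epsilon_1$ I would invoke Corollary~\ref{corol:weights}$(d)$: put $\Theta_0:=\sup_{|\gamma|\le 1/2}[\,|x|^{\gamma}\,]_{A_2(dx)}$, a finite constant depending only on $n$; let $\epsilon_0=\epsilon_0(\Theta_0,n,\Lambda/\lambda)\in(0,\tfrac1{2n}]$ be the constant provided by that part of the corollary; and set $\epsilon_1:=\epsilon_0/2\in(0,\tfrac1{2n})$, which depends only on $n$ and $\Lambda/\lambda$. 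For $1<\beta<1+\epsilon_1$ choose $\epsilon\in(\beta-1,\epsilon_0)$; then $w_\beta\in A_{1+\epsilon}(dx)$ (because $n(\beta-1)<n\epsilon$), $[w_\beta]_{A_2(dx)}\le\Theta_0$ (because $|n(\beta-1)|<n\epsilon_1\le\tfrac12$), and $w_\beta\in RH_\infty(dx)\subset RH_{\max\{\frac2{1-\epsilon},\,1+(1+\epsilon)\frac n2\}}(dx)$ (because $\beta\ge 1$), so Corollary~\ref{corol:weights}$(d)$ applies and yields \eqref{corol:main-est:power} with $p=2$, i.e.\ $(R^{\L_{w_\beta}})_{L^2(dx)}$ is solvable. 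The main obstacle is precisely this last point: for $\beta>1$ one has $r_{w_\beta}=\beta>1$, so the crude bound $q_+(L_{w_\beta})>2$ no longer forces $p=2$ into the solvability range (one needs $q_+(L_{w_\beta})>2\beta=2r_{w_\beta}$), and it is the quantitative improvement $q_+(L_w)>2r_w$ for weights with controlled $A_2$ characteristic---the ingredient underlying Corollary~\ref{corol:weights}$(d)$, taken from \cite{CMR15}---that makes the $L^2$ statement valid for $\beta$ slightly larger than $1$.
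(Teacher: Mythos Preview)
Your proof is correct and follows essentially the same route as the paper's own argument: compute $r_{w_\beta}=\max\{1,\beta\}$, $s_{w_\beta}=\max\{1,\beta^{-1}\}$, verify \eqref{cond1:apps} (equivalently \eqref{main:compa} with $v=w_\beta^{-1}$) case by case using the a priori bounds $q_-(L_{w_\beta})\le \tfrac{2n r_{w_\beta}}{n r_{w_\beta}+2}$ and $q_+(L_{w_\beta})>2$, read off the $p$-range from \eqref{intervalrsw}, and then treat the $L^2$ case for $\beta>1$ via Corollary~\ref{corol:weights}\,$(d)$ after fixing a uniform $A_2$ bound on the family $\{w_\beta\}$. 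Your choice $\epsilon_1=\epsilon_0/2$ (rather than the paper's $\epsilon_1=\epsilon_0$) is a harmless variant that has the minor advantage of guaranteeing $\epsilon_1\in(0,\tfrac{1}{2n})$ strictly, matching the statement.
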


\begin{proof}
Write  $w_\beta(x)=|x|^{n\,(\beta-1)}$ with $0<\beta<2$ so that $w_\beta\in A_2(dx)$. It is not difficult to see that 
\[
r_{w_\beta}=\max\{1,\beta\}
\qquad\text{and}\qquad
s_{w_\beta}=\max\{1, \beta^{-1}\}.
\]

Consider first the case $0< \beta\le 1$ so that $w_\beta\in A_1(dx)$, $r_{w_\beta}=1$, and $s_{w_\beta}=\beta^{-1}$. If $\beta\ge \frac{n}{n+2}$ then
\[
\max\{r_{w_\beta},q_-(L_{w_\beta})\}\, s_w
\le 
\frac{2\,n}{n+2}\,\frac1{\beta}
\le 
2
<q_+(L_{w_\beta})
= \frac{q_+(L_{w_\beta})}{r_{w_\beta}}.
\]
Thus \eqref{cond1:apps} holds and if $\frac{n}{n+2} \le \beta \le 1$ we have $(R^{\L_{w\beta}})_{L^p(dx)}$-solvability for $p$ such that
\[
\max\Big\{1,\frac{n\, q_-(L_{w_\beta})}{n+q_-(L_{w_\beta})}\Big\}\, \frac1{\beta}< p<q_+(L_{w_\beta}).
\]
In particular, if  $\frac{n}{n+2} < \beta \le 1$, the solvability holds in the range $\max\{1,\frac{2\,n}{n+4}\}\, \beta^{-1}< p\le 2$.

Let us treat the case $1<\beta<2$, so that we have $r_{w_\beta}=\beta$ and $s_{w_\beta}=1$. If $1<\beta \le \min\{\sqrt{2}, \frac{1+\sqrt{1+\frac8n}}{2}\}$
then 
\[
r_{w_\beta}\,\max\{r_{w_\beta},q_-(L_{w_\beta})\}
\le
\max\Big\{\beta^2, \frac{2\,n\,\beta^2}{n\,\beta+2}\Big\}
\le
2
<q_+(L_{w_\beta}).
\]
This implies that \eqref{cond1:apps} holds and, as a consequence, \eqref{cond2:apps} yields that if  $1<\beta \le \min\Big\{\sqrt{2}, \frac{1+\sqrt{1+\frac8n}}{2}\Big\}$ then $(R^{\L_{w\beta}})_{L^p(dx)}$ is solvable 
in the range
\[
\max\Big\{\beta,\frac{n\,\beta\, q_-(L_w)}{n\,\beta+q_-(L_w)}\Big\}< p<\frac{q_+(L_w)}{\beta}.
\]
In particular, if  $1<\beta < \min\Big\{\sqrt{2}, \frac{1+\sqrt{1+\frac8n}}{2}\Big\}$ one can solve $(R^{\L_{w\beta}})_{L^p(dx)}$  for $p$ satisfying 
\[
\max\Big\{\beta,\frac{2\,n\,\beta}{n\,\beta+4}\Big\}< p\le \frac{2}{\beta}.
\]

Let us finally focus on the $(R^{\L_{w_\beta}})_{L^2}$-solvability. Consider first the case when $\frac{n}{n+2} < \beta\le 1$ then
\[
\max\Big\{1, \beta,\frac{2\,n}{n+4},\frac{2\,n\,\beta}{n\,\beta+4}\Big\}\,\max\{1,\beta^{-1}\}
=
\max\Big\{1, \frac{2\,n}{n+4}\Big\}\,\frac1\beta
<
2
=
\frac{2}{\max\{1,\beta\}}.
\]
Hence what we have proved so far gives the  $(R^{\L_{w_\beta}})_{L^2(dx)}$-solvability. To consider the case $\beta>1$ we first assume that $\beta<\frac{2\,n+1}{2\,n}$ so that  $w_\beta\in A_{1+\frac1{2\,n}}(dx)$. Note that one can easily see that there exists $\Theta\ge 1$ depending just on $n$ (and independent of $\beta$) such that $[w_\beta]_{A_2(dx)}\le \Theta$. Next we repeat the argument given in the  proof of Corollary \ref{corol:weights} to find the corresponding $\epsilon_0\in (0,\frac1{2\,n}]$, which depends only on $n$ and $\Lambda/\lambda$. Set $\epsilon_1=\epsilon_0$ and assume that $1<\beta<1+\epsilon_1\le \frac{2\,n+1}{2\,n}$. Pick $\epsilon'>0$ so that $1<\beta<1+\epsilon'<1+\epsilon_1$. Hence $w_{\beta}\in A_{1+\epsilon'}(dx)$ with $0<\epsilon'<\epsilon_1=\epsilon_0$ and we can invoke $(d)$ in Corollary \ref{corol:main-est} to conclude the $(R^{\L_{w_\beta}})_{L^2(dx)}$-solvability.
\end{proof}

\begin{proof}[Proof of Corollary \ref{corol:weighted:intro}]
It suffices  to observe that the first part is just item $(d)$ in Corollary \ref{corol:weights}. Regarding power weights, setting $\alpha=-n(\beta-1)$ and with a slight abuse of notation the desired estimate follows at once from Corollary \ref{corol:weights:power}.
\end{proof}

%
%

\addtocontents{toc}{\protect\setcounter{tocdepth}{1}}


\bibliographystyle{acm}

\end{document}